\documentclass[10pt]{amsart}
\usepackage{amscd,amsfonts,amssymb,amsmath,amsthm,latexsym}
\usepackage{url}          % for arXiv references
\usepackage{mathrsfs}
\usepackage{geometry}
\usepackage{float}
\usepackage[dvips]{graphicx}
\usepackage{color}
\usepackage{graphics}
\usepackage[all]{xy}

\usepackage{epstopdf}
\usepackage{mathtools}
\usepackage{tikz}
\usetikzlibrary{calc}
\usetikzlibrary{decorations.markings}

%%%%%%%%%%%%%%%%%%%%%%%%%%%%%%%%%%55
\usepackage{amscd,amsfonts,amssymb,amsmath,amsthm,latexsym}

\usepackage{graphics}
\usepackage[all]{xy}
\xyoption{curve}
\xyoption{import}
\xyoption{arc}
\xyoption{ps}
%\UsePSspecials{dvips}
%\addto\captionsspanish{\renewcommand{\figurename}{\footnotesize{Figura}}}
%%%%%%%%%%%%%%%%%%%%%%%%%%%%%%%%%%%%%%

\numberwithin{equation}{section}

\theoremstyle{plain}
    \newtheorem{thm}{Theorem}[section]
    \newtheorem{lemma}[thm]{Lemma}
    \newtheorem{coro}[thm]{Corollary}
    \newtheorem{prop}[thm]{Proposition}

    \newtheorem{question}[thm]{Question}
\theoremstyle{definition}
    \newtheorem{defi}[thm]{Definition}
    \newtheorem{ex}[thm]{Example}
    \newtheorem{remark}[thm]{Remark}
\theoremstyle{remark}
    \newtheorem{case}{Case}

%%%%%%%%%%%%%%%%%%%%%%%%%%%%%%%%%%%%%%%%%%%%%%
\newcommand{\suchthat}{\ | \ }

\newcommand{\diag}{\operatorname{diag}}

\newcommand{\RA}[1]{R\langle\hspace{-0.05cm}\langle #1\rangle\hspace{-0.05cm}\rangle}

\newcommand{\myid}{1\hspace{-0.15cm}1}
\newcommand{\pathalg}[1]{R\langle #1\rangle}
\newcommand{\maxid}{\mathfrak{m}}

\newcommand{\sqr}{{\raisebox{-1pt}{\rotatebox{45}{$\diamond$}}}}

\newcommand{\image}{\operatorname{Im}}

\newcommand{\dtuple}{\mathbf{d}}
\newcommand{\Z}{\mathbb{Z}}

\newcommand{\C}{\mathbb{C}}
\newcommand{\R}{\mathbb{R}}
\newcommand{\lcm}{\operatorname{lcm}}
\newcommand{\B}{\mathcal{B}}
\newcommand{\Gal}{\operatorname{Gal}}
\newcommand{\Aut}{\operatorname{Aut}}

\newcommand{\marked}{\mathbb{M}}
\newcommand{\orb}{\mathbb{O}}
\newcommand{\punct}{\mathbb{P}}
\newcommand{\surf}{(\Sigma,\marked,\orb)}
\newcommand{\arcsinsurf}{\mathbb{A}^{\circ}\surf}
\newcommand{\taggedinsurf}{\mathbb{A}^{\bowtie}\surf}
\newcommand{\tagfunction}{\mathfrak{t}}
\newcommand{\arc}{k}

\newcommand{\Qtau}{Q(\tau)}
\newcommand{\unredQtau}{\widehat{Q}(\tau)}
\newcommand{\Atau}{A(\tau)}
\newcommand{\unredAtau}{\widehat{A}(\tau)}
\newcommand{\Stau}{S(\tau,\mathbf{x})}
\newcommand{\unredStau}{\widehat{S}(\tau,\mathbf{x})}
\newcommand{\AStau}{(\Atau,\Stau)}
\newcommand{\unredAStau}{(\unredAtau,\unredStau)}
\newcommand{\Qsigma}{Q(\sigma)}

\newcommand{\Asigma}{A(\sigma)}

\newcommand{\Ssigma}{S(\sigma,\mathbf{x})}

\newcommand{\ASsigma}{(\Asigma,\Ssigma)}

\newcommand{\Wtau}{W^{ij}(\tau,\mathbf{x})}
\newcommand{\AWtau}{(A(\tau),\Wtau)}

\newcommand{\Wsigma}{W^{ij}(\sigma,\mathbf{x})}
\newcommand{\AWsigma}{(A(\sigma),\Wsigma)}

\newcommand{\Vtauq}{V^q(\tau,\mathbf{x})}
\newcommand{\AVtauq}{(A(\tau),\Vtauq)}

\newcommand{\Vsigmaq}{V^q(\sigma,\mathbf{x})}
\newcommand{\AVsigmaq}{(A(\sigma),\Vsigmaq)}

\newcommand{\vv}{\operatorname{v}}
\newcommand{\val}{\operatorname{val}}

\newcommand{\short}{\operatorname{short}}

\newcommand{\depth}{\operatorname{depth}}

\newcommand{\BB}{\mathbb{B}}
\newcommand{\CC}{\mathbb{C}}
\newcommand{\tildeB}{\widetilde{\mathbb{B}}}
\newcommand{\tildeC}{\widetilde{\mathbb{C}}}

%%%%%%%% Color macros for editing  %%%%%%%%%%%%%%%%%%%%%%%%%%%%%%%%%%%%%

%%%%%%%%%%%%%%%%%%%%%%%%%%%%%%%%%%%%%%%%%%%%%%%%%%%%%%%%%%%%%%%%%%%%%%%%

%%%%%%%%%%%%%%%%
%%% IN ORDER TO HAVE DOUBLE LOWER LIMITS IN \SUM

%%%%%%%%%%%%%%%%

%%%%%%%%%%%%%%%%%%%%%% PAGE LAYOUT
\setlength{\textheight}{595pt}
%\addtolength{\hoffset}{10pt}
\addtolength{\voffset}{-10pt}
\addtolength{\textheight}{55pt}
\addtolength{\textwidth}{35pt}
\addtolength{\evensidemargin}{-40pt}
\addtolength{\headsep}{10pt}
\linespread{1.1}
%%%%%%%%%%%%%%%%%%%%%%%%%%%%%%%%%%%%%%%%%%%%%%%%%%%%%%%%%%%%%%%%%%%%%%%%%%%%%%%%%%%%%%%%%%%

% MACHOTE PARA INSERTAR FIGURAS
%        \begin{figure}[!ht]
%                \caption{}\label{}
%                \centering
%                \includegraphics[scale=.5]{.eps}
%        \end{figure}\\

%%%%%%%%%%%%%%%%%%%%%%%%%%%%%%%%%%%%%%%%%%%%%%%%%%%%%%%%%%%%%%%%%%%%%%%%%%%%%%%%%%%%%%%%%%%%%%%

\begin{document}

\title[SPs arising from surfaces with orbifold points, part I]{Species with potential arising from surfaces with orbifold points of order 2, Part I: one choice of weights}
\author{Jan Geuenich}
\author{Daniel Labardini-Fragoso}
\address{Mathematisches Institut, Universit\"at Bonn, Germany}
\email{jagek@math.uni-bonn.de}
\address{Instituto de Matem\'aticas, Universidad Nacional Aut\'onoma de M\'exico}
\email{labardini@matem.unam.mx}
\date{\today}
\subjclass[2010]{05E99, 13F60, 16G20}
\keywords{Surface, marked points, orbifold points, triangulation, flip, skew-symmetrizable matrix, weighted quiver, species, potential, mutation}
%\thanks{}
\dedicatory{Dedicated to Jerzy Weyman on the occasion of his sixtieth birthday}
\maketitle

\begin{abstract} We present a definition of mutations of species with potential that can be applied to
the species realizations of any skew-symmetrizable matrix~$B$ over cyclic Galois extensions $E/F$ whose base field $F$ has a primitive $[E:F]^{\operatorname{th}}$ root of unity. After providing an example of a globally unfoldable skew-symmetrizable matrix whose species realizations do not admit non-degenerate potentials, we present a construction that associates a species with potential to each tagged triangulation of a surface with marked points and orbifold points of order 2. Then we prove that for any two tagged triangulations related by a flip, the associated species with potential are related by the corresponding mutation (up to a possible change of sign at a cycle), thus showing that these species with potential are non-degenerate. In the absence of orbifold points, the constructions and results specialize to previous work (\cite{Labardini1} and \cite{Labardini-potsnoboundaryrevised}) by the second author.

The species constructed here for each triangulation $\tau$ is a species realization of one of the several matrices that Felikson-Shapiro-Tumarkin have associated to $\tau$ in \cite{FeShTu-orbifolds}, namely, the one that in their setting arises from choosing the number $\frac{1}{2}$ for every orbifold point.
\end{abstract}

\tableofcontents

\section{Introduction}

The quivers with potential associated to triangulations of surfaces with marked points, and the compatibility between QP-mutations and flips of triangulations proved in \cite{Labardini1} and \cite{Labardini-potsnoboundaryrevised}, have proven useful
in different areas of mathematics (cf.\ for example \cite{Bridgeland-Smith}, \cite{DMSS}, \cite{Nagao1}, \cite{Nagao2}, \cite{Smith}) and physics (cf.\ for example \cite{ACCERV1}, \cite{ACCERV2}, \cite{Cecotti}). In this paper we extend the constructions and results from \cite{Labardini1} and \cite{Labardini-potsnoboundaryrevised} to surfaces with marked points and orbifold points of order 2. To be more precise, the main construction of this paper (Definition \ref{def:SP-of-tagged-triangulation}) associates a species and a potential to each triangulation (either ideal or tagged) of a surface with marked points and orbifold points of order 2, and the main result (Theorem \ref{thm:tagged-flips<->SP-mutation--non-empty-boundary}) shows that whenever two triangulations (either ideal or tagged) are related by a flip, the associated species with potential are related by the corresponding SP-mutation. This result in particular establishes the existence of non-degenerate potentials on the species associated here to the (tagged) triangulations of surfaces with marked points and orbifold points of order 2.

Surfaces with marked points and orbifold points arise as one of the combinatorial upshots of hyperbolic geometry: Let $\Gamma$ be a discrete subgroup of $\operatorname{PSL}_2(\mathbb{R})$ for which the orbit space $\mathbb{H}^2/\Gamma$ has finite hyperbolic area (where $\mathbb{H}^2$ is the hyperbolic upper half plane).
If we ignore the hyperbolic metric on $\mathbb{H}^2/\Gamma$ inherited from that on $\mathbb{H}^2$, we can combinatorially describe $\mathbb{H}^2/\Gamma$ by means of four pieces of data:
\begin{itemize}
\item A compact oriented 2-dimensional real surface $\Sigma$ (with empty boundary);
\item a set $\marked$ of \emph{marked points} on $\Sigma$, which correspond to the ($\Gamma$-orbits of) fixed points of parabolic elements of $\Gamma$, and whose removal from $\Sigma$ produces a surface $\Sigma\setminus\marked$ homeomorphic to $\mathbb{H}^2/\Gamma$;
\item a set $\orb$ of \emph{orbifold points} on $\Sigma$, which correspond to the ($\Gamma$-orbits of) fixed points of elliptic elements of $\Gamma$;
\item the \emph{orders} of the orbifold points, which correspond to the orders of the (conjugacy classes of) maximal elliptic subgroups of $\Gamma$ (by definition of elliptic subgroups, these orders are integers greater than 1).
\end{itemize}
If we are interested in all the possible ways to obtain a fixed set of combinatorial data as above via discrete subgroups of $\operatorname{PSL}_2(\R)$, or, informally speaking, in all possible ways of defining a hyperbolic metric on $\Sigma\setminus\marked$ with $\orb$ as the set of orbifold points with the prescribed orders, we arrive at a notion of \emph{Teichm\"uller space}, defined, when $\orb=\varnothing$, as certain subset of the set of conjugacy classes of injective group homomorphisms from the fundamental group $\pi_1(\Sigma\setminus\marked)$ onto discrete subgroups $\Gamma$ of $\operatorname{PSL}_2(\mathbb{R})$ for which the orbit space $\mathbb{H}^2/\Gamma$ has finite hyperbolic area and yields the given combinatorial information (see \cite[Section 2.1]{Penner}).

Penner \cite{Penner} has shown that for any four ideal points of the hyperbolic plane $\mathbb{H}^2$, the \emph{lambda lengths} of the six hyperbolic geodesics determined by these four points satisfy the famous Ptolemy identity $\lambda_{13}\lambda_{24}=\lambda_{12}\lambda_{34}+\lambda_{14}\lambda_{23}$. Building on this idea, Penner \cite{Penner}, Fomin-Shapiro-Thurston \cite{FST} and Fomin-Thurston \cite{FT} have shown that in the absence of orbifold points and the presence of at least one marked point, the subring that the \emph{lambda lengths} of the (tagged) arcs on $(\Sigma,\marked)$ generate in the ring of real-valued functions on the \emph{decorated Teichm\"uller space} of $(\Sigma,\marked)$ carries a natural cluster algebra structure, whose cluster variables are parameterized by the (tagged) arcs on $(\Sigma,\marked)$, and whose clusters are parameterized by the (tagged) triangulations of $(\Sigma,\marked)$, with each (tagged) triangulation yielding a full coordinatization of the decorated Teichm\"uller space via the lambda lengths of its constituent (tagged) arcs, and with the change of coordinates associated to every flip\footnote{Particularly delicate are the flips of \emph{folded sides} of \emph{self-folded triangles}. \emph{Tagged arcs} and \emph{tagged triangulations} are the combinatorial gadgets with which Fomin-Shapiro-Thurston were able to define these flips.}
of (tagged) triangulations being governed by the \emph{Ptolemy relations}. Felikson-Shapiro-Tumarkin \cite{FeShTu-orbifolds} have generalized the results of Penner and Fomin-Thurston so as to encompass the situation where all orbifold points are assumed to have order 2. For the case of orbifold points of arbitrary order, Chekhov-Shapiro \cite{Chekhov-Shapiro} have shown that the natural cluster-like structure provided by lambda lengths is no longer necessarily a cluster algebra, but that it still behaves as a \emph{generalized cluster algebra}.

In this paper we focus on surfaces with marked points and orbifold points of order 2. Let $\surf$ be one such surface. A crucial observation in the works of Fomin-Shapiro-Thurston and Felikson-Shapiro-Tumarkin is that, in this case, to every triangulation of $\surf$ one can associate a \emph{skew-symmetrizable matrix} (equivalently, a \emph{weighted quiver}, that is, a pair $(Q,\dtuple)$ consisting of a loop-free quiver $Q$ and a tuple $\dtuple=(d_i)_{i\in Q_0}$ of positive integers attached to the vertices of $Q$) naturally associated to it, and that, moreover, the combinatorial operation of \emph{flip} on triangulations (also known as \emph{Whitehead move}) is reflected at the matrix level as a \emph{matrix mutation}\footnote{Actually, Felikson-Shapiro-Tumarkin associate $2^{|\orb|}$ different skew-symmetrizable matrices to each single triangulation of $\surf$.}, which is the main ingredient in Fomin-Zelevinsky's definition of cluster algebras. For surfaces without orbifold points, it was shown in \cite{Labardini1} and \cite{Labardini-potsnoboundaryrevised} that this compatibility between flips and mutations is actually stronger: not only does every (tagged) triangulation have a naturally associated matrix (which is skew-symmetric in this case and hence corresponds to a simply-laced quiver --that is, a directed graph with possibly multiple parallel arrows), but actually a naturally associated \emph{quiver with potential}, and, moreover, for (tagged) triangulations related by a flip, not only are their associated matrices (i.e., simply-laced quivers) related by Fomin-Zelevinsky mutation, but their associated quivers with potential are related by a \emph{mutation of quivers with potential}, which is the main ingredient in Derksen-Weyman-Zelevinsky's representation-theoretic approach to cluster algebras. Combined with general results of Amiot \cite{Amiot} and Keller-Yang \cite{Keller-Yang}, this compatibility between flips and QP-mutations yields a categorification, via 2-Calabi-Yau and 3-Calabi-Yau triangulated categories, of the cluster algebras associated to surfaces with marked points (and without orbifold points). The resulting categories have turned out to be useful in symplectic geometry and in the subject of stability conditions.

Felikson-Shapiro-Tumarkin associate several different skew-symmetrizable matrices to each single triangulation $\tau$ of $\surf$. In their article \cite{FeShTu-orbifolds}, these matrices arise from the choice of a number from $\{\frac{1}{2},2\}$ for each orbifold point, see Remark \ref{rem:tau=t_1(tau)}. In this paper we take one of the matrices associated with $\tau$, give a species realization of this matrix, define an explicit potential on this species, and show that whenever two triangulations (either ideal or tagged) are related by a flip, the associated species with potential are related by the corresponding SP-mutation (up to a possible change of sign at a cycle).

The article is organized as follows. Section~\ref{sec:background} contains background material: In
Subsection~\ref{subsec:surfaces-with-orbifold-points-and-triangulations} we recall Felikson-Shapiro-Tumarkin's definition of surfaces with marked points and orbifold points of order 2, as well as of the combinatorial notions of ideal and tagged triangulations of these surfaces, emphasizing the fact that any such triangulation can be obtained from the gluing of a finite set of elementary ``puzzle pieces'' (a fact that is crucial for the proof of our main result). Among the features of ideal and tagged triangulations is the fact that given any such triangulation $\tau$, each orbifold point lies on exactly one arc of $\tau$; an arc that contains an orbifold point is called \emph{pending}, otherwise it is \emph{non-pending}. By the end of Subsection \ref{subsec:surfaces-with-orbifold-points-and-triangulations} we also take the opportunity to illustrate the hyperbolic-geometric origins of our use of the term ``orbifold point of order 2'' and of the notion of ideal triangulation, use and notion that are of an admittedly combinatorial nature in this paper.

Subsection~\ref{subsec:tensor-products-and-bimodules} is devoted to emphasizing a few basic facts concerning bimodules and their tensor products and tensor algebras that will be used throughout the paper.
In particular, we study the direct sum decompositions of the tensor products of the bimodules that will be used as building blocks for species; more precisely, Proposition \ref{prop:tensor-prod-decomp} will tell us that in such decompositions one is forced to consider certain variations of the bimodules $F_i\otimes_{F_{i}\cap F_j}F_j$ that are ``twisted'' by elements of the Galois group $\Gal(F_i\cap F_j/F)$ and are not isomorphic to $F_i\otimes_{F_{i}\cap F_j}F_j$ (here, $F_i$ and $F_j$ are field subextensions of certain cyclic Galois extension $E/F$).

Subsection~\ref{subsec:matrices-vs-quivers} is concerned with the correspondence between skew-symmetrizable matrices and 2-acyclic weighted quivers, and with the translation of the notion of matrix mutation to the language of weighted quivers, while Subsection \ref{subsec:species-realizations} recalls the notion of \emph{species realization} of a skew-symmetrizable matrix and gives a brief account on previous approaches to mutations of species with potential.

Section~\ref{sec:SPs} establishes the algebraic setting used in the rest of the article for the construction and mutations of species and potentials.
Given a weighted quiver $(Q,\dtuple)$, we discuss how the choice of certain cyclic Galois extension $E/F$ allows to attach a field $F_i$ of degree $d_i$ over $F$ to each vertex $i\in Q_0$, and how the choice of a function $g:Q_1\rightarrow\bigcup_{i,j\in Q_0}\Gal(F_i\cap F_j/F)$ attaching to each arrow $a:j\rightarrow i$ an element of the Galois group $\Gal(F_{i}\cap F_{j}/F)$ (we will say that $g$ is a \emph{modulating function} for $(Q,\dtuple)$), allows to associate a ``twisted'' bimodule $F_i^{g_a}\otimes_{F_{i}\cap F_{j}}F_j$ to each such arrow $a:j\rightarrow i$, thus naturally allowing us to associate a \emph{species} to $(Q,\dtuple)$ with respect to the modulating function $g$. This species will be a bimodule over the commutative semisimple ring $R=\bigoplus_{i\in Q_0}F_i$; as such, it will give rise to a (complete) tensor algebra, and this will allow us to generalize some well-known concepts for quivers to the species setting.
In particular, potentials, cyclic derivatives, Jacobian algebras, and mutations of species with potential will be lifted to this setting. We will see that many of the main results from~\cite{DWZ1} regarding mutations of quivers with potential can be generalized to analogous results for species with potential (e.g., the Splitting Theorem, the well-definedness of SP-mutations up to right-equivalence, the involutivity of SP-mutations).
However, an unpleasant behavior of SP-mutations will be pointed out in Example \ref{ex:6263-cycle-species}, which exhibits a skew-symmetrizable matrix $B$ that admits a global unfolding in the sense of Zelevinsky (cf. the unpublished \cite{Zelevinsky-unfolding}, see \cite[Section 4]{FeShTu-unfoldings} or \cite[Definition 14.2]{LZ}) and has the property that, no matter which species realization of $B$ via cyclic Galois extensions and modulating functions we take, the resulting species will not admit non-degenerate potentials at all. This means that the existence of species realizations admitting non-degenerate potentials
should not be expected even for globally unfoldable skew-symmetrizable matrices.
So, even though many of Derksen-Weyman-Zelevinsky's statements are still valid in our species setup, somehow the most important one (namely, the one that asserts the existence of non-degenerate potentials) fails to hold in general.

Section~\ref{sec:species-of-a-triangulation} is devoted to the definition of the weighted quiver $(Q(\tau),\dtuple(\tau))$ and of the species $\Atau$ associated to a tagged triangulation $\tau$ of a surface with marked points and orbifold points of order 2. The weighted quiver $(Q(\tau),\dtuple(\tau))$ really is the weighted quiver corresponding to one of the several skew-symmetrizable matrices associated to $\tau$ by Felikson-Shapiro-Tumarkin in \cite{FeShTu-orbifolds}, where these authors show (in the language of skew-symmetrizable matrices) that if two tagged triangulations are related by the flip of a tagged arc $k$, then the associated weighted quivers are related by the $k^{\operatorname{th}}$ weighted-quiver mutation. The vertices of $Q(\tau)$ are the arcs in $\tau$, the weight tuple $\dtuple(\tau)$ attaches the weight 2 to each pending arc and the weight 1 to each non-pending arc\footnote{Our notion of weight is different from the notion used by Felikson-Shapiro-Tumarkin in \cite{FeShTu-orbifolds}, but it is related to it. The precise relation is given in Remark \ref{rem:tau=t_1(tau)}.}. As for the arrows, they are defined based on the idea of drawing arrows in the clockwise direction inside each triangle, although one has to be careful when drawing the arrows of certain triangulations (e.g., the triangulations that have self-folded triangles). On the species level, $\Atau$ will be defined as the species associated with respect to a specific choice of elements of certain Galois groups. The main feature of this choice is that it will attach non-identity elements to some arrows connecting two pending arcs.

In Section~\ref{sec:potential-of-a-triangulation} we present the construction of the potential $\Stau\in\RA{\Atau}$ associated to a tagged triangulation $\tau$ with respect to a given tuple $\mathbf{x}=(x_p)_{p\in\punct}$ of non-zero elements\footnote{Our reason for allowing arbitrary tuples is the desire to obtain as many non-degenerate potentials on $\Atau$ as possible.} of the ground field $F$.
After giving some examples, we start the road towards the proof of the main theorem of this paper, beginning with Remark \ref{rem:ultimate-goal-and-steps} and ending with Theorem \ref{thm:tagged-flips<->SP-mutation--non-empty-boundary}. The main result of Section \ref{sec:potential-of-a-triangulation} is Theorem \ref{thm:ideal-non-pending-flips<->SP-mutation}, which states that if $\surf$ has empty boundary, $\tau$ is an ideal triangulation of $\surf$, and $k\in\tau$ is an arc which is neither pending nor the folded side of any self-folded triangle of $\tau$, then the SP $\mu_k\AStau$ is right-equivalent to $\ASsigma$, where $\sigma$ is the ideal triangulation obtained from $\tau$ by flipping the arc $k$. The proof of this assertion is quite long as it is achieved through a case-by-case analysis of all possible configurations that the puzzle-piece decompositions of $\tau$ and $\sigma$ can present around $k$. We have deferred this proof to Section \ref{sec:technical-proofs}.

In general, the flips of folded sides of self-folded triangles and the flips of pending arcs are not as obviously compatible with SP-mutation as one would like (some unexpected cycles appear when performing the corresponding SP-mutation, see \cite[Section 4]{Labardini-potsnoboundaryrevised}).
For this reason, in
Section~\ref{sec:pops} we introduce two types of \emph{popped potentials}, associated to ideal triangulations with respect to self-folded triangles and with respect to orbifold points, and establish the relation that these popped potentials have with the potentials $\Stau$ introduced in Section \ref{sec:potential-of-a-triangulation}. To be more precise, given an ideal triangulation $\tau$ and arcs $i,j\in\tau$ forming a self-folded triangle with $i$ as folded side, in Subsection \ref{subsec:self-folded-pop} we define a popped potential $\Wtau$ and show in Theorem \ref{thm:popping-is-right-equiv} that it is always right-equivalent to the potential $\Stau$. Similarly, given an ideal triangulation $\tau$ and an orbifold point $q$, we define a popped potential $\Vtauq$ and show in Theorem \ref{thm:orbi-pop} that it is always right-equivalent to $\Stau$. The proofs of Theorems \ref{thm:popping-is-right-equiv} and \ref{thm:orbi-pop} are quite similar to the proof of \cite[Theorem 6.1]{Labardini-potsnoboundaryrevised}; each of them is reduced to showing the existence of ideal triangulations for which the corresponding popped potentials are right-equivalent to $\Stau$. That such ideal triangulations actually exist is stated in Propositions \ref{prop:pop-existence} and \ref{prop:orb-pop-stronger}. We have deferred the proofs of these propositions to Section \ref{sec:technical-proofs}; each of them consists in exhibiting an ideal triangulation for which a right-equivalence between $\Stau$ and the corresponding popped potentials can be constructed through an \emph{ad hoc} limit process.

With Theorems \ref{thm:ideal-non-pending-flips<->SP-mutation}, \ref{thm:popping-is-right-equiv} and \ref{thm:orbi-pop} at hand, in Section~\ref{sec:flip-and-mutation-compatibility} we show the first main result of this paper, which states that if $\surf$ has empty boundary, and $\tau$ and $\sigma$ are tagged triangulations of $\surf$ related by the flip of a tagged arc $k\in\tau$, then the SP $\mu_k\AStau$ is right-equivalent to $(A(\sigma),S(\sigma,\mathbf{y}))$ for some tuple $\mathbf{y}=(y_p)_{p\in\punct}$ of non-zero elements of $F$. This result, stated as Theorem \ref{thm:tagged-flips<->SP-mutation}, is proved in three steps: First, with the aid of Theorem \ref{thm:popping-is-right-equiv} we show in Theorem \ref{thm:leaving-positive-stratum} that if $\tau$ is an ideal triangulation and $k\in\tau$ is the folded side of a self-folded triangle of $\tau$, then $\mu_k\AStau$ is right-equivalent to $\ASsigma$. Secondly, using Theorem \ref{thm:orbi-pop} we prove in Theorem \ref{thm:flip-pending-arc<->SP-mut} that if $\tau$ is an ideal triangulation and $k\in\tau$ is a pending arc, then $\mu_k\AStau$ is right-equivalent to $(A(\sigma),S(\sigma,\mathbf{y}))$, where $\mathbf{y}=(y_p)_{p\in\punct}$ is a tuple obtained from $\mathbf{x}$ by changing the sign of (exactly) one of the scalars $x_p$ (the puncture $p$ at which the sign change occurs is completely determined by $k$). And thirdly, we deduce Theorem \ref{thm:tagged-flips<->SP-mutation} from Theorems \ref{thm:ideal-non-pending-flips<->SP-mutation}, \ref{thm:leaving-positive-stratum} and Theorem \ref{thm:flip-pending-arc<->SP-mut} through a ``notch deletion'' process first introduced by Fomin-Shapiro-Thurston for surfaces without orbifold points.

In Section \ref{sec:empty=>general} we present our main result, Theorem \ref{thm:tagged-flips<->SP-mutation--non-empty-boundary}, which states that regardless of whether $\surf$ has empty boundary or not, if $\tau$ and $\sigma$ are tagged triangulations of $\surf$ related by the flip of a tagged arc $k\in\tau$, then the SP $\mu_k\AStau$ is right-equivalent to $(A(\sigma),S(\sigma,\mathbf{y}))$ for some tuple $\mathbf{y}=(y_p)_{p\in\punct}$ of non-zero elements of $F$. To prove Theorem \ref{thm:tagged-flips<->SP-mutation--non-empty-boundary} we first show that Derksen-Weyman-Zelevinsky's notion of restriction of quivers with potential can be straightforwardly extended to the setting of SPs, that restriction of SPs commutes with SP-mutations, and that the SP of any tagged triangulation $\tau$ of any surface $\surf$ can be obtained as the restriction of the SP of some tagged triangulation $\widetilde{\tau}$ of a surface with empty boundary ($\widetilde{\tau}$ can be constructed by gluing triangulated punctured polygons along the boundary components of $\Sigma$). Then we combine these facts with Theorem \ref{thm:tagged-flips<->SP-mutation} and produce a proof of Theorem \ref{thm:tagged-flips<->SP-mutation--non-empty-boundary}. As a direct consequence of Theorem \ref{thm:tagged-flips<->SP-mutation--non-empty-boundary} we obtain Corollary \ref{coro:non-degeneracy-of-S(tau)}, which states the non-degeneracy of the SPs $\AStau$. We close Section \ref{sec:empty=>general} by showing that if the boundary of $\surf$ is not empty, then the tuple $\mathbf{y}$ can be taken to be equal to the tuple $\mathbf{x}$.

Section~\ref{sec:examples} provides some illustrative examples.
In particular, in Subsections \ref{subsec:unpunctured-surfaces}, \ref{subsec:type-C} and \ref{subsec:torus-1-orb-point} we consider SPs arising from unpunctured surfaces, which include species realizations for weighted quivers of Dynkin type~$\CC_n$ and Euclidean type~$\tildeC_n$.

Finally, Section \ref{sec:technical-proofs} contains technical considerations and proofs. Theorems \ref{thm:splitting-theorem}, \ref{thm:SP-mut-well-defined-up-to-re} and \ref{thm:ideal-non-pending-flips<->SP-mutation}, as well as Propositions \ref{prop:pop-existence} and  \ref{prop:orb-pop-stronger} are treated therein.

As mentioned before, Felikson-Shapiro-Tumarkin associate not one, but several, skew-symmetrizable matrices to each triangulation of a surface with orbifold points of order 2, see Remark \ref{rem:tau=t_1(tau)}. In this paper we have considered only one of these matrices. In the forthcoming sequel \cite{Geuenich-Labardini-2} to this paper we will present a construction of SPs for triangulations that will encompass other matrices. In particular, we will give constructions of SPs for the skew-symmetrizable matrices that are mutation-equivalent to matrices of types~$\BB$ and~$\tildeB$.

\section*{Acknowledgements}

We thank Raymundo Bautista, Christof Geiss and Jan Schr\"oer for helpful conversations.

%The results in this paper have been presented by the first author at the workshop \emph{A Cluster Algebras Day} held as part of the program \emph{Perspectives in Lie Theory} at the Centro di Ricerca Matematica Ennio De Giorgi, Pisa, on February, 2015, and by the second author at the workshop \emph{Hall and cluster algebras} held in May 2014, at the Centre de Recherches Math\'ematiques in Montr\'eal, and at the  \emph{USA-Mexico Conference in Representation Theory and Noncommutative Algebra} held in January 2015, at Instituto de Matem\'aticas, UNAM. We thank the organizers of these meetings for the opportunities to present our work.

Parts of this work were completed during a visit of the first author to Instituto de Matem\'aticas (Mexico City), UNAM, in May--October 2015, and during a visit of the second author to Claire Amiot at Institut Fourier, Universit\'e Joseph Fourier (Grenoble, France) in June 2014. The visit of the first author was made possible by funding provided by the \emph{Bonn International Graduate School in Mathematics}, and the grants \emph{PAPIIT-IA102215} and \emph{CONACyT-238754} of the second author. The visit of the second author was made possible by funding provided by the \emph{CNRS--CONACyT's Solomon Lefschetz International Laboratory (LAISLA)}. We gratefully acknowledge the financial support. We also thank Instituto de Matem\'aticas, Institut Fourier and Claire Amiot for their hospitality. The second author was supported as well by the grant \emph{PAPIIT-IN108114} of Christof Geiss.

Most of the contents of Section \ref{sec:SPs} are directly motivated by \cite{DWZ1}. A few of the results in that section were obtained by Andrei Zelevinsky and the second author a few years ago.

\section{Background}
\label{sec:background}

\subsection{Surfaces with orbifold points of order 2 and their triangulations}
\label{subsec:surfaces-with-orbifold-points-and-triangulations}

\begin{defi}\label{def:surf-with-orb-points} A \emph{surface with marked points and orbifold points of order 2}, or simply a \emph{surface}, is a triple $\surf$, where $\Sigma$ is a compact connected oriented 2-dimensional real surface with (possibly empty) boundary, $\marked$ is a non-empty finite subset of $\Sigma$ containing at least one point from each connected component of the boundary of $\Sigma$, and $\orb$ is a (possibly empty) finite subset of $\Sigma\setminus(\marked\cup\partial\Sigma)$, subject to the condition that if $\Sigma$ is a sphere, then $|\marked|\geq 7$. We refer to the elements of $\marked$ as \emph{marked points} and to the elements of $\orb$ as \emph{orbifold points}. The marked points that lie in the interior of $\Sigma$ are called \emph{punctures}, and the set of punctures of $\surf$ is denoted $\punct$.
\end{defi}

\begin{remark}\begin{enumerate}\item Our use of the term ``orbifold point of order 2'' comes from hyperbolic geometry. See the second paragraph of the introduction.
\item Throughout the paper we will omit the ``order 2'' part of the term ``orbifold point of order 2'', and refer to a triple $\surf$ as a \emph{surface with marked points and orbifold points} or simply as a \emph{surface}.
\item Note that when $\orb=\varnothing$, Definition \ref{def:surf-with-orb-points} recovers the notion of ``surface with marked points'' studied by Fomin-Shapiro-Thurston, cf.\ \cite{FST}.
\end{enumerate}
\end{remark}

\begin{defi}\cite[Section 4]{FeShTu-orbifolds}\label{def:triangulation} Let $\surf$ be a surface with marked points and orbifold points.
\begin{enumerate}\item An \emph{arc} on $\surf$, is a curve $i$ on $\Sigma$ such that:
\begin{itemize}
\item either both of the endpoints of $i$ belong to $\marked$, or $i$ connects a point of $\marked$ with a point of $\orb$;
\item $i$ does not intersect itself, except that its endpoints may coincide;
\item the points in $i$ that are not endpoints do not belong to $\marked\cup\orb\cup\partial\Sigma$;
\item if $i$ cuts out an unpunctured monogon, then such monogon contains at least two orbifold points;
\item if $i$ cuts out an unpunctured digon, then such digon contains at least one orbifold point.
\end{itemize}
\item If $i$ is an arc that connects a point of $\marked$ with a point of $\orb$, we will say that $i$ is a \emph{pending arc}.
\item Two arcs $i_1$ and $i_2$ are \emph{isotopic relative to $\marked\cup\orb$} if there exists a continuous function $H:[0,1]\times\Sigma\rightarrow\Sigma$ such that
    \begin{itemize}
    \item[(a)] $H(0,x)=x$ for all $x\in\Sigma$;
    \item[(b)] $H(1,i_1)=i_2$;
    \item[(c)] $H(t,m)=m$ for all $t\in I$ and all $m\in\marked\cup\orb$;
    \item[(d)] for every $t\in I$, the function $H_t:\Sigma\to\Sigma$ given by $x\mapsto H(t,x)$ is a homeomorphism.
    \end{itemize}
    Arcs will be considered up to isotopy relative to $\marked\cup\orb$, parametrization, and orientation.
\item Two isotopy classes $C_1$ and $C_2$ of arcs are \emph{compatible} if either
 \begin{itemize}
 \item $C_1=C_2$; or
 \item $C_1\neq C_2$ and there are arcs $i_1\in C_1$ and $i_2\in C_2$ such that $i_1$ and $i_2$ do not share an orbifold point as a common endpoint, and, except possibly for their endpoints, $i_1$ and $i_2$ do not intersect.
 \end{itemize}
 If $C_1$ and $C_2$ form a pair of compatible isotopy classes of arcs and we have elements $j_1\in C_1$ and $j_2\in C_2$, we will also say that $j_1$ and $j_2$ are compatible.
\item An \emph{ideal triangulation} of $\surf$ is any maximal collection $\tau$ of pairwise compatible arcs.
\item A \emph{tagged arc} on $\surf$ is an arc $i$ together with a tag that accompanies each of its two ends in such a way that the following five conditions are met:
\begin{itemize}
\item each tag is either a \emph{plain tag} or a \emph{notch};
\item the arc $i$ does not cut out a once-punctured monogon;
\item every end at a marked point that lies on the boundary must be tagged plain;
\item every end at an orbifold point must be tagged plain;
\item both ends of a loop must be tagged in the same way.
\end{itemize}
Note that there are arcs whose ends may be tagged in different ways. Following \cite{FST} and \cite{FeShTu-orbifolds}, in the figures we will omit the plain tags and represent the notched ones by the symbol $\bowtie$. The notion of isotopy relative to $\marked\cup\orb$ between tagged arcs is defined in the obvious way. Tagged arcs will be considered up to isotopy relative to $\marked\cup\orb$, parametrization, and orientation. We denote by $\taggedinsurf$ the set of (isotopy classes of) tagged arcs in $\surf$.
\item Two tagged arcs $i_1$ and $i_2$ are \emph{compatible} if the following conditions are satisfied:
\begin{itemize}	
\item the untagged versions of $i_1$ and $i_2$ are compatible as arcs;
\item if the untagged versions of $i_1$ and $i_2$ are different, then they are tagged in the same way at each end they share.
\item if the untagged versions of $i_1$ and $i_2$ coincide, then
 \begin{itemize}\item if none of the endpoints of $i_1$ and $i_2$ is an orbifold point, then there must be at least one end of the untagged version at which they are tagged in the same way;
 \item if an endpoint of $i_1$ and $i_2$ is an orbifold point, then $i_1$ and $i_2$ are equal as tagged arcs.
 \end{itemize}
\end{itemize}
\item A \emph{tagged triangulation} of $\surf$ is any maximal collection of pairwise compatible tagged arcs.
\end{enumerate}
\end{defi}

The following theorem of Felikson-Shapiro-Tumarkin \cite{FeShTu-orbifolds} states the basic properties of the \emph{flip}, which is a combinatorial move on tagged triangulations.

\begin{thm} Let $\surf$ be a surface with marked points and orbifold points.
\begin{enumerate}
\item If $\tau$ is a tagged triangulation of $\surf$ and $i\in\tau$, then there exists a unique tagged arc $j$ on $\surf$ such that the set $\sigma=(\tau\setminus\{i\})\cup\{j\}$ is a tagged triangulation of $\surf$. We say that $\sigma$ is obtained from $\tau$ by the \emph{flip of $i\in\tau$} and write $\sigma=f_i(\tau)$.
\item If $\surf$ is not a closed surface such that $|\marked|=1$, then any two tagged triangulations of $\surf$ can be obtained from each other by a finite sequence of flips.
\end{enumerate}
\end{thm}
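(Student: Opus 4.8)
The plan is to follow the strategy of Fomin--Shapiro--Thurston for surfaces without orbifold points, adapted to the present setting; since each orbifold point lies on exactly one arc of any triangulation, it affects the combinatorics only locally, and the arguments go through with bounded extra bookkeeping.

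For part~(1) I would first reduce the statement about tagged triangulations to the analogous statement about \emph{ideal} triangulations via the standard dictionary. Concretely, one sets up a bijection between tagged triangulations of $\surf$ having no two distinct tagged arcs with a common underlying arc on the one hand, and pairs $(\tau^{\circ},\epsilon)$ on the other, where $\tau^{\circ}$ is an ideal triangulation and $\epsilon$ assigns a sign to each puncture that is not enclosed by a self-folded triangle of $\tau^{\circ}$: one replaces each self-folded triangle with folded side enclosing a puncture $p$ by the enclosing loop taken in its two conjugate taggings at $p$, and at every other puncture carrying a $-$ sign one puts a notch on each incident end. Because ends at orbifold points are always tagged plain, nothing new happens at orbifold points, so the usual clauses suffice. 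Under this dictionary, the flip of a tagged arc $j$ is one of two moves on the signed-ideal side: if $j$ is not one of a conjugate pair it corresponds to an ordinary ideal flip, and if $j$ is one of a conjugate pair around a puncture $p$ it corresponds to toggling $\epsilon(p)$. Thus part~(1) reduces to the assertion that an arc of an ideal triangulation that is not the folded side of a self-folded triangle has a unique ideal flip; and this is a local check, since the triangles incident to such an arc $i$ glue (after erasing $i$) to one of the elementary regions of Subsection~\ref{subsec:surfaces-with-orbifold-points-and-triangulations} --- a quadrilateral, or, when $i$ is pending, a triangle carrying a pending arc, or a once-punctured/twice-orbifolded region --- and in each of the finitely many configurations on the puzzle-piece list there is exactly one replacement arc making the collection a triangulation again. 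Uniqueness is then automatic, as every maximal collection of pairwise compatible (tagged) arcs has the same cardinality, so the replacement arc is forced.

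For part~(2) I would first prove connectedness of the flip graph of \emph{ideal} triangulations of $\surf$ (assuming $\surf$ is not a closed surface with $|\marked|=1$) by induction on a complexity invariant built from $|\marked|$, $|\orb|$, and the genus. The base cases --- polygons, once-punctured polygons, and the small surfaces containing one or two orbifold points --- are either classical (for an unpunctured polygon the flip graph is the $1$-skeleton of the associahedron) or finite checks against the puzzle pieces. For the inductive step one picks an arc $a$ that is neither a loop nor pending --- such an $a$ exists away from the degenerate cases, possibly after one preliminary flip --- cuts $\surf$ along $a$ into one or two surfaces of strictly smaller complexity, observes that any ideal triangulation through $a$ restricts to ideal triangulations of the pieces, and invokes induction together with the standard fact that any ideal triangulation can be flip-connected to one containing $a$ (flip the arcs crossing $a$, innermost first). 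Finally I would transport this to tagged triangulations: given a tagged triangulation $\tau$, apply conjugate-pair (sign-toggling) flips from part~(1) to bring $\tau$ into the image of the dictionary above with the all-$+$ sign function; such tagged triangulations are in flip-equivariant bijection with ideal triangulations, so ideal connectedness finishes the proof.

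The main obstacle I expect is not any single deep point but the precision demanded by the reductions: pinning down the ideal-versus-tagged dictionary in the presence of self-folded triangles and pending arcs --- including the compatibility clauses that single out orbifold endpoints --- and, in part~(2), the ``cut and induct'' argument, where one must check that the complexity genuinely decreases under cutting, dispose of the small base cases, and verify that the excluded closed surfaces with $|\marked|=1$ are precisely those on which the tagged flip graph disconnects (there the all-plain and all-notched tagged triangulations lie in separate components, joined only by the non-flip global retagging at the unique puncture). None of this is conceptually hard, but it is exactly the kind of case analysis that must be carried out against the puzzle-piece list to be airtight.
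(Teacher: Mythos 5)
The paper does not prove this theorem: it is quoted in Subsection~\ref{subsec:surfaces-with-orbifold-points-and-triangulations} with an explicit attribution to Felikson--Shapiro--Tumarkin \cite{FeShTu-orbifolds}, and no proof is supplied. So there is nothing in the paper to compare your argument against directly; what you have written is a reconstruction of the strategy used in \cite{FeShTu-orbifolds} (which in turn extends Fomin--Shapiro--Thurston \cite{FST}), and in broad outline it is the right reconstruction.

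As a sketch, your proof is structurally sound and captures the key observations correctly: ends at orbifold points are forced to be tagged plain, so the tagged-versus-ideal dictionary of signatures (Proposition~\ref{prop:tagfunction-and-circ} in this paper, or~\cite[\S 9]{FST}) carries over without modification; flips of non-conjugate tagged arcs correspond to ordinary ideal flips, while flips of a member of a conjugate pair correspond to toggling a sign; and the excluded closed surfaces with $|\marked|=1$ are excluded precisely because the sign of the unique puncture cannot be toggled by a flip. For part~(1), however, your ``equal cardinality $\Rightarrow$ uniqueness'' step is slightly too fast: equal cardinality of all tagged triangulations tells you that removing one arc leaves a non-maximal collection, hence it can be extended, but it does not by itself rule out two distinct completing arcs. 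You do also gesture at the correct ingredient (the local check on the puzzle-piece list that the erased arc lives inside a region with exactly two triangulations), and that is what actually carries uniqueness, so this is a matter of presentation rather than a real gap.

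For part~(2) there is a genuine gap hidden behind the phrase ``the standard fact that any ideal triangulation can be flip-connected to one containing $a$ (flip the arcs crossing $a$, innermost first).'' That statement is the entire mathematical content of flip-connectedness; once you have it, the inductive ``cut along $a$ and reduce complexity'' scaffolding is unnecessary, since you can just iterate to transform one triangulation into another one arc at a time. Conversely, without it, cutting along $a$ tells you nothing, because you have not yet shown either triangulation is flip-equivalent to one containing $a$. The ``innermost first'' heuristic is not a proof: one must choose a definite isotopy representative, define a decreasing complexity on the set of crossings (as in Mosher's algorithm \cite[pp.~36--41]{Mosher}, which this paper itself invokes in the proofs of Theorems~\ref{thm:popping-is-right-equiv} and~\ref{thm:orbi-pop}), and verify that the flips you perform actually reduce it, with separate treatment of self-folded triangles and pending arcs where a naive flip can create new intersections. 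If you intend to cite the Mosher/Hatcher connectedness result rather than reprove it, you should say so explicitly; if you intend to prove it, then that argument, and not the induction-on-complexity frame around it, is what needs to be written out.
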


In other words, every tagged arc in a tagged triangulation can be flipped, and any two tagged triangulations are related by a chain of flips provided we are not in the situation where $\partial\Sigma=\varnothing$ and $|\marked|=1$. An example of a flip of a pending arc can be found in the bottom row of Figure \ref{Fig:orb-flip-ideal-hexagon}.

\begin{defi} Let $\surf$ be a surface with marked points and orbifold points, and let $\tau$ be an ideal triangulation of $\surf$.
\begin{enumerate}\item An \emph{ideal triangle} of $\tau$ is the topological closure of a connected component of the complement in $\Sigma$ of the union of the arcs in $\tau$.
\item An ideal triangle $\triangle$ is \emph{interior} if its intersection with the boundary of $\Sigma$ consists only of (possibly none) marked points. Otherwise it will be called \emph{non-interior}.
\item A \emph{self-folded triangle} is an interior ideal triangle that contains exactly two arcs of $\tau$ (see the left side of Figure \ref{Fig:weird_triangles}).
\item An \emph{orbifolded triangle} is an ideal triangle (not necessarily interior) that contains an orbifold point (see the center and right side of Figure \ref{Fig:weird_triangles}).
\end{enumerate}
\end{defi}
% Self-folded triangle and the two types of orbifolded triangles
        \begin{figure}[!ht]
                \centering
                \includegraphics[scale=.3]{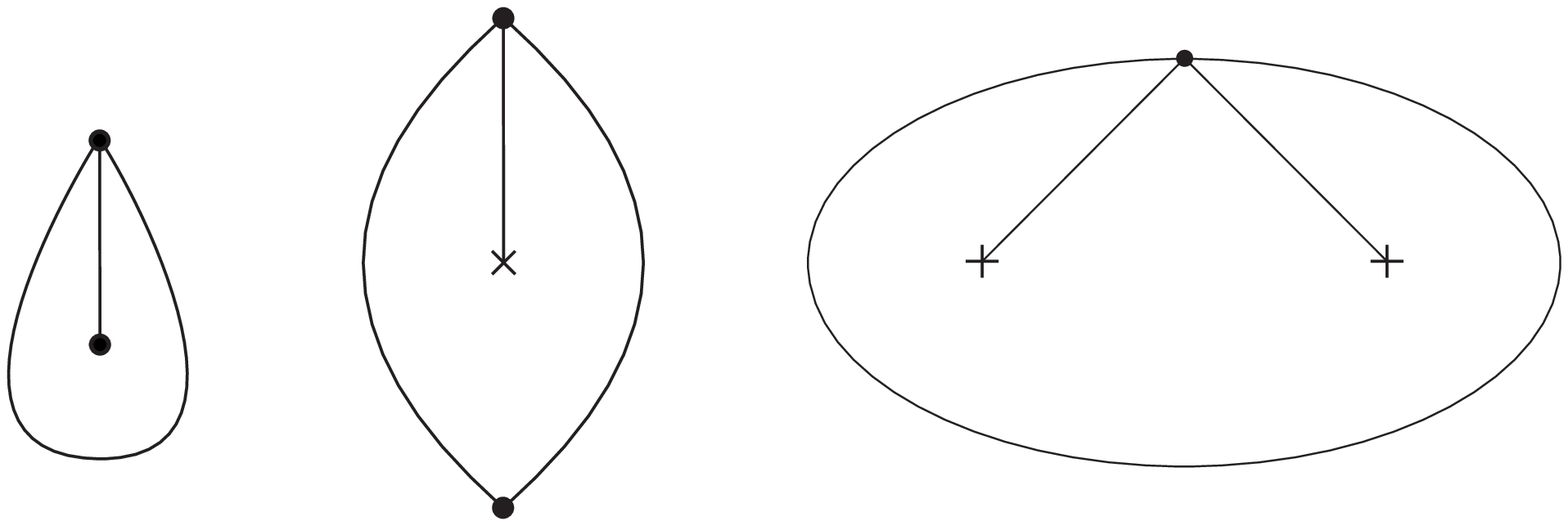}
                \caption{Self-folded triangle (left) and two orbifolded triangles (center and right)}
                \label{Fig:weird_triangles}
        \end{figure}

\begin{remark} Let $\tau$ be an ideal triangulation of $\surf$.
\begin{itemize}
\item An orbifolded triangle can contain one or two orbifold points, but not more than two.
\item An orbifolded triangle that contains exactly one orbifold point is always enclosed by a digon as in Figure \ref{Fig:weird_triangles} (center).
\item An orbifolded triangle that contains exactly two orbifold points is always enclosed by a loop, see Figure \ref{Fig:weird_triangles} (right).
\item If $\triangle$ is a self-folded triangle, then $\triangle$ does not contain orbifold points. Furthermore, if $\triangle$ contains the two (distinct) arcs $i$ and $j$, then one of the arcs $i$ and $j$, say $j$, is a loop that cuts out a once-punctured monogon, and the other one, namely $i$ is an arc that is entirely contained within this monogon and connects the marked point where $j$ is based to the puncture enclosed by $j$; we say that $i$ is the \emph{folded side} of $\triangle$.
\item If $\triangle$ is a self-folded triangle, with $i,j\subseteq\triangle$ as in the previous item, that is, with $j$ a loop enclosing $i$, and if $\triangle'$ is the unique ideal triangle of $\tau$ that contains $j$ and is different from $\triangle$, then $\triangle'$ is not a self-folded triangle and contains at most one orbifold point. See Figure \ref{Fig:adj_sf_orbif}.
\end{itemize}
\end{remark}
% The possibilities for a triangle which is adjacent to a self-folded triangle (it may be orbifolded)
        \begin{figure}[!ht]
                \centering
                \includegraphics[scale=.3]{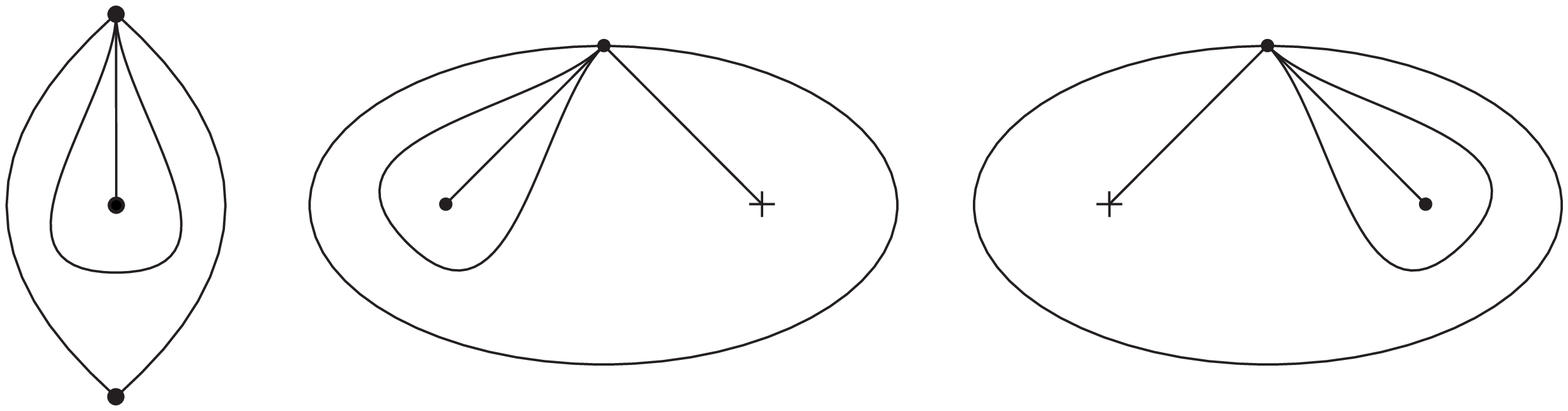}
                \caption{Possibilities for a triangle $\triangle'$ which shares an arc with a self-folded triangle $\triangle$}
                \label{Fig:adj_sf_orbif}
        \end{figure}

We now give a combinatorial description of ideal triangulations in terms of \emph{puzzle-piece decompositions}.
Consider the seven ``puzzle pieces" shown in Figure \ref{Fig:puzzle_pieces_2}.
% PUZZLE PIECE DECOMPOSITION
        \begin{figure}[!ht]
                \centering
                \includegraphics[scale=.3]{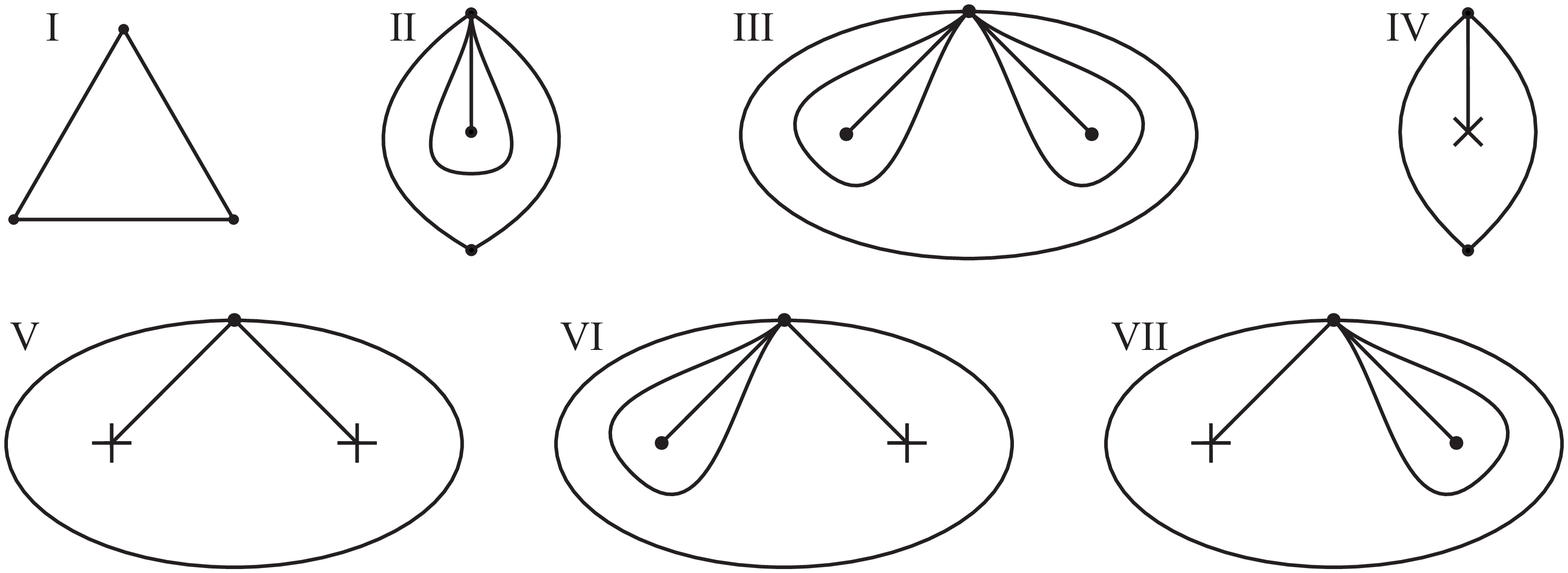}
                \caption{Puzzle pieces}
		\label{Fig:puzzle_pieces_2}
        \end{figure}

Take several copies of these pieces, assign an orientation to each of the outer sides of these copies and fix a partial matching on the set of all outer sides of the copies taken, never matching two sides of the same copy. Then glue the puzzle pieces along the matched sides, making sure the orientations match. Though some partial matchings may not lead to an (ideal triangulation of an) oriented surface, we do have the following.

\begin{thm}\label{thm:existence-of-puzzle-piece-decomps} Any ideal triangulation $\tau$ of an oriented surface $\surf$ can be obtained from a suitable partial matching by means of the procedure just described.
\end{thm}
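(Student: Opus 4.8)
The plan is to imitate the argument that Fomin--Shapiro--Thurston gave in \cite{FST} for surfaces without orbifold points, the only genuinely new ingredient being the local analysis of pending arcs near orbifold points, which is supplied by the Remarks preceding the statement. \textbf{Step 1 (grouping the ideal triangles of $\tau$ into blocks).} Given $\tau$, I would define a grouping on the set of ideal triangles of $\tau$ as follows: a self-folded triangle $\triangle$ is placed in the same block as the unique ideal triangle $\triangle'$ that shares the loop of $\triangle$ (the Remark guarantees $\triangle'$ exists, is not self-folded, and carries at most one orbifold point); an orbifolded triangle is placed, together with the triangle(s) that its enclosing digon in the one-orbifold-point case, or its enclosing loop in the two-orbifold-points case, forces it to be grouped with, into a single block; and every ideal triangle not affected by these rules forms a block by itself. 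The first thing to check is that this really is a partition --- that distinct applications of the rules never try to put one triangle into two different blocks and that no block is ``half-formed''. This is precisely where the structural Remarks are used: uniqueness of the host $\triangle'$, the fact that the host of a self-folded triangle is never self-folded and contains at most one orbifold point, and the descriptions of the shapes of orbifolded triangles.

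\textbf{Step 2 (identifying each block with a puzzle piece).} Each block, regarded together with the arcs of $\tau$ contained in its interior and with the boundary segments of $\partial\Sigma$ that it meets, must be homeomorphic --- respecting $\marked$, $\orb$ and the arcs --- to exactly one of the seven puzzle pieces of Figure~\ref{Fig:puzzle_pieces_2}. This is a finite case analysis: a block built around a host triangle is classified by how many of that triangle's three sides are loops of self-folded triangles and by whether it additionally contains an orbifold point; a block built around an orbifolded triangle is classified by the number of orbifold points it contains. Because the Remarks bound precisely these data, the list of seven pieces is exhaustive, and one also reads off that an arc which bounds a single block on both of its sides is necessarily interior to that block, so that the matching produced in the next step never pairs two outer sides of the same copy, as the statement requires.

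\textbf{Step 3 (reading off the matching and reconstructing $(\Sigma,\tau)$).} Take one copy of the corresponding puzzle piece for each block. The outer sides of these copies are in bijection with the arcs of $\tau$ that are not interior to any block, together with the boundary segments of $\Sigma$. Match two outer sides precisely when they correspond to the same arc of $\tau$, leave unmatched those corresponding to boundary segments, and orient every outer side by the orientation that the fixed orientation of $\Sigma$ induces on the boundary of the block. Gluing the copies along this matching recovers $\Sigma$ together with its marked points, its orbifold points and the triangulation $\tau$, and the orientations agree by construction, since they were all read off from one orientation of $\Sigma$. A handful of degenerate cases (closed surfaces, spheres with $|\marked|=7$, surfaces with very few marked points) should be checked directly.

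\textbf{Main obstacle.} The essential difficulty is the bookkeeping in Steps 1 and 2: ensuring that the grouping rules are globally consistent and that no local configuration escapes the seven listed pieces --- in particular when a single triangle is simultaneously the host of a self-folded triangle and an orbifolded triangle, or when several self-folded or orbifolded triangles crowd around one triangle. All of this is controlled by the structural Remarks above, so the proof amounts to a careful, if lengthy, enumeration rather than anything conceptually new beyond \cite{FST}.
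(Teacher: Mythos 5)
Your proposal is correct, but it follows a genuinely different route than the one the paper (tersely) indicates. The paper disposes of Theorem~\ref{thm:existence-of-puzzle-piece-decomps} in a single sentence after the statement: it invokes Fomin--Shapiro--Thurston's result that ideal triangulations of surfaces without orbifold points decompose into pieces of types I, II, III, and then says the orbifold case ``can be deduced \dots by treating pending arcs as if they were self-folded triangles.'' That is a reduction argument: temporarily replace each orbifold point by a puncture and each pending arc by the folded side of a self-folded triangle (adding the enclosing loop), apply the FST decomposition, and then undo the replacement so that the artificial type~II/III pieces containing these fake self-folded triangles become pieces of types IV--VII. You instead reprove the decomposition directly, adapting FST's block argument to accommodate orbifolded triangles from the start. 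Your direct approach is longer but self-contained and makes the case analysis explicit, whereas the paper's reduction is shorter but hides the verification that the pending-arc/self-folded-triangle dictionary is globally consistent (e.g.\ that the artificial loops can always be added compatibly, and that the resulting fake triangulation is still a legal ideal triangulation of a legal surface). Either route works; you should just note that the paper does not carry out your Step~1/Step~2 bookkeeping at all, so the ``careful enumeration'' you are worrying about is precisely the part the paper is sweeping under the rug via the reduction. One small point of presentation: your grouping rule for orbifolded triangles is phrased as if the enclosing digon or loop independently forces a grouping, but in fact the only forcing mechanism is self-folded adjacency (an orbifolded triangle with two orbifold points can never abut a self-folded triangle, since that would force $\Sigma$ to be a sphere with too few marked points); stating the rule purely in terms of self-folded adjacency, as FST do, would make the well-definedness of the partition immediate.
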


\begin{defi}\label{def:puzzle-piece-decomps} Any partial matching giving rise to $\tau$ through the procedure just described will be called a \emph{puzzle-piece decomposition} of $\tau$.
\end{defi}

According to Fomin-Shapiro-Thurston \cite{FST}, if $\orb=\varnothing$, then any ideal triangulation of $\surf$ has a puzzle-piece decomposition involving only puzzle pieces of types I, II and III (these types are signaled in Figure \ref{Fig:puzzle_pieces_2}). The existence of puzzle-piece decompositions in the case where $\orb\neq\varnothing$ can be deduced from the existence of puzzle-piece decompositions in the case $\orb=\varnothing$ by treating pending arcs as if they were self-folded triangles.

Although this paper will deal only with the purely combinatorial side of (triangulations of) surfaces with marked points and orbifold points (of order 2),
we believe it is pertinent to illustrate the geometric origin of the concepts introduced in this section, which admittedly have a combinatorial nature.
In the hyperbolic plane $\mathbb{H}^2$, consider the ideal hexagon~$P$ with ideal vertices~$a,b,c,d,e,f$
drawn on the left side of Figure~\ref{Fig:ideal-hexagon}.
\begin{figure}
%  \rotatebox{-120}{\includegraphics[width=0.3\textwidth]{hexagon.eps}}
 ~ \\ [-0.5em]
 \begin{tikzpicture}[scale=0.4,rotate=-60]
  \def \a     {-60}
  \def \r     {5}
  \def \x     {0.57735}

  \draw[very thick] (0,0) circle (\r);

  \draw[thick,draw=green!70!black]
    ([shift=(180:\r*\x)]   \r*\x, \r) arc (180:300:\r*\x);
  \draw[thick,red]
    ([shift=(240:\r*\x)]-  \r*\x, \r) arc (240:360:\r*\x);
  \draw[thick,draw=green!70!black]
    ([shift=(300:\r*\x)]-2*\r*\x,  0) arc (300:420:\r*\x);
  \draw[thick,red]
    ([shift=(  0:\r*\x)]-  \r*\x,-\r) arc (  0:120:\r*\x);
  \draw[thick,magenta]
    ([shift=( 60:\r*\x)]   \r*\x,-\r) arc ( 60:180:\r*\x);
  \draw[thick,blue]
    ([shift=(120:\r*\x)] 2*\r*\x,  0) arc (120:240:\r*\x);

  \node[rotate=330+\a, text=green!70!black]
    at ([shift=(240:\r*\x)]   \r*\x, \r) {\scalebox{0.7}{$>$}};
  \node[rotate= 30+\a, red]
    at ([shift=(300:\r*\x)]-  \r*\x, \r) {\scalebox{0.7}{$>$}};
  \node[rotate= 90+\a, text=green!70!black]
    at ([shift=(  0:\r*\x)]-2*\r*\x,  0) {\scalebox{0.7}{$<$}};
  \node[rotate=150+\a, red]
    at ([shift=( 60:\r*\x)]-  \r*\x,-\r) {\scalebox{0.7}{$<$}};
  \node[rotate=210+\a]
    at ([shift=(120:\r*\x)]   \r*\x,-\r) {$\times$};
  \node[rotate=270+\a]
    at ([shift=(180:\r*\x)] 2*\r*\x,  0) {$\times$};

  \node (c) at ([shift=(180:\r*\x)]   \r*\x, \r) {$$};
  \node (d) at ([shift=(240:\r*\x)]-  \r*\x, \r) {$$};
  \node (e) at ([shift=(300:\r*\x)]-2*\r*\x,  0) {$$};
  \node (f) at ([shift=(  0:\r*\x)]-  \r*\x,-\r) {$$};
  \node (a) at ([shift=( 60:\r*\x)]   \r*\x,-\r) {$$};
  \node (b) at ([shift=(120:\r*\x)] 2*\r*\x,  0) {$$};

  \node[above] at (d) {$f$};
  \node[left]  at (e) {$a$};
  \node[left]  at (f) {$b$};
  \node[below] at (a) {$c$};
  \node[right] at (b) {$d$};
  \node[right] at (c) {$e$};
 \end{tikzpicture}
 \hspace{4em}
 \begin{tikzpicture}[scale=0.65]
  \coordinate (o1) at ( 1, 0);
  \coordinate (o2) at (-1, 0);
  \coordinate  (1) at ( 0, 3);
  \coordinate  (2) at (-3, 0);
  \coordinate  (3) at ( 0,-3);
  \coordinate  (4) at ( 3, 0);

  \draw[thick,blue]    (3) to (o1);
  \draw[thick,magenta] (3) to (o2);

  \draw[thick,draw=green!70!black, text=green!70!black]
    (1) to node[rotate= 45] {\scalebox{0.7}{$<$}} (2);
  \draw[thick,red]
    (2) to node[rotate=-45] {\scalebox{0.7}{$>$}} (3);
  \draw[thick,draw=green!70!black, text=green!70!black]
    (3) to node[rotate= 45] {\scalebox{0.7}{$<$}} (4);
  \draw[thick,red]
    (4) to node[rotate=-45] {\scalebox{0.7}{$>$}} (1);

  \node[rotate=-10] at (o1) {$\times$};
  \node[rotate= 10] at (o2) {$\times$};
  \foreach \i in {1,...,4}
  {
%     \node[white] at (\i) {$\bullet$};
%     \node at (\i) {$\circ$};
    \node at (\i) {$\bullet$};
  }

  \node at (0,-4.3) {};
 \end{tikzpicture}
  \caption{Left: Ideal hexagon~$P$ in the Poincar\'e disc. Right: The once-punctured torus $\bar{P}/\Gamma$.}
 \label{Fig:ideal-hexagon}
\end{figure}
%We will realize $P$ as the fundamental domain of a Fuchsian group $\Gamma$ having exactly one orbit of fixed points of parabolic elements and exactly two orbits of fixed points of order-2 elliptic elements. Consequently, the orbit space $\mathbb{H}^2/\Gamma$ will have two orbifold points of order 2 and will be homeomorphic to a torus minus a point. We will construct an ideal triangulation of $P$, see how the image of this triangulation under the projection $P\rightarrow\mathbb{H}^2/\Gamma$ corresponds to a triangulation of the torus with one marked point and two orbifold points (of order 2), and see how a flip of a pending arc of the latter triangulation can be visualized at the level of fundamental domains of $\Gamma$.

Let $\Gamma \subseteq \operatorname{PSL}_2(\mathbb{R})$ be a subgroup generated by hyperbolic elements~$\gamma_1$, $\gamma_2$ and elliptic elements~$\varepsilon_1$, $\varepsilon_2$ acting on the oriented sides of~$P$ as follows:
\[
  \overrightarrow{ed} \:\:\xmapsto{\:\:\gamma_1\:\:}\:\:      \overrightarrow{fa}
  \,,\hspace{20pt}
  \overrightarrow{ab} \:\:\xmapsto{\:\:\gamma_2\:\:}\:\:      \overrightarrow{fe}
  \,,\hspace{20pt}
  \overrightarrow{cb} \:\:\xmapsto{\:\:\varepsilon_1\:\:}\:\: \overrightarrow{bc}
  \,,\hspace{20pt}
  \overrightarrow{cd} \:\:\xmapsto{\:\:\varepsilon_2\:\:}\:\: \overrightarrow{dc},
\]
and with the property that $\rho=\varepsilon_2 \varepsilon_1 \gamma_2\gamma_1\gamma_2^{-1}\gamma_1^{-1}$ is a parabolic element. It is not hard to actually construct such elements $\gamma_1,\gamma_2,\varepsilon_1,\varepsilon_2\in\operatorname{PSL}_2(\mathbb{R})$ explicitly.
%For instance, we could work in the half-plane model, take $(a,b,c,d,e,f) = (-3,-1,0,1,3,\infty)$ and
%\begin{center}
%\scalebox{0.9}{$
% \displaystyle
% \gamma_1 = \frac{1}{2 \sqrt{3}}
%   \begin{pmatrix*}[r]
%    -9 & 15 \\
%     1 & -3
%   \end{pmatrix*}
% \,,\hspace{8pt}
% \gamma_2 = \frac{1}{2 \sqrt{3}}
%   \begin{pmatrix*}[r]
%    9 & 15 \\
%    1 &  3
%   \end{pmatrix*}
%%  \gamma_2 = \mu^{-1} \gamma_1 \mu
% \,,\hspace{8pt}
% \varepsilon_1 = \frac{1}{2 \sqrt{3}}
%   \begin{pmatrix*}[r]
%    -3 & -3 \\
%     7 &  3
%   \end{pmatrix*}
% \,,\hspace{8pt}
% \varepsilon_2 = \frac{1}{2 \sqrt{3}}
%   \begin{pmatrix*}[r]
%    -3 & 3 \\
%    -7 & 3
%   \end{pmatrix*}
%%  \varepsilon_2 = \mu^{-1} \varepsilon_1 \mu
% \,,\hspace{8pt}
% \rho =
%   \begin{pmatrix*}[r]
%    -8 &  9 \\
%    -9 & 10
%   \end{pmatrix*}
% \,.
%$}
%\end{center}

Then a famous theorem of Poincar\'e implies that $\Gamma$ is a Fuchsian group with fundamental domain~$P$.
Since $P$ is a locally finite fundamental domain for $\Gamma$, the quotient~$\mathbb{H}^2/\Gamma$ is homeomorphic to $\overline{P}/\Gamma$,
which is a torus with one point removed (see the picture on the right of Figure~\ref{Fig:ideal-hexagon}).
Note that the unique fixed point of $\varepsilon_1$ and the unique fixed point of $\varepsilon_2$ give rise to two \emph{orbifold points} in $\mathbb{H}^2/\Gamma$. Both of these orbifold points have order 2 since each of $\varepsilon_1$ and $\varepsilon_2$ is an elliptic M\"{o}bius transformation of order 2.

Let us consider the ideal triangulation~$\widetilde{\tau}$ of the hyperbolic plane~$\mathbb{H}^2$ that is given by the $\Gamma$-translates of the sides~$ab$, $bc$, $cd$, $de$, $ef$, $fa$ and the diagonals~$bf$, $cf$, $ce$ of $P$.
This ideal triangulation~$\widetilde{\tau}$ yields a triangulation of~$\mathbb{H}^2/\Gamma$ by geodesics which in turn corresponds combinatorially to an ideal triangulation $\tau$ of the surface $\surf$ which is a torus with one puncture and two orbifold points (of order 2).
The flip of the arc~$bc$ of $\widetilde{\tau}$, whose associated element~$\varepsilon_1 \in \Gamma$ is an elliptic involution mapping $bc$ onto itself, induces the flip of the corresponding pending arc of $\tau$ (see Figure~\ref{Fig:orb-flip-ideal-hexagon}).

\begin{figure}[h!]
 ~ \\ [-0em]
 \includegraphics[width=0.7\textwidth]{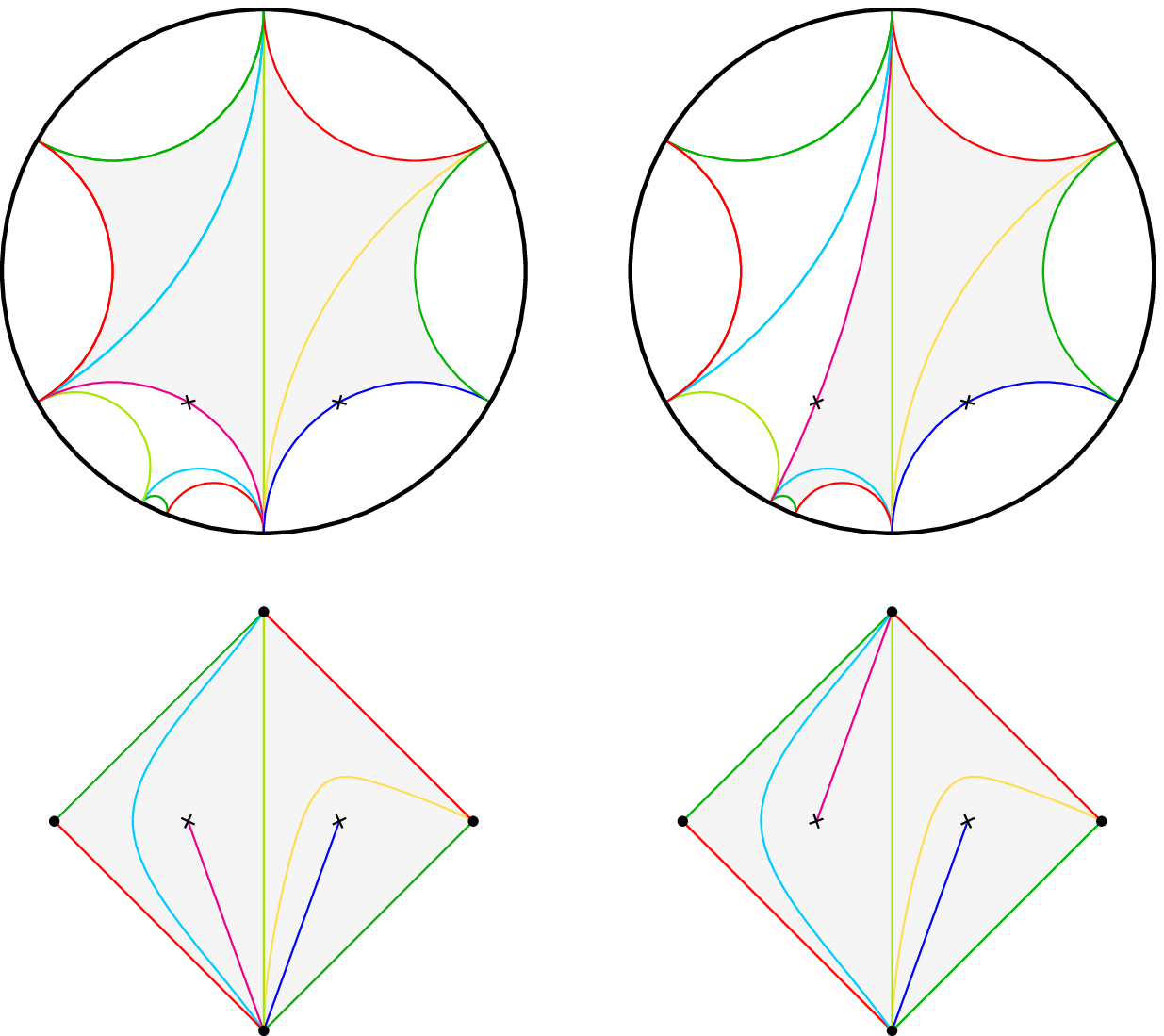}
 \caption{Left: $\widetilde{\tau}$ and $\tau$. Right: Ideal triangulations obtained by flipping the arc~$bc$ in $\widetilde{\tau}$.}
 \label{Fig:orb-flip-ideal-hexagon}
\end{figure}
\subsection{Remarks on tensor products and bimodules}
\label{subsec:tensor-products-and-bimodules}

Let $R$ be a commutative unitary ring, and let $A$ be an $R$-$R$-bimodule.
For every non-negative integer $\ell$ we can form the $\ell$-fold tensor power of $A$ over $R$, that is,
$$
A^\ell=\underset{\text{$\ell$ factors}}{\underbrace{A\otimes_R A\otimes_R\ldots\otimes_RA}},
$$
where, as usual, $A^0=R$ and $A^1=A$. This $\ell$-fold tensor power has a natural $R$-$R$-bimodule structure. The direct sum and the direct product of the $R$-$R$-bimodules $A^\ell$, $\ell\geq 0$, then become rings in the obvious way.

\begin{defi}\label{def:tensor-algebra-&-complete-tensor-algebra} Let $A$ be an $R$-$R$-bimodule. The \emph{tensor algebra of $A$ over $R$} is the direct sum
\begin{equation}\label{eq:def-of-R<A>}
\pathalg{A}=\bigoplus_{\ell\geq 0}A^\ell,
\end{equation}
while the \emph{complete tensor algebra of $A$ over $R$} is the direct product
\begin{equation}\label{eq:def-of-R<<A>>}
\RA{A}=\prod_{\ell\geq 0}A^\ell.
\end{equation}
In both $\pathalg{A}$ and $\RA{A}$, the multiplication is given by the rule
$$
\left(\sum_{\ell_1=0}^\infty x^{(\ell_1)}\right) \left(\sum_{\ell_2}^\infty y^{(\ell_2)}\right)=
\sum_{\ell=0}^\infty \left(\sum_{\ell_1+\ell_2=\ell} x^{(\ell_1)}y^{(\ell_2)}\right),
$$
where $x^{(\ell_1)}\in A^{\ell_1}$, $y^{(\ell_2)}\in A^{\ell_2}$, and $x^{(\ell_1)}y^{(\ell_2)}=x^{(\ell_1)}\otimes y^{(\ell_2)}\in A^{\ell_1}\otimes_R A^{\ell_2}=A^{\ell_1+\ell_2}$.
\end{defi}

Given that $R$ plays an essential role in the definition of $\pathalg{A}$ and $\RA{A}$, that every homogeneous component in the decompositions \eqref{eq:def-of-R<A>} and \eqref{eq:def-of-R<<A>>} is an $R$-$R$-bimodule, and that $R$ appears as the degree-0 component,  we will say that \emph{$\pathalg{A}$ (resp.\ $\RA{A}$) is an $R$-algebra}, despite the fact that $R$ may not be a central subring of $\pathalg{A}$ (resp.\ $\RA{A}$).
Accordingly, we will say that a ring homomorphism (resp. isomorphism) from $\RA{A}$ or $\pathalg{A}$ to $\RA{A'}$ or $\pathalg{A'}$ is an \emph{$R$-algebra homomorphism} (resp. \emph{$R$-algebra isomorphism}) if its restriction to $R$ is the identity. This terminology, although not standard\footnote{Usually, the definition of $R$-algebra requires $R$ to be commutative and to sit as a central subring.}, has the advantage of rendering the following statement quite neat:

\begin{prop}\label{prop:univ-property-of-R<<A>>} Let $A$ and $A'$ be $R$-$R$-bimodules. Every $R$-$R$-bimodule homomorphism $A\rightarrow\RA{A'}$ whose image is contained in $\prod_{\ell\geq 1}A'^{\ell}$ can be uniquely extended to an $R$-algebra homomorphism $\RA{A}\rightarrow\RA{A'}$.
\end{prop}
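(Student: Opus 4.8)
The plan is to construct the extension explicitly on each homogeneous component and then check it assembles into a well-defined $R$-algebra homomorphism. Write $\phi\colon A\rightarrow\RA{A'}$ for the given bimodule map, and let $\phi_\ell\colon A\rightarrow A'^{\ell}$ denote its $\ell$-th component, so that $\phi_0=0$ by the hypothesis that $\image\phi\subseteq\prod_{\ell\geq 1}A'^{\ell}$. First I would define, for each $n\geq 1$, a map on the $n$-fold tensor power $A^n\rightarrow\RA{A'}$ by sending a decomposable tensor $a_1\otimes\cdots\otimes a_n$ to the product $\phi(a_1)\cdots\phi(a_n)$ computed in $\RA{A'}$. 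The key point making this well-defined is $R$-multilinearity: for $r\in R$, the tensor relation $a_1\otimes\cdots\otimes(a_ir)\otimes a_{i+1}\otimes\cdots$ equals $a_1\otimes\cdots\otimes a_i\otimes(ra_{i+1})\otimes\cdots$, and since $\phi$ is an $R$-$R$-bimodule homomorphism we have $\phi(a_ir)=\phi(a_i)r$ and $\phi(ra_{i+1})=r\phi(a_{i+1})$, so the two products agree by associativity in $\RA{A'}$; this is exactly the universal property of the tensor product $A^n=A\otimes_R\cdots\otimes_R A$, which hands us a well-defined bimodule map $\Phi_n\colon A^n\rightarrow\RA{A'}$. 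Setting $\Phi_0=\mathrm{id}_R$ (the inclusion of $R$ as the degree-$0$ part of $\RA{A'}$) and $\Phi=\prod_{n\geq 0}\Phi_n$ on $\RA{A}=\prod_{n\geq 0}A^n$ gives a candidate $R$-algebra map.

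Next I would verify that $\Phi$ is well-defined as a map \emph{into} $\RA{A'}=\prod_\ell A'^{\ell}$, i.e.\ that for each target degree $\ell$ only finitely many of the $\Phi_n$ contribute. Because $\phi_0=0$, the image $\Phi_n(A^n)$ has components only in degrees $\geq n$; hence for a fixed $\ell$ the degree-$\ell$ component of $\Phi(x)$ receives contributions only from $\Phi_0(x^{(0)}),\dots,\Phi_\ell(x^{(\ell)})$, a finite sum, so the infinite product is legitimate. Then I would check that $\Phi$ is a ring homomorphism: it suffices to check multiplicativity on decomposable tensors, where $\Phi\bigl((a_1\otimes\cdots\otimes a_m)(b_1\otimes\cdots\otimes b_n)\bigr)=\Phi(a_1\otimes\cdots\otimes a_m\otimes b_1\otimes\cdots\otimes b_n)=\phi(a_1)\cdots\phi(a_m)\phi(b_1)\cdots\phi(b_n)=\Phi(a_1\otimes\cdots\otimes a_m)\Phi(b_1\otimes\cdots\otimes b_n)$, using associativity of the product in $\RA{A'}$; additivity and compatibility with the multiplication rule of Definition~\ref{def:tensor-algebra-&-complete-tensor-algebra} then follow by distributivity and a continuity/finiteness argument analogous to the one just made. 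That $\Phi|_R=\mathrm{id}_R$ is immediate from $\Phi_0=\mathrm{id}_R$, so $\Phi$ is an $R$-algebra homomorphism, and by construction $\Phi|_{A^1}=\Phi_1=\phi$ (identifying $A^1=A$), so $\Phi$ extends $\phi$.

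For uniqueness, suppose $\Psi\colon\RA{A}\rightarrow\RA{A'}$ is any $R$-algebra homomorphism with $\Psi|_A=\phi$. Since $\Psi$ fixes $R$ and restricts to $\phi$ on $A=A^1$, multiplicativity forces $\Psi(a_1\otimes\cdots\otimes a_n)=\Psi(a_1)\cdots\Psi(a_n)=\phi(a_1)\cdots\phi(a_n)=\Phi(a_1\otimes\cdots\otimes a_n)$ on decomposables, hence $\Psi=\Phi$ on the dense subalgebra $\pathalg{A}\subseteq\RA{A}$; extending to all of $\RA{A}$ requires knowing that $R$-algebra homomorphisms between these complete tensor algebras are continuous for the natural (arrow-length) filtration topologies, i.e.\ that $\Psi$ respects infinite sums $\sum_\ell x^{(\ell)}$ componentwise. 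I expect the main technical obstacle to be precisely this last point — making the passage from the tensor algebra $\pathalg{A}$ to its completion $\RA{A}$ rigorous, namely that any $R$-algebra homomorphism here automatically sends the degree-$\geq N$ part into the degree-$\geq N$ part (because $\phi_0=0$ pushes degrees up) and is therefore continuous, so that its values on arbitrary elements of $\RA{A}$ are determined by its values on finite sums. Everything else is the standard universal-property-of-tensor-products bookkeeping; the only genuinely new ingredient compared to the classical (non-complete) statement is the completeness, and the hypothesis $\image\phi\subseteq\prod_{\ell\geq1}A'^{\ell}$ is exactly what is needed to keep the infinite products and sums convergent.
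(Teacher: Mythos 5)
The paper states this proposition without proof, so there is no argument to compare against directly. Your construction of $\Phi$ — defining $\Phi_n$ on $A^n$ via the universal property of the $R$-balanced tensor product, setting $\Phi_0 = \operatorname{id}_R$, and using $\phi_0 = 0$ to ensure $\Phi_n(A^n)\subseteq\prod_{\ell\geq n}A'^\ell$ so that the infinite product converges degreewise — is correct and natural, as is the verification of multiplicativity.

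The gap is exactly at the point you flag as the main technical obstacle and then dispatch too quickly. For uniqueness you assert that any $R$-algebra homomorphism $\Psi$ extending $\phi$ ``automatically sends the degree-$\geq N$ part into the degree-$\geq N$ part (because $\phi_0=0$ pushes degrees up).'' That is true on the dense subalgebra $\pathalg{A}$ of finite sums, but it does not follow for an arbitrary $y\in\prod_{\ell\geq N}A^\ell$: such a $y$ is an infinite sum, and applying $\Psi$ to it term by term is precisely the continuity you are trying to establish, not something you may presuppose. To close the gap algebraically one must express $y$ as a \emph{finite} sum $\sum a_{i_1}\cdots a_{i_N}z$ with each $a_{i_j}\in A$ and $z\in\RA{A}$; then $\Psi(y)=\sum\phi(a_{i_1})\cdots\phi(a_{i_N})\Psi(z)\in(\maxid')^N\subseteq\prod_{\ell\geq N}A'^\ell$, and letting $N\to\infty$ gives $\Psi=\Phi$. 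Such a factorization exists whenever $A$ is finitely generated as a one-sided $R$-module (repeatedly peel a generator off the left of each $A^\ell$), which holds for every species appearing in the paper but is not an hypothesis of the proposition as stated for general $R$-$R$-bimodules. The alternative is to read ``uniquely extended'' as uniqueness among \emph{continuous} $R$-algebra homomorphisms, consistent with the explicit ``continuous'' in the neighbouring Proposition~\ref{prop:automorphisms} and with the paper's later remark that in its setting every $R$-algebra homomorphism sends $\maxid$ into $\maxid'$. With either patch your argument is complete; as written, the passage from $\Psi=\Phi$ on $\pathalg{A}$ to $\Psi=\Phi$ on $\RA{A}$ is unjustified.
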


Often times one can induce ring homomorphisms $\pathalg{A}\rightarrow\RA{A'}$ through group homomorphisms $A\rightarrow\RA{A'}$ that are not $R$-$R$-bimodule homomorphisms under the standard $R$-$R$-bimodule structure of $\RA{A'}$. For instance:

\begin{prop}\label{prop:ring-endomorphisms-of-RA} Let $A$ be an $R$-$R$-bimodule, and suppose that $\Psi^{(0)}$ is a ring automorphism of $R$ and $\Psi^{(1)}$ is a group endomorphism of $A$ such that $\Psi^{(1)}(ra)=\Psi^{(0)}(r)\Psi^{(1)}(a)$ and $\Psi^{(1)}(ar)=\Psi^{1}(a)\Psi^{(0)}(r)$ for every $r\in R$ and every $a\in A$. Then, for every $\ell\geq 2$, the rule
$$
a_1\otimes \ldots \otimes a_\ell\mapsto \Psi^{(1)}(a_1)\otimes\ldots\otimes\Psi^{(1)}(a_\ell)
$$
produces a well-defined group homomorphism $\Psi^{(\ell)}:A^\ell\rightarrow A^\ell$.
Moreover, the assignment
$$
\sum_{\ell\geq 0}u^{(\ell)}\mapsto\sum_{\ell\geq 0}\Psi^{(\ell)}(u^{(\ell)}), \ \ \ \ \ u^{(\ell)}\in A^\ell,
$$
constitutes a well-defined ring endomorphism $\Psi:\RA{A}\rightarrow\RA{A}$.
This ring endomorphism satisfies $\Psi|_R=\Psi^{(0)}$ and $\Psi|_A=\Psi^{(1)}$, and any ring endomorphism of $\RA{A}$ whose restrictions to $R$ and $A$ respectively coincide with $\Psi^{(0)}$ and $\Psi^{(1)}$ has to be equal to $\Psi$.
\end{prop}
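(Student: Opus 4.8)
The plan is to take the maps $\Psi^{(\ell)}$ exactly as written in the statement, assemble them into $\Psi$ componentwise on $\RA{A}=\prod_{\ell\geq 0}A^\ell$, and then verify the ring-homomorphism axioms by reducing each identity to elementary tensors, where everything becomes a formal consequence of the concatenation rule $a_1\otimes\cdots\otimes a_\ell\mapsto\Psi^{(1)}(a_1)\otimes\cdots\otimes\Psi^{(1)}(a_\ell)$. The one step that is not purely formal is the well-definedness of each $\Psi^{(\ell)}$, and this is where I would invoke the universal property of $A^\ell=A\otimes_R\cdots\otimes_R A$: the map $A\times\cdots\times A\to A^\ell$ sending $(a_1,\dots,a_\ell)$ to $\Psi^{(1)}(a_1)\otimes\cdots\otimes\Psi^{(1)}(a_\ell)$ is additive in each argument because $\Psi^{(1)}$ is a group homomorphism and the tensor product is additive in each slot, and it is $R$-balanced at each consecutive pair because, for $r\in R$,
\begin{align*}
\Psi^{(1)}(a_i r)\otimes\Psi^{(1)}(a_{i+1})
&=\bigl(\Psi^{(1)}(a_i)\Psi^{(0)}(r)\bigr)\otimes\Psi^{(1)}(a_{i+1})\\
&=\Psi^{(1)}(a_i)\otimes\bigl(\Psi^{(0)}(r)\Psi^{(1)}(a_{i+1})\bigr)
=\Psi^{(1)}(a_i)\otimes\Psi^{(1)}(r a_{i+1}),
\end{align*}
where the two outer equalities are the compatibility hypothesis relating $\Psi^{(1)}$ and $\Psi^{(0)}$, and the middle one holds because $\Psi^{(0)}(r)\in R$ and $\otimes_R$ is $R$-balanced. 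Hence the map factors uniquely through $A^\ell$, producing the group homomorphism $\Psi^{(\ell)}$; this is the only point at which the hypothesis on $\Psi^{(1)}$ is used, and it is the step deserving care.

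Granting the $\Psi^{(\ell)}$, the rule $\sum_\ell u^{(\ell)}\mapsto\sum_\ell\Psi^{(\ell)}(u^{(\ell)})$ is a well-defined additive endomorphism $\Psi$ of $\RA{A}$. To see that $\Psi$ is multiplicative, I would use the bi-additivity of the multiplication on $\RA{A}$ together with the additivity of $\Psi$ to reduce $\Psi(uv)=\Psi(u)\Psi(v)$ to homogeneous $u\in A^m$ and $v\in A^n$, i.e.\ to $\Psi^{(m+n)}(u\otimes v)=\Psi^{(m)}(u)\otimes\Psi^{(n)}(v)$; since both sides of the latter are additive in $u$ and in $v$ and elementary tensors additively generate $A^m$ and $A^n$, it suffices to verify it for $u=a_1\otimes\cdots\otimes a_m$ and $v=b_1\otimes\cdots\otimes b_n$, where it is immediate from the concatenation rule. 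The cases $m=0$ or $n=0$ reduce similarly to $\Psi^{(\ell)}(r u^{(\ell)})=\Psi^{(0)}(r)\Psi^{(\ell)}(u^{(\ell)})$ and $\Psi^{(\ell)}(u^{(\ell)} r)=\Psi^{(\ell)}(u^{(\ell)})\Psi^{(0)}(r)$, which again come from the hypothesis on $\Psi^{(1)}$ and $\Psi^{(0)}$. Since $\Psi(1)=\Psi^{(0)}(1)=1$ (because $\Psi^{(0)}$ is a ring automorphism), $\Psi$ is a ring endomorphism of $\RA{A}$, with $\Psi|_R=\Psi^{(0)}$ and $\Psi|_A=\Psi^{(1)}$ by inspection of the degree-$0$ and degree-$1$ components.

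For uniqueness, suppose $\Phi$ is any ring endomorphism of $\RA{A}$ with $\Phi|_R=\Psi^{(0)}$ and $\Phi|_A=\Psi^{(1)}$. Then $\Phi$ and $\Psi$ agree on the subring generated by $R$ and $A$, which is $\pathalg{A}=\bigoplus_{\ell\geq 0}A^\ell$, since each $A^\ell$ is spanned inside $\RA{A}$ by products $a_1\cdots a_\ell$ of elements of $A$ and by $R$-multiples thereof. Given an arbitrary $u=\sum_\ell u^{(\ell)}\in\RA{A}$, write $u=u_{<n}+u_{\geq n}$ with $u_{<n}\in\pathalg{A}$ and $u_{\geq n}\in\maxid^n$, where $\maxid=\prod_{\ell\geq 1}A^\ell$; since both $\Phi$ and $\Psi$ respect the filtration $\RA{A}\supseteq\maxid\supseteq\maxid^2\supseteq\cdots$, one gets $\Phi(u)-\Psi(u)=\Phi(u_{\geq n})-\Psi(u_{\geq n})\in\maxid^n$ for every $n$, hence $\Phi(u)=\Psi(u)$ because $\bigcap_n\maxid^n=0$. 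I expect no serious obstacle: the proposition is formal bookkeeping around the tensor algebra, and the only non-automatic point is the well-definedness of the $\Psi^{(\ell)}$ in the first paragraph — which is exactly why the hypothesis couples $\Psi^{(1)}$ with $\Psi^{(0)}$.
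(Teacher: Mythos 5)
Your construction of the $\Psi^{(\ell)}$ via the universal property of the iterated tensor product, the verification that the compatibility hypothesis relating $\Psi^{(1)}$ and $\Psi^{(0)}$ is exactly the $R$-balancedness needed at each consecutive slot, and the reduction of multiplicativity to elementary tensors are all correct; the paper states Proposition~\ref{prop:ring-endomorphisms-of-RA} without proof, and this is the expected argument. There is, however, a genuine gap in the uniqueness paragraph: you assert that any ring endomorphism $\Phi$ of $\RA{A}$ with $\Phi|_R=\Psi^{(0)}$ and $\Phi|_A=\Psi^{(1)}$ ``respects the filtration'' $\RA{A}\supseteq\maxid\supseteq\maxid^2\supseteq\cdots$. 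For $\Psi$ this is built into the construction (it preserves each $A^\ell$), but for an abstract ring endomorphism $\Phi$, not assumed continuous, it is precisely the point that must be proved. The paper's remark just before Proposition~\ref{prop:automorphisms} does establish that maps send $\maxid$ into $\maxid$, but only for $R$-algebra homomorphisms, i.e.\ those restricting to $\myid_R$; your $\Phi$ restricts to a nontrivial automorphism of $R$, so that statement does not apply verbatim.

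The gap can be closed in the setting in which the paper actually uses the proposition, namely $R=\bigoplus_i F_i$ a finite product of fields and $A$ finitely generated over $R$. Since $\Psi^{(0)}$ is a ring automorphism of $R$, it permutes the primitive idempotents, say $\Phi(e_i)=e_{\pi(i)}$, and restricts to field isomorphisms $F_i\to F_{\pi(i)}$. For $u\in\maxid$ and $r\in F_i^{\times}$, the element $re_i - e_iue_i$ has invertible degree-$0$ part and is therefore a unit of the ring $e_i\RA{A}e_i$; applying $\Phi$ shows that $\Psi^{(0)}(r)e_{\pi(i)} - e_{\pi(i)}\Phi(u)e_{\pi(i)}$ is a unit of $e_{\pi(i)}\RA{A}e_{\pi(i)}$, hence has nonzero degree-$0$ part, for every nonzero scalar $\Psi^{(0)}(r)\in F_{\pi(i)}$. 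This forces the degree-$0$ part of $e_{\pi(i)}\Phi(u)e_{\pi(i)}$ to vanish; varying $i$ gives $\Phi(u)\in\maxid$, and therefore $\Phi(\maxid^n)\subseteq\maxid^n$ for all $n$. Note also that your decomposition $u=u_{<n}+u_{\geq n}$ with $u_{\geq n}\in\maxid^n$ tacitly uses the equality $\maxid^n=\prod_{\ell\geq n}A^\ell$, which again holds because $A$ is finitely generated over $R$. With these two observations supplied, your limiting argument concludes as written.
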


The rest of this subsection deals with direct sum decompositions of tensor products of certain bimodules. The results, which are almost certainly well-known, were found in joint work of the second author and A. Zelevinsky a few years ago.

In latter sections, some elements of certain Galois groups will appear as labels of arrows of quivers. One of the aims of this subsection is to provide a justification of their appearance.

Let $E/F$ be a finite-degree cyclic Galois extension, and suppose that $F$ contains a primitive $d^{\operatorname{th}}$ root of unity, where $d=[E:F]$. This implies the existence of an element $v\in E$ such that for every subextension $K/L$ of $E/F$ the set $\B_{K/L}=\{1,v_{K/L},v_{K/L}^2,\ldots,v_{K/L}^{[K:L]-1}\}$ is an eigenbasis of $K/L$, where $v_{K/L}=v^{\frac{d}{[K:L]}}$.

Let $J/F$ and $K/F$ be subextensions of $E/F$. %It is easy to see that $J\otimes_{J\cap K}K$ (resp.\ $K\otimes_{K\cap L}L$) is simple in the category of $J$-$K$-bimodules (resp.\ $K$-$L$-bimodules) on which $F$ acts centrally. We are interested in finding a decomposition of $(J\otimes_{J\cap K}K)\otimes_{K}(K\otimes_{K\cap L}L)$ as a direct sum of simples in the category of $J$-$L$-bimodules on which $F$ acts centrally. For this we need to introduce some $J$-$L$-bimodules that are not isomorphic to $J\otimes_{J\cap L}L$. To be precise,
For each element $\rho_1$ of the Galois group $\Gal(J\cap K/F)$, consider the left action of $J$ on itself given by the multiplication it has as a field, and the right action of $J\cap K$ on $J$ given by
$$
m\star x=m\rho_1(x) \ \ \ \text{for} \ m\in J \ \text{and} \ x\in J\cap K,
$$
where the product $m\rho_1(x)$ is taken according to the multiplication that $J$ has as a subfield of $E$.
We denote by $J^{\rho_1}$ the $J$-$J\cap K$-bimodule obtained in this way. Note that $J^{\rho_1}$ is not isomorphic to $J=J^{\myid}$ as a $J$-$J\cap K$-bimodule unless $\rho_1=\myid_{J\cap K}$.

Consider the $J$-$K$-bimodule $J^{\rho_1}\otimes_{J\cap K}K$. Given $x\in J^{\rho_1}$, $y\in K$ and $z\in J\cap K$, we have
$$
x\otimes (zy)=(x\star z)\otimes y=(x\rho_1(z))\otimes y \ \ \ \text{and}\ \ \  (xz)\otimes y=(x\star\rho_1^{-1}(z))\otimes y=x\otimes (\rho_1^{-1}(z)y),
$$
where the products $x\rho_1(z)$, $xz$, $zy$ and $\rho_1^{-1}(z)y$ are taken according to the multiplications that $J$ and $K$ have as subfields of $E$. This means that we can bypass the use of the symbol $\star$ when passing the elements of $J\cap K$ through the tensor symbol $\otimes=\otimes_{J\cap K}$, and write this passage entirely in terms of the field multiplications of $J$ and $K$; the passage from right to left then requires applying $\rho$ while the passage from left to right requires applying $\rho_1^{-1}$.
Henceforth we will always bypass the use of the symbol $\star$ without any further apology.

It is not hard to see that $J^{\rho_1}\otimes_{J\cap K}K$ is a simple object in the category of $J$-$L$-bimodules on which $F$ acts centrally, and that $J^{\rho_1}\otimes_{J\cap K}K$ and $J^{\rho'}\otimes_{J\cap K}K$ are not isomorphic as $J$-$L$-bimodules if $\rho_1$ and $\rho'$ are different elements of $\Gal(J\cap K/F)$. Let $L/F$ be a further field subextension of $E/F$ and $\rho_2\in\Gal(K\cap L/F)$. The following result shows how to decompose $(J^{\rho_1}\otimes_{J\cap K}K)\otimes_{K}(K^{\rho_2}\otimes_{K\cap L}L)$ as a direct sum of simples of the category of $J$-$L$-bimodules on which $J\cap L$ acts centrally.

\begin{prop}\label{prop:tensor-prod-decomp} Let $\rho_1\in\Gal(J\cap K/F)$ and $\rho_2\in\Gal(K\cap L/F)$.
There is a $J$-$L$-bimodule isomorphism
$$
(J^{\rho_1}\otimes_{J\cap K}K)\otimes_{K}(K^{\rho_2}\otimes_{K\cap L}L)\cong\left(\bigoplus_{\rho}J^{\rho}\otimes_{J\cap L}L\right)^{[K:(J\cap K)(K\cap L)]},
$$
where $\rho$ runs through the elements of the Galois group $\Gal(J\cap L/F)$ whose restriction to $J\cap K\cap L$ equals $(\rho_1|_{J\cap K\cap L})(\rho_2|_{J\cap K\cap L})$. This provides a decomposition of $(J^{\rho_1}\otimes_{J\cap K}K)\otimes_{K}(K^{\rho_2}\otimes_{K\cap L}L)$ as a direct sum of simples in the category of $J$-$L$-bimodules on which $F$ acts centrally.
\end{prop}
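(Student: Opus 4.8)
The plan is to reduce everything to the classical "additive/multiplicative" structure of cyclic Galois extensions by extending scalars to the compositum $JK$ and keeping track of twists. First I would reduce to the case $\rho_1=\rho_2=\myid$ by a preliminary observation: multiplying on the appropriate side by an element realizing the Galois action does not change the isomorphism type of the individual twisted bimodules in a way that interferes with the tensor product, so the content of the statement is carried by the untwisted identity $(J\otimes_{J\cap K}K)\otimes_K(K\otimes_{K\cap L}L)\cong J\otimes_{J\cap K}K\otimes_{K\cap L}L$, together with a careful analysis of how a chosen twist on the left and a chosen twist in the middle combine, on each summand, into a twist $\rho$ on the outer bimodule. (This is exactly where the hypothesis that $F$ contains a primitive $d^{\mathrm{th}}$ root of unity and the existence of the universal eigenelement $v$ enter: they let me write down explicit eigenbases $\B_{K/L}$ and thereby explicit idempotents.)

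Next I would compute the tensor product $J\otimes_{J\cap K}K\otimes_{K\cap L}L$ directly. The key step is: $J\otimes_{J\cap K}K$ is, as a $K$-module, free of rank $[J:J\cap K]=[JK:K]$; in fact $J\otimes_{J\cap K}K\cong JK$ as a $J$-$K$-bimodule when $JK$ is a subextension of $E$, because $J$ and $K$ are linearly disjoint over $J\cap K$ inside the cyclic extension $E$ (subextensions of a cyclic extension are "arithmetically disjoint" over their intersection, since they correspond to subgroups of a cyclic group and subgroups of a cyclic group are arithmetically disjoint over the subgroup they generate). So the whole triple tensor product becomes $JK\otimes_K(K\otimes_{K\cap L}L)\cong JK\otimes_{K\cap L}L$, and now I have to decompose $JK\otimes_{K\cap L}L$ as a $J$-$L$-bimodule (with $F$ central). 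Here the relevant fact is that $JK$ and $L$ need no longer be linearly disjoint over $(JK)\cap L$, so $JK\otimes_{K\cap L}L$ splits as $\bigl[JK\otimes_{(JK)\cap L}L\bigr]^{\oplus m}$ where $m=[(JK)\cap L : K\cap L]$; one checks $[(JK)\cap L:K\cap L]=[K:(J\cap K)(K\cap L)]$ by the cyclic-subgroup-lattice computation (everything is governed by $\mathrm{lcm}$'s and $\gcd$'s of the degrees). Finally, $JK\otimes_{(JK)\cap L}L\cong\bigoplus_{\rho}J^{\rho}\otimes_{J\cap L}L$ where $\rho$ ranges over $\Gal(J\cap L/F)$: this is because $JK\supseteq J$ and $JK\otimes_{(JK)\cap L}L$ restricted to a $J$-$L$-bimodule further breaks up according to the $[(JK):J\cdot((JK)\cap L)]$-many ways of "crossing" $J$ with $L$, each contributing a distinct twist $\rho$ whose restriction to $J\cap K\cap L$ is forced to be the product of the induced actions — and in the untwisted case that product is the identity, which after reinstating the twists $\rho_1,\rho_2$ becomes $(\rho_1|_{J\cap K\cap L})(\rho_2|_{J\cap K\cap L})$.

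To reinstate the twists cleanly, rather than tracking them through every isomorphism I would instead work abstractly: the bimodule $J^{\rho_1}\otimes_{J\cap K}K$ is determined up to isomorphism, among simple $J$-$K$-bimodules with $F$ central, by the induced automorphism of $J\cap K$ (this is part of the discussion preceding the proposition, that $J^{\rho_1}\otimes K\not\cong J^{\rho'}\otimes K$ for $\rho_1\ne\rho'$), and likewise every simple $J$-$L$-bimodule with $F$ central is $J^\rho\otimes_{J\cap L}L$ for a unique $\rho\in\Gal(J\cap L/F)$. So the decomposition is forced to have the stated shape once I know (i) the multiplicity and (ii) that a summand $J^\rho\otimes_{J\cap L}L$ occurs if and only if $\rho|_{J\cap K\cap L}=(\rho_1\rho_2)|_{J\cap K\cap L}$. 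Claim (ii) I would get by computing the action of $J\cap K\cap L$ on a nonzero pure tensor $x\otimes y\otimes z$ from both sides — passing $z'\in J\cap K\cap L$ leftward through $\otimes_{K\cap L}$ costs $\rho_2^{-1}$, then through $\otimes_{J\cap K}$ costs $\rho_1^{-1}$ — which pins down $\rho|_{J\cap K\cap L}$, and claim (i) is the purely numerical count above.

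The main obstacle I anticipate is not any single step but the bookkeeping of twists through the chain of isomorphisms: one must be scrupulous about which side each Galois element acts on and in which direction (left-to-right costs $\rho^{-1}$, right-to-left costs $\rho$, as spelled out just before the proposition for a single bimodule), and about the fact that $\Gal(J\cap K/F)\to\Gal(J\cap K\cap L/F)$ is restriction, not inclusion, so "products of twists" only make literal sense after restricting to the common triple intersection. I would therefore organize the argument so that all twist data is recorded by the single invariant "automorphism induced on the relevant intersection field," and only invoke explicit eigenbases $\B_{K/L}$ and the element $v$ to (a) verify linear disjointness of $J$ and $K$ over $J\cap K$ inside $E$ and (b) exhibit the splitting idempotents realizing the multiplicity $[K:(J\cap K)(K\cap L)]$.
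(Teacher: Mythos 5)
Your route through the compositum $JK$ is genuinely different from the paper's, which instead first collapses the middle $K$ \emph{down} to $J\cap K\cap L$ and only afterwards expands $J\cap K\cap L$ \emph{up} to $J\cap L$ (Lemmas~\ref{lemma:JotimesKotimesL} and~\ref{lemma:JotimesL}): the collapse produces the multiplicity $[K:(J\cap K)(K\cap L)]$, and the expansion produces the direct sum over twists $\rho$. Unfortunately there is a concrete error at the heart of your route, namely the claim
$$
JK\otimes_{K\cap L}L\;\cong\;\bigl[JK\otimes_{(JK)\cap L}L\bigr]^{\oplus m},\qquad m=[(JK)\cap L:K\cap L].
$$
This is false, both as a bimodule identity and numerically. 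Take $E/F$ cyclic of degree~$4$, let $J=L=E$ and let $K$ be the unique intermediate field of degree~$2$. Then $JK\otimes_{K\cap L}L=E\otimes_K E$, which as an $E$-$E$-bimodule (with $F$ central) decomposes as $E\oplus E^\theta$ with $\theta$ generating $\Gal(E/K)$, and in particular is \emph{not} isomorphic to $\bigl(E\otimes_E E\bigr)^{\oplus 2}\cong E\oplus E$. The accompanying numerical identity $[(JK)\cap L:K\cap L]=[K:(J\cap K)(K\cap L)]$ fails in the same example: the left side is $[E:K]=2$ while the right side is $[K:K]=1$. The phenomenon you are overlooking is that dropping from $(JK)\cap L$ to the smaller subfield $K\cap L$ in the tensor product splits the bimodule into \emph{pairwise non-isomorphic twists} of $JK\otimes_{(JK)\cap L}L$, not into repeated copies of a single one; this is exactly what Lemma~\ref{lemma:JotimesL} captures, and it resurfaces here even though your preliminary reduction to $\rho_1=\rho_2=\myid$ was intended to postpone all twist bookkeeping to the end. (Your related remark that ``$JK$ and $L$ need no longer be linearly disjoint over $(JK)\cap L$'' is also not correct: any two subextensions of a cyclic Galois extension are linearly disjoint over their intersection.) Repairing this step forces you to track the twist data in the middle of the argument, at which point your route essentially reproduces the paper's two lemmas. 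The parts of your proposal that are sound---identifying $J\otimes_{J\cap K}K\cong JK$ by linear disjointness inside $E$, computing the induced automorphism of $J\cap K\cap L$ as $(\rho_1\rho_2)|_{J\cap K\cap L}$ by passing an element across both tensor signs, and invoking Proposition~\ref{prop:properties-of-J-K-bimodules} to pin down the simple summands---are all fine, but they do not fill the hole left by the incorrect factorization.
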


Before proving Proposition \ref{prop:tensor-prod-decomp} we would like to make an observation. Note that if $\rho_1=\myid_{J\cap K}$, then $J^{\rho_1}\otimes_{J\cap K}K$ is the usual tensor product $J\otimes_{J\cap K}K$. The same observation applies to $K^{\rho_2}\otimes_{K\cap L}L$ provided $\rho_2=\myid_{K\cap L}$. So, Proposition \ref{prop:tensor-prod-decomp} tells us that, even if we start with the usual tensor products $J\otimes_{J\cap K}K$ and $K\otimes_{K\cap L}L$, with their usual bimodule structure inherited from the standard bimodule structures of $J$, $K$ and $L$, then as soon as we are interested in decomposing $(J\otimes_{J\cap K}K)\otimes_{K}(K\otimes_{K\cap L}L)$ as a direct sum of simple $J$-$L$-bimodules, % of a form similar to the form $J\otimes_{J\cap K}K$ and $K\otimes_{K\cap L}L$ of the tensor factors,
we are forced to introduce the less frequently used bimodules $J^\rho\otimes_{J\cap L}L$, that are not isomorphic to the usual $J\otimes_{J\cap L}L$ as $J$-$L$-bimodules whenever $\rho\neq\myid_{J\cap L}$. %, and whose structure is inherited from a bimodule structure on $J$ which is not the standard one, but rather ``twisted by $\rho$".
This is what will force us to label arrows of quivers with elements of Galois groups later on.

%Now, what happens with the decompositions of tensor products of the ``twisted" bimodules $J^\rho\otimes_{J\cap L}L$? Do we have to consider a yet larger family of ``basic" bimodules?
%The answer to these questions is provided by the following generalization of Lemma \ref{lemma:tensor-prod-decomp-identity}:
%Proposition \ref{prop:tensor-prod-decomp} also tells us that, once we have introduced the ``twisted'' bimodules,

Proposition \ref{prop:tensor-prod-decomp} is an obvious consequence of the next two lemmas.

\begin{lemma}\label{lemma:JotimesKotimesL} Let $\rho_1\in\Gal(J\cap K/F)$ and $\rho_2\in\Gal(K\cap L/F)$.
The $J$-$L$-bimodule $(J^{\rho_1}\otimes_{J\cap K}K)\otimes_{K}(K^{\rho_2}\otimes_{K\cap L}L)$ is isomorphic to the $J$-$L$-bimodule $(J^{\rho_1|\rho_2|}\otimes_{J\cap K\cap L}L)^{[K:(J\cap K)(K\cap L)]}$, where $\rho_1|$ and $\rho_2|$ are the restrictions of $\rho_1$ and $\rho_2$ to $J\cap K\cap L$.
\end{lemma}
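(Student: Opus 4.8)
The plan is to collapse the middle tensor factor over $K$, then decompose the field $K$ as a bimodule over its two subfields $J\cap K$ and $K\cap L$, and finally handle the remaining two-field manipulations by hand, tracking the Galois twists throughout. First I would use the canonical isomorphism $K\otimes_K K^{\rho_2}\cong K^{\rho_2}$ and associativity of $\otimes$ to obtain a $J$-$L$-bimodule isomorphism
\[
  (J^{\rho_1}\otimes_{J\cap K}K)\otimes_{K}(K^{\rho_2}\otimes_{K\cap L}L)\;\cong\;J^{\rho_1}\otimes_{J\cap K}K^{\rho_2}\otimes_{K\cap L}L,
\]
where on the right $K^{\rho_2}$ is viewed as a $(J\cap K)$-$(K\cap L)$-bimodule, its left action being the restriction of field multiplication on $K$ and its right action being twisted by $\rho_2$.

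The core step is to decompose $K^{\rho_2}$ over the pair $(J\cap K,\,K\cap L)$. Set $I=J\cap K\cap L$ and $N=(J\cap K)(K\cap L)$, both subextensions of $E/F$ lying inside $K$. As a field containing $N$, $K$ is free of rank $[K:N]$ over $N$, and choosing an $N$-basis (for instance a suitable string of powers of the element $v\in E$ fixed above) yields an isomorphism of $(J\cap K)$-$(K\cap L)$-bimodules
\[
  K^{\rho_2}\;\cong\;\bigl((J\cap K)\otimes_I(K\cap L)^{\rho_2}\bigr)^{[K:N]},
\]
provided the multiplication map $(J\cap K)\otimes_I(K\cap L)^{\rho_2}\to N^{\rho_2}$ is an isomorphism. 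This map is visibly a surjection of bimodules, so it suffices to compare $F$-dimensions: the target has dimension $[N:F]$, the source $[J\cap K:F]\,[K\cap L:F]/[I:F]$. Here the cyclicity of $E/F$ is essential, since it forces $[I:F]=\gcd([J\cap K:F],[K\cap L:F])$ and $[N:F]=\lcm([J\cap K:F],[K\cap L:F])$, and hence the two dimensions agree. I expect this dimension bookkeeping — the linear disjointness of $J\cap K$ and $K\cap L$ over $I$, and the determination of the exact multiplicity $[K:(J\cap K)(K\cap L)]$ — to be the only genuinely non-formal ingredient; without cyclicity, intersections and compositums need not be governed by gcd and lcm of degrees and the map above could fail to be injective.

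Substituting this decomposition and pulling the multiplicity $[K:N]$ out of the tensor product, it then remains to establish the $J$-$L$-bimodule isomorphism
\[
  J^{\rho_1}\otimes_{J\cap K}(J\cap K)\otimes_I(K\cap L)^{\rho_2}\otimes_{K\cap L}L\;\cong\;J^{(\rho_1|)(\rho_2|)}\otimes_I L,
\]
with $\rho_1|,\rho_2|$ the restrictions of $\rho_1,\rho_2$ to $I$. I would do this in three elementary moves, each verified by specifying the map on generators, checking compatibility with the bimodule relations, and matching $F$-dimensions for injectivity: first, $J^{\rho_1}\otimes_{J\cap K}(J\cap K)\cong J^{\rho_1|}$ as $J$-$I$-bimodules via $j\otimes q\mapsto j\rho_1(q)$; second, $(K\cap L)^{\rho_2}\otimes_{K\cap L}L\cong{}^{(\rho_2|)^{-1}}\!L$, meaning $L$ with its ordinary right $L$-action but left $I$-action $x\cdot l=(\rho_2|)^{-1}(x)\,l$, via $q\otimes l\mapsto\rho_2^{-1}(q)l$; and third, $J^{\rho_1|}\otimes_I{}^{(\rho_2|)^{-1}}\!L\cong J^{(\rho_1|)(\rho_2|)}\otimes_I L$ via the identity map, once the defining relation $\bigl(j\,\rho_1|(x)\bigr)\otimes l=j\otimes\bigl((\rho_2|)^{-1}(x)\,l\bigr)$ is rewritten through the substitution $y=(\rho_2|)^{-1}(x)$ and one uses that $\Gal(I/F)$ is abelian, so $\rho_1|$ and $\rho_2|$ commute.

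As a final remark, one could instead run the entire argument in coordinates using the eigenbases $\B_{K/L}$ from the outset; the route sketched here has the advantage of isolating the role of cyclicity in a single dimension count and of keeping the twist computations transparent.
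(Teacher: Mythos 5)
Your proof is correct, and it reaches the same endpoint as the paper's via a more modular route. The paper writes down the $J$-$L$-bimodule map $\psi_\omega: x\otimes y\mapsto x\otimes\omega\otimes 1\otimes y$ for each $\omega$ in the eigenbasis $\B_{K/(J\cap K)(K\cap L)}$, assembles them into a single map $\psi$, proves surjectivity directly from the fact that every element of $(J\cap K)(K\cap L)$ is a finite sum $\sum_t j_t l_t$, and then compares total $F$-dimensions once at the end. You instead first collapse the middle tensor factor, then decompose $K^{\rho_2}$ as a $(J\cap K)$-$(K\cap L)$-bimodule into $[K:N]$ copies of $(J\cap K)\otimes_I(K\cap L)^{\rho_2}$, and finish with three elementary bimodule identifications that make the twist-bookkeeping fully explicit. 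The two proofs use the same two ingredients (an $N$-basis of $K$ and a dimension count), but your version isolates exactly where cyclicity enters — in the single dimension comparison showing $(J\cap K)\otimes_I(K\cap L)\cong N$, i.e.\ linear disjointness of $J\cap K$ and $K\cap L$ over $I$ — whereas the paper leaves this buried in the unproved claim ``it is fairly easy to show that the $F$-dimensions agree.'' One small remark: in your third move, you invoke commutativity of $\Gal(I/F)$ to identify $\rho_1|\circ\rho_2|$ with $\rho_2|\circ\rho_1|$, but after the substitution $y=(\rho_2|)^{-1}(x)$ the defining relation of the source already reads $(j\,(\rho_1|\rho_2|)(y))\otimes l = j\otimes yl$ with a single fixed order of composition, so the abelianness is not actually needed there (it is only needed if one wishes to be agnostic about which composition convention the paper's ``$\rho_1|\rho_2|$'' denotes).
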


\begin{proof} Let $\B=\B_{K/(J\cap K)(K\cap L)}$ be an eigenbasis of $K/(J\cap K)(K\cap L)$. A routinary check shows that, for each $\omega\in\B$, the rule
$
x\otimes y\mapsto x\otimes \omega\otimes 1\otimes y
$
produces a well-defined $J$-$L$-bimodule homomorphism $\psi_\omega:J^{\rho_1|\rho_2|}\otimes_{J\cap K\cap L}L\rightarrow (J^{\rho_1}\otimes_{J\cap K}K)\otimes_{K}(K^{\rho_2}\otimes_{K\cap L}L)$. Set $\psi=[\psi_\omega]_{\omega\in\B}:(J^{\rho_1|\rho_2|}\otimes_{J\cap K\cap L}L)^{[K:(J\cap K)(K\cap L)]}\rightarrow (J^{\rho_1}\otimes_{J\cap K}K)\otimes_{K}(K^{\rho_2}\otimes_{K\cap L}L)$, which is a $J$-$L$-bimodule homomorphism. Since every element of $K$ can be written as a $(J\cap K)(K\cap L)$-linear combination of elements of $\B$, and since any element of $(J\cap K)(K\cap L)$ can be written as a finite sum $\sum_{t}j_tl_t$, with each $j_t\in J\cap K$ and each $l_t\in K\cap L$, we deduce that $\psi$ is surjective.

Since $F$ acts centrally on $(J^{\rho_1|\rho_2|}\otimes_{J\cap K\cap L}L)^{[K:(J\cap K)(K\cap L)]}$ and $(J^{\rho_1}\otimes_{J\cap K}K)\otimes_{K}(K^{\rho_2}\otimes_{K\cap L}L)$, each of these bimodules has a well-defined dimension over $F$. It is fairly easy to show that $\dim_F((J^{\rho_1|\rho_2|}\otimes_{J\cap K\cap L}L)^{[K:(J\cap K)(K\cap L)]})=\dim_F((J^{\rho_1}\otimes_{J\cap K}K)\otimes_{K}(K^{\rho_2}\otimes_{K\cap L}L))$. The surjectivity of $\psi$ then implies that $\psi$ is injective as well.
\end{proof}

\begin{lemma}\label{lemma:JotimesL}
The $J$-$L$-bimodule $J^{\rho_1|\rho_2|}\otimes_{J\cap K\cap L}L$ is isomorphic to the $J$-$L$-bimodule
$$
\bigoplus_{\rho}J^{\rho}\otimes_{J\cap L}L,
$$
where the sum runs over all the elements $\rho$ of $\Gal(J\cap L/F)$ whose restriction to $J\cap K\cap L$ equals $\rho_1|\rho_2|$.
\end{lemma}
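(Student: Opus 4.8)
I want to decompose the $J$-$L$-bimodule $M := J^{\rho_1|\rho_2|}\otimes_{J\cap K\cap L}L$ into simples of the category of $J$-$L$-bimodules on which $F$ acts centrally, and to identify each summand as one of the twisted bimodules $J^{\rho}\otimes_{J\cap L}L$. The key structural fact is that $(J\cap K\cap L) = (J\cap L)\cap(J\cap K\cap L)$ sits inside $J\cap L$, and $M$ is obtained from the "smaller" twisted bimodule by tensoring over the smaller field; so morally I am pushing a bimodule up along the tower $(J\cap K\cap L)\subseteq (J\cap L)$. Since $F$ contains enough roots of unity and $E/F$ is cyclic, the eigenbasis machinery from the paragraph preceding Proposition \ref{prop:tensor-prod-decomp} applies to every subextension.

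**Step 1 (construct the maps).** Let $\B = \B_{(J\cap L)/(J\cap K\cap L)}$ be an eigenbasis of $(J\cap L)/(J\cap K\cap L)$, which has $[(J\cap L):(J\cap K\cap L)]$ elements; this number equals the number of elements $\rho\in\Gal(J\cap L/F)$ restricting to $\rho_1|\rho_2|$ on $J\cap K\cap L$, by the fundamental theorem of Galois theory applied to the tower $F\subseteq (J\cap K\cap L)\subseteq (J\cap L)$. For each such $\rho$, I will write down an explicit $J$-$L$-bimodule homomorphism $J^{\rho}\otimes_{J\cap L}L \to M$ by sending $x\otimes y$ to $x\, e_\rho \otimes y$ for a suitable element $e_\rho \in J\cap L$ built from the eigenbasis $\B$ (an idempotent-like element picking out the $\rho$-eigencomponent). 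One checks this is well-defined: when passing an element $z\in J\cap L$ through $\otimes_{J\cap L}$ on the left one applies $\rho^{-1}$, and on the right one needs $e_\rho\cdot z = \rho_1|\rho_2|(z)\cdot e_\rho$ to land correctly after passing through $\otimes_{J\cap K\cap L}$, which is exactly the eigenvector property of $e_\rho$ under the action of $J\cap L$ twisted by $\rho$. Assembling these over all admissible $\rho$ gives a $J$-$L$-bimodule homomorphism $\Phi\colon \bigoplus_\rho (J^{\rho}\otimes_{J\cap L}L) \to M$.

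**Step 2 (surjectivity and dimension count).** Surjectivity of $\Phi$ follows because the elements $e_\rho$ (over admissible $\rho$) span $J\cap L$ as an $F$-vector space, hence as a $(J\cap K\cap L)$-module, so every simple tensor $x\otimes y\in M$ is in the image. For injectivity I compare $F$-dimensions, which are well-defined since $F$ acts centrally on both sides. On the one hand $\dim_F M = \dim_F J \cdot [L:(J\cap K\cap L)] = [J:F]\,[L:F]/[(J\cap K\cap L):F]$. On the other hand $\dim_F(J^{\rho}\otimes_{J\cap L}L) = [J:F]\,[L:F]/[(J\cap L):F]$, and summing over the $[(J\cap L):(J\cap K\cap L)]$ admissible $\rho$'s multiplies this by exactly $[(J\cap L):(J\cap K\cap L)] = [(J\cap L):F]/[(J\cap K\cap L):F]$, giving agreement. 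A surjective $F$-linear map between finite-dimensional $F$-spaces of equal dimension is an isomorphism, so $\Phi$ is the desired $J$-$L$-bimodule isomorphism.

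**Main obstacle.** The routine calculations are easy; the one genuinely delicate point is producing the eigen-elements $e_\rho\in J\cap L$ with the correct twisted-eigenvector property relative to the \emph{specific} twist $\rho_1|\rho_2|$, rather than the untwisted action, and then verifying that the map $x\otimes y\mapsto x e_\rho\otimes y$ respects the relations on \emph{both} tensor factors simultaneously (the left relation forces the $\rho^{-1}$ passage in $J^\rho\otimes_{J\cap L}L$, while the right relation must be consistent with the $\rho_1|\rho_2|$ twist hidden inside $M$). Concretely, one takes a generator $v_{(J\cap L)/(J\cap K\cap L)}$ of the eigenbasis, notes the Galois action permutes its eigenlines, and defines $e_\rho$ as the projector onto the line on which $J\cap L$ acts through $\rho$; the cyclicity of the extension and the presence of the root of unity guarantee these projectors exist inside $J\cap L$ and sum to $1$. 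Once these $e_\rho$ are in hand the rest is bookkeeping, so I expect the write-up to mirror the proof of Lemma \ref{lemma:JotimesKotimesL}: construct the map, prove surjectivity from a spanning statement, conclude via the dimension count.
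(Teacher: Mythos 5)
Your overall architecture — one bimodule map per admissible $\rho$, surjectivity from a spanning/decomposition argument, injectivity by an $F$-dimension count — matches the paper's proof exactly, and your dimension bookkeeping in Step~2 is correct. But Step~1 has a genuine gap: there is no element $e_\rho$ (in $J\cap L$, in $J$, or anywhere) for which the rule $x\otimes y\mapsto x e_\rho\otimes y$ is a well-defined nonzero map $J^{\rho}\otimes_{J\cap L}L\to J^{\rho_1|\rho_2|}\otimes_{J\cap K\cap L}L$. Balancedness over $J\cap L$ forces, for every $z\in J\cap L$, the identity $x e_\rho\otimes zy = x\rho(z)e_\rho\otimes y$ in the target; but the target tensor is only over $J\cap K\cap L$, so no element of $J\cap L\setminus(J\cap K\cap L)$ can be slid across $\otimes$, and the required identity collapses to $e_\rho z = \rho(z)e_\rho$ inside the commutative field $J$. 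That gives $e_\rho(z-\rho(z))=0$, hence $e_\rho=0$ as soon as $\rho\neq\myid_{J\cap L}$ (equivalently, as soon as $J\cap K\cap L\subsetneq J\cap L$). So the ``eigen\nobreakdash-projector living inside $J\cap L$'' you invoke cannot exist; a nonzero idempotent in a field is $1$, and $1$ is not an eigenvector for a nontrivial character.

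The repair is precisely what makes the paper's $\varphi_\rho$ a little subtler than a one-sided multiplication: define
\[
\varphi_\rho(x\otimes y)\;=\;\frac{1}{[J\cap L:J\cap K\cap L]}\sum_{\omega\in\B}\rho(\omega^{-1})\,x\otimes y\,\omega,
\]
where $\B=\B_{J\cap L/J\cap K\cap L}$ is the eigenbasis. Here $\rho(\omega^{-1})$ scales the $J$-factor and $\omega$ scales the $L$-factor, so the ``projector'' is genuinely split across the tensor symbol; it is the two-sided averaging operator $\pi_\rho$ of Proposition~\ref{prop:properties-of-J-K-bimodules} applied to the extension $J\cap L/J\cap K\cap L$, and there is no single-factor element reproducing it. Balancedness over $J\cap L$ then follows because the product of two eigenbasis elements is an $F$-multiple of an eigenbasis element, which lets the sum reorganize correctly. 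Your Step~2 is salvageable once this map is in place: surjectivity comes from $m\otimes n=\sum_\rho\varphi_\rho(\pi_\rho(m\otimes n))$ with $\pi_\rho:J^{\rho_1|\rho_2|}\otimes_{J\cap K\cap L}L\to J^{\rho}\otimes_{J\cap L}L$ the tautological $m\otimes n\mapsto m\otimes n$, and the dimension count you already wrote finishes the argument.
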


\begin{proof} Let us abbreviate $\B=\B_{J\cap L/J\cap K\cap L}$, which is an eigenbasis of the subextension $J\cap L/J\cap K\cap L$. For each $\rho\in\Gal(J\cap L/F)$ such that $\rho|_{J\cap K\cap L}=\rho_1|\rho_2|$, let
$$
\varphi_\rho:J^{\rho}\otimes_{J\cap L}L\rightarrow J^{\rho_1|\rho_2|}\otimes_{J\cap K\cap L}L
$$
be the $J$-$L$-bimodule homomorphism given by
$$
\varphi_\rho(x\otimes y)=\frac{1}{[J\cap L:J\cap K\cap L]}\sum_{\omega\in\B}\rho(\omega^{-1})x\otimes y\omega.
$$
The fact that $\varphi_\rho$ is $J\cap L$-balanced and hence well-defined follows from the fact that the product of any two elements of $\B$ is an $F$-multiple of some other element of $\B$. Assembling all the maps $\varphi_\rho$, with $\rho$ as in the statement of the lemma, we obtain a $J$-$L$-bimodule homomorphism
$$
\varphi:\bigoplus_\rho J^{\rho}\otimes_{J\cap L}L\rightarrow J^{\rho_1|\rho_2|}\otimes_{J\cap K\cap L}L.
$$
To verify that $\varphi$ is surjective, for each $\rho$ define $\pi_\rho:J^{\rho_1|\rho_2|}\otimes_{J\cap K\cap L}L\rightarrow J^{\rho}\otimes_{J\cap L}L$ by $\pi_\rho(m\otimes n)= m\otimes n$.
Then for
$m\otimes n\in J^{\rho_1|\rho_2|}\otimes_{J\cap K\cap L}L$ we have $m\otimes n = \sum_{\rho}\varphi_\rho(\pi_\rho(m\otimes n))$, which shows that $\varphi$ is surjective.

Since $\rho\in\Gal(J\cap L/F)$, $F$ acts centrally on $J^{\rho}\otimes_{J\cap L}L$ and on $J^{\rho_1|\rho_2|}\otimes_{J\cap K\cap L}L$, hence each of these bimodules has a well-defined dimension over $F$. It is fairly easy to show that $\dim_F(\bigoplus_\rho J^{\rho}\otimes_{J\cap L}L)=\dim_F(J^{\rho_1|\rho_2|}\otimes_{J\cap K\cap L}L)$. The surjectivity of $\varphi$ then implies that $\varphi$ is injective as well.
\end{proof}

%\begin{lemma}\label{lemma:tensor-prod-decomp-identity}
%We have a $J$-$L$-bimodule isomorphism
%$$
%(J\otimes_{J\cap K}K)\otimes_{K}(K\otimes_{K\cap L}L)\cong\left(\bigoplus_{\rho}J^{\rho}\otimes_{J\cap L}L\right)^{[K:(J\cap K)(K\cap L)]},
%$$
%where $\rho$ runs through all elements of the Galois group $\Gal(J\cap L/J\cap K\cap L)$. This provides a decomposition of $(J\otimes_{J\cap K}K)\otimes_{K}(K\otimes_{K\cap L}L)$ as a direct sum of simples in the category of $J$-$L$-bimodules on which $F$ acts centrally.
%\end{lemma}
%
%\begin{proof} This will be a special case of Proposition \ref{prop:tensor-prod-decomp}.
%\end{proof}

To close this section, we record a result that will be useful later on.

\begin{prop}\label{prop:properties-of-J-K-bimodules} Under the assumptions stated at the beginning of the current subsection, let $\mathcal{C}_{J,K}$ be the category of $J$-$K$-bimodules on which $F$ acts centrally and whose dimension over $F$ is finite.
\begin{enumerate}
\item the bimodules $J^{\rho}\otimes_{J\cap K}K$, for $\rho\in\Gal(J\cap K/F)$ form a complete set of pairwise non-isomorphic simple objects in $\mathcal{C}_{J,K}$;
\item for every $M\in\mathcal{C}_{J,K}$ and every $\rho\in\Gal(J\cap K/F)$, the function $\pi_\rho=\pi^M_\rho:M\rightarrow M$ defined by
    $$
    m\mapsto\frac{1}{[J\cap K:F]}\sum_{\omega\in\B_{J\cap K/F}}\rho(\omega^{-1})m\omega
    $$
    is an idempotent $J$-$K$-bimodule homomorphism;
\item for every $M\in\mathcal{C}_{J,K}$ and every pair $\rho_1,\rho_2\in\Gal(J\cap K/F)$, if $\rho_1\neq\rho_2$, then $\pi_{\rho_1}\pi_{\rho_2}=0$;
\item for every $M\in\mathcal{C}_{J,K}$ we have an internal direct sum decomposition
$$
M=\bigoplus_{\rho\in\Gal(J\cap K/F)}\image\pi_\rho;
$$
\item for every $M\in\mathcal{C}_{J,K}$, every $m\in\image\pi_\rho$ and every $x\in J\cap K$ we have $mx=\rho(x)m$;
\item for every $M\in\mathcal{C}_{J,K}$ and every $m\in\image\pi_\rho$ there exists a unique $J$-$K$-bimodule homomorphism $\varphi:J^{\rho}\otimes_{J\cap K}K\rightarrow M$ such that $\varphi(1\otimes 1)=m$.
\end{enumerate}
\end{prop}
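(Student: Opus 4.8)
The plan is to derive all six statements from the structure of the commutative $F$-algebra $\Lambda=(J\cap K)\otimes_F(J\cap K)$ together with the eigenbasis $\B=\B_{J\cap K/F}=\{1,w,\dots,w^{n-1}\}$ fixed at the start of this subsection, where $n=[J\cap K:F]$, $G=\Gal(J\cap K/F)$, $w=v_{J\cap K/F}$, and $\sigma_0(w^i)=\zeta_0^i w^i$ for a suitable generator $\sigma_0$ of $G$ and a primitive $n$-th root of unity $\zeta_0\in F$. First I would note that, since $J$ and $K$ are fields, every $M\in\mathcal{C}_{J,K}$ becomes a left $\Lambda$-module via $(a\otimes b)\cdot m:=amb$ (well defined because $F$ acts centrally and $J\cap K$ is commutative), and that $\Lambda$ acts by $J$-$K$-bimodule endomorphisms of $M$: left multiplication by an element of $J\cap K\subseteq J$ commutes with the left $J$-action ($J$ is commutative) and with the right $K$-action (bimodule axiom), and symmetrically for right multiplications by elements of $J\cap K\subseteq K$. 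Next I would invoke the classical $F$-algebra isomorphism $\Phi\colon\Lambda\xrightarrow{\ \sim\ }\prod_{\rho\in G}(J\cap K)$, $a\otimes b\mapsto(a\rho(b))_{\rho}$ (bijective since in the $(J\cap K)$-basis $\{1\otimes w^j\}$ of $\Lambda$ the matrix $(\rho(w^j))_{\rho,j}$ is Vandermonde up to column scaling, hence invertible), and compute, using $\sum_{i=0}^{n-1}\zeta_0^{ti}=n\,\delta_{n\mid t}$, that $\varepsilon_\rho:=\tfrac1n\sum_{\omega\in\B}\rho(\omega^{-1})\otimes\omega$ satisfies $\Phi(\varepsilon_\rho)=(\delta_{\rho,\tau})_\tau$. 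Thus the $\varepsilon_\rho$ are the primitive idempotents of $\Lambda$: orthogonal idempotents summing to $1$. Since $\pi_\rho$ is precisely the action of $\varepsilon_\rho$ on $M$, statements (2), (3) and (4) follow at once.

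For (5) I would check, again after applying $\Phi$, that $(1\otimes x)\varepsilon_\rho=(\rho(x)\otimes1)\varepsilon_\rho$ in $\Lambda$ for all $x\in J\cap K$; then for $m\in\image\pi_\rho$, i.e.\ $m=\varepsilon_\rho\cdot m$, one obtains $mx=(1\otimes x)\cdot m=(1\otimes x)\varepsilon_\rho\cdot m=(\rho(x)\otimes1)\varepsilon_\rho\cdot m=\rho(x)m$. For (6), given $m\in\image\pi_\rho$, define $\varphi\colon J^{\rho}\otimes_{J\cap K}K\to M$ on simple tensors by $\varphi(a\otimes b):=amb$; the only point in the well-definedness is $(J\cap K)$-balancedness, $\varphi\big((a\rho(x))\otimes b\big)=\varphi(a\otimes xb)$, which is exactly $a\rho(x)mb=amxb$, i.e.\ statement (5). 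The resulting $\varphi$ is a bimodule homomorphism with $\varphi(1\otimes1)=m$, and it is the unique one, because $J^{\rho}\otimes_{J\cap K}K$ is generated by $1\otimes1$ as a bimodule (every element is a finite sum of $a\otimes b=a\,(1\otimes1)\,b$), which forces $\varphi(a\otimes b)=amb$.

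For (1) I would first record the standard fact that, $J/F$ being Galois, the $F$-algebra $J\otimes_F K$ is isomorphic to a product of $n=[J\cap K:F]$ copies of the compositum field $JK\subseteq E$: if $f$ is the minimal polynomial over $F$ of a generator of $J/F$, then $f$ splits over $J$, each of its roots generates $JK$ over $K$ (it already generates $J$ over $F$), hence $f$ factors over $K$ into $n$ distinct irreducibles of degree $[JK:K]=[J:J\cap K]$, and $J\otimes_F K\cong K[t]/(f)\cong(JK)^n$ by the CRT. Since a $J$-$K$-bimodule on which $F$ acts centrally is the same thing as a left $J\otimes_F K$-module, $\mathcal{C}_{J,K}$ is the category of finite-dimensional $(JK)^n$-modules: semisimple, with exactly $n$ isomorphism classes of simples, each of $F$-dimension $[JK:F]$. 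Now each $J^{\rho}\otimes_{J\cap K}K$ is a nonzero object of $\mathcal{C}_{J,K}$ with $\dim_F\big(J^{\rho}\otimes_{J\cap K}K\big)=[J:F][K:F]/[J\cap K:F]=[JK:F]$; in a semisimple category all of whose simples have dimension $[JK:F]$, a nonzero object of that dimension is simple. Finally they are pairwise non-isomorphic: a bimodule isomorphism $J^{\rho}\otimes_{J\cap K}K\to J^{\rho'}\otimes_{J\cap K}K$ carries $1\otimes1$ to a nonzero $m$ with $mx=\rho(x)m$ (transporting the relation $\rho(x)\otimes1=1\otimes x$) and also $mx=\rho'(x)m$ (a short computation shows every element $u$ of $J^{\rho'}\otimes_{J\cap K}K$ satisfies $ux=\rho'(x)u$); hence $\big(\rho(x)-\rho'(x)\big)m=0$, and since a nonzero element of the field $J$ acts invertibly, $\rho(x)=\rho'(x)$ for all $x$, i.e.\ $\rho=\rho'$. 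So the $J^{\rho}\otimes_{J\cap K}K$ form $n$ pairwise non-isomorphic simple objects in a category with $n$ simple classes, hence a complete set, which is (1).

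I expect the main obstacle to be the bookkeeping in part (1): pinning down the isomorphism $J\otimes_F K\cong(JK)^n$ (especially the equality $[JK:K]=[J:J\cap K]$, which uses that $J/F$ is Galois) and making the dimension count precise enough that ``nonzero of $F$-dimension $[JK:F]\Rightarrow$ simple'' is legitimate. The remaining ingredients all reduce to the orthogonal-idempotent computation inside $\Lambda$, which the eigenbasis $\B$ renders completely explicit.
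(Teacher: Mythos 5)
Your proof is correct. The paper in fact states Proposition~\ref{prop:properties-of-J-K-bimodules} without supplying a proof (it is introduced with ``we record a result that will be useful later on''), so there is no argument in the text to compare against directly; but your approach is in the same spirit as the nearby Lemmas~\ref{lemma:JotimesKotimesL} and~\ref{lemma:JotimesL}, lifted one level of abstraction. Where the authors work concretely with the averaging maps $\pi_\rho$, $\varphi_\rho$ on a fixed bimodule and finish with a dimension count, you package the same data structurally: every $M\in\mathcal{C}_{J,K}$ is a $J\otimes_F K$-module, the commutative ring $\Lambda=(J\cap K)\otimes_F(J\cap K)$ acts by bimodule endomorphisms, and the eigenbasis $\B_{J\cap K/F}$ makes the primitive idempotents $\varepsilon_\rho$ of $\Lambda\cong\prod_{\rho\in G}(J\cap K)$ explicit, so that $\pi_\rho$ is literally the action of $\varepsilon_\rho$. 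This gives (2)--(5) in one stroke rather than by separate verifications. For (1), your identification $J\otimes_F K\cong(JK)^n$ (using that $J/F$ Galois implies $[JK:K]=[J:J\cap K]$) combined with the matching $F$-dimension of $J^\rho\otimes_{J\cap K}K$ is exactly parallel to the dimension-count idea the paper uses to finish Lemmas~\ref{lemma:JotimesKotimesL} and~\ref{lemma:JotimesL}; the observation that every $u\in J^{\rho}\otimes_{J\cap K}K$ satisfies $ux=\rho(x)u$ is the right way to see pairwise non-isomorphy and also cleanly feeds the balancedness check in (6). I find no gaps.
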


\subsection{Skew-symmetrizable matrices and weighted quivers}\label{subsec:matrices-vs-quivers}

In this subsection we recall the notion of weighted quiver and the correspondence between skew-symmetrizable matrices and 2-acyclic weighted quivers.

Let $n$ be a positive integer. Recall that a square matrix $B\in\mathbb{Z}^{n\times n}$ is said to be \emph{skew-symmetrizable} if there exists a diagonal matrix $D=\diag(d_1,\ldots,d_n)$, with $d_1,\ldots,d_n\in\mathbb{Z}_{>0}$, such that $DB$ is skew-symmetric. We say that $D$ is a \emph{skew-symmetrizer of $B$}.

\begin{defi}\label{def:weighted-quiver} A \emph{weighted quiver} is a pair $(Q,\dtuple)$ constituted by a loop-free quiver $Q=(Q_0,Q_1,t,h)$ and a tuple $\dtuple=(d_i)_{i\in Q_0}$ that attaches a positive integer $d_i$ to each vertex $i$ of $Q$. We refer to $\dtuple$ as the \emph{weight tuple} of $(Q,\dtuple)$ and to each $d_i$ as a \emph{weight attached to $i$}.
\end{defi}

A weighted quiver will be said to be \emph{2-acyclic} if its underlying quiver does not have oriented cycles of length 2.

Let $B\in\mathbb{Z}^{n\times n}$ be a skew-symmetrizable matrix, and fix a skew-symmetrizer $D=\diag(d_1,\ldots,d_n)$ of $B$. As in \cite{LZ}, we associate to $B$ a 2-acyclic weighted quiver $(Q_B,\dtuple)$ as follows. The vertex set of $Q_B$ is $\{1,\ldots,n\}$, and $Q_B$ has exactly
$$
\frac{b_{ij}\gcd(d_i,d_j)}{d_j}=\frac{-b_{ji}\gcd(d_j,d_i)}{d_i}
$$
arrows from $j$ to $i$ whenever $b_{ij}\geq 0$. As for the tuple $\dtuple$, its $i^{\operatorname{th}}$ member is defined to be precisely $d_i$, the $i^{\operatorname{th}}$ diagonal entry of $D$. Since $B$ is skew-symmetrizable, it is clear that $(Q_B,\dtuple)$ is a 2-acyclic weighted quiver.

\begin{remark}
The rational number $\frac{b_{ij}\gcd(d_i,d_j)}{d_j}$ is indeed an integer: write $d_i=d_i'\gcd(d_i,d_j)$ and $d_j=d_j'\gcd(d_i,d_j)$. The equality $d_ib_{ij}=-d_jb_{ji}$ implies the equality $d_i'b_{ij}=-d_j'b_{ji}$, which in turn implies that the rational number
$\frac{b_{ij}}{d_j'}$ is an integer since $\gcd(d_i',d_j')=1$. So, $\frac{b_{ij}\gcd(d_i,d_j)}{d_j}=\frac{b_{ij}}{d_j'}$ is an integer.
\end{remark}

The following lemma is obvious.

\begin{lemma}\cite{LZ}\label{lemma:B<->(Q,d)} Fix a tuple $\dtuple=(d_1,\ldots,d_n)$ of positive integers. The correspondence $B\mapsto(Q_B,\dtuple)$ is a bijection between the set of all skew-symmetrizable matrices that have $D=\diag(d_1,\ldots,d_n)$ as a skew-symmetrizer, and the set of all 2-acyclic weighted quivers whose vertex set is $\{1,\ldots,n\}$ and whose weight tuple is precisely $\dtuple$.
\end{lemma}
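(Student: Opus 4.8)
The plan is to write down an explicit inverse to the assignment $B\mapsto(Q_B,\dtuple)$ and to verify that the two maps are mutually inverse; since both directions are governed by the single formula relating arrow multiplicities to matrix entries, this reduces to a short computation with integers, which is why the lemma is stated as obvious.

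First I would construct the candidate inverse. Given a $2$-acyclic weighted quiver $(Q,\dtuple)$ with vertex set $\{1,\dots,n\}$, denote by $a_{ij}$ the number of arrows from $j$ to $i$ in $Q$; $2$-acyclicity guarantees that for each unordered pair $\{i,j\}$ at most one of $a_{ij}$, $a_{ji}$ is nonzero. Define $B=(b_{ij})\in\Z^{n\times n}$ by $b_{ii}=0$, by
\[
b_{ij}=\frac{a_{ij}\,d_j}{\gcd(d_i,d_j)}\,,\qquad b_{ji}=-\frac{a_{ij}\,d_i}{\gcd(d_i,d_j)}
\]
whenever $a_{ij}>0$, and by $b_{ij}=b_{ji}=0$ whenever $a_{ij}=a_{ji}=0$. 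These numbers are integers since $\gcd(d_i,d_j)$ divides both $d_i$ and $d_j$, and the identity $d_ib_{ij}=\frac{a_{ij}d_id_j}{\gcd(d_i,d_j)}=-d_jb_{ji}$, together with $b_{ii}=0$, shows that $D=\diag(d_1,\dots,d_n)$ is a skew-symmetrizer of $B$. Hence $(Q,\dtuple)\mapsto B$ takes values in the set of skew-symmetrizable matrices with skew-symmetrizer $D$.

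Then I would check the two round trips. Starting from such a $B$: by definition $Q_B$ has exactly $\frac{b_{ij}\gcd(d_i,d_j)}{d_j}$ arrows from $j$ to $i$ when $b_{ij}\geq 0$, so applying the construction above to $(Q_B,\dtuple)$ returns, for $b_{ij}>0$, the entry $\frac{1}{\gcd(d_i,d_j)}\cdot\frac{b_{ij}\gcd(d_i,d_j)}{d_j}\cdot d_j=b_{ij}$, and it recovers $b_{ji}$ from the relation $d_ib_{ij}=-d_jb_{ji}$; since $D$ is a skew-symmetrizer we have $b_{ij}=0\iff b_{ji}=0$, so the pairs with $b_{ij}=b_{ji}=0$ are produced correctly as well, and we recover $B$. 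Conversely, starting from $(Q,\dtuple)$ and forming $B$ as above, the number of arrows from $j$ to $i$ in $Q_B$ is $\frac{b_{ij}\gcd(d_i,d_j)}{d_j}=a_{ij}$ when $a_{ij}>0$ and is $0$ otherwise, so $Q_B=Q$, while the weight tuple is unchanged by construction. The only point requiring a moment's care is the sign-and-orientation bookkeeping — that the trichotomy ``$a_{ij}>0$ / $a_{ji}>0$ / both zero'' for $(Q,\dtuple)$ matches the sign of $b_{ij}$ for $B$, and that $Q_B$ only ever places arrows in one of the two directions between $i$ and $j$ — and this follows immediately from $d_ib_{ij}=-d_jb_{ji}$ with $d_i,d_j>0$. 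I do not expect any genuine obstacle here; the content is simply the observation that the stated arrow-count formula is invertible over $\Z$.
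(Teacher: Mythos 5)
Your proof is correct, and it takes the only natural route: it writes down the explicit inverse $(Q,\dtuple)\mapsto B$ and verifies the two round trips by direct computation with the arrow-count formula. The paper itself provides no proof at all — it merely prefaces the statement with ``The following lemma is obvious'' and cites [LZ] — so your write-up supplies exactly the verification the authors were alluding to and not spelling out.
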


Because of this lemma, it makes sense to ask how Fomin-Zelevinsky's mutation rule for skew-symmetrizable matrices (cf. \cite{FZ1} or \cite[Equation (1.3)]{FZ2}) translates to the language of weighted quivers.

\begin{defi}\label{def:weighted-quiver-mutation}\cite{LZ} Let $(Q,\dtuple)$ be a 2-acyclic weighted quiver, and let $k\in Q_0$ be a vertex of $Q$. The \emph{mutation of $(Q,\dtuple)$} in direction $k$ is the weighted quiver $\mu_k(Q,\dtuple)=(\mu_k(Q),\dtuple)$, where $\mu_k(Q)$ is the quiver obtained from $Q$ by applying the following 3 steps:
\begin{itemize}
\item[(Step 1)] For each pair $a,b\in Q_1$ such that $h(a)=k=t(b)$, add $\frac{d_k d_{h(b),t(a)}}{d_{h(b),k}d_{k,t(a)}}$ ``composite'' arrows from $t(a)$ to $h(b)$, where $d_{i,j}=\gcd(d_i,d_j)$ for $i,j\in Q_0$;
\item[(Step 2)] replace each arrow $c$ incident to $k$ with an arrow $c^*$ going in the opposite direction;
\item[(Step 3)] choose a maximal collection of pairwise disjoint 2-cycles and delete it.
\end{itemize}
\end{defi}

\begin{remark}\label{rem:composite-arrows-indeed-integer} For each pair $a,b\in Q_1$ such that $h(a)=k=t(b)$, the rational number $\frac{d_k d_{h(b),t(a)}}{d_{h(b),k}d_{k,t(a)}}$ is indeed a positive integer: $\frac{d_k d_{h(b),t(a)}}{d_{h(b),k}d_{k,t(a)}}
    =\frac{d_k d_{h(b),t(a)}}{\operatorname{lcm}(d_{h(b),k},d_{k,t(a)})\gcd(d_{h(b),k},d_{k,t(a)})}$ and $\operatorname{lcm}(d_{h(b),k},d_{k,t(a)})$ divides $d_k$ since each of $d_{h(b),k}$ and $d_{k,t(a)}$ does, while $\gcd(d_{h(b),k},d_{k,t(a)})$ obviously divides $d_{h(b),t(a)}$.
\end{remark}

The next lemma says that Definition \ref{def:weighted-quiver-mutation} gives the correct translation of Fomin-Zelevinsky's matrix mutation rule to the language of weighted quivers.

\begin{lemma}\cite{LZ} Fix a tuple $\dtuple=(d_1,\ldots,d_n)$ of positive integers, and let $B\in\mathbb{Z}^{n\times n}$ be a skew-symmetrizable matrix having $D=\diag(d_1,\ldots,d_n)$ as a skew-symmetrizer. For every $k\in\{1,\ldots,n\}$, the weighted quivers $\mu_k(Q_B,\dtuple)$ and $(Q_{\mu_k(B)},\dtuple)$ are isomorphic as weighted quivers.
\end{lemma}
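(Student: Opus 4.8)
The plan is to verify directly that the weighted-quiver mutation rule of Definition~\ref{def:weighted-quiver-mutation} reproduces, on the level of the matrices associated via Lemma~\ref{lemma:B<->(Q,d)}, the Fomin--Zelevinsky mutation rule
\[
b'_{ij}=\begin{cases}-b_{ij}&\text{if }i=k\text{ or }j=k,\\ b_{ij}+\tfrac12\bigl(|b_{ik}|b_{kj}+b_{ik}|b_{kj}|\bigr)&\text{otherwise.}\end{cases}
\]
Since both $\mu_k(Q_B,\dtuple)$ and $Q_{\mu_k(B)}$ share the weight tuple $\dtuple$, and since (by Lemma~\ref{lemma:B<->(Q,d)}) a $2$-acyclic weighted quiver with a fixed weight tuple is determined by its number of arrows between each ordered pair of vertices, it suffices to check that $\mu_k(Q_B)$ has, for each ordered pair $(i,j)$ with $i,j\neq k$, exactly as many arrows from $j$ to $i$ as $Q_{\mu_k(B)}$ does, and similarly for pairs involving $k$. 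The pairs involving $k$ are immediate: Step~2 of the mutation reverses every arrow incident to $k$, which matches the sign reversal $b'_{ki}=-b_{ki}$, $b'_{ik}=-b_{ik}$, and this sign reversal does not disturb skew-symmetrizability by the same skew-symmetrizer $D$.

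The substance is in the pairs $(i,j)$ with $i,j\neq k$. First I would translate the quantities appearing in the matrix mutation rule into arrow counts. Writing $d_{p,q}=\gcd(d_p,d_q)$, recall that when $b_{pq}\ge 0$ the number of arrows $q\to p$ in $Q_B$ is $b_{pq}d_{p,q}/d_q$. One then fixes $i,j\neq k$ and splits into cases according to the signs of $b_{ik}$ and $b_{kj}$ (equivalently, the directions of the arrows at $k$), since the correction term $\tfrac12(|b_{ik}|b_{kj}+b_{ik}|b_{kj}|)$ is $b_{ik}b_{kj}$ when both factors have the appropriate sign to create a path $j\to k\to i$, is $0$ when they do not, and in the symmetric direction contributes to the count of $i\to j$ arrows instead. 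In the path-forming case, the number of length-two paths $j\to k\to i$ through $k$ is $\bigl(b_{ik}d_{i,k}/d_k\bigr)\bigl(b_{kj}d_{k,j}/d_j\bigr)$, and each such pair $(a,b)$ contributes, by Step~1, $d_k d_{i,j}/(d_{i,k}d_{k,j})$ composite arrows $j\to i$; multiplying, the total number of new $j\to i$ arrows produced by Step~1 is
\[
\frac{b_{ik}d_{i,k}}{d_k}\cdot\frac{b_{kj}d_{k,j}}{d_j}\cdot\frac{d_k d_{i,j}}{d_{i,k}d_{k,j}}=\frac{b_{ik}b_{kj}d_{i,j}}{d_j},
\]
which is exactly $d_{i,j}/d_j$ times the correction term $b_{ik}b_{kj}$ — precisely the arrow-count increment demanded by the matrix rule. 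The remaining bookkeeping is to add this to the number $b_{ij}d_{i,j}/d_j$ of pre-existing $j\to i$ arrows, observe that the corresponding count of $i\to j$ arrows in the opposite case combines analogously, and then note that Step~3 (removing a maximal collection of disjoint $2$-cycles between $i$ and $j$) implements the possible cancellation between $j\to i$ and $i\to j$ arrows, yielding the net count $|b'_{ij}|d_{i,j}/d_j$ with the correct orientation. One also checks in passing that $\mu_k(B)$ is again skew-symmetrizable with skew-symmetrizer $D$ (standard), so that Lemma~\ref{lemma:B<->(Q,d)} applies to it and the isomorphism of weighted quivers follows from equality of arrow counts.

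The main obstacle is purely organizational rather than conceptual: one must handle uniformly the several sign cases for $(b_{ik},b_{kj})$ together with the $2$-cycle-cancellation in Step~3, making sure that the direction of the surviving arrows after cancellation agrees with the sign of $b'_{ij}$, and that the case $b_{ij}$ of either sign is covered. A clean way to avoid case proliferation is to work throughout with the ``signed arrow count'' $\beta_{ij}:=b_{ij}d_{i,j}/d_j$ (positive meaning arrows $j\to i$, negative meaning arrows $i\to j$), to verify the single identity that Step~1 changes the signed count by $\tfrac12(|\beta_{ik}|\beta_{kj}+\beta_{ik}|\beta_{kj}|)\cdot(\text{appropriate gcd ratio})$ reproducing the Fomin--Zelevinsky increment after clearing denominators, and to observe that Steps~2 and~3 together just record $\beta\mapsto-\beta$ at $k$ and the passage from a formal signed count to an honest $2$-acyclic arrow multiset. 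The integrality facts needed along the way are exactly those already isolated in Remark~\ref{rem:composite-arrows-indeed-integer} and the remark following Lemma~\ref{lemma:B<->(Q,d)}, so no new arithmetic input is required; this is why the lemma is, as the text says, obvious.
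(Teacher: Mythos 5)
Your proof is correct, and since the paper itself offers no proof of this lemma (it is simply cited from [LZ]), there is no in-paper argument to compare against; the direct verification you give — translating both sides to arrow counts, checking the Step~1 composite-arrow count matches the Fomin--Zelevinsky correction term $\tfrac12(|b_{ik}|b_{kj}+b_{ik}|b_{kj}|)$ multiplied by $d_{i,j}/d_j$, handling arrows at $k$ via Step~2, and noting that Step~3 realizes the cancellation implicit in the signed matrix entries — is exactly the computation one expects behind the citation. Your central identity
\[
\frac{b_{ik}d_{i,k}}{d_k}\cdot\frac{b_{kj}d_{k,j}}{d_j}\cdot\frac{d_k d_{i,j}}{d_{i,k}d_{k,j}}=\frac{b_{ik}b_{kj}d_{i,j}}{d_j}
\]
is right, and the observation that at most one of the two path directions $j\to k\to i$, $i\to k\to j$ can occur (since $Q_B$ is $2$-acyclic, so $b_{ik}$ and $b_{kj}$ each have a definite sign) keeps the case analysis manageable. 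The one place where I would tighten the exposition: when $b_{ik}b_{kj}>0$ but the created composite arrows run $i\to j$ while the preexisting arrows run $j\to i$ (i.e.\ $b_{ij}>0>b_{ij}+\tfrac12(|b_{ik}|b_{kj}+b_{ik}|b_{kj}|)$ is impossible, but $b_{ij}\ge 0$ with correction negative is possible), you should explicitly record that after Step~3 the \emph{direction} of the surviving arrows is the one with the larger raw count and that this agrees with the sign of $b'_{ij}$; you gesture at this with the ``signed arrow count'' $\beta_{ij}$, which is the clean way to do it, but the claim ``yielding the net count $|b'_{ij}|d_{i,j}/d_j$ with the correct orientation'' deserves the one-line justification that $b_{ij}$ and the correction always produce a well-defined sign (no ambiguity), which follows because the correction is $0$ unless $b_{ik}b_{kj}>0$, in which case its sign is $\mathrm{sgn}(b_{ik})$. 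With that remark inserted, the argument is complete.
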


\subsection{Species realizations of skew-symmetrizable matrices}\label{subsec:species-realizations}

The following definition is an adaptation of Dlab-Ringel's notion of \emph{modulation of a valued graph} (cf. \cite{DR}) to the setting of skew-symmetrizable matrices.

\begin{defi}\label{def:species-realization} Let $B\in\mathbb{Z}^{n\times n}$ be a skew-symmetrizable matrix, and set $Q_0=\{1,\ldots,n\}$. A \emph{species realization} of $B$ is a pair $(\mathbf{F},\mathbf{A})$ such that:
\begin{enumerate}
\item $\mathbf{F}=(F_i)_{i\in Q_0}$ is a tuple of division rings;
\item $\mathbf{A}$ is a tuple consisting of an $F_i$-$F_j$-bimodule $A_{ij}$ for each pair $(i,j)\in Q_0\times Q_0$ such that $b_{ij}> 0$;
\item for every pair $(i,j)\in Q_0\times Q_0$ such that $b_{ij}> 0$, there are $F_j$-$F_i$-bimodule isomorphisms
$$
\operatorname{Hom}_{F_i}(A_{ij},F_i)\cong\operatorname{Hom}_{F_j}(A_{ij},F_j);
$$
\item for every pair $i,j\in Q_0$ such that $b_{ij}> 0$ we have $\operatorname{dim}_{F_i}(A_{ij})=b_{ij}$ and $\operatorname{dim}_{F_j}(A_{ij})=-b_{ji}$.
\end{enumerate}
\end{defi}

The next question is motivated by Derksen-Weyman-Zelevinsky's mutation theory of quivers with potential.

\begin{question}\label{question:non-deg-SPs?} Can a mutation theory of species with potential be defined so that every skew-symmetrizable matrix $B$ have a species realization which admit a non-degenerate potential?
\end{question}

To the best of our knowledge, this question has not been fully answered. Partial answers to this question have been given\footnote{In chronological order according to appearance in arXiv.} by Derksen-Weyman-Zelevinsky \cite{DWZ1}, Demonet \cite{Demonet}, Nguefack \cite{Nguefack}, Labardini-Fragoso--Zelevinsky \cite{LZ}, Bautista--L\'opez-Aguayo \cite{Bautista-Lopez}, who give partially positive answers by restricting their attention to certain particular classes of skew-symmetrizable matrices and certain specific types of species realizations:

\begin{enumerate}
\item In their seminal paper  \cite{DWZ1}, Derksen-Weyman-Zelevinsky give a full positive answer for the class of skew-symmetric matrices;
\item in \cite{Demonet} Demonet develops a mutation theory of \emph{group species with potentials}, and gives a full positive answer for two classes of matrices, namely, those that are mutation-equivalent to \emph{acyclic} skew-symmetrizable matrices, and those skew-symmetrizable matrices that are of the form $B=CB'$ for some skew-symmetric matrix $B'$ and some diagonal matrix $C=\diag(c_1,\ldots,c_n)$ with $c_1,\ldots,c_n\in\mathbb{Z}_{>0}$ (as mentioned above, Demonet works with \emph{group species}, which is a notion of species different from the one given in Definition \ref{def:species-realization} --it attaches a group algebra to each $i\in Q_0$ instead of a division ring);
\item  Nguefack \cite{Nguefack} has given a general mutation rule for species with potential; however, in that generality the question of existence of species realizations admitting non-degenerate potentials seems hard to address;
\item in \cite{LZ}, Labardini-Fragoso--Zelevinsky give a partially positive answer to Question \ref{question:non-deg-SPs?} for the skew-symmetrizable matrices $B$ that admit a skew-symmetrizer with pairwise coprime diagonal entries. More precisely, they show that for every such $B\in\mathbb{Z}^{n\times n}$ and every finite sequence $(i_1,\ldots,i_\ell)$ of elements of $\{1,\ldots,n\}$ there exist a species realization of $B$ over finite fields and a potential on this species to which the finite SP-mutation sequence $(\mu_{i_1}, \ \mu_{i_2}\mu_{i_1}, \ \ldots \ , \ \mu_{i_\ell}\cdots\mu_{i_2}\mu_{i_1})$ can be applied producing 2-acyclic SPs along the way;
\item in their very recent paper \cite{Bautista-Lopez}, Bautista--L\'opez-Aguayo give a partially positive answer to Question \ref{question:non-deg-SPs?} for the skew-symmetrizable matrices $B=(b_{ij})_{i,j}$ that admit a skew-symmetrizer $D=\diag(d_1,\ldots,d_n)$ with the property that each $d_j$ divides each and every member $b_{ij}$ of the $j^{\operatorname{th}}$ column of $B$. More precisely, stated as one of the main results in \cite{Bautista-Lopez} is the existence, for every such $B\in\mathbb{Z}^{n\times n}$ and every finite sequence $(i_1,\ldots,i_\ell)$ of elements of $\{1,\ldots,n\}$, of a species realization of $B$ and a potential on this species to which the finite SP-mutation sequence $(\mu_{i_1}, \ \mu_{i_2}\mu_{i_1}, \ \ldots \ , \ \mu_{i_\ell}\cdots\mu_{i_2}\mu_{i_1})$ can be applied producing 2-acyclic SPs along the way.
\end{enumerate}

In this paper we will give a general mutation rule for species with potential\footnote{General in the sense that it will be valid for species realizations of arbitrary skew-symmetrizable matrices.}, then we will show, by means of Example \ref{ex:6263-cycle-species}, that this mutation rule does not provide a full positive answer to Question \ref{question:non-deg-SPs?} for arbitrary skew-symmetrizable matrices, after which we will consider a subclass of a class of skew-symmetrizable matrices associated by Felikson-Shapiro-Tumarkin \cite{FeShTu-orbifolds} to (tagged) triangulations of surfaces with orbifold points of order 2  (see Remark \ref{rem:tau=t_1(tau)}). We will construct an explicit species realization for each matrix in this class of matrices, define an explicit potential on this species, and prove that whenever two tagged triangulations are related by a flip, the corresponding species with potential are related by the mutation rule we define. This compatibility between flips and SP-mutations will easily yield a positive answer to Question \ref{question:non-deg-SPs?} for the subclass under consideration.

\section{A few general considerations regarding species with potential}
\label{sec:SPs}

%The contents of this section were found by A. Zelevinsky and the second author of this paper a few years ago.
This section is devoted to presenting the algebraic setting for our species realizations of skew-symmetrizable matrices, and to developing a generalization of Derksen-Weyman-Zelevinsky's definition of mutations of quivers with potential to such species realizations. Our definition of mutation of species with potential is general in the sense that any skew-symmetrizable matrix has species realizations to which the definition can be applied. However, we will see that there exist skew-symmetrizable matrices whose species realizations (over certain cyclic Galois extensions) never admit non-degenerate potentials.

Most of the contents of this section are directly motivated by \cite{DWZ1}. In particular, Proposition \ref{prop:automorphisms}, Definitions \ref{def:automorphisms}, \ref{def:cyclic-stuff}, \ref{def:direct-sum-of-SPs}, \ref{def:red-and-trivial-SPs}, \ref{def:reduced-and-trivial-parts},  and \ref{def:SP-premutation}, and Theorems \ref{thm:splitting-theorem}, \ref{thm:SP-mut-well-defined-up-to-re} and \ref{thm:SP-mutation-is-involution} below, try to follow the guidelines of \cite{DWZ1}.

Let $(Q,\dtuple)$ be a weighted quiver. Let $d$ be the least common multiple of the integers that conform the tuple $\dtuple$.
Throughout this section we will suppose that
\begin{equation}\label{eq:root-of-unity}
\text{$F$ is a field containing a primitive $d^{\operatorname{th}}$ root of unity, and}
\end{equation}
\begin{equation}\label{eq:E/F-degree-d}
\text{$E$ is a degree-$d$ cyclic Galois field extension of $F$.}
\end{equation}

The assumption  \eqref{eq:E/F-degree-d} implies that
\begin{equation}\label{eq:Fi/F=degree-di-extension}
\text{for each $i\in Q_0$ there exists a unique degree-$d_i$ cyclic Galois subextension $F_i/F$ of $E/F$,}
\end{equation}
while \eqref{eq:root-of-unity} and \eqref{eq:E/F-degree-d} together imply that
\begin{equation}\label{eq:eigenbasis}
\text{there exists an element $v\in E$ such that $\B_E=\{1,v,\ldots,v^{d-1}\}$ is an eigenbasis of $E/F$,}
\end{equation}
that is, an $F$-vector space basis of $E$ consisting of eigenvectors of all elements of the Galois group $\Gal(E/F)$.

Throughout the paper we will use the following notation:
\begin{center}
{\renewcommand{\arraystretch}{1.5}
\renewcommand{\tabcolsep}{0.5cm}
\begin{tabular}{cc}
$d_{i,j} = \gcd(d_i,d_j)$ & $F_{i,j} = F_i\cap F_j$\\
$v_{i,j} = v^{\frac{d}{d_{i,j}}}$ & $v_i = v^{\frac{d}{d_i}}$\\
$\B_{i,j} = \{1,v_{i,j},v_{i,j}^2,\ldots,v_{i,j}^{d_{i,j}-1}\}=\B_E\cap F_{i,j}$ & $\B_i = \{1,v_i,\ldots,v_i^{d_i-1}\}=\B_E\cap F_i$\\
$G_{i,j} = \Gal(F_{i,j}/F)$ & $G_{i} = \Gal(F_i/F)$
\end{tabular}}
\end{center}

%\begin{eqnarray}
%d_{i,j} &=& \gcd(d_i,d_j)\\
%\nonumber
%F_{i,j} &=& F_i\cap F_j\\
%\nonumber
%v_{i,j} &=& v^{\frac{d}{d_{i,j}}}\\
%\nonumber
%\B_{i,j} &=& \{1,v_{i,j},v_{i,j}^2,\ldots,v_{i,j}^{d_{i,j}-1}\}=\B_E\cap F_{i,j}\\
%\nonumber
%G_{i,j} &=& \Gal(F_{i,j}/F)\\
%\nonumber
%v_i &=& v^{\frac{d}{d_i}}\\
%\nonumber
%\B_i &= &\{1,v_i,\ldots,v_i^{d_i-1}\}=\B_E\cap F_i\\
%\nonumber
%G_{i} &=& \Gal(F_i/F).
%\end{eqnarray}

Notice that
$\B_i$ is an eigenbasis of $F_i/F$, and $\B_{i,j}$ is an eigenbasis of $F_{i,j}/F$.
%\begin{equation}\label{eq:compatible-eigenbases-Bi}
%\text{$\B_i$ is an eigenbasis of $F_i/F$, and}
%\end{equation}
%\begin{equation}\label{eq:compatible-eigenbases-Ba}
%\text{$\B_{i,j}$ is an eigenbasis of $F_{i,j}/F$.}
%\end{equation}
Note also that
$v^d\in F$, from which one can easily deduce that there are functions $f:\B_E\times\B_E\rightarrow F^\times$ and $m:\B_E\times\B_E\rightarrow\B_E$ such that
$uu'=f(u,u')m(u,u')$ for all $u,u'\in\B_E$.
%\begin{equation}\label{eq:mult-property-of-eigenbases}
%uu'=f(u,u')m(u,u') \ \ \text{for all} \ u,u'\in\B_E.
%\end{equation}
These functions have the extra property that $m(u,u')\in\B_i$ for all $u,u'\in\B_i$.

\begin{ex} For $d=2$, the extension $\C/\R$ obviously satisfies \eqref{eq:root-of-unity} and \eqref{eq:E/F-degree-d}, and the element $v\in\C$ in \eqref{eq:eigenbasis} can be taken to be an imaginary number whose square equals $-1$.
\end{ex}

\begin{defi}\label{def:modulating-function} Let $(Q,\dtuple)$ be a weighted quiver and let $E/F$ be a field extension satisfying \eqref{eq:root-of-unity} and \eqref{eq:E/F-degree-d}. If $g:Q_1\rightarrow\bigcup_{i,j\in Q_0}G_{i,j}$ is a function that to each arrow $a\in Q_1$ associates an element $g_a$ of $G_{h(a),t(a)}=\Gal(F_{h(a)}\cap F_{t(a)}/F)$, we will say that $g$ is a \emph{modulating function for $(Q,\dtuple)$ over $E/F$}.
\end{defi}

Suppose we have a modulating function $g:Q_1\rightarrow\bigcup_{i,j\in Q_0}G_{i,j}$ for $(Q,\dtuple)$ over $E/F$. Setting
\begin{equation}\label{eq:R-and-Ag-for-modulating-function}
R=\bigoplus_{i\in Q_0} F_i \ \ \ \text{and} \ \ \ A=\bigoplus_{a\in Q_1}F_{h(a)}^{g_a}\otimes_{F_{h(a),t(a)}} F_{t(a)},
\end{equation}
we see that $R$ is a semisimple commutative $F$-algebra and $A$ is an $R$-$R$-bimodule. We stress the fact that $A$ depends essentially not only on $(Q,\dtuple)$, but also on $g$ and $E/F$.

\begin{defi}\label{def:species-Ag} We will say that $A$ is the \emph{species} of $(Q,\dtuple,g)$ and $R$ is the \emph{vertex span} of $(Q,\dtuple,g)$.
\end{defi}

We will adopt the following conventions:
\begin{enumerate}
\item We shall identify each arrow $a\in Q_1$ with the element $1\otimes 1$ of the direct summand corresponding to $a$ in \eqref{eq:R-and-Ag-for-modulating-function};
\item we will write $e_i$ to denote the element of $R$ that has a 1 at the $i^{\operatorname{th}}$ component and $0$ elsewhere.
\end{enumerate}
With these conventions, the $R$-$R$-bimodule structure of $A$ referred to above is given by the rules
\begin{equation}\label{eq:Ag-standard-bimod-structure}
\left(\sum_{i\in Q_0}\lambda_ie_i\right)(xay)=\lambda_{h(a)}xay \ \ \ \text{(left action)} \ \ \ \ \ \text{and}\ \ \ \ \ (xay)\left(\sum_{i\in Q_0}\lambda_ie_i\right)=xay\lambda_{t(a)} \ \ \ \text{(right action)}
\end{equation}
for $\sum_{i\in Q_0}\lambda_ie_i\in R$, $a\in Q_1$, $x\in F_{h(a)}$ and $y\in F_{t(a)}$.

\begin{remark}\begin{enumerate}\item Notice that the left and right actions of $R$ on $A$ make no reference to the modulating function $g$. The place where $g$ plays a role is in the passage of scalars through the arrows (from left to right and from right to left); more formally, for $a\in Q_1$ and $x\in F_{h(a),t(a)}$ we have $ax=g_a(x)a$ and $xa=ag_{a}^{-1}(x)$.
\item Our use of the term ``species'' in Definition \ref{def:species-Ag} is motivated by the following: Let $B\in\mathbb{Z}^{n\times n}$ be a skew-symmetrizable matrix, and fix a skew-symmetrizer $D$ of $B$. Suppose $(Q,\dtuple)=(Q_B,\dtuple)$ is the weighted quiver we have associated to $B$ in Subsection \ref{subsec:matrices-vs-quivers}. Take any modulating function $g$ for $(Q,\dtuple)$ over $E/F$, and let $A$ be the species of $(Q,\dtuple,g)$. Then, setting $\mathbf{F}=(F_i)_{i\in Q_0}$ and $\mathbf{A}=(e_iAe_j\suchthat (i,j)\in Q_0\times Q_0, b_{ij}> 0)$, it is not hard to see that $(\mathbf{F},\mathbf{A})$ is a species realization of $B$ (see Definition \ref{def:species-realization}).
\end{enumerate}
\end{remark}

\begin{defi} \begin{itemize}\item The \emph{path algebra} of $A$ (or of $(Q,\dtuple,g)$) is the tensor algebra $R\langle A\rangle$ of $A$ over $R$.% (see Definition \ref{def:tensor-algebra-&-complete-tensor-algebra}).
\item The \emph{complete path algebra} of $A$ (or of $(Q,\dtuple,g)$) is the complete tensor algebra $\RA{A}$ of $A$ over $R$.% (see Definition \ref{def:tensor-algebra-&-complete-tensor-algebra}).
\end{itemize}
\end{defi}

\begin{defi}\label{def:path} A \emph{path of length $\ell$} on $(Q,\dtuple,g)$ is an element $\omega_0a_1\omega_1a_2\ldots \omega_{\ell-1}a_\ell\omega_\ell\in\RA{A}$, where
\begin{itemize}
\item $a_1,\ldots,a_\ell$, are arrows of $Q$ such that $h(a_{r+1})=t(a_r)$ for $r=1,\ldots,\ell-1$;
\item $\omega_0\in\B_{h(a_1)}$ and $\omega_r\in\B_{t(a_r)}$ for $r=1,\ldots,\ell$.
\end{itemize}
Here we are assuming that the eigenbasis $\B_E$, and hence the eigenbases $\B_i$ for $i\in Q_0$, have been \emph{a priori} fixed.
\end{defi}

Note that a path of length $0$ is just an element of the form $\omega e_i$, where $\omega$ belongs to an eigenbasis $\B_i$ and $e_i$ is the idempotent of $R$ sitting in the $i^{\operatorname{th}}$ component (hence there are $d_i$ paths of length $0$ sitting at each vertex $i$ of $Q$). Note also that while every arrow is a path of length $1$, not every path of length $1$ is an arrow. (That every arrow is a path of length $1$ follows from the fact that the element $1\in F$ belongs to each one of the eigenbases $\B_i$).

The set of all length-$\ell$ paths spans $A^\ell$ as an $F$-vector space, although it is not necessarily linearly independent over $F$.

Let ${\mathfrak m} = {\mathfrak m}(A)$ denote the (two-sided) ideal
of $\RA{A}$ given by
${\mathfrak m}  = {\mathfrak m}(A) = \prod_{\ell=1}^\infty A^\ell$.
%Thus the powers of~${\mathfrak m}$ are given by
%$$
%{\mathfrak m}^n = \prod_{\ell=n}^\infty A^\ell.
%$$
Following \cite{DWZ1}, we view $\RA{A}$ as a topological $F$-algebra
via the \emph{${\mathfrak m}$-adic topology}, whose basic system of open neighborhoods around~$0$ is given by
the powers of ${\mathfrak m}$.
So, just as in \cite{DWZ1}, the closure of any subset $W \subseteq \RA{A}$
is given by
\begin{equation}\label{eq:closure}
\overline W = \bigcap_{n=0}^\infty (W + {\mathfrak m}^n).
\end{equation}
It is clear that $\pathalg{A}$ is dense in $\RA{A}$.

The ideal $\maxid$ satisfies the basic properties one would expect (cf.\ \cite[Section 2]{DWZ1}). For instance, $\maxid$ is
maximal amongst the two-sided ideals of $\RA{A}$
that have zero intersection with $R$.
Moreover,
if $(Q,\dtuple)$ and $(Q',\dtuple)$ are weighted quivers on the same vertex set and with the same weight function $\dtuple$, then any $R$-algebra homomorphism $\varphi: \RA{A}\rightarrow \RA{A'}$ sends $\maxid=\maxid(A)$ into $\maxid'=\maxid(A')$, and is hence
continuous.
Furthermore, such a ~$\varphi$ is uniquely determined by its
restriction to $A$, which is an $R$-$R$-bimodule homomorphism
$A \to \maxid' = A' \oplus (\maxid')^2$.
We write $\varphi|_{A} = (\varphi^{(1)}, \varphi^{(2)})$, where
$\varphi^{(1)}:A \to A'$ and $\varphi^{(2)}:A \to (\maxid')^2$ are
$R$-$R$-bimodule homomorphisms.

\begin{prop} \label{prop:automorphisms}
Any pair $(\varphi^{(1)}, \varphi^{(2)})$
of $R$-$R$-bimodule homomorphisms
$\varphi^{(1)}:A \to A'$ and $\varphi^{(2)}:A \to (\maxid')^2$
gives rise to a unique continuous $R$-algebra homomorphism
$\varphi: \RA{A} \to \RA {A'}$
such that $\varphi|_{A} = (\varphi^{(1)}, \varphi^{(2)})$.
Furthermore, $\varphi$ is an isomorphism
if and only if $\varphi^{(1)}$ is an $R$-$R$-bimodule isomorphism, in which case there exists a quiver isomorphism $\psi:Q\rightarrow Q'$ that fixes the vertex set pointwise and has the property that $g_{\psi(a)}=g_a$ for every arrow $a\in Q_1$.
\end{prop}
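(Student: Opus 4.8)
The plan is to break the statement into three parts: the construction and uniqueness of $\varphi$; the equivalence ``$\varphi$ is an isomorphism $\Longleftrightarrow$ $\varphi^{(1)}$ is an isomorphism''; and the extraction of the quiver isomorphism $\psi$ when $\varphi$ is an isomorphism. For the first part I would simply observe that a pair $(\varphi^{(1)},\varphi^{(2)})$ of $R$-$R$-bimodule homomorphisms $\varphi^{(1)}\colon A\to A'$, $\varphi^{(2)}\colon A\to(\maxid')^2$ is the same datum as a single $R$-$R$-bimodule homomorphism $\varphi^{(1)}+\varphi^{(2)}\colon A\to\maxid'=\prod_{\ell\geq 1}(A')^{\ell}$, and invoke Proposition~\ref{prop:univ-property-of-R<<A>>} verbatim to obtain the unique $R$-algebra homomorphism $\varphi\colon\RA{A}\to\RA{A'}$ extending it. Concretely $\varphi$ sends a path $\omega_0a_1\cdots a_\ell\omega_\ell$ to $\omega_0(\varphi^{(1)}+\varphi^{(2)})(a_1)\cdots(\varphi^{(1)}+\varphi^{(2)})(a_\ell)\omega_\ell\in(\maxid')^{\ell}$ and is then forced on infinite sums; in particular $\varphi(\maxid^{n})\subseteq(\maxid')^{n}$ for all $n$, which is the asserted continuity (and is also a special case of the remark, recorded just before the statement, that every $R$-algebra homomorphism of such complete path algebras carries $\maxid$ into $\maxid'$, so ``$R$-algebra homomorphism'' and ``continuous $R$-algebra homomorphism'' coincide here).

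For the equivalence, the ``only if'' direction is short: if $\varphi$ is invertible, then $\varphi^{-1}$ is again an $R$-algebra homomorphism, hence carries $\maxid'$ into $\maxid$, so one may write $\varphi^{-1}|_{A'}=(\eta^{(1)},\eta^{(2)})$ with $\eta^{(1)}\colon A'\to A$ and $\eta^{(2)}\colon A'\to\maxid^{2}$; extracting the degree-$1$ homogeneous components from $\varphi\circ\varphi^{-1}=\mathrm{id}$ and $\varphi^{-1}\circ\varphi=\mathrm{id}$ — using that $\varphi$ and $\varphi^{-1}$ raise $\maxid$-adic order, so that the degree-$1$ part of $\varphi$ on $A$ is precisely $\varphi^{(1)}$ — yields $\varphi^{(1)}\circ\eta^{(1)}=\mathrm{id}_{A'}$ and $\eta^{(1)}\circ\varphi^{(1)}=\mathrm{id}_{A}$. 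For the ``if'' direction I would run the standard associated-graded argument: for each $\ell$ the map $A^{\ell}\to(A')^{\ell}$ obtained from $\varphi|_{A^{\ell}}$ by projecting onto the degree-$\ell$ component agrees on the spanning paths with $(\varphi^{(1)})^{\otimes_{R}\ell}$, hence equals it, and is therefore an $R$-$R$-bimodule isomorphism whenever $\varphi^{(1)}$ is. Injectivity of $\varphi$ then follows by inspecting the lowest-degree nonzero homogeneous component of a putative kernel element (it is mapped to something nonzero of the same degree), and surjectivity by solving $\varphi(x)=y$ degree by degree — choosing $x_{n}\in A^{n}$ with $y-\varphi(x_{0}+\cdots+x_{n})\in(\maxid')^{n+1}$ — so that $\sum_{n}x_{n}$ converges in the complete ring $\RA{A}$ to a preimage of $y$ by continuity of $\varphi$.

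For the quiver isomorphism, assume $\varphi^{(1)}\colon A\to A'$ is an $R$-$R$-bimodule isomorphism. Since $Q$ and $Q'$ share the vertex set and the weight tuple $\dtuple$, the ring $R=\bigoplus_i F_i$ and the fields $F_i$, $F_{i,j}$ are common to both species; restricting $\varphi^{(1)}$ to the summands indexed by an ordered pair $(i,j)$ of vertices gives, for each such pair, an $F_i$-$F_j$-bimodule isomorphism
$$
\bigoplus_{a\colon j\to i\text{ in }Q}F_i^{g_a}\otimes_{F_{i,j}}F_j\ \xrightarrow{\ \sim\ }\ \bigoplus_{a\colon j\to i\text{ in }Q'}F_i^{g_a}\otimes_{F_{i,j}}F_j
$$
between objects of the category $\mathcal{C}_{F_i,F_j}$ (these bimodules lie in $\mathcal{C}_{F_i,F_j}$ because the labels $g_a$ fix $F$ pointwise, so $F$ acts centrally, and they are finite-dimensional over $F$). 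By Proposition~\ref{prop:properties-of-J-K-bimodules}(1) the summands $F_i^{\rho}\otimes_{F_{i,j}}F_j$, $\rho\in G_{i,j}$, are precisely the pairwise non-isomorphic simple objects of this category, which is moreover semisimple; hence the Krull--Schmidt theorem forces, for every $\rho\in G_{i,j}$, the number of arrows $j\to i$ in $Q$ labelled $\rho$ by $g$ to equal the number of arrows $j\to i$ in $Q'$ labelled $\rho$ by $g$. Choosing, for each $(i,j)$ and each $\rho$, a bijection between the corresponding arrow sets and assembling these bijections over all $(i,j)$ and all $\rho$ produces the desired quiver isomorphism $\psi\colon Q\to Q'$, which fixes $Q_0$ pointwise and satisfies $g_{\psi(a)}=g_a$ for all $a\in Q_1$.

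I expect the one genuinely new point — the only place where this differs substantially from the quiver case of \cite{DWZ1} — to be the last step: one must know that the twisted bimodules $F_i^{\rho}\otimes_{F_{i,j}}F_j$ with distinct $\rho$ are non-isomorphic simple objects of a semisimple category, so that their multiplicities, hence the Galois labels $g_a$, are isomorphism invariants of the species. This is exactly what Proposition~\ref{prop:properties-of-J-K-bimodules} supplies, so the argument goes through once that proposition is available; the filtration bookkeeping in the equivalence step is routine given that $\RA{A}$ is $\maxid$-adically complete and $\varphi$ is filtered.
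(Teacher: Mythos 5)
Your proof is correct and is precisely the ``suitable modification of the proof of Proposition~2.4 of [DWZ1]'' that the paper alludes to but does not spell out. The universal-property and $\maxid$-adic filtration arguments transplant the DWZ1 argument verbatim to the species setting, and you have correctly isolated the one genuinely new ingredient: by Proposition~\ref{prop:properties-of-J-K-bimodules} the twisted bimodules $F_i^\rho\otimes_{F_{i,j}}F_j$ are pairwise non-isomorphic simples in a semisimple category, so the Galois labels $g_a$ are isomorphism invariants of $A$ and the arrow bijection $\psi$ with $g_{\psi(a)}=g_a$ drops out.
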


\begin{proof}
A suitable modification of the proof of Proposition 2.4 of \cite{DWZ1} applies here.
\end{proof}

\begin{remark} Simply laced quivers correspond to weighted quivers $(Q,\dtuple)$ for which the tuple $\dtuple$ consists entirely of 1s. For such a weighted quiver, giving $R$-$R$-bimodule homomorphisms $\varphi^{(1)}:A \to A$ and $\varphi^{(2)}:A \to (\maxid)^2$ is equivalent to providing tuples $(b_a)_{a\in Q_1}$ and $(u_a)_{a\in Q_1}$ subject to the conditions that $b_a\in e_{h(a)}Ae_{t(a)}$ and $u_a\in e_{h(a)}\maxid^2e_{t(a)}$ for every
$a\in Q_1$. The first of these two conditions means that, for every $a$, the element $b_a$ be an $F$-linear combination of the arrows of $Q$ that go from $t(a)$ to $h(a)$, while the second one means that $u_a$ be an $F$-linear combination of the paths of length at least 2 that go from $t(a)$ to $h(a)$. Hence, giving $R$-$R$-bimodule homomorphisms $\varphi^{(1)}:A \to A$ and $\varphi^{(2)}:A \to (\maxid)^2$ becomes rather easy, one just has to be careful with the starting and ending points of the elements $b_a$ and $u_a$ for $a\in Q_1$.

For weighted quivers whose weight tuple $\dtuple$ does not consist only of 1s the situation becomes more complicated, as one is required to obey an extra constraint besides the conditions that $b_a\in e_{h(a)}Ae_{t(a)}$ and $u_a\in e_{h(a)}\maxid^2e_{t(a)}$ for every $a\in Q_1$. Indeed, in order for the tuples $(b_a)_{a\in Q_1}$ and $(u_a)_{a\in Q_1}$ to induce $R$-$R$-bimodule homomorphisms $\varphi^{(1)}:A \to A$ and $\varphi^{(2)}:A \to (\maxid)^2$, one must have $b_a\in\image\pi_{g_a}$ and $u_a\in\image\pi_{g_a}$, where, as in Proposition \ref{prop:properties-of-J-K-bimodules}, $\pi_{g_a}:e_{h(a)}\RA{A}e_{t(a)}\rightarrow e_{h(a)}\RA{A}e_{t(a)}$
is the $F_{h(a)}$-$F_{t(a)}$-bimodule homomorphisms given by the rule
$$
m\mapsto\frac{1}{d_{h(a),t(a)}}\sum_{\omega\in\B_{h(a),t(a)}} g_a(\omega^{-1})m\omega.
$$
The condition that $b_a\in\image\pi_{g_a}$ can be translated to the requirement that $b_a$ be an $F_{h(a)}$-$F_{t(a)}$-bilinear combination of the arrows $c$ that go from $t(a)$ to $h(a)$ and satisfy $g_c=g_a$.
A similar situation occurs with each element $u_a$.

On the other hand, once one has tuples $(b_a)_{a\in Q_1}$ and $(u_a)_{a\in Q_1}$ such that $b_a\in\image\pi_{g_a}\cap A$ and $u_a\in\image\pi_{g_a}\cap\maxid^2$ for every $a\in Q_1$, one can automatically define the corresponding $R$-$R$-bimodule homomorphisms $\varphi^{(1)}$ and $\varphi^{(2)}$ and thus obtain an $R$-algebra homomorphism $\varphi:\RA{A}\rightarrow\RA{A}$. In other words, the possible obstructions for defining $R$-algebra endomorphisms of $\RA{A}$ lie on the difficulty to define $R$-$R$-bimodule homomorphisms $\varphi^{(1)}$ and $\varphi^{(2)}$, and not on the passage from $R$-$R$-bimodule homomorphisms to $R$-algebra homomorphisms, the latter passage is always possible.
\end{remark}

\begin{ex} We illustrate the previous remark with an example. Let $Q$ be the quiver $1\overset{a}{\underset{b}{\rightrightarrows}} 2$ and $\dtuple$ be the pair $(2,2)$. The function $g:\{a,b\}\rightarrow\Gal(\C/\R)$ given by $g_a=\myid$ and $g_b=\theta$, where $\theta$ denotes complex conjugation, clearly is a modulating function for $(Q,\dtuple)$ over $\C/\R$. The ideal $\maxid$ of $\RA{A}$ obviously satisfies $\maxid^2=0$, whence the $R$-algebra endomorphisms of $\RA{A}$ are in bijection with the $R$-$R$-bimodule endomorphisms of $A$. Since $g_a\neq g_b$, any $R$-$R$-bimodule endomorphism of $A$ must map $a$ to a $\C$-multiple of $a$, and $b$ to a $\C$-multiple of $b$. In particular, no $R$-algebra endomorphism of $\RA{A}$ sends $a$ to $b$ or $b$ to $a$.
\end{ex}

\begin{defi}\label{def:automorphisms}
Let $\varphi$ be an automorphism of $\RA{A}$, and let
$(\varphi^{(1)}, \varphi^{(2)})$ be the corresponding pair of $R$-$R$-bimodule homomorphisms.
If~$\varphi^{(2)} = 0$, then we call $\varphi$ a \emph{change of arrows}.
If $\varphi^{(1)}$ is the identity automorphism of~$A$, we say that~$\varphi$
is a \emph{unitriangular} automorphism; furthermore, we
say that $\varphi$ is of \emph{depth} $\delta \geq 1$, if $\varphi^{(2)}(A)
\subset \maxid^{\delta+1}$.
\end{defi}

Just as in the case of complete path algebras of quivers, unitriangular automorphisms possess the following useful property:
\begin{align}
\label{eq:unitriangular-d}
&\text{If~$\varphi$ is a unitriangular automorphism of $\RA{A}$ of depth~$\delta$,}\\
\nonumber
&\text{then
$\varphi(u) - u \in  \maxid^{n+\delta}$ for $u \in \maxid^{n}$.}
\end{align}

%The following definition is inspired in \cite{DWZ1}.

\begin{defi}[Potentials, cyclical equivalence, cyclic derivatives, Jacobian algebras]\
\label{def:cyclic-stuff}
\begin{itemize}
\item For each $\ell \geq 1$, we define the \emph{cyclic part} of $A^\ell$ to be
${A^\ell}_{\operatorname{cyc}} = \bigoplus_{i \in Q_0} e_iA^\ell e_i$.
Thus, ${A^\ell}_{\rm cyc}$ is the $F$-span of all paths $\omega_0 a_1\omega_1 \cdots a_\ell\omega_\ell$
with $h(a_1) = t(a_d)$; we call such paths \emph{cycles}.
\item We define a closed $F$-vector subspace
$\RA{A}_{\operatorname{cyc}} \subseteq \RA{A}$
by setting
$
\RA{A}_{\operatorname{cyc}} =
\prod_{\ell=1}^\infty {A^\ell}_{\operatorname{cyc}},
$
and call the elements of $\RA{A}_{\operatorname{cyc}}$
\emph{potentials} on $A$. For any potential $S\in\RA{A}_{\operatorname{cyc}}$, we will say that the pair $(A,S)$ is a \emph{species with potential}, or \emph{SP} for short.
\item\label{item:cyclic-equivalence}
Two potentials $S$ and $S'$ are \emph{cyclically equivalent}
if $S - S'$ lies in the closure of the $F$-span of
all elements of the form $\omega_0 a_1\omega_1 a_2\omega_2  \cdots a_\ell\omega_\ell - \omega_1 a_2\omega_2  \cdots a_\ell\omega_\ell\omega_0a_1$,
where $\omega_0a_1\omega_1 a_2\omega_2  \cdots a_\ell\omega_\ell$ is a cyclic path on $(Q,\dtuple)$.
\item Two SPs $(A,S)$ and $(A',S')$ are \emph{right-equivalent} if there exists a \emph{right-equivalence} $\varphi:(A,S)\rightarrow(A',S')$, which by definition is a ring isomorphism $\varphi:\RA{A}\rightarrow\RA{A'}$ satisfying $\varphi|_{R}=\myid_{R}$ and such that $\varphi(S)$ and $S'$ are cyclically equivalent.
\item For each $a\in Q_1$, we define the \emph{cyclic
derivative} $\partial_a$ as the continuous $F$-linear map
$\RA{A}_{\rm cyc} \to \RA{A}$
acting on individual cycles by
\begin{equation}
\label{eq:cyclic-derivative}
\partial_a (\omega_0 a_1\omega_1 a_2\cdots a_\ell\omega_\ell)=
\sum_{k=1}^\ell \delta_{a,a_k} \pi_{g_a^{-1}}(\omega_k a_{k+1} \cdots a_\ell\omega_\ell\omega_0 a_1 \cdots a_{k-1}\omega_{k-1}),
\end{equation}
where $\delta_{a,a_k}$ is the \emph{Kronecker delta} between $a$ and $a_k$, and $\pi_{g_a^{-1}}(\xi)=\frac{1}{d_{h(a),t(a)}}\sum_{\nu\in\B_{h(a),t(a)}}g_{a}^{-1}(\nu^{-1})\xi\nu$.
\item For every potential~$S$, we
define its \emph{Jacobian ideal} $J(S)$ as the closure of
the (two-sided) ideal in $\RA{A}$
generated by the elements $\partial_a(S)$ for all $a\in Q_1$
(see \eqref{eq:closure}); clearly, $J(S)$ is a two-sided
ideal in $\RA{A}$.
\item We call the quotient $\RA{A}/J(S)$
the \emph{Jacobian algebra} of~$S$, and denote
it by ${\mathcal P}(A,S)$.
\end{itemize}
\end{defi}

\begin{ex}\label{ex:2-cycle-equivalent-to-0}
Let $m \in e_i \RA{A} e_j$ and $n \in e_j \RA{A} e_i$.
 In what follows, we will repeatedly use Proposition~\ref{prop:properties-of-J-K-bimodules}.
 If $n = \pi_\rho(n)$ for some $\rho \in G_{i.j}$, one computes easily
 \[
  \begin{array}{llllllll}
   m n
   &=&
   \displaystyle \frac{1}{d_{i,j}} \sum_{\omega \in \B_{i,j}} m \rho(\omega^{-1}) n \omega
%    \\ [1.5em]
   &\sim_{\operatorname{cyc}}&
   \displaystyle \frac{1}{d_{i,j}} \sum_{\omega \in \B_{i,j}} \omega m \rho(\omega^{-1}) n
   &=&
   \pi_{\rho^{-1}}(m) n \,.
  \end{array}
 \]
 Therefore, we have in general
 \[
  \begin{array}{llllllll}
   m n
   &=&
   \displaystyle \sum_{\rho \in G_{i,j}} m \pi_\rho(n)
   &\sim_{\operatorname{cyc}}&
   \displaystyle \sum_{\rho \in G_{i,j}} \pi_{\rho^{-1}}(m) \pi_\rho(n) \,.
  \end{array}
 \]
 In particular, if $m = x a$ and $n = y b z$ for arrows $a, b \in Q_1$ and elements~$x, z \in F_i$, $y \in F_j$ one has
 \[
  x a y b z \:\sim_{\operatorname{cyc}}\: \displaystyle \sum_{\rho \in G_{i,j}} x \pi_{\rho^{-1}}(a) y \pi_\rho(b) z \:=\: \delta_{g_a,g_b^{-1}} \, x a y b z \,,
 \]
 i.e.\ the potential~$x a y b z$ is cyclically equivalent to $0$ unless $g_a = g_b^{-1}$.
\end{ex}

\begin{ex}\label{ex:6263-cycle-2cycle-species} Consider the weighted quiver
{\small\begin{center}
$(Q,\dtuple)=$\begin{tabular}{cc}
$\xymatrix{1   \ar@{}[ddrrr]^(.1){}="a"^(.94){}="b" \ar@<0.7ex> "a";"b" \ar@<1.7ex> "a";"b" \ar[rrr]^{\delta}&  & & 2 \ar[dd]^{\gamma}\\
 & & \\
 4 \ar[uu]^{\alpha} & & & 3 \ar@{}[uulll]^(.13){}="a"^(.93){}="b"  \ar@<0.5ex> "a";"b" \ar@<1.5ex> "a";"b" \ar@<2.5ex> "a";"b" \ar[lll]^{\beta}}$ &
$\xymatrix{6 & & & 2 \\
 & & &\\
3 & & & 6 }$
\end{tabular}
\end{center}}
\noindent whose diagonal arrows we denote by $\varepsilon_1,\varepsilon_2:1\rightarrow 3$ and $\eta_1,\eta_2,\eta_3:3\rightarrow 1$.
Since $\lcm(1,2,1,3)=6$, we have $[E:F]=6$, $F_1=E=F_3$, $[F_2:F]=2$ and $[F_4:F]=3$ (note that here, just as in \eqref{eq:Fi/F=degree-di-extension}, the subindex $i$ in $F_i$ does not refer to the degree of the extension $F_i/F$, but rather to the vertex $i$ of $Q$ to which $F_i$ is attached). Write $\Aut(F_{i,j})$ to denote the group of field automorphisms of $F_{i,j}=F_i\cap F_j$. Let $g:Q_1\rightarrow\bigcup_{i,j\in Q_0}\Aut(F_{i,j})$ be any function satisfying the following properties simultaneously:
\begin{itemize}
\item[(0)] $g_a\in\Aut(F_{h(a),t(a)})$ for every $a\in Q_1$;
\item[(i)] $g_{\varepsilon_1}\neq g_{\varepsilon_2}$;
\item[(ii)] $g_{\varepsilon_1}|_{F_4}=g_{\varepsilon_2}|_{F_4}$;
\item[(iii)] $g_{\eta_1}|_{F_2}=g_{\eta_2}|_{F_2}=g_{\eta_3}|_{F_2}$.
%\item[(iv)] for each $\varepsilon\in\{\varepsilon_1,\varepsilon_2\}$ there exists $\eta\in\{\eta_1,\eta_2,\eta_3\}$ such that $g_\varepsilon=g_\eta^{-1}$.
\end{itemize}
By (0), we have $\varepsilon_1,\varepsilon_2,\eta_1,\eta_2,\eta_3\in\Aut(E)$, while (i), (ii) and the equality $E=F_2F_4$ imply that $g_{\varepsilon_1}^{-1}|_{F_2}\neq g_{\varepsilon_2}^{-1}|_{F_2}$. From this and (iii), we deduce that there necessarily exists $\varepsilon\in \{\varepsilon_1,\varepsilon_2\}$ such that $g_\varepsilon\notin\{g_{\eta_1}^{-1},g_{\eta_2}^{-1},g_{\eta_3}^{-1}\}$. For such $\varepsilon$, every $E$-multiple of each of the 2-cycles $\varepsilon\eta_1$, $\varepsilon\eta_2$ and $\varepsilon\eta_3$ will be cyclically equivalent to 0 in the complete path algebra of the species of $(Q,\dtuple,g)$, and hence, $\varepsilon$ will not belong to the Jacobian ideal of any potential $W$ belonging to the complete path algebra.
\end{ex}

%%%%%%%%%%%%%%%%%%%%%%%%%%%%%%%%%%%%%%%%
%%%%%%%%%%%%%%%%%%%%%%%%%%%%%%%%%%%%%%%%

\begin{defi}\label{def:direct-sum-of-SPs} Let $(Q,\dtuple)$ and $(Q',\dtuple)$ be weighted quivers with the same vertex set $Q_0=Q'_0$ and the same weight tuple $\dtuple=(d_i)_{i\in Q_0}$, let $g$ and $g'$ respectively be modulating functions for $(Q,\dtuple)$ and $(Q',\dtuple)$ over $E/F$, and let $A$ and $A'$ be the species of $(Q,\dtuple,g)$ and $(Q',\dtuple,g')$, respectively. Let $S$ and $S'$ be potentials on $A$ and $A'$, respectively. The \emph{direct sum} of the SPs $(A,S)$ and $(A',S')$ is the SP $(A,S)\oplus(A',S')=(A\oplus A',S+S')$, where $A\oplus A'$ is the direct sum of $A$ and $A'$ as $R$-$R$-bimodules, and $S+S'$ is the sum of $S$ and $S'$ in $\RA{A\oplus A'}$.
\end{defi}

\begin{defi}\label{def:red-and-trivial-SPs} Let $A$ be the species of a triple $(Q,\dtuple,g)$, and let $S$ be a potential on $A$. We will say that $(A,S)$ is:
\begin{itemize}
\item a \emph{reduced SP} if the degree-2 component of $S$ according to the decomposition \eqref{eq:def-of-R<<A>>} of $\RA{A}$ is $0$;
\item a \emph{trivial SP} if $S$ equals its degree-2 component and the $R$-$R$-subbimodule of $A$ generated by the cyclic derivatives of $S$ is equal to $A$.
\end{itemize}
\end{defi}

We now state the analog in our context of \cite[Theorem 4.6]{DWZ1}.

\begin{thm}[Splitting Theorem]\label{thm:splitting-theorem} For every species $A$ as in Definition \ref{def:species-Ag} (see also Definitions \ref{def:weighted-quiver} and \ref{def:modulating-function}) and every potential $S\in\RA{A}$ there exist a reduced SP $(A_{\operatorname{red}},S_{\operatorname{red}})$, a trivial SP $(A_{\operatorname{triv}},S_{\operatorname{triv}})$ and a right-equivalence $\varphi:(A,S)\rightarrow (A_{\operatorname{red}},S_{\operatorname{red}})\oplus(A_{\operatorname{triv}},S_{\operatorname{triv}})$. Furthermore, the right-equivalence classes of $(A_{\operatorname{red}},S_{\operatorname{red}})$ and $(A_{\operatorname{triv}},S_{\operatorname{triv}})$ are uniquely determined by $(A,S)$.
\end{thm}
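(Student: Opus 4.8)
The plan is to adapt the proof of \cite[Theorem 4.6]{DWZ1} to the species setting, the essential new ingredient being that the degree-$2$ component of $A$ is no longer a span of arrows but a direct sum of twisted bimodules $F_{h(a)}^{g_a}\otimes_{F_{h(a),t(a)}}F_{t(a)}$, so all algebraic manipulations must respect the idempotent decomposition $M=\bigoplus_{\rho}\image\pi_\rho$ supplied by Proposition \ref{prop:properties-of-J-K-bimodules}. First I would reduce to the case where $S$ has no terms of degree $<2$: this is automatic since $S\in\RA{A}_{\operatorname{cyc}}$ and a potential has no degree-$0$ or degree-$1$ part (cycles have length $\geq 1$, and a length-$1$ cycle $\omega_0 a\omega_1$ would require a loop, which $Q$ has none of). Write $S=S_2+S_{\geq 3}$ with $S_2$ the degree-$2$ component.

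**The splitting algorithm.** Since $(A_{\operatorname{triv}},S_{\operatorname{triv}})$ should absorb exactly the ``nondegenerate part'' of $S_2$, I would first analyze $S_2$ as an element of $\bigoplus_{i\in Q_0}e_iA^2e_i$. Using Example \ref{ex:2-cycle-equivalent-to-0}, every degree-$2$ cycle $xayb z$ is cyclically equivalent to $0$ unless $g_a=g_b^{-1}$, so up to cyclical equivalence $S_2$ is a sum of terms pairing an arrow $a:j\to i$ with arrows $b:i\to j$ having $g_b=g_a^{-1}$, i.e.\ $S_2$ is determined by a collection of $F_i$-$F_j$-bimodule pairings between the simple summands $\image\pi_{g_a}$ of $e_iAe_j$ and $\image\pi_{g_a^{-1}}$ of $e_jAe_i$. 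Because these summands are simple bimodules with the Galois-descent structure of Proposition \ref{prop:properties-of-J-K-bimodules}(5)--(6), each such pairing is, after a change of arrows, either zero or a ``perfect pairing'' identifying a summand of $e_iAe_j$ with the dual of a summand of $e_jAe_i$; I would split off the maximal perfectly-paired subbimodule as $A_{\operatorname{triv}}$, with $S_{\operatorname{triv}}$ the restriction of $S_2$ to it, and let $A_{\operatorname{red}}$ be an $R$-$R$-bimodule complement on which $S_2$ restricts to $0$. Then, exactly as in \cite{DWZ1}, I would run the standard depth-induction: having arranged $\varphi_{<n}$ so that $\varphi_{<n}(S)$ is cyclically equivalent to an element of $S_2+\maxid^{n}$ for the reduced part plus $S_{\operatorname{triv}}$, one uses the ``triviality'' of $S_{\operatorname{triv}}$ — concretely, that the cyclic derivatives $\partial_a(S_{\operatorname{triv}})$, $a$ an arrow of $Q_{\operatorname{triv}}$, generate $A_{\operatorname{triv}}$ as an $R$-$R$-bimodule, which is where Proposition \ref{prop:properties-of-J-K-bimodules}(6) and the normalization $\pi_{g_a^{-1}}$ in \eqref{eq:cyclic-derivative} are used — to construct a unitriangular automorphism $\psi_n$ of depth $n-1$ killing the degree-$n$ discrepancy modulo cyclical equivalence and modulo $\maxid^{n+1}$. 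The composite $\varphi=\varprojlim(\psi_n\cdots\psi_2)\circ\varphi_0$ converges in the $\maxid$-adic topology by \eqref{eq:unitriangular-d} and is the desired right-equivalence.

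**Uniqueness.** For the last sentence I would follow the \cite{DWZ1} argument verbatim in structure: given two such splittings, compose the right-equivalences to get a right-equivalence $(A_{\operatorname{red}},S_{\operatorname{red}})\oplus(A_{\operatorname{triv}},S_{\operatorname{triv}})\to(A_{\operatorname{red}}',S_{\operatorname{red}}')\oplus(A_{\operatorname{triv}}',S_{\operatorname{triv}}')$; comparing degree-$2$ components forces $A_{\operatorname{triv}}\cong A_{\operatorname{triv}}'$ (both are the ``degenerate direction'' of $S_2$, an invariant of the bimodule-with-pairing up to isomorphism), hence $A_{\operatorname{red}}\cong A_{\operatorname{red}}'$ as $R$-$R$-bimodules by counting dimensions over $F$ in each $e_iAe_j$; then one shows the induced map on the reduced parts can be corrected to a right-equivalence $(A_{\operatorname{red}},S_{\operatorname{red}})\to(A_{\operatorname{red}}',S_{\operatorname{red}}')$, again by a depth induction that peels off, degree by degree, the contribution of the trivial summand.

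**Main obstacle.** The genuinely new difficulty — and the step I expect to require the most care — is the structural analysis of $S_2$: in the simply-laced case of \cite{DWZ1} one simply does linear algebra over $F$ on the space spanned by length-$2$ paths, whereas here one must work bimodule-theoretically, decomposing each $e_iA^2e_i$ via the idempotents $\pi_\rho$ and checking that the pairing induced by $S_2$ between $\image\pi_\rho\subseteq e_iAe_j$ and $\image\pi_{\rho^{-1}}\subseteq e_jAe_i$ is compatible with the $F_i$-$F_j$-bimodule structure, so that a ``change of arrows'' (an automorphism with $\varphi^{(2)}=0$, necessarily preserving each $\image\pi_{g_a}$ by the Remark following Proposition \ref{prop:automorphisms}) suffices to put it in the split-off normal form. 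Once this is in place, the depth-induction machinery and the convergence arguments go through by routine modification of \cite{DWZ1}, as asserted; I would relegate the verification of these routine modifications, together with the detailed bimodule bookkeeping, to Section \ref{sec:technical-proofs}.
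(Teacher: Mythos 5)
Your proposal follows the same two-stage architecture as the paper's own proof: adapt the canonical-form/depth-induction argument of \cite[Lemma~4.7]{DWZ1} for existence, then port the \cite{DWZ1} uniqueness argument. You correctly identify the Galois-twisted bimodule constraint: the contaminating terms must sit in the right $\pi_{g_a}$-isotypic pieces or the correcting map $\psi$ fails to be an $R$-$R$-bimodule homomorphism, and your appeal to Example~\ref{ex:2-cycle-equivalent-to-0} is exactly what forces the condition $g_{a_k}g_{b_k}=\myid$ appearing in the paper's canonical form~\eqref{eq:potential-canonical-form}. That part matches.

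Where your allocation of effort differs from the paper's: you flag the bimodule analysis of $S_2$ as the ``main obstacle,'' but the paper handles it through the canonical form and the short Lemma~\ref{lem:splitting-theorem-existence}; the genuine new technical investment in Subsection~\ref{subsec:proof-Splitting-Thm} is on the \emph{uniqueness} side, which you dispatch by declaring that the DWZ argument carries over ``verbatim in structure.'' This glosses over the crux: \cite[Propositions~4.9--4.10, Lemmas~4.11--4.12]{DWZ1} rest on the cyclic Leibniz and chain rules (\cite[Lemmas~3.8, 3.9]{DWZ1}), which do \emph{not} transfer word-for-word. In the species setting the ``sandwich'' operator $\Delta_a$ must be redefined with the idempotent $\xi_a=\tfrac{1}{d_{h(a),t(a)}}\sum_{\nu\in\B_{h(a),t(a)}}g_a(\nu)\otimes\nu^{-1}$ so that it is well defined on the twisted bimodules, and one must then re-prove the Leibniz identity and the chain rule for this $\Delta_a$ (with the projections $\pi_{g_b^{-1}}$ inserted in the right places) before invoking the DWZ lemmas. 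This is the content of Lemma~\ref{lem:cyclic-leibniz-rule} and the ensuing Cyclic Chain Rule lemma in Subsection~\ref{subsec:proof-Splitting-Thm}; without it, the assertion that right-equivalences preserve Jacobian ideals --- and hence the correction-by-depth-induction on the reduced parts --- has no justification. Were you to execute your plan, you would discover this need in the uniqueness step; but as written, the proposal does not identify it, and so understates the new work the species setting demands.
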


The proof of \cite[Theorem 4.6]{DWZ1} can be adapted to produce a proof of Theorem \ref{thm:splitting-theorem}. The adaptation requires some considerations that may be regarded to be non-trivial; the reader can find these considerations in Subsection \ref{subsec:proof-Splitting-Thm}.

\begin{defi}\label{def:reduced-and-trivial-parts} In the situation of Theorem \ref{thm:splitting-theorem}, the (right-equivalence class of the) SP $(A_{\operatorname{red}},S_{\operatorname{red}})$ is called the \emph{reduced part} of $(A,S)$, while the (right-equivalence class of the) SP $(A_{\operatorname{triv}},S_{\operatorname{triv}})$ is called the \emph{trivial part} of $(A,S)$.
\end{defi}

\begin{remark} Despite Theorem \ref{thm:splitting-theorem}, there exist species $A$ such that, no matter which potential $S\in\RA{A}$ we take, the underlying quiver of the SP $(A_{\operatorname{red}},S_{\operatorname{red}})$ has 2-cycles. Example \ref{ex:6263-cycle-2cycle-species} provides a way of producing explicit examples.
\end{remark}

We now turn to the problem of giving the ``pre-mutation" rule for weighted quivers, modulating functions and potentials. This needs some preparation.

Fix a vertex $k\in Q_0$. For every pair of arrows $a,b\in Q_1$ such that $t(b)=k=h(a)$ we adopt the notations
\begin{eqnarray}\label{eq:Galois-coset-G_ba}
\B_{b,a} & =& \left\{1,v_{k},v_{k}^2,\ldots,v_{k}^{\frac{d_{k}}{\operatorname{lcm}(d_{h(b),k},d_{k,t(a)})}-1}\right\}\\
\nonumber
X_{b,a} &=& \left\{h\in G_{h(b),t(a)}\suchthat h|_{F_{h(b),t(a)}\cap F_k}=g_b|_{F_{h(b),t(a)}\cap F_k}g_a|_{F_{h(b),t(a)}\cap F_k}\right\};
\end{eqnarray}
Note that for any such pair of arrows, the set $\B_{b,a}$ is an eigenbasis of the field extension $F_{k}/(F_{h(b),k})(F_{k,t(a)})$, and the set $X_{b,a}$ is a coset of the subgroup $\Gal(F_{h(b),t(a)}/F_{h(b),t(a)}\cap F_k)$ of the group $G_{h(b),t(a)}=\Gal(F_{h(b),t(a)}/F)$, hence $|X_{b,a}|=\frac{d_{h(b),t(a)}}{\gcd(d_{h(b)},d_{k},d_{t(a)})}$. Notice also that
$$
|\B_{b,a}||X_{b,a}|=
    \frac{d_k}{\operatorname{lcm}(d_{h(b),k},d_{k,t(a)})}\frac{d_{h(b),t(a)}}{\gcd(d_{h(b)},d_k,d_{t(a)})}=
    \frac{d_kd_{h(b),t(a)}\gcd(d_{h(b),k},d_{k,t(a)})}{d_{h(b),k}d_{k,t(a)}\gcd(d_{h(b)},d_k,d_{t(a)})}=
    \frac{d_kd_{h(b),t(a)}}{d_{h(b),k}d_{k,t(a)}},
$$
that is, the integer $|\B_{b,a}||X_{b,a}|$ is precisely the number of ``composite'' arrows from $t(a)$ to $h(b)$ introduced in Definition \ref{def:weighted-quiver-mutation}. Therefore, we can bijectively label these ``composite'' arrows with the symbols $[b\omega a]_{\rho}$, where $\omega$ runs inside $\B_{b,a}$ and $\rho$ runs inside $X_{b,a}$.

%The next definition generalizes \cite[Definition ]{DWZ1} and \cite[Definition ]{LZ}. The former was given for quivers (those whose underlying tuple $\dtuple$ consists only of 1s), the latter for \emph{strongly primitive} species (those whose underlying tuple $\dtuple$ satisfies $\operatorname{gcd}(d_i,d_j)=1$ whenever $i\neq j$).

\begin{defi}\label{def:SP-premutation} Let $(Q,\dtuple)$ be a 2-acyclic weighted quiver, $k\in Q_0$ a vertex of $Q$, and $E/F$ a field extension satisfying \eqref{eq:root-of-unity} and \eqref{eq:E/F-degree-d}.
\begin{enumerate}
\item Denote by $(\widetilde{\mu}_k(Q),\dtuple)$ the weighted quiver obtained after applying the following two steps:
\begin{itemize}
\item[(Step 1)] For each pair $a,b\in Q_1$ such that $h(a)=k=t(b)$, add $\frac{d_k d_{h(b),t(a)}}{d_{h(b),k}d_{k,t(a)}}$ ``composite'' arrows from $t(a)$ to $h(b)$. Label these arrows with the symbols $[b\omega a]_{\rho}$, where $\omega$ runs inside $\B_{b,a}$ and $\rho$ runs inside the set $X_{b,a}$;
\item[(Step 2)] replace each arrow $c$ incident to $k$ with an arrow $c^*$ going in the opposite direction.
\end{itemize}
\item Given a modulating function $g:Q_1\rightarrow\bigcup_{i,j\in Q_0}G_{i,j}$ for $(Q,\dtuple)$, we define a function $\widetilde{\mu}_k(g):\widetilde{\mu}_k(Q)_1\rightarrow\bigcup_{i,j\in Q_0}G_{i,j}$ in the obvious way, namely, by setting
$$
\widetilde{\mu}_k(g)_{\alpha}=\begin{cases}
g_\alpha & \text{if $\alpha\in Q_1\cap\widetilde{\mu}_k(Q)_1$};\\
\rho & \text{if $\alpha=[b\omega a]_\rho$ is a composite arrow of $\widetilde{\mu}_k(Q)$};\\
g_c^{-1} & \text{if $c=c^*$ for some arrow $c\in Q_1$ incident to $k$.}
\end{cases}
$$
It is clear that $\widetilde{\mu}_k(g)$ is a modulating function for $(\widetilde{\mu}_k(Q),\dtuple)$.
\item Let us denote by $\widetilde{\mu}_k(A)$ the species of $(\widetilde{\mu}_k(Q),\dtuple,\widetilde{\mu}_k(g))$ over $E/F$, where $g$ is a modulating function as in the previous paragraph. Given a potential $S\in\RA{A}$, we assume that $e_kS=0=Se_k$ by replacing $S$ with a cyclically equivalent potential if necessary, and denote by $\widetilde{\mu}_k(S)$ the element of $\RA{\widetilde{\mu}_k(A)}$ given by
$$
\widetilde{\mu}_k(S):=[S]+\triangle_k(A):=[S]+\sum_{\overset{a}{\to}k\overset{b}{\to}}\frac{1}{|\B_{b,a}|}\sum_{\omega\in\B_{b,a}}\sum_{\rho\in X_{b,a}}\omega^{-1}b^*[b\omega a]_{\rho}a^*,
$$
where $[S]$ is obtained from $S$ as follows. Write $S$ as a possibly infinite $F$-linear combination of cyclic paths, say $S=\sum_\xi\alpha_\xi\xi$ with $\alpha_\xi\in F$, where each path $\xi$ is expressed as $\xi=x_0 b_1 \omega_1 a_1 x_1 \cdots b_\ell \omega_\ell a_\ell x_\ell$, where $a_i, b_i \in Q_1$ are arrows with $t(b_i) = k = h(a_i)$ for all $i\in\{1,\ldots,\ell\}$, the $x_i$ are paths not passing through $k$, and $\omega_i \in \B_{b_i,a_i}$ for all $i\in\{1,\ldots,\ell\}$.
  Then we define $[S]=\sum_\xi\alpha_\xi[\xi]$, where
  $$
   [\xi] \:=\: x_0 [b_1 \omega_1 a_1] x_1 \cdots [b_\ell \omega_\ell a_\ell] x_\ell
   \ \ \ \text{with} \ \ \
   [b_i \omega_i a_i] \:=\: \sum_{\rho\in X_{b,a}} [b_i \omega_i a_i]_\rho
   \,.
  $$
\end{enumerate}
\end{defi}

The definition of the potential $[S]$ deserves some comments.

\begin{remark}\label{rem:on-def-of-[S]}\begin{enumerate}\item Under the labeling we have adopted for the ``composite'' arrows from $t(a)$ to $h(b)$, the element $1\otimes 1$ of the direct summand corresponding to the indices $\rho\in X_{b,a}$ and $\omega\in\B_{b,a}$ in the direct sum
    \begin{equation}\label{eq:premut-bimodule-labeled}
    \bigoplus_{\omega\in\B_{b,a}}\bigoplus_{\rho\in X_{b,a}}F_{h(b)}^\rho\otimes_{F_{h(b),t(a)}}F_{t(a)}\subseteq\widetilde{\mu}_k(A)
    \end{equation}
    is identified with the arrow $[b\omega a]_{\rho}$ of $\widetilde{\mu}_k(Q,\dtuple)$ when $1\otimes1$ is considered as an element of the species $\widetilde{\mu}_k(A)$, see the paragraph immediately after Definition \ref{def:species-Ag};
\item Proposition \ref{prop:tensor-prod-decomp} tells us that there is an $F_{h(b)}$-$F_{t(a)}$-bimodule isomorphism
    \begin{equation}\label{eq:tensor-prod-decomp-premut}
    \left(F_{h(b)}^{g_b}\otimes_{F_{h(b),k}}F_k\right)\otimes_{F_k}\left(F_{k}^{g_a}\otimes_{F_{k,t(a)}}F_{t(a)}\right)\cong
    \left(\bigoplus_{\rho\in X_{b,a}}F_{h(b)}^{\rho}\otimes_{F_{h(b),t(a)}}F_{t(a)}\right)^{[F_k:(F_{h(b),k})(F_{k,t(a)})]}.
    \end{equation}
    Thus, the labels of the ``composite arrows'' and the modulating function $\widetilde{\mu}_k(g)$ are compatible with the direct sum decomposition \eqref{eq:tensor-prod-decomp-premut}, and the right-hand side of \eqref{eq:tensor-prod-decomp-premut} coincides with the left-hand side of \eqref{eq:premut-bimodule-labeled};
\item the bimodule isomorphisms \eqref{eq:tensor-prod-decomp-premut} for $a,b\in Q_1$ such that $t(b)=k=h(a)$, considered altogether, induce an $R$-$R$-bimodule homomorphism $[\bullet]:(1-e_k)\RA{A}(1-e_k)\rightarrow\RA{\widetilde{\mu}_k(A)}$. The assumption $e_kS=0=Se_k$ guarantees that $S$ belongs to the domain of $[\bullet]$. The image of $S$ under $[\bullet]$ coincides with the potential $[S]$ defined in Part (3) of Definition \ref{def:SP-premutation}. In particular, this means that $[S]$ is indeed independent of the chosen expression of $S$ as (possibly infinite) $F$-linear combination of cyclic paths.
\item the cyclical equivalence class of $\widetilde{\mu}_k(S)$ is determined by the cyclical equivalence class of $S$.
\end{enumerate}
\end{remark}

\begin{thm}\label{thm:SP-mut-well-defined-up-to-re} If $(A,S)$ and $(A',S')$ are right-equivalent SPs, then $(\widetilde{\mu}_k(A),\widetilde{\mu}_k(S))$ and $(\widetilde{\mu}_k(A'),\widetilde{\mu}_k(S'))$ are right-equivalent SPs too. Consequently, the reduced parts of $(\widetilde{\mu}_k(A),\widetilde{\mu}_k(S))$ and $(\widetilde{\mu}_k(A'),\widetilde{\mu}_k(S'))$ are right-equivalent as well.
\end{thm}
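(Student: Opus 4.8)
The plan is to mimic, step by step, the proof of \cite[Theorem 6.2]{DWZ1} (the well-definedness of QP-mutation up to right-equivalence), making the modifications forced by the presence of the fields $F_i$ and the modulating functions $g$. First I would reduce to the case where the right-equivalence $\varphi\colon(A,S)\to(A',S')$ is either a \emph{change of arrows} (i.e.\ $\varphi^{(2)}=0$) or a \emph{unitriangular} automorphism, using that by Proposition~\ref{prop:automorphisms} every right-equivalence factors as a composition of these two types (after identifying $A'$ with $A$ via the bimodule isomorphism $\varphi^{(1)}$, which by Proposition~\ref{prop:automorphisms} is induced by a quiver isomorphism compatible with the modulating functions, so it commutes with $\widetilde\mu_k$ in the obvious way). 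Throughout one may assume $e_kS=0=Se_k$ and $e_kS'=0=S'e_k$ by replacing $S,S'$ with cyclically equivalent potentials, and one may arrange that $\varphi$ fixes $e_k\RA{A}e_k$-components appropriately; this is the routine bookkeeping that \cite{DWZ1} carries out and that goes through here using Remark~\ref{rem:on-def-of-[S]}(4), which says the cyclical equivalence class of $\widetilde\mu_k(S)$ depends only on that of $S$.

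The heart of the argument is the \emph{change-of-arrows} case: one must produce from a change of arrows $\varphi$ between $(A,S)$ and $(A',S')$ a right-equivalence $\bar\varphi$ between $(\widetilde\mu_k(A),\widetilde\mu_k(S))$ and $(\widetilde\mu_k(A'),\widetilde\mu_k(S'))$. Following \cite{DWZ1}, I would define $\bar\varphi$ on arrows not incident to $k$ by copying $\varphi^{(1)}$; on the reversed arrows $c^*$ by the ``dual'' of $\varphi^{(1)}$ restricted to the arrows incident to $k$ (here one uses the $F_j$-$F_i$-bimodule duality isomorphisms $\operatorname{Hom}_{F_i}(A_{ij},F_i)\cong\operatorname{Hom}_{F_j}(A_{ij},F_j)$ built into the notion of species realization, Definition~\ref{def:species-realization}(3)); and on the composite arrows $[b\omega a]_\rho$ by the rule forced by functoriality of the bimodule isomorphism \eqref{eq:tensor-prod-decomp-premut} of Proposition~\ref{prop:tensor-prod-decomp}. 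The key computation is then that $\bar\varphi([S]+\triangle_k(A))$ is cyclically equivalent to $[S']+\triangle_k(A')$: the term $\triangle_k$ is built precisely so that it transforms correctly under $\bar\varphi$ on the $\omega^{-1}b^*[b\omega a]_\rho a^*$ summands, and the compatibility of $[\bullet]$ with the bimodule decomposition (Remark~\ref{rem:on-def-of-[S]}(2),(3)) ensures $\bar\varphi([S])\sim_{\operatorname{cyc}}[S']$. Here one repeatedly invokes the cyclic-equivalence manipulations of Example~\ref{ex:2-cycle-equivalent-to-0}, in particular that a length-$2$ term $xaybz$ is cyclically trivial unless $g_a=g_b^{-1}$, to handle the pairing of $b^*$ with $[b\omega a]_\rho$ and with $a^*$.

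For the \emph{unitriangular} case, I would argue by a depth induction exactly as in \cite{DWZ1}: a unitriangular $\varphi$ of depth $\delta$ differs from the identity only in degree $\geq\delta+1$ (property \eqref{eq:unitriangular-d}), and one checks that applying $\widetilde\mu_k$ produces an $R$-algebra automorphism of $\RA{\widetilde\mu_k(A)}$ that is again unitriangular (of depth $\geq\delta$) and carries $\widetilde\mu_k(S)$ to a potential cyclically equivalent to $\widetilde\mu_k(S')$ modulo $\maxid^{n}$ for all $n$, hence genuinely cyclically equivalent by completeness; the limit/convergence argument in the $\maxid$-adic topology is identical to the one in \cite{DWZ1} and uses that $\RA{A}$ is complete. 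The final sentence about reduced parts is then immediate from the Splitting Theorem (Theorem~\ref{thm:splitting-theorem}): right-equivalent SPs have right-equivalent reduced parts, so applying $\widetilde\mu_k$ and then splitting gives right-equivalent reduced parts of the pre-mutations. \textbf{The main obstacle} I anticipate is the change-of-arrows computation, specifically verifying that the duality isomorphisms on the reversed arrows and the Proposition~\ref{prop:tensor-prod-decomp} decomposition on the composite arrows fit together \emph{coherently} so that $\bar\varphi$ is a well-defined $R$-algebra isomorphism (not merely a map on generators) and that the $\triangle_k$ term is genuinely preserved up to cyclical equivalence; this is where the twisted bimodules $F_i^\rho\otimes F_j$ and the modulating-function bookkeeping make the argument genuinely heavier than in the skew-symmetric case of \cite{DWZ1}, and it is the step where Proposition~\ref{prop:properties-of-J-K-bimodules} and the explicit description of $\pi_\rho$ must be used with care.
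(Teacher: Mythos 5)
Your overall orientation is right in spirit, but your blueprint both takes a different organizational route from the paper and, more importantly, leaves unresolved the very step the paper singles out as the new technical content.

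On the route: the paper's proof (Subsection~\ref{subsec:proof-SP-mut-well-defined-up-to-re}) does \emph{not} decompose the right-equivalence into a change of arrows composed with a unitriangular automorphism. Instead, following \cite[Theorem~5.2]{DWZ1} (note: 5.2, not 6.2) and \cite[Theorem~8.3]{LZ}, it embeds both $\RA{A}$ and $\RA{\widetilde\mu_k(A)}$ into the complete tensor algebra of $\widehat A = A\oplus(e_kA)^*\oplus(Ae_k)^*$, identifies $[b\omega a]_\rho$ with $\pi_\rho(b\omega a)$ inside $\RA{\widehat A}$ so that $\widetilde\mu_k(S)=S+\Delta_k(A)$ with $\Delta_k(A)\sim_{\operatorname{cyc}}\Delta_{k,\mathrm{out}}\Delta_{k,\mathrm{in}}$, and then extends the \emph{given} automorphism $\varphi$ all at once to an automorphism $\overline\varphi$ of $\RA{\widehat A}$ that leaves $\RA{\widetilde\mu_k(A)}$, $\Delta_{k,\mathrm{out}}$ and $\Delta_{k,\mathrm{in}}$ invariant. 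The ``depth induction'' you describe for the unitriangular case misremembers DWZ: there is no such induction in the proof of their Theorem~5.2, and the $\Delta_k$-preservation still requires the same nontrivial construction of $\overline\varphi(a^*)$, $\overline\varphi(b^*)$ whether or not $\varphi$ is unitriangular, so the case split buys nothing.

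The genuine gap is that you do not identify, let alone prove, the one computation the paper explicitly flags as the ``extra consideration'' beyond DWZ and LZ, namely the isotypic-projection identity~\eqref{eq:iterated-isotypical-projections}:
\[
\pi_{\myid_{F_i}}(mn)\:=\:\sum_{\rho\in G_{i,j}}\pi_{\myid_{F_i}}\bigl(\pi_{\rho^{-1}}(m)\,\pi_\rho(n)\bigr)
\qquad\text{for } m\in e_i\RA{A}e_j,\ n\in e_j\RA{A}e_i.
\]
This is exactly what is needed to go from the LZ identity $\sum_a\pi_{\myid_{F_k}}(a\,\widehat\varphi(a^*))=\sum_a\pi_{\myid_{F_k}}(aa^*)$ to the desired $\overline\varphi(\Delta_{k,\mathrm{in}})=\Delta_{k,\mathrm{in}}$, once one sets $\overline\varphi(a^*)=\pi_{g_a^{-1}}(\widehat\varphi(a^*))$ and $\overline\varphi(b^*)=\pi_{g_b^{-1}}(\widehat\varphi(b^*))$ so that $\overline\varphi$ actually lands in the correct isotypic components. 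Your appeal to ``functoriality of the bimodule isomorphism of Proposition~\ref{prop:tensor-prod-decomp}'' for the composite arrows, and to Example~\ref{ex:2-cycle-equivalent-to-0} to pair $b^*$ with $[b\omega a]_\rho$ and with $a^*$, gestures in the right direction, but without~\eqref{eq:iterated-isotypical-projections} the verification that $\overline\varphi$ preserves $\Delta_k$ up to cyclical equivalence---which you yourself correctly flag as the main obstacle---simply does not close. You also need the invertibility of $\overline\varphi^{(1)}$ restricted to $(e_kA)^*\oplus(Ae_k)^*$, which the paper gets from the invertibility of the matrices $C_0$, $D_0$ in \cite{LZ} combined with the observation that applying $\pi_{g_c^{-1}}$ does not change the linear part; this also goes unaddressed. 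The final sentence of your sketch---that the reduced-parts statement follows from Theorem~\ref{thm:splitting-theorem}---is correct.
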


The proof of the first statement of Theorem \ref{thm:SP-mut-well-defined-up-to-re} is similar to the proof of \cite[Theorem 8.3]{LZ} (which in turn follows the main idea of the proof of \cite[Theorem 5.2]{DWZ1}), but requires an extra consideration that, although elementary, may be regarded to be non-obvious. This consideration, not present (nor necessary) in \cite{DWZ1} nor in \cite{LZ}, can be found in Section \ref{subsec:proof-SP-mut-well-defined-up-to-re}.

\begin{defi}\label{def:SP-mutation} The (right-equivalence class of the) reduced part of $(\widetilde{\mu}_k(A),\widetilde{\mu}_k(S))$ will be called \emph{the mutation of $(A,S)$ in direction $k$} and denoted $\mu_k(A,S)$.
\end{defi}

\begin{remark} Since the underlying weighted quiver of $\mu_k(A,S)$ does not necessarily coincide with the weighted quiver obtained from $(Q,\dtuple)$ through the $k^{\operatorname{th}}$ weighted-quiver mutation, we are not allowed to always write $\mu_k(A,S)=(\mu_k(A),\mu_k(S))$.
\end{remark}

\begin{thm}\label{thm:SP-mutation-is-involution} The SPs $(A,S)$ and $\mu_k\mu_k(A,S)$ are right-equivalent. In other words, SP-mutation is an involution up to right-equivalence.
\end{thm}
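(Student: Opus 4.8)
The plan is to follow closely the strategy of the proof of \cite[Theorem 5.7]{DWZ1} (and its adaptation in \cite[Section 8]{LZ}), adjusting it to the species setting developed above. First I would reduce to the reduced case: by the Splitting Theorem \ref{thm:splitting-theorem} and Theorem \ref{thm:SP-mut-well-defined-up-to-re}, it suffices to prove the statement when $(A,S)$ itself is a reduced SP, since mutation is defined as the reduced part of the premutation and is well-defined on right-equivalence classes. So assume $(A,S)$ reduced and, replacing $S$ by a cyclically equivalent potential, assume $e_kS=0=Se_k$.

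The heart of the argument is to produce, for the double premutation $(\widetilde{\mu}_k\widetilde{\mu}_k(A),\widetilde{\mu}_k\widetilde{\mu}_k(S))$, an explicit right-equivalence onto a direct sum $(A,S)\oplus(C,T)$ where $(C,T)$ is a trivial SP, after which uniqueness of the reduced part (Theorem \ref{thm:splitting-theorem}) finishes the proof. Concretely: applying $\widetilde{\mu}_k$ twice replaces each arrow $c$ incident to $k$ with $c^{**}$, which as a bimodule is canonically isomorphic to $c$ itself (the modulating function goes $g_c\mapsto g_c^{-1}\mapsto g_c$), and for each pair of arrows $\overset{a}{\to}k\overset{b}{\to}$ it creates, besides the original composite arrows $[b\omega a]_\rho$, also the "double composites" $[a^*\omega'[b\omega a]_\rho{}^*]_{\rho'}$ and $[[b\omega a]_\rho{}^* \omega'' a^*]$, etc. The key computation is that the potential $\widetilde{\mu}_k\widetilde{\mu}_k(S)$ contains the terms $\sum \omega^{-1}b^{**}[b\omega a]_\rho a^{**}$ coming from $\triangle_k$ applied twice, together with $[[S]]$; one checks, exactly as in \cite{DWZ1}, that the bilinear terms quadratic in the $(\cdot)^{**}$ arrows split off a trivial SP whose cyclic derivatives allow one to eliminate, via a unitriangular change of arrows, all appearances of the composite arrows $[b\omega a]_\rho$ from the rest of $\widetilde{\mu}_k\widetilde{\mu}_k(S)$, and simultaneously identify $c^{**}$ with $c$ and $[[b\omega a]_\rho{}^* \cdots ]$-type arrows with the original arrows $a,b$. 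What remains after discarding the trivial part is precisely $(A,S)$.

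The main obstacle, and the place where one must do species-specific bookkeeping rather than cite \cite{DWZ1} verbatim, is keeping track of the Galois-group labels through the double premutation: one must verify that the composition of the two bimodule isomorphisms from Proposition \ref{prop:tensor-prod-decomp} (one for $(F_{h(b)}^{g_b}\otimes F_k)\otimes(F_k^{g_a}\otimes F_{t(a)})$, then a second one involving the dualized arrows $a^*,b^*$) recovers, coset by coset, the original bimodule $F_{h(a)}^{g_a}\otimes_{F_{h(a),t(a)}}F_{t(a)}$ attached to $a$ (and likewise for $b$), with matching modulating-function values; this is the species analog of the harmless sign/relabeling subtleties in \cite{DWZ1} but requires the cancellation identities for the eigenbasis structure constants $f,m$ and the idempotents $\pi_\rho$ from Proposition \ref{prop:properties-of-J-K-bimodules}. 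Once that compatibility is checked, the cyclic-derivative computation that realizes the unitriangular automorphism splitting off the trivial SP is formally identical to \cite[proof of Theorem 5.7]{DWZ1}, carried out in $\RA{\widetilde{\mu}_k\widetilde{\mu}_k(A)}$ and using the $\maxid$-adic convergence guaranteed by \eqref{eq:unitriangular-d}. I would relegate the detailed verification of the label-matching to the technical section, mirroring how Theorems \ref{thm:splitting-theorem} and \ref{thm:SP-mut-well-defined-up-to-re} are handled.
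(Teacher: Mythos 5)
Your high-level plan---exhibit a right-equivalence $\widetilde{\mu}_k\widetilde{\mu}_k(A,S)\to(A,S)\oplus(C,T)$ with $(C,T)$ trivial and conclude via uniqueness of the reduced part (Theorem \ref{thm:splitting-theorem})---is exactly what the paper has in mind; the paper's own proof is a pointer to \cite[Theorem~8.10]{LZ}, which in turn follows \cite[Theorem~5.7]{DWZ1}. Your observation that the modulating values of the doubly-reversed arrows recover the original ones ($g_c\mapsto g_c^{-1}\mapsto g_c$) is correct and is one of the species-specific facts one needs.

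However, your concrete description of $\widetilde{\mu}_k\widetilde{\mu}_k(Q)$ is wrong in a way that would derail the computation if carried out as written. The composites $[b\omega a]_\rho$ produced by the first premutation go from $t(a)$ to $h(b)$, and since $Q$ is loop-free neither endpoint equals $k$; hence these arrows are \emph{not} incident to $k$ in $\widetilde{\mu}_k(Q)$, so they are neither reversed nor used to build new composites in the second premutation. The only arrows incident to $k$ in $\widetilde{\mu}_k(Q)$ are $a^*:k\to t(a)$ and $b^*:h(b)\to k$, so the second premutation produces precisely the double composites $[a^*\omega'b^*]_{\rho'}:h(b)\to t(a)$ together with the reversals $a^{**},b^{**}$; the arrows $[a^*\omega'[b\omega a]_\rho{}^*]_{\rho'}$ and $[[b\omega a]_\rho{}^*\omega''a^*]$ you describe do not exist. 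Likewise, there are no terms in $\widetilde{\mu}_k\widetilde{\mu}_k(S)$ ``quadratic in the $(\cdot)^{**}$ arrows'': the degree-$2$ part, which is the candidate trivial summand, comes from $[\triangle_k(A)]$ and (up to cyclic equivalence and scalars) has the form $\sum [b\omega a]_\rho\,[a^*\omega^{-1}b^*]_{\rho'}$, i.e.\ the $2$-cycles between first-level composites and double composites. The genuinely species-specific point you gesture at in your last paragraph is precisely the observation of Example \ref{ex:2-cycle-equivalent-to-0}: such a $2$-cycle is cyclically equivalent to $0$ unless $\rho'=\rho^{-1}$, so one must check that the surviving pairs account exactly for a trivial direct summand whose removal, via a unitriangular automorphism, absorbs the remaining $[b\omega a]_\rho$-dependence into $[[S]]$ and identifies $a^{**}\mapsto a$, $b^{**}\mapsto b$. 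With the combinatorics of the double premutation corrected in this way, your plan does match the argument in \cite{LZ}.
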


The proof of Theorem \ref{thm:SP-mutation-is-involution} is a minor modification of the proof of \cite[Theorem 8.10]{LZ}, which in turn follows the main idea of proof of \cite[Theorem 5.7]{DWZ1}

The following example was found by A. Zelevinsky and the second author a few years ago.

\begin{ex}\label{ex:6263-cycle-species} Example \ref{ex:6263-cycle-2cycle-species} has a very unpleasant consequence. Consider the matrix
$$
B=\left[\begin{array}{rrrr}
0 & 1 & 0 & -1\\
-3 & 0 & 3 & 0\\
0 & -1 & 0 & 1 \\
2 & 0 & -2 & 0
\end{array}
\right] .
$$
A straightforward check shows that if we set $D=\diag(6,2,6,3)$, then $DB$ is skew-symmetric. That is, $B$ is skew-symmetrizable and $D=\diag(6,2,6,3)$ is a skew-symmetrizer of $B$. The associated weighted quiver is
{\small\begin{center}
$(Q',\dtuple)=$\begin{tabular}{cc}
$\xymatrix{1  \ar[dd]%^{d}
& & 2 \ar[ll]%^{c}
\\
 & & \\
4 \ar[rr]%^{a}
& & 3 \ar[uu]%^{b}
}$ &
$\xymatrix{6  & & 2 \\
 & & \\
3 & & 6 }$%\\
%\hline
\end{tabular}
\end{center}}
Take any modulating function $h:Q'_1\rightarrow\bigcup_{i,j\in Q'_0}G_{i,j}$. Then $\widetilde{\mu}_2(Q',\dtuple)=\mu_2(Q',\dtuple)$ and the weighted quiver $(Q,\dtuple)=\widetilde{\mu}_3\mu_2(Q',\dtuple)$ is precisely the one in Example \ref{ex:6263-cycle-2cycle-species}. Furthermore, the modulating function $\widetilde{\mu}_3\mu_2(h):Q_1\rightarrow\bigcup_{i,j\in Q_0}G_{i,j}$ satisfies the conditions (0), (i), (ii) and (iii) stated in that same example. Whence, for any potential $S\in\RA{A}$, where $A$ is the species of $(Q',\dtuple,h)$,
the underlying quiver of the reduced part of the SP $\widetilde{\mu}_3\mu_2(A,S)$ will fail to be 2-acyclic. In other words, the underlying quiver of $\mu_3\mu_2(A,S)$ will not be 2-acyclic, no matter which potential $S\in\RA{A}$ we take. In the terminology of \cite{DWZ1}, this means that, no matter which modulating function $h:Q'_1\rightarrow\bigcup_{i,j\in Q'_0}G_{i,j}$ we take, the species with potential $(A,S)$ will be degenerate for every potential $S$ on the species $A$ of $(Q',\dtuple,h)$.

Notice that, setting $C=\diag(1,3,1,2)$, the matrix $C^{-1}B$ is skew-symmetric. As shown in \cite[Example 14.3]{LZ}, from this it can be deduced that $B$ admits a global unfolding (for the definition of what a global unfolding is, see \cite[Section 4]{FeShTu-unfoldings} or \cite[Definition 14.2]{LZ}).
\end{ex}

%%%%%%%%%%%%%%%%%%%%%%%%%%%%%%%%%%%%%%%%
%%%%%%%%%%%%%%%%%%%%%%%%%%%%%%%%%%%%%%%%

We finish this section with a couple of technical results that will be used later.

\begin{lemma}\label{lemma:non-triv-automorphism} Let $(Q,\dtuple)$ be a weighted quiver, $E/F$ a field extension satisfying \eqref{eq:root-of-unity} and \eqref{eq:E/F-degree-d}, and $g$ a modulating function for $(Q,\dtuple)$ over $E/F$. Suppose that we are given a tuple $(\phi_j)_{j\in Q_0}$ and a bijection $\psi:Q_1\rightarrow Q_1$ such that:
\begin{itemize}
\item $\phi_j\in G_j$ for every $j\in Q_0$;
\item $t(a)=t(\psi(a))$ and $h(a)=h(\psi(a))$ for every $a\in Q_1$;
\item $g_{\psi(a)}=\phi_{h(a)}|_{F_{h(a),t(a)}}g_a\phi_{t(a)}^{-1}|_{F_{h(a),t(a)}}$ for every $a\in Q_1$.
\end{itemize}
Then the function $\Psi^{(0)}:R\rightarrow R$ given by
$
\sum_{j\in Q_0}x_je_j\mapsto\sum_{j\in Q_0}\phi_j(x_j)e_j
$
is a ring automorphism of $R$, and the rule
\begin{equation}\label{eq:Psi^1-general-version}
xay\mapsto\phi_{h(a)}(x)\psi(a)\phi_{t(a)}(y), \ \ \ \text{for $a\in Q_1$, $x\in F_{h(a)}$ and $y\in F_{t(a)}$},
\end{equation}
produces a well-defined group automorphism $\Psi^{(1)}:A\rightarrow A$ such that $\Psi^{(1)}(ru)=\Psi^{(0)}(r)\Psi^{(1)}(u)$ and $\Psi^{(1)}(ur)=\Psi^{(1)}(u)\Psi^{(0)}(r)$ for all $r\in R$ and all $u\in A$. Consequently, there exists a unique ring automorphism $\Psi:\RA{A}\rightarrow\RA{A}$ such that $\Psi|_R=\Psi^{(0)}$ and $\Psi|_{A}=\Psi^{(1)}$.
\end{lemma}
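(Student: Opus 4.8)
The plan is to verify the three claims in sequence: first that $\Psi^{(0)}$ is a ring automorphism of $R$, then that \eqref{eq:Psi^1-general-version} gives a well-defined group automorphism $\Psi^{(1)}$ of $A$ compatible with $\Psi^{(0)}$, and finally to invoke Proposition \ref{prop:ring-endomorphisms-of-RA} to obtain $\Psi$. The first claim is immediate: since $R=\bigoplus_{j\in Q_0}F_j$ as a ring and each $\phi_j\in G_j=\Gal(F_j/F)$ is a field automorphism of $F_j$, the map $\sum x_je_j\mapsto\sum\phi_j(x_j)e_j$ is a direct sum of ring automorphisms, hence a ring automorphism of $R$; its inverse is $\sum x_je_j\mapsto\sum\phi_j^{-1}(x_j)e_j$.

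For the second claim I would argue one direct summand at a time. For a fixed arrow $a\colon t(a)\to h(a)$, the summand $F_{h(a)}^{g_a}\otimes_{F_{h(a),t(a)}}F_{t(a)}$ of $A$ must be sent into the summand $F_{h(a)}^{g_{\psi(a)}}\otimes_{F_{h(a),t(a)}}F_{t(a)}$ of $A$ corresponding to $\psi(a)$ (this is where the hypotheses $t(\psi(a))=t(a)$, $h(\psi(a))=h(a)$ are used, so both summands are bimodules over the same pair of fields). To see that the assignment $x\otimes y\mapsto\phi_{h(a)}(x)\,\psi(a)\,\phi_{t(a)}(y)$ is well defined on this tensor product I need to check it is $F_{h(a),t(a)}$-balanced: for $z\in F_{h(a),t(a)}$, the element $xz\otimes y=x\otimes \rho_a^{-1}(z)y$ in the source (where $\rho_a=g_a$, cf.\ the passage-of-scalars rule stated after Definition \ref{def:species-Ag}, so $xz$ on the left equals $g_a^{-1}(z)$ times the tensor viewed from the right; concretely $x\star z = x g_a(z)$ in the twisted bimodule $F_{h(a)}^{g_a}$). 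Pushing both expressions through \eqref{eq:Psi^1-general-version} and using that $\phi_{h(a)}$ and $\phi_{t(a)}$ restrict on $F_{h(a),t(a)}$ to the elements figuring in the relation $g_{\psi(a)}=\phi_{h(a)}|g_a\phi_{t(a)}^{-1}|$, one checks that the two images agree inside $F_{h(a)}^{g_{\psi(a)}}\otimes_{F_{h(a),t(a)}}F_{t(a)}$ — this is the computation that the third hypothesis on $g_{\psi(a)}$ is designed to make work. Additivity is clear, and bijectivity of $\Psi^{(1)}$ follows because $\psi$ is a bijection on $Q_1$ and each $\phi_j$ is invertible, so an explicit two-sided inverse can be written using $\psi^{-1}$ and the $\phi_j^{-1}$. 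The compatibility identities $\Psi^{(1)}(ru)=\Psi^{(0)}(r)\Psi^{(1)}(u)$ and $\Psi^{(1)}(ur)=\Psi^{(1)}(u)\Psi^{(0)}(r)$ reduce, via the bimodule rules \eqref{eq:Ag-standard-bimod-structure}, to the trivial observations that $\phi_{h(a)}(\lambda x)=\phi_{h(a)}(\lambda)\phi_{h(a)}(x)$ and $\phi_{t(a)}(y\lambda)=\phi_{t(a)}(y)\phi_{t(a)}(\lambda)$ for scalars $\lambda$ in the appropriate fields.

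With $\Psi^{(0)}$ a ring automorphism of $R$ and $\Psi^{(1)}$ a group automorphism of $A$ satisfying the stated intertwining relations, Proposition \ref{prop:ring-endomorphisms-of-RA} applies verbatim and yields a unique ring endomorphism $\Psi\colon\RA{A}\to\RA{A}$ with $\Psi|_R=\Psi^{(0)}$ and $\Psi|_A=\Psi^{(1)}$; applying the same proposition to the inverse data $(\phi_j^{-1})_{j\in Q_0}$ and $\psi^{-1}$ (which satisfy the three bulleted hypotheses with $g$ replaced by itself, since the relation is symmetric under inversion) produces a two-sided inverse, so $\Psi$ is in fact an automorphism. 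The main obstacle is the balancing check in the second claim: one must be careful to track the twist $g_a$ in the source and the twist $g_{\psi(a)}$ in the target correctly when passing an element of $F_{h(a),t(a)}$ across the tensor sign, and to confirm that the relation $g_{\psi(a)}=\phi_{h(a)}|_{F_{h(a),t(a)}}\,g_a\,\phi_{t(a)}^{-1}|_{F_{h(a),t(a)}}$ is precisely the compatibility condition that makes the image independent of the representative. Everything else is bookkeeping.
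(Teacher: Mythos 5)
Your proposal is correct and follows the same route as the paper: immediate verification that $\Psi^{(0)}$ is a ring automorphism, a balancing check that moves $z\in F_{h(a),t(a)}$ across the tensor sign and uses the hypothesis $g_{\psi(a)}=\phi_{h(a)}|\,g_a\,\phi_{t(a)}^{-1}|$ to reconcile the source twist $g_a$ with the target twist $g_{\psi(a)}$, and an appeal to Proposition~\ref{prop:ring-endomorphisms-of-RA} for the extension to $\RA{A}$. One small point in your favor: Proposition~\ref{prop:ring-endomorphisms-of-RA} as stated only yields a ring \emph{endomorphism}, and the paper passes silently over the argument that $\Psi$ is in fact bijective; your observation that the inverse data $\bigl((\phi_j^{-1})_{j\in Q_0},\,\psi^{-1}\bigr)$ again satisfy the three bulleted hypotheses (which you correctly verify using $h(\psi^{-1}(a))=h(a)$, $t(\psi^{-1}(a))=t(a)$) and hence produce a two-sided inverse endomorphism is exactly the missing detail, and is welcome.
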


\begin{proof} It is clear that $\Psi^{(0)}$ is a ring automorphism of $R$.
For $a\in Q_1$, $x\in F_{h(a)}$, $y\in F_{t(a)}$ and $z\in F_{h(a),t(a)}$, we have
\begin{eqnarray}\nonumber
\phi_{h(a)}(x)\psi(a)\phi_{t(a)}(zy) &=& \phi_{h(a)}(x)g_{\psi(a)}(\phi_{t(a)}(z))\psi(a)\phi_{t(a)}(y)\\
\nonumber
&=& \phi_{h(a)}(x)\phi_{h(a)}(g_a(z))\psi(a)\phi_{t(a)}(y)\\
\nonumber
&=&\phi_{h(a)}(xg_a(z))\psi(a)\phi_{t(a)}(y),
\end{eqnarray}
hence \eqref{eq:Psi^1-general-version} produces a well-defined group homomorphism $\Psi^{(1)}:A\rightarrow A$.
%To show that \eqref{eq:Psi^1-general-version} produces a well-defined group homomorphism $\Psi^{(1)}:A\rightarrow A$, it is enough to show that
%$$
%\phi_{h(a)}(x)\psi(a)\phi_{t(a)}(zy)=\phi_{h(a)}(xg_a(z))\psi(a)\phi_{t(a)}(y) \ \ \ \text{for $a\in Q_1$, $x\in F_{h(a)}$, $y\in F_{t(a)}$ and $z\in F_{h(a),t(a)}$}.
%$$
%This is easy:
%\begin{eqnarray}\nonumber
%\phi_{h(a)}(x)\psi(a)\phi_{t(a)}(zy) &=& \phi_{h(a)}(x)g_{\psi(a)}(\phi_{t(a)}(z))\psi(a)\phi_{t(a)}(y)\\
%\nonumber
%&=& \phi_{h(a)}(x)\phi_{h(a)}(g_a(z))\psi(a)\phi_{t(a)}(y)\\
%\nonumber
%&=&\phi_{h(a)}(xg_a(z))\psi(a)\phi_{t(a)}(y).
%\end{eqnarray}
It is clear that $\Psi^{(1)}$ is bijective and that it satisfies $\Psi^{(1)}(ru)=\Psi^{(0)}(r)\Psi^{(1)}(u)$ and $\Psi^{(1)}(ur)=\Psi^{(1)}(u)\Psi^{(0)}(r)$. The existence and uniqueness of a ring automorphism $\Psi:\RA{A}\rightarrow\RA{A}$ such that $\Psi|_R=\Psi^{(0)}$ and $\Psi|_{A}=\Psi^{(1)}$ follow now from Proposition \ref{prop:ring-endomorphisms-of-RA}.
\end{proof}

\begin{coro}\label{coro:non-triv-automorphism-Psi_i} Let $(Q,\dtuple)$ be a weighted quiver such that the least common multiple of $\dtuple$ is 2, $E/F$ a field extension satisfying \eqref{eq:root-of-unity} and \eqref{eq:E/F-degree-d}, and $g$ a modulating function for $(Q,\dtuple)$ over $E/F$. Suppose that $i$ is a vertex of $Q$ for which $d_i=2$ and $\psi:Q_1\rightarrow Q_1$ is a bijection such that:
\begin{itemize}
\item $\psi^2$ is the identity function of $Q_1$;
\item $t(a)=t(\psi(a))$ and $h(a)=h(\psi(a))$ for every $a\in Q_1$;
\item for $a\in Q_1$, if $a$ is not incident to $i$, then $\psi(a)=a$;
\item for $a\in Q_1$, if $a$ is incident to $i$ and one of the integers $\dtuple_{h(a)}$ and $\dtuple_{t(a)}$ is equal to $1$, then $\psi(a)=a$;
\item for $a\in Q_1$, if $a$ is incident to $i$ and both $\dtuple_{h(a)}$ and $\dtuple_{t(a)}$ are equal to 2, then $g_{\psi(a)}\neq g_{a}$.
\end{itemize}
Then there exists a unique ring automorphism $\Psi_i$ of $\RA{A}$ with the properties that:
\begin{itemize}
\item its restriction to $R$ obeys the rule
\begin{equation}\label{eq:automorphism-of-R}
\sum_{j\in Q_0}\alpha_je_j\mapsto
\theta(\alpha_i)e_i+\sum\limits_{\substack{j\in Q_0 \\ j\neq i}}\alpha_je_j,
\end{equation}
where $\theta$ is the unique non-identity element of $\Gal(E/F)$.
\item its restriction to $Q_1$ equals $\psi$.
\end{itemize}
The automorphism $\Psi_i$ whose existence has just been claimed satisfies $\Psi_i^2=\myid_{\RA{A}}$.
\end{coro}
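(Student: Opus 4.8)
The plan is to obtain $\Psi_i$ as a special instance of the automorphism $\Psi$ produced by Lemma~\ref{lemma:non-triv-automorphism}, and then to read off the uniqueness and the identity $\Psi_i^2=\myid_{\RA{A}}$ from the shape of the construction.

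Since the least common multiple of $\dtuple$ equals $2$, we have $[E:F]=2$, so $\Gal(E/F)=\{\myid,\theta\}$; moreover $F_j=E$ whenever $d_j=2$ (in particular $F_i=E$, so $G_i=\Gal(E/F)$) and $F_j=F$ whenever $d_j=1$. I would define $\phi_j\in G_j$ by $\phi_i=\theta$ and $\phi_j=\myid_{F_j}$ for $j\neq i$ (the latter being forced when $d_j=1$), and then check that the data $\bigl((\phi_j)_{j\in Q_0},\psi\bigr)$ satisfies the three hypotheses of Lemma~\ref{lemma:non-triv-automorphism}. The first two are immediate from the construction and from the hypotheses on $\psi$, and the third, namely
\[
g_{\psi(a)}=\phi_{h(a)}|_{F_{h(a),t(a)}}\,g_a\,\phi_{t(a)}^{-1}|_{F_{h(a),t(a)}}\qquad\text{for every }a\in Q_1,
\]
I would verify by the same trichotomy that appears in the statement. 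If $a$ is not incident to $i$, then $\psi(a)=a$ and neither $\phi_{h(a)}$ nor $\phi_{t(a)}$ is $\theta$, so both sides equal $g_a$. If $a$ is incident to $i$ but one of $d_{h(a)},d_{t(a)}$ equals $1$, then $\psi(a)=a$ and $F_{h(a),t(a)}=F$, so $g_a=g_{\psi(a)}=\myid_F$ while the right-hand side is $\myid_F$ because $\theta|_F=\myid$. Finally, if $a$ is incident to $i$ and $d_{h(a)}=d_{t(a)}=2$, then $F_{h(a),t(a)}=E$, exactly one of $h(a),t(a)$ equals $i$ (the quiver being loop-free), and the right-hand side equals $\theta g_a$, which is the element of $\Gal(E/F)=\{\myid,\theta\}$ distinct from $g_a$; since $g_{\psi(a)}\neq g_a$ also lies in $\{\myid,\theta\}$, the two coincide. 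I expect this last case — matching the twist by $\theta$ against the hypothesis $g_{\psi(a)}\neq g_a$ — to be the only point that requires real attention; everything else is bookkeeping about which fields coincide with $E$ or with $F$.

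With the hypotheses verified, Lemma~\ref{lemma:non-triv-automorphism} supplies a ring automorphism $\Psi$ of $\RA{A}$ with $\Psi|_R=\Psi^{(0)}$ and $\Psi|_A=\Psi^{(1)}$; by the choice of the $\phi_j$, $\Psi^{(0)}$ is exactly the map~\eqref{eq:automorphism-of-R}, and $\Psi^{(1)}(xay)=\phi_{h(a)}(x)\psi(a)\phi_{t(a)}(y)$, so in particular $\Psi(a)=\psi(a)$ for every $a\in Q_1$. I set $\Psi_i:=\Psi$. For uniqueness I would observe that any ring endomorphism of $\RA{A}$ whose restriction to $R$ is~\eqref{eq:automorphism-of-R} fixes the diagonal copy of $F$ pointwise (because $\theta|_F=\myid$), and that if it also restricts to $\psi$ on $Q_1$ then, using that $A$ is spanned over $F$ by the elements $xay$ with $a\in Q_1$, $x\in F_{h(a)}$, $y\in F_{t(a)}$, it must agree with $\Psi^{(1)}$ on $A$ and with $\Psi^{(0)}$ on $R$; the uniqueness clause of Proposition~\ref{prop:ring-endomorphisms-of-RA} then forces it to equal $\Psi_i$.

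For the involutivity, $\Psi_i^2|_R=\myid_R$ because $\theta^2=\myid$, and $\Psi_i^2|_A=\myid_A$ because $\psi^2=\myid_{Q_1}$ and every $\phi_j$ has order dividing $2$; a further application of the uniqueness clause of Proposition~\ref{prop:ring-endomorphisms-of-RA}, now with data $\myid_R$ and $\myid_A$, yields $\Psi_i^2=\myid_{\RA{A}}$.
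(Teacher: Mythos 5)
Your proof is correct and takes essentially the same route as the paper's: both define the tuple $(\phi_j)_{j\in Q_0}$ with $\phi_i=\theta$ and $\phi_j=\myid$ for $j\neq i$ and invoke Lemma~\ref{lemma:non-triv-automorphism}. The only difference is that you spell out the case-by-case verification of the compatibility condition $g_{\psi(a)}=\phi_{h(a)}|\,g_a\,\phi_{t(a)}^{-1}|$ and the uniqueness and involutivity arguments, all of which the paper leaves as ``easy to check'' or ``obvious''; your write-up is fully correct.
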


\begin{proof} Uniqueness is obvious, we prove existence. For $j\in Q_0$ let $\phi_j$ be the field automorphism of $F_j$ given by
\begin{equation}\label{eq:tuple-phi_j-for-one-orbpop}
\phi_j=\begin{cases}
\theta & \text{if $j=i$},\\
\myid & \text{if $j\neq i$}.
\end{cases}
\end{equation}
The ring automorphism $\Psi^{(0)}:R\rightarrow R$ induced by the tuple $(\phi_j)_{j\in Q_0}$ according to the statement of Lemma \ref{lemma:non-triv-automorphism} is clearly given by \eqref{eq:automorphism-of-R}. Furthermore, it is easy to check that
$g_{\psi(a)}=\phi_{h(a)}|_{F_{h(a),t(a)}}g_a\phi_{t(a)}^{-1}|_{F_{h(a),t(a)}}$ for every $a\in Q_1$. The existence of the desired automorphism $\Psi_i:\RA{A}\rightarrow\RA{A}$ thus follows from Lemma \ref{lemma:non-triv-automorphism}.
\end{proof}

\begin{ex} Let $(Q,\dtuple)$ be the weighted quiver
$$
\xymatrix{ & 2 \ar[dr]^{a} &\\
1 \ar[ur]^{b} & & 3 \ar[ll]^{{c}}} \ \ \ \xymatrix{ & 2 &\\
1  & & 1 }
$$
The least common multiple of $\dtuple$ is obviously 2, and the field extension $\C/\R$ clearly satisfies \eqref{eq:E/F-degree-d} and \eqref{eq:root-of-unity}. Since $F_{h(\alpha),t(\alpha)}=\R$ for every arrow $\alpha\in Q_1$, the weighted quiver $(Q,\dtuple)$ admits only one modulating function $g$, namely, the one given by $g_a=g_b=g_c=\myid_\R\in\Gal(\R/\R)$. The vertex $2\in Q_0$ and the identity function of $Q_1$ satisfy the hypotheses of Corollary \ref{coro:non-triv-automorphism-Psi_i}. The action on $R$ of the corresponding ring automorphism $\Psi_2:\RA{A}\rightarrow\RA{A}$ is given by
$$
\Psi_2(x_1e_1+x_2e_2+x_3e_3)=x_1e_1+\theta(x_2)e_2+x_3e_3 \ \ \ \text{for $x_1,x_3\in\R$ and $x_2\in\C$},
$$
where $\theta:\C\rightarrow\C$ is the usual complex conjugation. As for positive-length paths, we have, for example,
$$\xymatrix{
\Psi_2((abc)^n)=(abc)^n & \text{and} &
\Psi_2((avbc)^n)=(-avbc)^n}
$$
for $n\geq 1$, where $v\in\C$ is an imaginary number whose square equals $-1$.
\end{ex}

\begin{ex} Let $(Q,\dtuple)$ be the weighted quiver
$$
\xymatrix{1 \ar@<.75ex>[rr]^{a_1} \ar@<-.75ex>[rr]_{a_2} & & 2 \ar[dl]^{c}\\
 & 3 \ar[ul]^{b} & } \ \ \ \xymatrix{2 & & 2\\ & 1 & }
$$
The least common multiple of $\dtuple$ is obviously 2, and the field extension $\C/\R$ clearly satisfies \eqref{eq:E/F-degree-d} and \eqref{eq:root-of-unity}. The function $g:Q_1\rightarrow\Gal(\C/\R)\cup\Gal(\R/\R)$ given by $g_{a_1}=\myid_{\C}$, $g_{a_2}=\theta\neq\myid_{\C}$, $g_{b}=\myid_{\R}=g_{c}$ is a modulating function for $(Q,\dtuple)$ over $E/F$. The vertex $1\in Q_0$ and the function $\psi:Q_1\rightarrow Q_1$ that swaps $a_1$ and $a_2$ and acts as the identity on $b$ and $c$ are easily seen to satisfy the hypotheses of Corollary \ref{coro:non-triv-automorphism-Psi_i}. The action on $R$ of the corresponding ring automorphism $\Psi_1:\RA{A}\rightarrow\RA{A}$ is given by
$$
\Psi_1(x_1e_1+x_2e_2+x_3e_3)=\theta(x_1)e_1+x_2e_2+x_3e_3 \ \ \ \text{for $x_1\in\C$ and $x_2,x_3\in\R$}.
$$
As for positive-length paths, we have, for example,
\begin{eqnarray*}
\Psi_1(a_1bc)=a_2bc, &&
\Psi_1(a_1vbc)=-a_2vbc=va_2bc=\Psi_1(va_1bc), \\
\Psi_1(a_2bc)=a_1bc, &&
\Psi_1(a_2vbc)=-a_1vbc=-va_1bc=\Psi_1(-va_2bc),
\end{eqnarray*}
where $v\in\C$ is an imaginary number whose square equals $-1$.
\end{ex}

\section{The weighted quiver and the species of a triangulation}
\label{sec:species-of-a-triangulation}

Let $\surf$ be a surface with marked points and orbifold points, and let $\tau$ be an ideal triangulation of $\surf$. We define a weighted quiver $(\unredQtau,\dtuple(\tau))$ as follows. The vertices of the quiver $\unredQtau$ are the arcs in $\tau$, and to each $i\in\tau$ the weight tuple $\dtuple(\tau)$ attaches the weight
$$
\dtuple(\tau)_i=\begin{cases} 1 & \text{if no orbifold point lies on $i$};\\
2 & \text{otherwise}.
\end{cases}
$$
To define the number of arrows between any two given vertices of $\unredQtau$, we need an auxiliary function $\pi_\tau:\tau\rightarrow\tau$ which we now define. Given an arc $i\in\tau$, let $\pi_\tau(i)$ be the arc in $\tau$ defined as follows. If $i$ is not the folded side of a self-folded triangle, then $\pi_\tau(i)=i$, whereas if $i$ is the folded side of a self-folded triangle $\triangle$, and $k\in\tau$ is the loop that encloses $\triangle$, then $\pi_\tau(i)=k$.

Given arcs $i,j\in \tau$, let $\triangle(i,j)$ be the number of ideal triangles $\triangle$ of $\tau$ that satisfy the following conditions:
\begin{itemize}
\item $\triangle$ is not a self-folded triangle;
\item $\pi_\tau(i)$ and $\pi_\tau(j)$ are contained in $\triangle$;
\item inside $\triangle$, $\pi_\tau(j)$ directly precedes $\pi_\tau(i)$ according to the clockwise orientation of $\triangle$ which is inherited from the orientation of $\Sigma$.
\end{itemize}
The number of arrows in $\unredQtau$ that go from $j$ to $i$ is then defined to be
$$
\widehat{c}_{ij}=\begin{cases}
\triangle(i,j) & \text{if at least one of $i$ and $j$ contains no orbifold points};\\
2\triangle(i,j) & \text{otherwise}.
\end{cases}
$$

\begin{defi}\label{def:weighted-quiver-of-ideal-triangulation} For an ideal triangulation $\tau$ of $\surf$, the weighted quiver $(\unredQtau,\dtuple(\tau))$ just defined receives the name of \emph{unreduced weighted quiver of $\tau$}, while the weighted quiver $(\Qtau,\dtuple(\tau))$ obtained from it by deleting all 2-cycles is called the \emph{weighted quiver of $\tau$}.
\end{defi}

Note that because of the way $\unredQtau$ is defined, every arrow in $\unredQtau$ (resp. $\Qtau$) has a marked point canonically associated to it.

%We will define $(\unredQtau,\dtuple(\tau))$ and $(\Qtau,\dtuple(\tau))$ for arbitrary tagged triangulations in a few moments.
The reader can find examples of ideal triangulations $\tau$ and their associated weighted quivers $(\Qtau,\dtuple(\tau))$ in Examples \ref{ex:type-C_4}, \ref{ex:hex_1punct_2orbs_two_together} and \ref{ex:hex_2punct_1orb_2_improved}.

%\begin{ex} In Figure \ref{Fig:example_weighted_quivers} we can see an ideal triangulation $\tau$ of a twice-punctured hexagon with one orbifold point (left of the figure) and an ideal triangulation $\sigma$ of a once-punctured hexagon with two orbifold points (right of the figure). The quivers $\Qtau$ and $\Qsigma$ have been drawn on the surface. Arcs incident to orbifold points have weight 2, while the remaining arcs have weight 1. Weights equal to 2 have been written on the corresponding arcs, and weights equal to 1 have been omitted from the figure.
%%
%        \begin{figure}[!ht]
%                \centering
%                \includegraphics[scale=.5]{example_weighted_quivers_two_of_them.EPS}
%                \caption{}\label{Fig:example_weighted_quivers}
%        \end{figure}
%\end{ex}

To define the weighted quivers of arbitrary tagged triangulations, one first passes through ideal triangulations by means of a procedure of ``notch deletion" (first introduced by Fomin-Shapiro-Thurston), whose description requires some preparation.

\begin{defi}
Let $\tau$ be a tagged triangulation of $\surf$.
\begin{enumerate}\item Following \cite[Definition 9.1]{FST}, we define the \emph{signature of $\tau$} to be the function $\delta_\tau:\punct\rightarrow\{-1,0,1\}$ given by
$$
\delta_\tau(p)=\begin{cases}
1 & \text{if all the tags at $p$ of tagged arcs in $\tau$ that are incident to $p$ are plain};\\
-1 & \text{if all the tags at $p$ of tagged arcs in $\tau$ that are incident to $p$ are notches};\\
0 & \text{otherwise}.
\end{cases}
$$
Note that if $\delta_\tau(p) = 0$, then there are precisely two tagged arcs in $\tau$ incident to $p$, the untagged versions
of these arcs coincide and they carry the same tag at the end different from $p$.
\item Following \cite[Definition 2.10]{Labardini-potsnoboundaryrevised}, we define the \emph{weak signature of $\tau$} to be the function $\epsilon_\tau:\punct\rightarrow\{-1,1\}$ given by
$$
\epsilon_\tau(p)=\begin{cases}
1 & \text{if $\delta_\tau(p)\in\{0,1\}$};\\
-1 & \text{otherwise}.
\end{cases}
$$
\end{enumerate}
\end{defi}

\begin{defi}\label{def:tau-circ}  Let $\tau$ be a tagged triangulation of $\surf$. We define a set of ordinary arcs by replacing each tagged arc
 $k$
in $\tau$ with an ordinary arc
 $k^\circ$
via the following procedure:
\begin{enumerate}
\item delete all tags at the punctures $p$ with non-zero signature;
\item for each puncture $p$ with $\delta_\tau(p)=0$, replace the tagged arc $\arc\in\tau$ which is notched at $p$ by a loop closely enclosing $\arc$.
\end{enumerate}
The resulting collection $\{k^\circ\suchthat k\in \tau\}$ of ordinary arcs will be denoted by $\tau^\circ$.
\end{defi}

\begin{defi}%[{\cite[Definitions 7.2 and 9.2]{FST}}]
\label{def:tagfunction(ideal-arc)} Let $\epsilon:\punct\rightarrow\{-1,1\}$ be any function. We define a function $\tagfunction_\epsilon:\arcsinsurf\rightarrow\taggedinsurf$ that represents ordinary arcs by tagged ones as follows.
\begin{enumerate}\item If $\arc$ is an ordinary arc that is not a loop enclosing a once-punctured monogon, set $\arc$ to be the underlying ordinary arc of the tagged arc $\tagfunction_\epsilon(\arc)$. An end of $\tagfunction_\epsilon(\arc)$ will be tagged notched if and only if the corresponding marked point is an element of $\punct$ where $\epsilon$ takes the value $-1$.
\item If $\arc$ is a loop, based at a marked point $q$, that encloses a once-punctured monogon, being $p$ the puncture inside this monogon, then the underlying ordinary arc of $\tagfunction(\arc)$ is the arc that connects $q$ with $p$ inside the monogon. The end at $q$ will be tagged notched if and only if $q\in\punct$ and $\epsilon(q)=-1$, and the end at $p$ will be tagged notched if and only if $\epsilon(p)=1$.
\end{enumerate}
\end{defi}

\begin{prop}\label{prop:tagfunction-and-circ} Let $\surf$ be a surface.
\begin{enumerate}
\item For every function $\epsilon:\punct\rightarrow\{-1,1\}$, the function $\tagfunction_\epsilon:\arcsinsurf\to\taggedinsurf$ is injective and preserves compatibility. Thus, if $\arc_1$ and $\arc_2$ are compatible ordinary arcs, then $\tagfunction_\epsilon(\arc_1)$ and $\tagfunction_\epsilon(\arc_2)$ are compatible tagged arcs. Consequently, if $T$ is an ideal triangulation of $\surf$, then $\tagfunction_\epsilon(T)=\{\tagfunction_\epsilon(\arc)\suchthat\arc\in T\}$ is a tagged triangulation of $\surf$. Moreover, if $T_1$ and $T_2$ are ideal triangulations such that $T_2=f_\arc(T_1)$ for an arc $\arc\in T_1$, then $\tagfunction_\epsilon(T_2)=f_{\tagfunction_\epsilon(\arc)}(\tagfunction_{\epsilon}(T_1))$.
\item If $\tau$ is a tagged triangulation of $\surf$, then $\tau^\circ$ is an ideal triangulation of $\surf$ and $\arc\mapsto\arc^\circ$ is a bijection between $\tau$ and $\tau^\circ$.
\item For every ideal triangulation $T$, we have $\tagfunction_\mathbf{1}(T)^\circ=T$, where $\mathbf{1}:\punct\to\{-1,1\}$ is the constant function taking the value 1.
\item For every tagged triangulation $\tau$ and every tagged arc $\arc\in\tau$ we have $\tagfunction_{\epsilon_\tau}(\arc^\circ)=\arc$ (see
Definition \ref{def:tau-circ} for the definition of $\arc^\circ$).
Consequently, $\tagfunction_{\epsilon_\tau}(\tau^\circ)=\tau$.
\item Let $\tau$ and $\sigma$ be tagged triangulations such that $\epsilon_\tau=\epsilon_\sigma$.
    If $\sigma=f_{\arc}(\tau)$ for a tagged arc $\arc\in\tau$, then $\sigma^\circ=f_{\arc^\circ}(\tau^\circ)$ (see Definition \ref{def:tau-circ}).
    Moreover, the diagram of functions
    \begin{equation}\label{eq:commutative-diagram-ordinary-flips}
    \xymatrix{\tau \ar[r]^{?^\circ} \ar[d] & \tau^\circ \ar[d] \ar[r]^{\tagfunction_{\epsilon_\tau}} & \tagfunction_{\epsilon_\tau}(\tau^\circ) \ar[d] \\
    \sigma \ar[r]_{?^\circ} & \sigma^\circ\ar[r]_{\tagfunction_{\epsilon_\sigma}} & \tagfunction_{\epsilon_\sigma}(\sigma^\circ)
    }
    \end{equation}
    commutes, where the three vertical arrows are bijections canonically induced by the operation of flip.
\item Let $\tau$ and $\sigma$ be tagged triangulations such that $\sigma=f_\arc(\tau)$ for some tagged arc $\arc\in\tau$. Then $\epsilon_\tau$ and $\epsilon_\sigma$ either are equal or differ at exactly one puncture $q$. In the latter case, if $\epsilon_\tau(q)=1=-\epsilon_\sigma(q)$, then $\arc^\circ$ is a folded side of $\tau^\circ$ incident to the puncture $q$, and $\tagfunction_{\epsilon_\tau\epsilon_\sigma}(\sigma^\circ)=f_{\arc^\circ}(\tagfunction_{\mathbf{1}}(\tau^\circ))$, where\\ $\epsilon_\tau\epsilon_\sigma:\punct\to\{-1,1\}$ is the function defined by $p\mapsto\epsilon_\tau(p)\epsilon_\sigma(p)$.
    Moreover, the diagram of functions
    \begin{equation}\label{eq:commutative-diagram-tagged-flips}
    \xymatrix{\tau \ar[r]^{?^\circ} \ar[d] & \tau^\circ \ar[r]^{\tagfunction_{\mathbf{1}}} & \tagfunction_{\mathbf{1}}(\tau^\circ) \ar[d]\\
\sigma=f_k(\tau) \ar[r]_{\phantom{xxx}?^\circ} & \sigma^\circ \ar[r]_{\tagfunction_{\epsilon_\tau\epsilon_\sigma}\phantom{xxxxxxxxxx}} & \tagfunction_{\epsilon_\tau\epsilon_\sigma}(\sigma^\circ)=f_{k^\circ}(\tagfunction_{\mathbf{1}}(\tau^\circ))
}
    \end{equation}
    commutes, where the two vertical arrows are bijections canonically induced by the operation of flip.
\end{enumerate}
\end{prop}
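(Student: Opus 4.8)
The plan is to reduce everything to the corresponding statements for surfaces without orbifold points, due to Fomin--Shapiro--Thurston \cite[Section 9]{FST} (together with the weak-signature refinements of \cite[Section 2]{Labardini-potsnoboundaryrevised}), by observing that all three operations in play --- notch deletion $\tau\mapsto\tau^\circ$, the representation maps $\tagfunction_\epsilon$, and flips --- leave pending arcs and the local picture around orbifold points completely untouched. Indeed, by Definition \ref{def:triangulation}(6) every end of a tagged arc at an orbifold point is plain; the signature $\delta_\tau$ and the weak signature $\epsilon_\tau$ depend only on the punctures; Definition \ref{def:tau-circ} only deletes tags at punctures or replaces a notched arc by a loop around a once-punctured monogon; and Definition \ref{def:tagfunction(ideal-arc)} only notches ends lying at punctures. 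Hence every pending arc $k$ satisfies $k^\circ=k=\tagfunction_\epsilon(k)$ for all $\epsilon$, and flipping a pending arc changes no signature. This confines all of the combinatorial content to the punctures, where the situation is literally the one treated in \cite{FST}.

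With this reduction I would dispatch Parts (1)--(4) by a puncture-by-puncture case analysis. Injectivity of $\tagfunction_\epsilon$ is immediate (the underlying ordinary arc and its tags are recovered from $\tagfunction_\epsilon(k)$), and preservation of compatibility is checked against Definition \ref{def:triangulation}(7): when the untagged versions of two compatible ordinary arcs differ, the tags $\tagfunction_\epsilon$ assigns at a shared endpoint depend only on $\epsilon$ and on the once-punctured-monogon condition, so they agree; when the untagged versions coincide, one of the two arcs is a loop around a once-punctured monogon and $\tagfunction_\epsilon$ tags the two relevant ends oppositely. That ideal triangulations go to tagged triangulations then follows by maximality, and compatibility with flips from the locality of the flip. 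Part (2) is the orbifold analogue of the corresponding FST statement, obtained by inspecting the cases $\delta_\tau(p)\in\{1,-1,0\}$ at each puncture. Part (3) is immediate, since $\tagfunction_{\mathbf{1}}(T)$ has all punctures plainly tagged, so $?^\circ$ changes nothing. Part (4) asserts that $?^\circ$ and $\tagfunction_{\epsilon_\tau}$ are mutually inverse on $\tau$: a puncture $p$ with $\delta_\tau(p)=1$ (resp.\ $-1$) has all ends plain (resp.\ notched) and the two operations delete then restore them; a puncture $p$ with $\delta_\tau(p)=0$ carries one plain and one notched arc on a common underlying arc, $?^\circ$ turns the notched one into a loop enclosing a once-punctured monogon, and $\tagfunction_{\epsilon_\tau}$ (which takes the value $+1$ at $p$) turns it back; ends at boundary marked points and at orbifold points are fixed throughout.

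For Part (5), set $\epsilon:=\epsilon_\tau=\epsilon_\sigma$. By Part (4), $\tau=\tagfunction_\epsilon(\tau^\circ)$ and $\sigma=\tagfunction_\epsilon(\sigma^\circ)$, so $\sigma=f_k(\tau)=f_{\tagfunction_\epsilon(k^\circ)}(\tagfunction_\epsilon(\tau^\circ))=\tagfunction_\epsilon(f_{k^\circ}(\tau^\circ))$ by the last clause of Part (1); since $\tagfunction_\epsilon$ is injective and sends ideal triangulations to tagged ones, $\sigma^\circ=f_{k^\circ}(\tau^\circ)$, and reading this off arrow by arrow yields commutativity of \eqref{eq:commutative-diagram-ordinary-flips}. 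For Part (6): that $\epsilon_\tau$ and $\epsilon_\sigma$ are equal or differ at a single puncture is the weak-signature statement of \cite[Section 2]{Labardini-potsnoboundaryrevised}, applicable because flipping a pending arc leaves all signatures unchanged, so a change can occur only at a puncture through an ordinary arc. If $\epsilon_\tau(q)=1=-\epsilon_\sigma(q)$, then $\delta_\tau(q)\in\{0,1\}$ while $\delta_\sigma(q)=-1$; by the puncture-local FST analysis the only flip producing such a jump is the flip of the folded side of a self-folded triangle incident to $q$, which identifies $k^\circ$ as such a folded side. The required identity $\tagfunction_{\epsilon_\tau\epsilon_\sigma}(\sigma^\circ)=f_{k^\circ}(\tagfunction_{\mathbf{1}}(\tau^\circ))$ and the commutativity of \eqref{eq:commutative-diagram-tagged-flips} then follow from a direct computation of how flipping a folded side redistributes the tags at $q$, using Part (3) to control $\tagfunction_{\mathbf{1}}(\tau^\circ)$.

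The only genuinely delicate point is the tag bookkeeping at the puncture $q$ in Part (6): making precise that the signature jumps from $+1$ or $0$ down to $-1$ exactly when one flips a folded side, and tracking the tags carefully enough to pin down the equality $f_{k^\circ}(\tagfunction_{\mathbf{1}}(\tau^\circ))=\tagfunction_{\epsilon_\tau\epsilon_\sigma}(\sigma^\circ)$. Everything else is a mechanical, puncture-by-puncture transcription of Fomin--Shapiro--Thurston's argument, with the single recurring check being that orbifold points and pending arcs remain passive, which is guaranteed by the fact that ends at orbifold points are always plain.
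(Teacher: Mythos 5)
The paper states Proposition~\ref{prop:tagfunction-and-circ} without any accompanying proof; it is presented as a routine extension of the corresponding results of Fomin--Shapiro--Thurston and of \cite{Labardini-potsnoboundaryrevised}. Your proposed argument -- reduce everything to the punctured, orbifold-free case by observing that $\tagfunction_\epsilon$, $?^\circ$, flips, and the (weak) signature all leave pending arcs and ends at orbifold points completely inert, then run the FST puncture-by-puncture analysis -- is exactly the natural proof and is, in my view, what the authors have in mind when omitting it.

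One small imprecision worth repairing: your claim that injectivity of $\tagfunction_\epsilon$ is ``immediate because the underlying ordinary arc and its tags are recovered from $\tagfunction_\epsilon(k)$'' is not quite right as stated, because when $k$ is a loop enclosing a once-punctured monogon (say around a puncture $p$, based at $q$), the underlying ordinary arc of $\tagfunction_\epsilon(k)$ is the arc from $q$ to $p$ inside the monogon, not $k$ itself. What saves you is the observation you make later in the compatibility argument: for a fixed underlying ordinary arc, Definition~\ref{def:tagfunction(ideal-arc)} assigns the opposite tag at $p$ to the arc and to the enclosing loop (notched iff $\epsilon(p)=-1$ in the first case, notched iff $\epsilon(p)=1$ in the second), so the two candidate preimages yield distinct tagged arcs. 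You should fold that observation into the injectivity step rather than treating it only under compatibility. With that adjustment, the remaining pieces --- Part~(5) by injectivity plus the flip-commutation clause of Part~(1), Part~(6) by the weak-signature dichotomy of \cite{Labardini-potsnoboundaryrevised} together with the FST analysis of flips at folded sides --- are correctly organized, and the recurring check that orbifold ends stay plain throughout is exactly the right ``passivity'' principle to cite.
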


\begin{defi}\label{def:weighted-quivers-of-triangulations} Let $\tau$ be an arbitrary tagged triangulation of $\surf$. By Proposition \ref{prop:tagfunction-and-circ}, the assignments $\arc\mapsto\arc^\circ$ and $\arc\mapsto\tagfunction_{\epsilon_\tau}(\arc)$ constitute mutually inverse bijections between $\tau$ and $\tau^\circ$. We define $(\unredQtau,\dtuple(\tau))$ and $(\Qtau,\dtuple(\tau))$, respectively, as the result of replacing each $k\in\tau^\circ$ with $\tagfunction_{\epsilon_\tau}(k)$ as a vertex of $(\widehat{Q}(\tau^\circ),\dtuple(\tau^\circ))$ and $(Q(\tau^\circ),\dtuple(\tau^\circ))$.
\end{defi}

\begin{remark}\label{rem:tau=t_1(tau)}\begin{enumerate}\item When $\tau$ is an ideal triangulation, Proposition \ref{prop:tagfunction-and-circ} allows us to make no distinction between $\tau$ and the tagged triangulation $\tagfunction_{\mathbf{1}}(\tau)$. We will thus refer to $\tau$ and to $\tagfunction_{\mathbf{1}}(\tau)$ as being the same ideal triangulation and the same tagged triangulation with no further apology. Similarly, given an ordinary arc $i$ we will refer to $i$ and to the corresponding tagged arc $\tagfunction_{\mathbf{1}}(i)$ as being the same ordinary arc and the same tagged arc.
\item By Lemma \ref{lemma:B<->(Q,d)}, $(\Qtau,\dtuple(\tau))$ corresponds to a skew-symmetrizable matrix $B(\tau)$ whose columns and rows are indexed by the arcs in $\tau$. So, Definition \ref{def:weighted-quivers-of-triangulations} associates a skew-symmetrizable matrix to each tagged triangulation of a surface with marked points and orbifold points of order 2. This matrix was originally defined by Felikson-Shapiro-Tumarkin \cite{FeShTu-orbifolds}. They showed that whenever two tagged triangulations $\tau$ and $\sigma$ are related by the flip of a tagged arc $k\in\tau$, the associated matrices satisfy $B(\sigma)=\mu_k(B(\tau))$.
\item We must point out that in the presence of at least one orbifold point, Felikson-Shapiro-Tumarkin associate not one, but several skew-symmetrizable matrices to each given tagged triangulation. In this paper we will restrict our attention to only one of these matrices, namely, the matrix $B(\tau)$ corresponding to the weighted quiver $(Q(\tau),\dtuple(\tau))$ defined above. In the forthcoming \cite{Geuenich-Labardini-2} we will consider (the weighted quiver of) other skew-symmetrizable matrices.
\item Let us describe more precisely how Felikson-Shapiro-Tumarkin associate several skew-symmetrizable matrices to each tagged triangulation $\tau$. Start with a pair $(\tau,w)$ consisting of a tagged triangulation and an arbitrary function $w:\orb\rightarrow \{\frac{1}{2},2\}$. If $w(q)=2$ for all $q\in\orb$, set $d_k'=2$ for every pending arc $k\in\tau$ and $d_i'=1$ for every non-pending arc $i\in\tau$. If $w$ is not the constant function taking the value $2$ on all $\orb$, set $d'_k=2w(q_k)$ for each pending arc $k\in\tau$, where $q_k\in \orb$ is the unique orbifold point lying on $k$, and set $d_i'=2$ for each non-pending arc $i\in\tau$. With the tuple $(d'_i\suchthat i\in\tau)$ at hand, define $d_k=\frac{\operatorname{lcm}(d'_i\suchthat i\in\tau)}{d_k'}$ for every $k\in\tau$, and set $\dtuple_{w}=(d_k)_{k\in\tau}$. Furthermore, let $Q_w(\tau)$ be the quiver obtained from $Q(\tau)$ by replacing each pair of arrows $j\rightrightarrows i$ such that $d_i=1$, $d_j=4$, or $d_i=4$, $d_j=1$, with a single arrow. The matrix Felikson-Shapiro-Tumarkin associate to $(\tau,w)$ is the matrix corresponding to the weighted quiver $(Q_w(\tau),\dtuple_w)$ according to Lemma \ref{lemma:B<->(Q,d)}. In this paper we are considering the matrix that arises from choosing the function $w:\orb\rightarrow\{\frac{1}{2},2\}$ that takes the constant value $\frac{1}{2}$.
\end{enumerate}
\end{remark}

\begin{ex}\label{ex:type-C_4}
   Consider the following triangulation of the pentagon with one orbifold point:
  \begin{center}
  \begin{tikzpicture}[scale=0.4]
    \def \n {7}
    \def \s {360/\n}
    \def \r {3cm}

    \foreach \i in {2,...,\n} {
      \coordinate (\i) at (\i*360/\n+90:\r);
    }
    \coordinate (O) at ($(2)!0.3!(\n)$);
    \node[rotate=45] at (O) {$\times$};

    \foreach \i in {3,...,\n}
      \node at (\i) {$\bullet$};

    \draw[dotted] (0,0) circle (\r);

    \draw (O) to (\n);
    \foreach \i [evaluate=\i as \is using int(\i-2)] in {5,...,\n} {
      \draw (\is) to (\n);
    }
  \end{tikzpicture}
  \end{center}
Its associated weighted quiver is $(\bullet \to \bullet \to \bullet \to \bullet, \ \ 2 \ \ 1 \ \ 1 \ \ 1)$, whose corresponding matrix under Lemma \ref{lemma:B<->(Q,d)} is
$$
\left[\begin{array}{rrrr}
0 & -1 & 0 & 0 \\
2 & 0 & -1 & 0\\
0 & 1 & 0 & -1\\
0 & 0 & 1 & 0
\end{array}\right] .
$$
\end{ex}

To be able to define a species for $(\unredQtau,\dtuple(\tau))$ we need a field extension and a modulating function over this extension.
Since $\dtuple(\tau)$ consists only of 1s and 2s, its least common multiple $d$ is either 1 or 2. Let $E/F$ be a field extension satisfying \eqref{eq:root-of-unity} and \eqref{eq:E/F-degree-d}. Then either $E=F$ or $[E:F]=2$. If $[E:F]=2$, we denote by $\theta$ the unique element of $\Gal(E/F)$ which is different from the identity, so that $\Gal(E/F)=\{\myid_E,\theta\}$.

Let $g=g(\tau):\unredQtau\rightarrow\Gal(E/F)$ be a modulating function satisfying the following conditions:
\begin{itemize}
\item For every arrow $a$ of $\unredQtau$, if either of the numbers $d(\tau)_{h(a)}$ and $d(\tau)_{t(a)}$ is equal to $1$, then $G_{h(a),t(a)}=\Gal(F/F)$, so we take $g_a=\myid_F\in\Gal(F/F)$;
\item for every two arcs $i$ and $j$ of $\tau$ such that $d(\tau)_i=2=d(\tau)_j$, if $i$ and $j$ are connected by at least one arrow of $\unredQtau$, then they are connected by exactly two arrows of $\unredQtau$, say $a$ and $b$. We take $g_a=\myid_E$ and $g_b=\theta$.
\end{itemize}
We fix one such modulating function once and for all, and call it the \emph{modulating function of $\tau$ over $E/F$}.

\begin{defi}\label{def:unred-species-of-triangulation} We define the \emph{unreduced species of $\tau$ over $E/F$} to be the species of $(\unredQtau,\dtuple(\tau),g(\tau))$, and denote it $\widehat{A}(\tau)$ (see Definitions \ref{def:species-Ag}, \ref{def:weighted-quiver-of-ideal-triangulation} and \ref{def:weighted-quivers-of-triangulations}).
\end{defi}

We will define the reduced species of $\tau$ over $E/F$ in the next section. For the moment we only observe that $\unredQtau$ may have 2-cycles, but it never has 2-cycles incident to pending arcs.

\section{The potential associated to a triangulation}
\label{sec:potential-of-a-triangulation}

Let $\surf$ be a surface with marked points and orbifold points, and let $\tau$ be an ideal triangulation of $\surf$. Let $E/F$ and $g$ be as in the two paragraphs that follow Definition \ref{def:weighted-quivers-of-triangulations}. Recall that if $[E:F]=2$, then $E/F$ has an eigenbasis of the form $\{1,v\}$. From this moment to the end of the paper we assume that
\begin{equation}\label{eq:v^2=-1}
\text{if $[E:F]=2$, then the element $v$ of the chosen eigenbasis satisfies $v^2=-1$.}
\end{equation}
For example, if $E/F$ is the extension $\C/\R$, we can take $v$ to be one of the two imaginary numbers whose square is $-1$.

It is our intention to define a potential $\unredStau$, which will be an element of $\RA{\widehat{A}(\tau)}$.  The following definitions are aimed at locating some cycles on the unreduced species $\widehat{A}(\tau)$. Throughout Definitions \ref{def:cycles-from-int-triangles}, \ref{def:cycles-from-orb-triangles}, \ref{def:cycles-from-self-folded-triangles} and \ref{def:cycles-from-punctures}, $\tau$ will be an ideal triangulation of $\surf$.

\begin{defi}[Cycles from interior triangles]\label{def:cycles-from-int-triangles} Every interior ideal triangle $\triangle$ of $\tau$ which is neither self-folded nor orbifolded gives rise to an oriented 3-cycle $\alpha^\triangle\beta^\triangle\gamma^\triangle$ of $\unredQtau$. We set $\widehat{S}^\triangle(\tau)=\alpha^\triangle\beta^\triangle\gamma^\triangle\in\RA{\widehat{A}(\tau)}$ up to cyclical equivalence. See Figure \ref{Fig:normal_triangle}.
% THE NICEST 3-SIDED TRIANGLE, NO ORBIFOLD POINTS
        \begin{figure}[!ht]
                \centering
                \includegraphics[scale=.30]{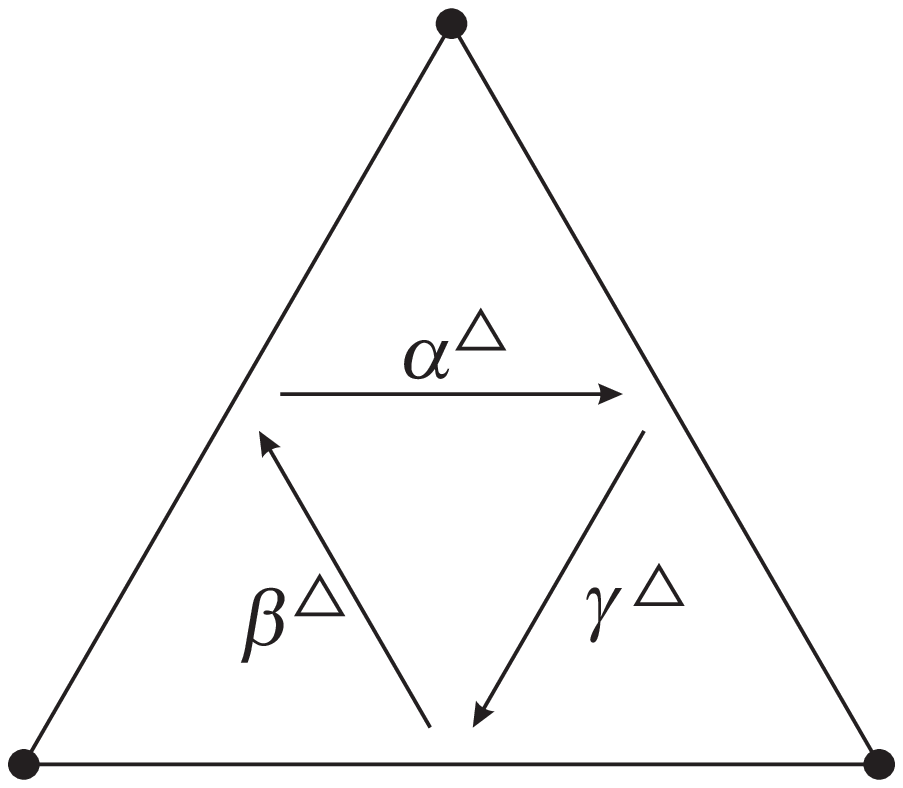}
                \caption{}\label{Fig:normal_triangle}
        \end{figure}

\end{defi}

\begin{defi}[Cycles from orbifolded triangles]\label{def:cycles-from-orb-triangles} Suppose $\triangle$ is an interior orbifolded triangle of $\tau$.
 \begin{enumerate}
 \item If $\triangle$ contains exactly one orbifold point, then, with the notation of Figure \ref{Fig:orb_triangles_1and2pts} (left), we define $\widehat{S}^{\triangle}(\tau,\mathbf{x})=\alpha^\triangle\beta^\triangle\gamma^\triangle-\alpha^\triangle\beta^\triangle v\gamma^\triangle$.
     % ORBIFOLDED TRIANGLE DRAWN AS A NICE DIGON
        \begin{figure}[!ht]
                \centering
                \includegraphics[scale=.45]{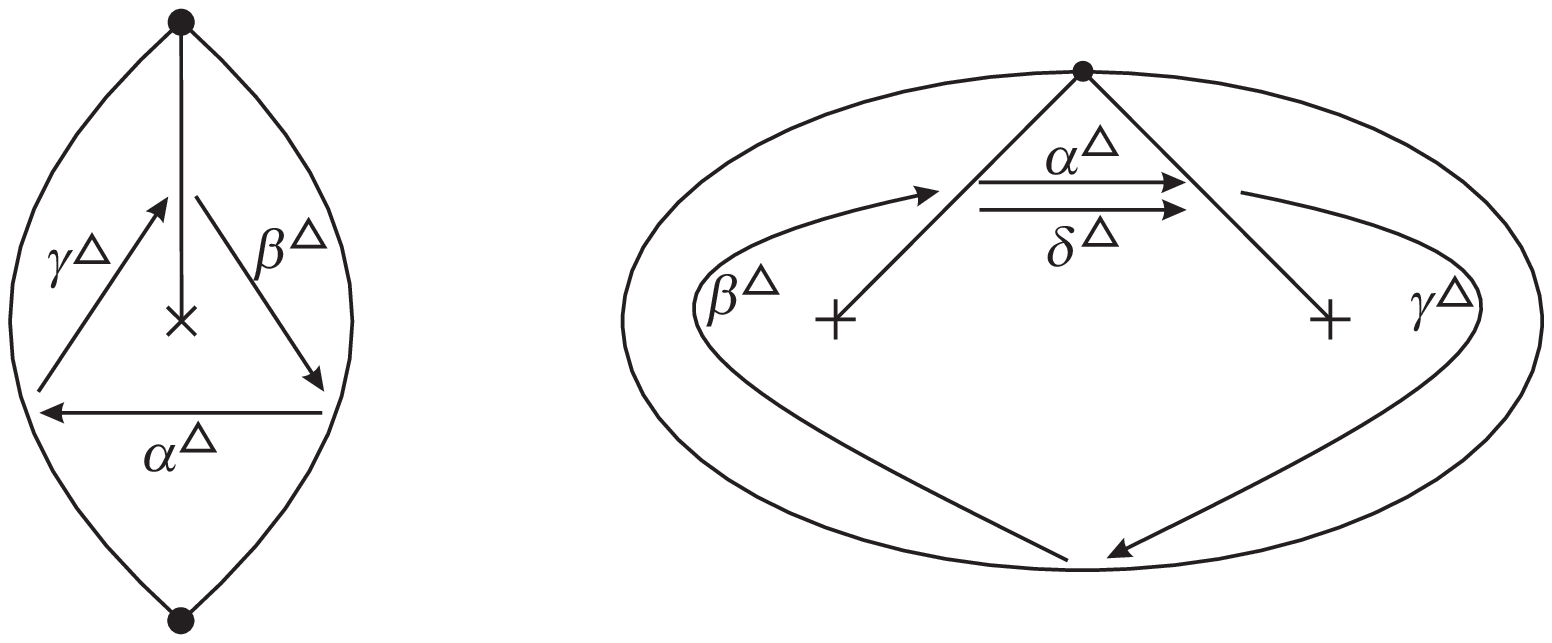}
                \caption{}\label{Fig:orb_triangles_1and2pts}
        \end{figure}
 \item If $\triangle$ contains exactly two orbifold points, then, with the notation of Figure \ref{Fig:orb_triangles_1and2pts} (right) and the convention that $g(\tau)_{\alpha^\triangle}=\myid_E\in\Gal(E/F)$ and $g(\tau)_{\delta^\triangle}=\theta\in\Gal(E/F)$, where $\Gal(E/F)=\{\myid_E,\theta\}$, define $\widehat{S}^{\triangle}(\tau,\mathbf{x})=2v^{-1}\alpha^\triangle\beta^\triangle\gamma^\triangle+2\delta^\triangle\beta^\triangle\gamma^\triangle$,.
     %% ORBIFOLDED TRIANGLE WITH EXACTLY TWO ORBIFOLD POINTS
%        \begin{figure}[!ht]
%                \centering
%                \includegraphics[scale=.45]{orb_triangle_2pts.EPS}
%                \caption{}\label{Fig:orb_triangle_2pts}
%        \end{figure}
 \end{enumerate}
\end{defi}

\begin{defi}[Cycles from triangles adjacent to self-folded triangles]
\label{def:cycles-from-self-folded-triangles}
Suppose $\triangle$ is an interior non-self-folded triangle of $\tau$.
\begin{enumerate}
\item If $\triangle$ is not an orbifolded triangle, and does not share sides with exactly two self-folded triangles, we set $\widehat{U}^\triangle(\tau,\mathbf{x})=0$.
\item If $\triangle$ is not an orbifolded triangle, and is adjacent to two self-folded triangles like in the configuration of Figure \ref{Fig:adj_1sf_triang_merged} (left),
% TWO ADJACENT SELF-FOLDED TRIANGLES
        \begin{figure}[!ht]
                \centering
                \includegraphics[scale=.4]{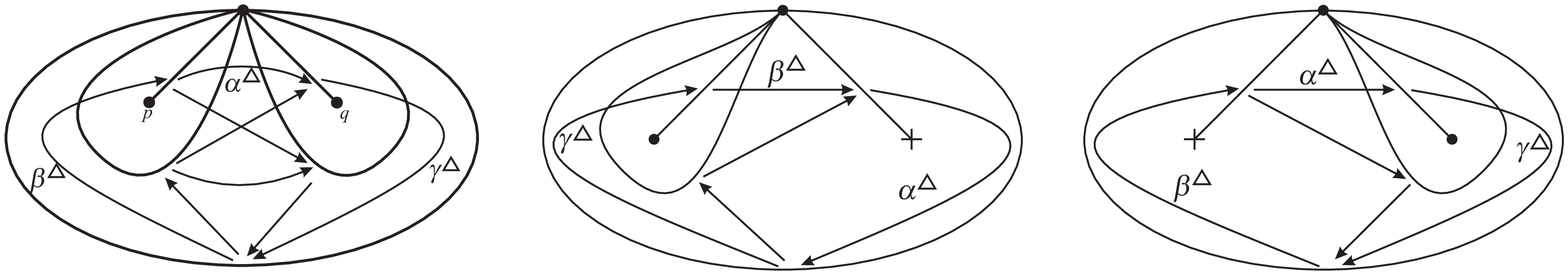}
                \caption{}\label{Fig:adj_1sf_triang_merged}
        \end{figure}
we set
$\widehat{U}^\triangle(\tau,\mathbf{x})=x_p^{-1}x_q^{-1}\alpha^\triangle\beta^\triangle\gamma^\triangle$ up to cyclical equivalence, where $p$ and $q$ are the punctures enclosed in the self-folded triangles adjacent to $\triangle$.
\item If $\triangle$ is an orbifolded triangle which is adjacent to a self-folded triangle like in the configurations shown in Figure \ref{Fig:adj_1sf_triang_merged} (center and right), we set $\widehat{U}^\triangle(\tau,\mathbf{x})=x_p^{-1}\alpha^\triangle v\beta^\triangle\gamma^\triangle$
    %% ORBIFOLDED TRIANGLE ADJACENT TO A SELF-FOLDED TRIANGLE (LEFT AND RIGHT)
%        \begin{figure}[!ht]
%                \centering
%                \includegraphics[scale=.5]{1orb_triang_adj_1sf_triang.EPS}
%                \caption{}\label{Fig:1orb_triang_adj_1sf_triang}
%        \end{figure}
\end{enumerate}
\end{defi}

\begin{defi}[Cycles from punctures]
\label{def:cycles-from-punctures}
Let $p\in\punct$ be a puncture.
\begin{enumerate}
\item Suppose $p$ is adjacent to exactly one arc $i$ of $\tau$; then $i$ is the folded side of a self-folded triangle $\triangle$ of $\tau$. Let $\triangle'$ be the non-self-folded ideal triangle of $\tau$ which is adjacent to $\triangle$. If $\triangle'$ is an interior triangle, then around $i$ we have one of the three configurations shown in Figure \ref{Fig:sftriangle_edit}, and
% SELF-FOLDED TRIANGLE
        \begin{figure}[!ht]
                \centering
                \includegraphics[scale=.5]{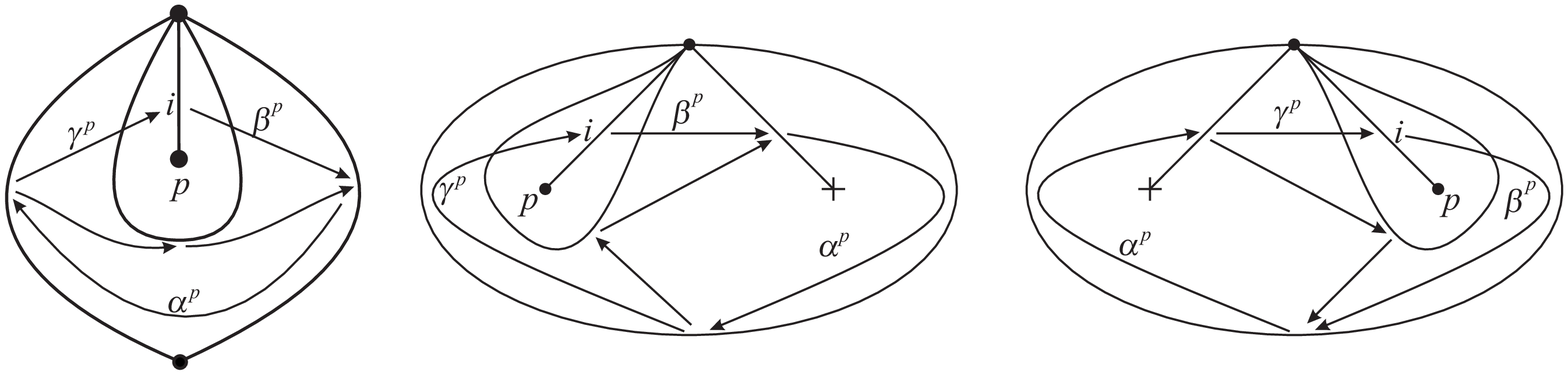}
                \caption{}\label{Fig:sftriangle_edit}
        \end{figure}
we set
$\widehat{S}^p(\tau,\mathbf{x})=-x_p^{-1}\alpha^p\beta^p\gamma^p$ up to cyclical equivalence.
Otherwise, if $\triangle'$ is not an interior triangle, we set $\widehat{S}^{p}(\tau,\mathbf{x})=0$.
\item
Suppose $p$ is adjacent to more than one arc. Let $\tau^p$ be the set of arcs incident to $p$ that are not loops enclosing self-folded triangles.
The orientation of $\Sigma$ induces a cyclic ordering~$(i_1, \ldots, i_\ell)$ on $\tau^p$, i.e.\ moving in a small closed curve counter-clockwise around the puncture $p$ the arc $i_{s+1}$ is crossed directly after crossing $i_s$ (we may have $i_s=i_{s'}$ for different indices $s$ and $s'$).
Set $\widehat{S}^p(\tau,\mathbf{x})=x_p\Lambda^p_\ell\cdots\Lambda^p_{1} \in \RA{A}_{\rm cyc}$ where $\Lambda^p_s$ is the sum of all arrows that go from $i_{s-1}$ to $i_{s}$ and have $p$ as associated puncture (see the comment right after Definition \ref{def:weighted-quiver-of-ideal-triangulation}).
\end{enumerate}
\end{defi}

\begin{remark} If no orbifolded triangle with exactly two orbifold points contains the puncture $p$, then $\widehat{S}^p(\tau,\mathbf{x})$ is a non-zero scalar multiple of a concatenation of actual arrows of $\widehat{Q}(\tau)$.
\end{remark}

Now that we have located some ``obvious" cycles on (the species of) the weighted quiver of an ideal triangulation, we are ready to define the potential of an arbitrary tagged triangulation.
Recall that given a tagged triangulation $\tau$, the function $\tagfunction_{\epsilon_\tau}:\tau^\circ\rightarrow\tau$ is a specific bijection. By its very definition, $\Qtau$ is the weighted quiver obtained from $Q(\tau^\circ)$ by replacing each vertex $i\in\tau^\circ$ with $\tagfunction_{\epsilon_\tau}(i)\in\tagfunction_{\epsilon_\tau}(\tau^\circ)=\tau$. During this replacement, the arrow set does not suffer any significant change: if $a$ is an arrow in $Q(\tau^\circ)$ going from $j\in\tau^\circ$ to $i\in\tau^\circ$, then $a$ itself is an arrow in $\Qtau$ going from $\tagfunction_{\epsilon_\tau}(j)$ to $\tagfunction_{\epsilon_\tau}(i)$. The weight tuple $\dtuple(\tau)$ and the modulating function $g(\tau)$ are defined in the obvious way, namely, by setting $\dtuple(\tau)_i=\dtuple(\tau^\circ)_{i^\circ}$ and $g(\tau)=g(\tau^\circ)$. Therefore, the function $\tagfunction_{\epsilon_\tau}:\tau^\circ\rightarrow\tau$, together with the identity function of the arrow set, is a specific weighted-quiver isomorphism $(Q(\tau^\circ),\dtuple(\tau^\circ)\rightarrow(\Qtau,\dtuple(\tau))$ that preserves the modulating functions and hence yields an $R$-$R$-bimodule isomorphism $A(\tau^\circ)\rightarrow\Atau$, which we denote by $\tagfunction_{\epsilon_\tau}$ also. Being an $R$-$R$-bimodule isomorphism, $\tagfunction_{\epsilon_\tau}$ induces an $F$-algebra isomorphism $\RA{A(\tau^\circ)}\rightarrow\RA{\Atau}$, which, in a slight abuse of notation, we denote by $\tagfunction_{\epsilon_\tau}$ as well.

\begin{defi}\label{def:SP-of-tagged-triangulation} Let $\tau$ be a tagged triangulation of $\surf$, and $\mathbf{x}=(x_p)_{p\in\punct}$ be a choice of non-zero scalars (one scalar $x_p\in F$ per puncture $p$). Let $\widehat{A}(\tau)$ be the unreduced species of $\tau$ (see Definition \ref{def:unred-species-of-triangulation}).
\begin{enumerate}
\item The \emph{unreduced potential $\unredStau\in \RA{\widehat{A}(\tau)}$ associated to $\tau$ with respect to the choice $\mathbf{x}=(x_p)_{p\in\punct}$} is:
\begin{equation}\unredStau=
\tagfunction_{\epsilon_\tau}\left(\sum_\triangle\left(\widehat{S}^\triangle(\tau^\circ)+
\widehat{U}^\triangle(\tau^\circ,\mathbf{x})\right)+
\sum_{p\in\punct}\left(\epsilon_\tau(p)\widehat{S}^p(\tau^\circ,\mathbf{x})\right)\right)
\end{equation}
where the first sum runs over all interior non-self-folded triangles of $\tau^\circ$, and $\epsilon_\tau:\punct\rightarrow\{-1,1\}$ is the weak signature of $\tau$.
\item We define $\AStau$ to be the reduced part of $\unredAStau$.
\end{enumerate}
\end{defi}

\begin{remark}
\begin{enumerate}\item If $\orb=\varnothing$, that is, if $\tau$ is a tagged triangulation of a surface with marked points but without orbifold points, the QP $\AStau$ defined above coincides with the one defined in \cite{Labardini-potsnoboundaryrevised}. If $\orb=\varnothing$ and the boundary of $\Sigma$ is not empty, $\AStau$ is right-equivalent to the QP defined in~\cite{CI-LF}.
\item If $\tau$ is an ideal triangulation, the only situation where one needs to apply reduction to $(\widehat{A}(\tau),\unredStau)$ in order to obtain $\Stau$ is when there is some puncture incident to exactly two arcs of $\tau$. It is not hard to see that the SP $\AStau$ is always 2-acyclic, and that the underlying weighted quiver of $\AStau$ is $(\Qtau,\dtuple(\tau))$.
\end{enumerate}
\end{remark}

%Let us illustrate Definition \ref{def:SP-of-tagged-triangulation} with some examples.

\begin{ex}\label{ex:hex_1punct_2orbs_two_together}
In Figure \ref{Fig:hex_1punct_2orbs_two_together} we can see two triangulations of a hexagon with 1 puncture and 2 orbifold points.
% EXAMPLES QPS OF IDEAL AND TAGGED TRIANGULATION, HEXAGON WITH 1 PUNCTURE AND 2 ORBIFOLD POINTS, 2 ORBIFOLDED TRIANGLE CONTAINING 1 ORBIFOLD POINT EACH
        \begin{figure}[!ht]
                \centering
                \includegraphics[scale=.65]{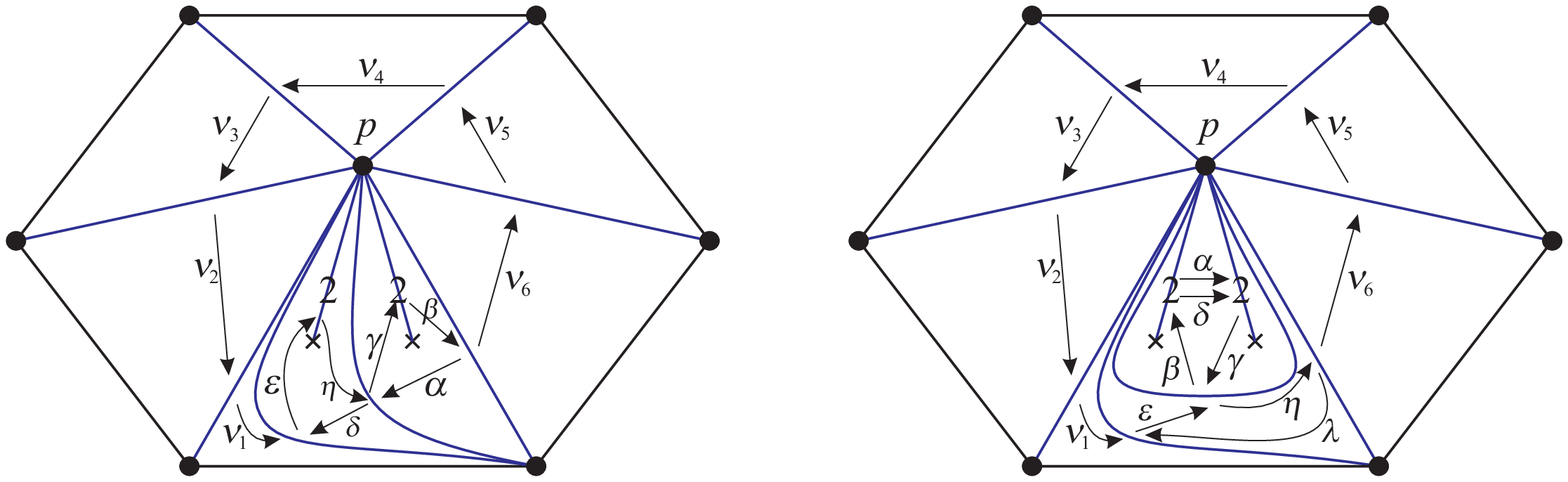}
                \caption{}\label{Fig:hex_1punct_2orbs_two_together}
        \end{figure}
Let $\tau$ be the triangulation depicted on the left and $\sigma$ be the triangulation depicted on the right.
Taking $\C/\R$ as our extension $E/F$, we have $g(\tau)_{a}=\myid_{\R}\in\Gal(\R/\R)$ for every $a\in\Qtau_1$. As for the potential, we have
%\begin{eqnarray}\nonumber
%\tagfunction_{\epsilon_\tau}\left(\sum_\triangle\widehat{S}^\triangle(\tau^\circ)\right) & = &
%\alpha\beta\gamma-\alpha\beta v\gamma+\delta\eta\varepsilon-\delta\eta v\varepsilon,\\
%\nonumber
%\tagfunction_{\epsilon_\tau}\left(\sum_\triangle\widehat{U}^\triangle(\tau^\circ,\mathbf{x})\right) & = &
%0,\\
%\nonumber
%\tagfunction_{\epsilon_\tau}\left(\sum_{p\in\punct}\epsilon_\tau(p)\widehat{S}^p(\tau^\circ,\mathbf{x})\right) & = &
%x_p\beta\gamma\eta\varepsilon\nu_1\nu_2\nu_3\nu_4\nu_5\nu_6,
%\end{eqnarray}
%hence
$$
\Stau=\alpha\beta\gamma-\alpha\beta v\gamma+\delta\eta\varepsilon-\delta\eta v\varepsilon+x_p\beta\gamma\eta\varepsilon\nu_1\nu_2\nu_3\nu_4\nu_5\nu_6.
$$

Similarly, we have $g(\sigma)_{\alpha}=\myid_{\C}\in\Gal(\C/\R)$ and $g(\sigma)_\delta=\theta\in\Gal(\C/\R)=\{\myid_{\C},\theta\}$, while $g(\sigma)_{a}=\myid_{\R}\in\Gal(\R/\R)$ for every $a\in\Qsigma_1\setminus\{\alpha,\delta\}$. As for the potential, we have
%\begin{eqnarray}\nonumber
%\tagfunction_{\epsilon_\sigma}\left(\sum_\triangle\widehat{S}^\triangle(\sigma^\circ)\right) & = &
%\eta\varepsilon\lambda-2v\alpha\beta\gamma+2\delta\beta\gamma,\\
%\nonumber
%\tagfunction_{\epsilon_\sigma}\left(\sum_\triangle\widehat{U}^\triangle(\sigma^\circ,\mathbf{x})\right) & = &
%0,\\
%\nonumber
%\tagfunction_{\epsilon_\sigma}\left(\sum_{p\in\punct}\epsilon_\sigma(p)\widehat{S}^p(\tau^\circ,\mathbf{x})\right) & = &
%x_p(\alpha+\delta)\beta\varepsilon\nu_1\nu_2\nu_3\nu_4\nu_5\nu_6\eta\gamma,
%\end{eqnarray}
%hence
\begin{eqnarray}\nonumber
\Ssigma & = & \eta\varepsilon\lambda-2v\alpha\beta\gamma+2\delta\beta\gamma+x_p(\alpha+\delta)\beta\varepsilon\nu_1\nu_2\nu_3\nu_4\nu_5\nu_6\eta\gamma\\
\nonumber
& = & \eta\varepsilon\lambda-2v\alpha\beta\gamma+2\delta\beta\gamma+x_p\alpha\beta\varepsilon\nu_1\nu_2\nu_3\nu_4\nu_5\nu_6\eta\gamma
+x_p\delta\beta\varepsilon\nu_1\nu_2\nu_3\nu_4\nu_5\nu_6\eta\gamma.
\end{eqnarray}
\end{ex}

\begin{ex}\label{ex:hex_2punct_1orb_2_improved}  In Figure \ref{Fig:hex_2punct_1orb_2_improved} we can see a triangulation $\tau$ of a hexagon with 2 punctures and 1 orbifold point.
% EXAMPLES QPS OF IDEAL AND TAGGED TRIANGULATION, HEXAGON WITH 2 PUNCTURES AND 1 ORBIFOLD POINTS, ORBIFOLDED TRIANGLE ADJACENNT TO SELF-FOLDED TRIANGLE
        \begin{figure}[!ht]
                \centering
                \includegraphics[scale=.65]{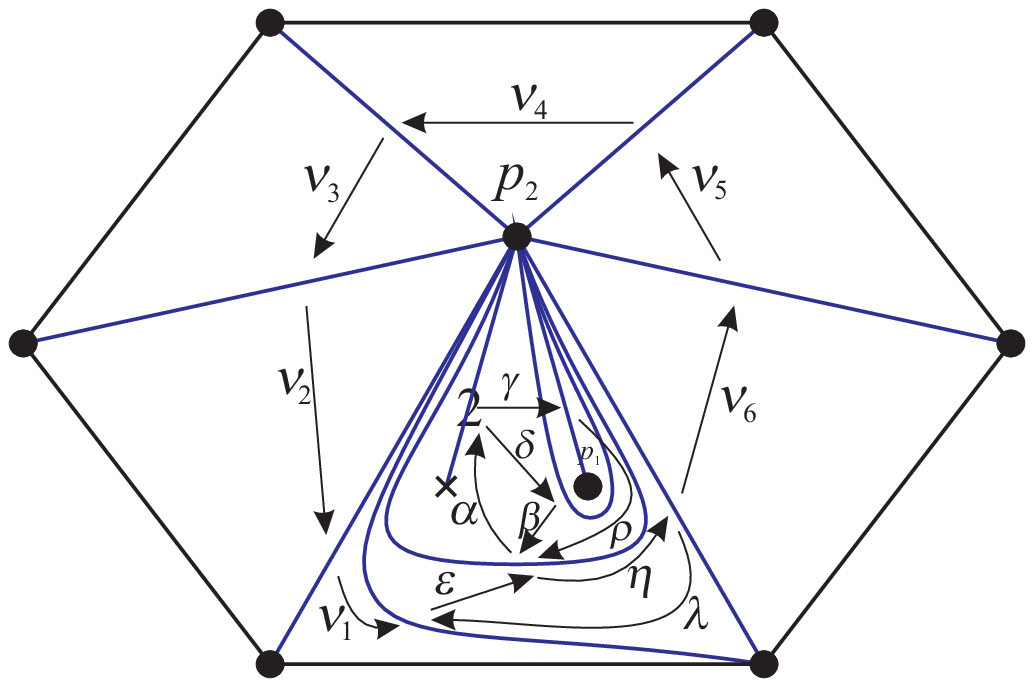}
                \caption{}\label{Fig:hex_2punct_1orb_2_improved}
        \end{figure}
The triangulation has an orbifolded triangle adjacent to a self-folded triangle. Taking $\C/\R$ as our extension $E/F$, we have $g(\tau)_{a}=\myid_{\R}\in\Gal(\R/\R)$ for every $a\in\Qtau_1$. As for the potential, we have
%\begin{eqnarray}\nonumber
%\tagfunction_{\epsilon_\tau}\left(\sum_\triangle\widehat{S}^\triangle(\tau^\circ)\right) & = &
%\eta\varepsilon\lambda+\alpha\beta\delta-v\alpha\beta\delta,\\
%\nonumber
%\tagfunction_{\epsilon_\tau}\left(\sum_\triangle\widehat{U}^\triangle(\tau^\circ,\mathbf{x})\right) & = &
%x_{p_1}^{-1}v\alpha\rho\gamma,\\
%\nonumber
%\tagfunction_{\epsilon_\tau}\left(\sum_{p\in\punct}\epsilon_\tau(p)\widehat{S}^p(\tau^\circ,\mathbf{x})\right) & = &
%-x_{p_1}^{-1}\alpha\rho\gamma+x_{p_2}\eta\rho\gamma\alpha\varepsilon\nu_1\nu_2\nu_3\nu_4\nu_5\nu_6,
%\end{eqnarray}
%hence
\begin{eqnarray}\nonumber
\Stau & = & \eta\varepsilon\lambda+\alpha\beta\delta-v\alpha\beta\delta+x_{p_1}^{-1}v\alpha\rho\gamma-x_{p_1}^{-1}\alpha\rho\gamma
+x_{p_2}\eta\rho\gamma\alpha\varepsilon\nu_1\nu_2\nu_3\nu_4\nu_5\nu_6.
\end{eqnarray}
\end{ex}

\begin{remark}\label{rem:ultimate-goal-and-steps}
The ultimate goal of this paper is to prove that whenever two tagged triangulations $\tau$ and $\sigma$ are related by a flip, then some function $\zeta:\punct\rightarrow\{-1,1\}$ exists that takes the value $-1$ at most once and has the property that the SPs $\AStau$ and $(A(\sigma),S(\sigma,\zeta\mathbf{x}))$ are related by the SP-mutation given in Definition \ref{def:SP-mutation}, where $\zeta\mathbf{x}=(\zeta(p)x_p)_{p\in\punct}$. Our strategy of proof will consist of 2 steps:
\begin{enumerate}
\item Showing that the assertion holds for surfaces with empty boundary (with an arbitrary number of punctures and an arbitrary number of orbifold points), see Theorem \ref{thm:tagged-flips<->SP-mutation};
\item deducing that the assertion holds for all surfaces with marked points and orbifold points, regardless of the emptiness or non-emptiness of their boundary, by using restriction of SPs and gluing/cutting of discs along boundary components, see Theorem \ref{thm:tagged-flips<->SP-mutation--non-empty-boundary}.
\end{enumerate}
Among these two steps, the second one will be more or less straightforward (see Section \ref{sec:empty=>general}). The first step will be a lot more involved, and it will be achieved in four steps:
\begin{itemize}
\item[(i)] Showing that, in the empty-boundary case, the desired result is true when both $\tau$ and $\sigma$ are ideal triangulations related by the flip of an arc which is not self-folded nor pending; the precise statement can be found in Theorem \ref{thm:ideal-non-pending-flips<->SP-mutation};
\item[(ii)] proving the desired assertion in the empty-boundary case when $\tau$ is ideal and $\sigma$ is the tagged triangulation obtained by flipping a self-folded arc of $\tau$, the reader can find the precise statement in Theorem \ref{thm:leaving-positive-stratum};
\item[(iii)] showing the desired statement to be true in the empty-boundary case when $\tau$ is ideal and $\sigma$ is the ideal triangulation obtained by flipping a pending arc of $\tau$; the precise statement can be found in Theorem \ref{thm:flip-pending-arc<->SP-mut};
\item[(iv)] proving that the the desired assertion is always true for tagged triangulations of surfaces with empty boundary by using weak signatures and notch deletions to reduce the proof to the cases covered by the first three steps; see Theorem \ref{thm:tagged-flips<->SP-mutation}.
\end{itemize}
Due to technical reasons, to accomplish steps (ii) and (iii) we will need to introduce what we call \emph{popped potentials} (see Section \ref{sec:pops}).
\end{remark}

The next result is step (i) of the strategy just described.

\begin{thm}\label{thm:ideal-non-pending-flips<->SP-mutation} Let $\surf$ be a surface with empty boundary, and let $\tau$ and $\sigma$ be ideal triangulations of $\surf$. If $\sigma$ can be obtained from $\tau$ by the flip of a non-pending arc $k\in\tau$, then the SP $\mu_k\ASsigma$ is right-equivalent to the SP $\AStau$.
\end{thm}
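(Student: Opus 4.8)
The plan is to proceed by a case-by-case analysis of the local configurations around the flipped arc $k$, exactly in the spirit of the proofs of \cite[Theorem 30]{Labardini1} and \cite[Theorem 6.1]{Labardini-potsnoboundaryrevised}, but now keeping track of the extra data coming from orbifold points: the weights $d(\tau)_i\in\{1,2\}$, the field $F_i$ attached to each arc, and the modulating-function values $g(\tau)_a\in\Gal(E/F)$. First I would reduce to a purely local statement: since the unreduced potential $\unredStau$ is, by Definition \ref{def:SP-of-tagged-triangulation}, a sum of contributions coming from triangles, triangles adjacent to self-folded triangles, and punctures, and since flipping a non-pending arc $k$ only changes those summands attached to the (at most two) triangles containing $k$ and to the punctures that are vertices of the quadrilateral surrounding $k$, it suffices to compute $\widetilde\mu_k(\unredAStau)$ directly from Definition \ref{def:SP-premutation}, split off its trivial part via Theorem \ref{thm:splitting-theorem}, and compare the result with $\unredASsigma$ up to right-equivalence. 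The puzzle-piece decomposition (Theorem \ref{thm:existence-of-puzzle-piece-decomps}) is what organizes the finitely many cases: $k$ lies in one of the seven puzzle pieces of Figure \ref{Fig:puzzle_pieces_2}, and for each the quadrilateral around $k$ (and hence the surrounding triangulation locally) can be enumerated, including the configurations where $k$ borders a self-folded triangle's outer loop, or an orbifolded triangle with one or two orbifold points.

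In each case I would carry out the following four steps. (a) Write down $(\unredAtau,\unredStau)$ restricted to the relevant local patch, recording for each arrow whether its endpoints have weight $1$ or $2$ and the value of $g(\tau)$ on it (by the construction of $g(\tau)$, between two weight-$2$ arcs one of the two parallel arrows carries $\myid_E$ and the other $\theta$). (b) Apply the premutation $\widetilde\mu_k$ from Definition \ref{def:SP-premutation}: replace the arrows incident to $k$ by their stars, introduce the composite arrows $[b\omega a]_\rho$ with the prescribed labels, and form $\widetilde\mu_k(\unredStau)=[\unredStau]+\triangle_k(\unredAtau)$. Here the careful bookkeeping is that a composite through a weight-$2$ vertex $k$ may split into several composite arrows indexed by $\B_{b,a}$ and $X_{b,a}$, so the number of arrows of $\mu_k(\Qtau)$ agrees with $Q(\sigma)$ only after the $2$-cycle cancellations in Step 3 of Definition \ref{def:weighted-quiver-mutation}. (c) Reduce: identify the degree-$2$ part of $\widetilde\mu_k(\unredStau)$, which will always consist of the $2$-cycles $\omega^{-1}b^*[b\omega a]_\rho a^*$ coming from $\triangle_k$, and kill them by an explicit unitriangular change of arrows, using Example \ref{ex:2-cycle-equivalent-to-0} and Proposition \ref{prop:properties-of-J-K-bimodules} to handle the $\pi_\rho$-projections; this produces the reduced SP $\mu_k(\unredAStau)$. (d) Exhibit an explicit $R$-algebra isomorphism (a change of arrows composed, where necessary, with one of the sign-twisting automorphisms $\Psi_i$ of Corollary \ref{coro:non-triv-automorphism-Psi_i}) carrying the reduced potential onto $\unredSsigma$ up to cyclical equivalence. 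The scalars $x_p$ attached to punctures are what make these identifications possible: the coefficients produced by reduction are monomials in the $x_p$ and in $v$ (with $v^2=-1$), and they match the coefficients prescribed in Definitions \ref{def:cycles-from-int-triangles}--\ref{def:cycles-from-punctures} for $\sigma$ after rescaling arrows by suitable nonzero scalars.

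The main obstacle, as in the unorbifolded case, will be the bookkeeping in the configurations where the quadrilateral around $k$ degenerates — when two sides of the quadrilateral are glued, when $k$ is a side of a triangle that is also adjacent to a self-folded triangle, or when one or both triangles containing $k$ are orbifolded. In those cases new cycles appear in $\widetilde\mu_k(\unredStau)$ that are not present in $\unredSsigma$, and one must check that they lie in the Jacobian ideal after reduction, or absorb them into a further unitriangular automorphism; this is where the argument becomes genuinely long. A secondary technical point specific to the orbifold setting is that a composite arrow through a weight-$2$ vertex acquires a Galois label $\rho\in X_{b,a}$, and one has to verify that the reduction automorphism respects the $\image\pi_{g_a}$-constraints of the Remark following Proposition \ref{prop:automorphisms}, i.e.\ that the bilinear coefficients used in the change of arrows genuinely live in the correct twisted bimodule; Example \ref{ex:2-cycle-equivalent-to-0} is the key computational lemma that makes this bookkeeping routine once set up. Because of its length, the full case analysis is deferred to Section \ref{sec:technical-proofs}; here the plan is only to set up the local reduction and indicate why finiteness of the puzzle pieces makes the enumeration terminate.
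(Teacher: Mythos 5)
Your plan is essentially the strategy the paper follows: organize the finitely many local configurations around $k$ via the puzzle-piece decomposition, observe that the purely orbifold-free configurations already follow from \cite{Labardini1}, and for each remaining configuration exhibit an explicit $R$-algebra automorphism of $\RA{\widetilde\mu_k(\Atau)}$ carrying $\widetilde\mu_k(\Stau)$ to a potential whose reduced part is visibly $\ASsigma$, then invoke uniqueness of reduced parts (Theorem~\ref{thm:splitting-theorem}); the flip-involution observation in~\eqref{eq:matching-cases} further cuts the case count roughly in half.

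One caution: you suggest composing the change of arrows ``where necessary'' with the sign-twisting automorphism $\Psi_i$ of Corollary~\ref{coro:non-triv-automorphism-Psi_i}, but $\Psi_i$ applies $\theta$ on the $F_i$-component of $R$ and is therefore \emph{not} an $R$-algebra automorphism; it cannot be a constituent of a right-equivalence (the paper remarks on exactly this in Section~\ref{sec:pops}). In the actual case-by-case verification no such twist is needed --- all the required right-equivalences are built from arrow rescalings and unitriangular substitutions that fix $R$ pointwise, and the role of $\Psi_i$ is confined to the orbifold-pop machinery used later to handle flips of pending arcs, not the non-pending flips treated here.
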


Note that in Theorem \ref{thm:ideal-non-pending-flips<->SP-mutation}, the fact that both $\tau$ and $\sigma$ are ideal triangulations implies that the arc $k$ cannot be a folded side of any self-folded triangle of $\tau$.

Our proof of Theorem \ref{thm:ideal-non-pending-flips<->SP-mutation} is rather long since it is done through a case-by-case verification of the statement for all the possible combinatorial configurations that $\tau$ and $\sigma$ can present around $k$. For this reason we have deferred the alluded proof to Section \ref{sec:technical-proofs}.

\section{Pops}\label{sec:pops}

\subsection{Pop at a self-folded triangle}\label{subsec:self-folded-pop}

In this subsection we define the \emph{popped potential} associated to an ideal triangulation with respect to a fixed self-folded triangle contained in it.
Our main aim in this subsection is to prove that these potentials are right-equivalent to the potentials defined in Section \ref{sec:potential-of-a-triangulation} (the reason for wanting this result will become clear in the proof of Theorem \ref{thm:flip-pending-arc<->SP-mut}). To achieve this we will first show that if we do not flip/mutate at pending arcs, nor at any of the two arcs in the fixed self-folded triangle under consideration, the resulting \emph{popped SPs} present the same compatibility between flips and SP-mutations stated in Theorem \ref{thm:ideal-non-pending-flips<->SP-mutation} for the SPs $\AStau$; then we will show that, for any fixed self-folded triangle and any set consisting of exactly $|\orb|$ pairwise compatible pending arcs, there always exists an ideal triangulation that contains it and for which the associated SP $\AStau$ is right-equivalent to the corresponding popped SP. These two facts will imply that the popped potentials to be defined shortly are right-equivalent to the potentials defined in Section \ref{sec:potential-of-a-triangulation}, since any two ideal triangulations sharing a fixed self-folded triangle and a fixed set of exactly $|\orb|$ pairwise compatible pending arcs are related by a sequence of flips that involves only ideal triangulations that also share the fixed self-folded triangle and the fixed set of $|\orb|$ pairwise compatible pending arcs.

The constructions and results presented in this subsection are, in essence, those contained in \cite[Sections 5 and 6]{Labardini-potsnoboundaryrevised}. However, we will need the uniqueness-up-to-right-equivalence of reduced and trivial parts of SPs in an essential way, and in our current species setup this uniqueness result is not covered by the statement of \cite[Theorem 4.6]{DWZ1}. Moreover, as said in the previous paragraph, we will work not only with an arbitrarily given self-folded triangle, but also with an arbitrarily given collection of $|\orb|$ pairwise compatible pending arcs, neither of which will be flipped throughout this subsection. Hence we prefer not to omit any details, thus consciously running the risk of seeming to literally repeat \cite[Sections 5 and 6]{Labardini-potsnoboundaryrevised}.

Suppose $i$ is the folded side of a self-folded triangle of an ideal triangulation $\tau$. Let $j\in\tau$ be the loop that cuts out a once-punctured monogon and encloses $i$, and $q\in\punct$ be the puncture that lies inside the monogon cut out by $j$. Let $\pi_{i,j}^\tau:\tau\to\tau$ be the bijection that fixes $\tau\setminus\{i,j\}$ pointwise and interchanges $i$ and $j$. Then $\pi_{i,j}^\tau$ extends uniquely to a quiver automorphism of $\Qtau$ that acts as the identity on all the arrows of $\Qtau$ that are not incident to $i$ or $j$. This quiver automorphism yields an $F$-algebra automorphism of $\RA{\Atau}$. In a slight notational abuse we shall denote both this quiver and algebra automorphisms by $\pi_{i,j}^\tau$ as well.

%\begin{remark}\label{rem:pi-is-not-R-alg-auto} Notice that $\pi_{i,j}^\tau$ is not an $R$-algebra automorphism of $\RA{\Atau}$.
%\end{remark}

\begin{defi}\label{def:popped-potential} Let $\mathbf{x}=(x_p)_{p\in\punct}$ be a choice of non-zero scalars. With the notations from the preceding paragraph, let $S(\tau,\mathbf{y})$ be the potential associated to $\tau$ with respect to the choice of scalars $\mathbf{y}=(y_p)_{p\in\punct}$ defined by $y_p=(-1)^{\delta_{p,q}}x_p$ for all $p\in\punct$, where $\delta_{p,q}$ is the \emph{Kronecker delta}. The \emph{popped potential} of the quadruple $(\tau,\mathbf{x},i,j)$ is the potential $\Wtau=\pi_{i,j}^\tau(S(\tau,\mathbf{y}))\in \RA{\Atau}$.
\end{defi}

\begin{remark}\label{rem:on-popped-pots}\begin{enumerate}\item
The $F$-algebra automorphism $\pi_{i,j}^\tau$ of $\RA{\Atau}$ has only been used to define $\Wtau$, and it is actually possible to define $\Wtau$ without any mention of $\pi_{i,j}^\tau$ (we have not done so in order to avoid an unnecessarily cumbersome definition), but $\pi_{i,j}^\tau$ is not a right-equivalence because it does not act as the identity on $R$ (see Definition \ref{def:cyclic-stuff}; % and the paragraph that precedes Proposition \ref{prop:univ-property-of-R<<A>>}
 compare to \cite[Definition 4.2]{DWZ1}).
\item As mentioned in Remark \ref{rem:ultimate-goal-and-steps}, the ultimate goal of this paper is to prove that whenever two tagged triangulations $\tau$ and $\sigma$ are related by a flip, the SPs $\AStau$ and $(A(\sigma),S(\sigma,\zeta\mathbf{x}))$ are related by the corresponding SP-mutation for some function $\zeta:\punct\rightarrow\{-1,1\}$ that takes the value $-1$ on at most one puncture (at the moment we know this to be true only when both $\tau$ and $\sigma$ are ideal triangulations of a surface with empty boundary, see Theorem \ref{thm:ideal-non-pending-flips<->SP-mutation} and the sentence right after it). The popped potentials $\Wtau$ will be used as a tool to achieve our goal: showing that the flip of a folded side of an ideal triangulation $\tau$ results in an SP related to $\AStau$ by the corresponding SP-mutation may be way less straightforward or obvious than we would like (the problem one faces is precisely the one described in \cite[Section 4]{Labardini-potsnoboundaryrevised}). So, what we will do is show that whenever $i$ and $j$ are arcs forming a self-folded triangle of an ideal triangulation $\tau$, with $i$ as folded side and $j$ as enclosing loop, the SPs $\AStau$ and $\AWtau$ are right-equivalent (see Theorem \ref{thm:popping-is-right-equiv}). This means that proving that $\mu_i\AWtau$ is right-equivalent to $\ASsigma$ will suffice in order to show that $\mu_i\AStau$ is right-equivalent to $\ASsigma$ (see Theorem \ref{thm:leaving-positive-stratum} and its proof). To say it in few words: the popped potentials $\Wtau$ associated to self-folded triangles will help us to prove that the flip of the self-folded arc $i$ produces a tagged triangulation $\sigma$ whose SP is related to the SP of $\tau$ by the corresponding SP-mutation.
\end{enumerate}
\end{remark}

\begin{prop}\label{prop:pop-commutes-with-mutation} Let $\tau$ be an ideal triangulation, and $\mathbf{x}=(x_p)_{p\in\punct}$ and $i,j\in\tau$ be as above. If $k\in\tau\setminus\{i,j\}$ is a non-pending arc such that $\sigma=f_k(\tau)$ happens to be an ideal triangulation, then $\mu_k(\Atau,\Wtau)$ is right-equivalent to $(\Asigma,\Wsigma)$.
\end{prop}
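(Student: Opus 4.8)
The plan is to reduce Proposition~\ref{prop:pop-commutes-with-mutation} to Theorem~\ref{thm:ideal-non-pending-flips<->SP-mutation} (or rather, to the mechanism behind its proof) by exhibiting an explicit right-equivalence that intertwines $\pi_{i,j}^\tau$ with $\pi_{i,j}^\sigma$ and commutes with the SP-pre-mutation $\widetilde{\mu}_k$. First I would observe that, since $k\notin\{i,j\}$ and $k$ is non-pending, the arc $k$ is neither the folded side nor the enclosing loop of the fixed self-folded triangle $\triangle$, and the flip $f_k$ does not touch $\triangle$; hence $\triangle$ survives as a self-folded triangle of $\sigma$ with the same folded side $i$, the same enclosing loop $j$, and the same enclosed puncture $q$. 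In particular the bijection $\pi_{i,j}^\sigma:\sigma\to\sigma$ is defined, and under the identification of $\tau\setminus\{k\}$ with $\sigma\setminus\{f_k(\arc)\}$ it agrees with $\pi_{i,j}^\tau$ away from $k$ and fixes (the image of) $k$.

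Next I would set up the key commutation. Let $g$ be the modulating function of $\tau$. The quiver automorphism $\pi_{i,j}^\tau$ of $\Qtau$ only permutes the two vertices $i,j$ and relabels the arrows incident to them accordingly; since the weights satisfy $\dtuple(\tau)_i=\dtuple(\tau)_j=1$ (neither $i$ nor $j$ is pending) and all arrows incident to $i$ or $j$ carry the identity element of the trivial Galois group, $\pi_{i,j}^\tau$ extends to an $R$-$R$-bimodule isomorphism $\Atau\to\Atau$ that preserves $g$ in the sense of Lemma~\ref{lemma:non-triv-automorphism} (with $\phi_j=\myid$ for all $j$, so $\Psi^{(0)}=\myid_R$ is not required — here the relevant point is simply that $\pi_{i,j}^\tau$ is a change of arrows fixing $R$ componentwise except that it swaps the idempotents $e_i,e_j$). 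Because $k\notin\{i,j\}$, the vertex $k$ and all arrows into and out of $k$ are fixed by $\pi_{i,j}^\tau$; therefore $\pi_{i,j}^\tau$ carries $\triangle_k(\Atau)$ to $\triangle_k(\Atau)$ and commutes with the formation of $[\,\cdot\,]$ and of $\widetilde{\mu}_k$. Concretely, I claim the diagram
\begin{equation}\label{eq:pop-commutes-diagram}
\widetilde{\mu}_k\circ\pi_{i,j}^{\tau}\ =\ \pi_{i,j}^{\widetilde{\mu}_k(\tau)}\circ\widetilde{\mu}_k
\end{equation}
holds as maps $\RA{\Atau}\to\RA{\widetilde{\mu}_k(\Atau)}$, which in particular gives
$\widetilde{\mu}_k(\Wtau)=\widetilde{\mu}_k\bigl(\pi_{i,j}^\tau(S(\tau,\mathbf{y}))\bigr)=\pi_{i,j}^{\widetilde{\mu}_k(\tau)}\bigl(\widetilde{\mu}_k(S(\tau,\mathbf{y}))\bigr)$.

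Now I would invoke Theorem~\ref{thm:ideal-non-pending-flips<->SP-mutation}: applied to the choice of scalars $\mathbf{y}$ (rather than $\mathbf{x}$), it provides a right-equivalence $\mu_k(A(\tau),S(\tau,\mathbf{y}))\cong(A(\sigma),S(\sigma,\mathbf{y}))$. By Theorem~\ref{thm:SP-mut-well-defined-up-to-re} this lifts to a right-equivalence on the level of $\widetilde{\mu}_k$, and conjugating it by the (vertex-relabelling, $R$-fixing) isomorphisms $\pi_{i,j}^{\widetilde{\mu}_k(\tau)}$ and $\pi_{i,j}^\sigma$ — using \eqref{eq:pop-commutes-diagram} and the fact that $\pi^\sigma_{i,j}(S(\sigma,\mathbf{y}))=\Wsigma$ by definition — yields a right-equivalence between the reduced parts of $(\widetilde{\mu}_k(\Atau),\widetilde{\mu}_k(\Wtau))$ and $(A(\sigma),\Wsigma)$; that is, $\mu_k(\Atau,\Wtau)$ is right-equivalent to $(\Asigma,\Wsigma)$, as desired. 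One small point to check along the way is that the scalar tuple $\mathbf{y}$ used to define $\Wtau$ and the one used to define $\Wsigma$ coincide: both are obtained from $\mathbf{x}$ by flipping the sign at the same puncture $q$, because $q$ is the puncture enclosed by $\triangle$ and $\triangle$ is unchanged by $f_k$.

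The main obstacle will be verifying \eqref{eq:pop-commutes-diagram} rigorously, i.e.\ that $\pi_{i,j}^\tau$ really does commute with every ingredient of $\widetilde{\mu}_k$ — the ``composite-arrow'' labels $[b\omega a]_\rho$, the potential $[S]$, and the cubic term $\triangle_k$. This is essentially bookkeeping: one must track how $\pi_{i,j}^\tau$ acts on the (possibly infinitely many) cyclic paths appearing in $S(\tau,\mathbf{y})$, noting that a path through $k$ is sent to a path through $k$ with the same $\B_{b,a}$- and $X_{b,a}$-data whenever neither of the arrows $b,a$ flanking a visit to $k$ is incident to $\{i,j\}$, and checking the (few) cases in which such an arrow \emph{is} incident to $i$ or $j$ — here one uses that those arrows carry the identity Galois element and that $\pi_{i,j}^\tau$ merely swaps their endpoints among $\{i,j\}$, so no twist is introduced. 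A second, milder subtlety is that $\pi_{i,j}^\tau$ is not a right-equivalence (it moves $R$), so one must be careful to conjugate, not compose: the final right-equivalence is $(\pi_{i,j}^\sigma)^{-1}\circ\varphi\circ\pi_{i,j}^{\widetilde{\mu}_k(\tau)}$ for $\varphi$ the right-equivalence coming from Theorem~\ref{thm:ideal-non-pending-flips<->SP-mutation}, and one checks this composite fixes $R$ because the outer two maps are mutually inverse on the common ``away-from-$k$-and-$\{i,j\}$'' part and the middle map fixes $R$.
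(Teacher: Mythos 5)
Your proposal is correct and follows essentially the same route as the paper: establish the commutation $\psi\circ\widetilde{\mu}_k=\widetilde{\mu}_k\circ\pi_{i,j}^\tau$, invoke Theorem~\ref{thm:ideal-non-pending-flips<->SP-mutation} with the popped tuple $\mathbf{y}$, conjugate the resulting right-equivalence $\varphi$ by $\psi$, and conclude by uniqueness of reduced parts. One small misattribution: the final deduction (from the existence of a right-equivalence $\widetilde{\mu}_k(\Atau,\Wtau)\to(\Asigma,\Wsigma)\oplus(\psi(C),\psi(T))$ to the desired right-equivalence of reduced parts) rests on the uniqueness clause of the Splitting Theorem (Theorem~\ref{thm:splitting-theorem}), not on Theorem~\ref{thm:SP-mut-well-defined-up-to-re}; similarly, the existence of $\varphi$ with target $(\Asigma,S(\sigma,\mathbf{y}))\oplus(C,T)$ is Theorem~\ref{thm:ideal-non-pending-flips<->SP-mutation} combined with the Splitting Theorem rather than a ``lift''.
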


\begin{proof} This is analogous to the proof of \cite[Lemma 5.5]{Labardini-potsnoboundaryrevised}. We reproduce the latter (with modifications) here because we need to appeal to a result not covered by the statement of \cite[Theorem 4.6]{DWZ1}, namely, the one which asserts the uniqueness of reduced parts in the non-simply laced case (cf.\ Theorem \ref{thm:splitting-theorem} above). Note that the hypotheses that both $\tau$ and $\sigma=f_k(\tau)$ be ideal triangulations implies that $k$ cannot be a folded side of $\tau$ nor of $\sigma$.

Let $\mathbf{y}=((-1)^{\delta_{p,q}}x_p)_{p\in\punct}$ be as in Definition \ref{def:popped-potential}.
The bijection $\pi=\pi_{i,j}^\tau:\tau\to\tau$ induces a weighted-quiver automorphism $\psi$ of $\widetilde{\mu}_k(\Qtau)$. Following a slight notational abuse, we denote also by $\psi$ the induced $F$-algebra automorphism of $\RA{\widetilde{\mu}_k(\Atau)}$ (which is not an $R$-algebra automorphism). It is straightforward to see that $\psi([S(\tau,\mathbf{y})])=[\pi(S(\tau,\mathbf{y}))]$ and $\psi(\triangle_k(\Atau))=\triangle_k(\Atau)$ (see Part (3) of Definition \ref{def:SP-premutation}), and hence
\begin{eqnarray}\nonumber
\psi(\widetilde{\mu}_{k}(S(\tau,\mathbf{y}))) & = & \psi\left([S(\tau,\mathbf{y})]+\triangle_k(\Atau)\right) \ \ \ \ \ \text{by Part (3) of Definition \ref{def:SP-premutation}}\\
\nonumber & = & [\pi(S(\tau,\mathbf{y}))]+\triangle_k(\Atau)\\
\nonumber & = & \widetilde{\mu}_k(\pi(S(\tau,\mathbf{y}))) \ \ \ \ \ \ \ \ \ \ \ \ \ \ \ \ \ \ \ \text{by Part (3) of Definition \ref{def:SP-premutation}}\\
\nonumber & = &\widetilde{\mu}_k(\Wtau)  \ \ \ \ \ \ \ \ \ \ \ \ \ \ \ \ \ \ \ \  \text{by Definition \ref{def:popped-potential}}.
\end{eqnarray}

Since $\sigma=f_k(\tau)$ is an ideal triangulation and $k$ is not a pending arc, Theorem \ref{thm:ideal-non-pending-flips<->SP-mutation} guarantees the existence of a right-equivalence
$\varphi:\widetilde{\mu}_{k}(\Atau,S(\tau,\mathbf{y}))\rightarrow(A(\sigma),S(\sigma,\mathbf{y}))\oplus(C,T)$, where $(C,T)$ is a trivial SP.  This right-equivalence identifies the weighted quiver $(\Qsigma,\dtuple(\sigma))$ with a weighted subquiver of $\widetilde{\mu}_k(\Qtau,\dtuple(\tau))$, and the species $\Asigma$ with an $R$-$R$-subbimodule of $\widetilde{\mu}_k(\Atau)$. Under the former identification, the restriction of the quiver automorphism $\psi$ of $\widetilde{\mu}_k(\Qtau)$ to $Q(\sigma)$ is precisely $\pi^\sigma_{i,j}$.
This, together with the obvious fact that the $F$-algebra isomorphism $\psi\varphi\psi^{-1}:\RA{\widetilde{\mu}_k(\Atau)}\rightarrow \RA{\Asigma\oplus C}$ acts as the identity on the vertex span $R$, implies that
$\psi\varphi\psi^{-1}$ is an $R$-algebra isomorphism such that
$\psi\varphi\psi^{-1}(\widetilde{\mu}_k(\Wtau))=\psi\varphi(\widetilde{\mu}_{k}(S(\tau,\mathbf{y})))$ is cyclically-equivalent to
$\psi(S(\sigma,\mathbf{y})+T)=\Wsigma+\psi(T)$. That is, $\psi\varphi\psi^{-1}$ is a right-equivalence
$\psi\varphi\psi^{-1}:\widetilde{\mu}_{k}(\Atau,\Wtau)\rightarrow(\Asigma,\Wsigma)\oplus(\psi(C),\psi(T))$. Since $(\Asigma,\Wsigma)$ is a
reduced SP and $(\psi(C),\psi(T))$ is a trivial SP, Theorem \ref{thm:splitting-theorem} allows us to conclude that the SPs $\mu_k(\Atau,\Wtau)$ and $(\Asigma,\Wsigma)$
are right-equivalent.
\end{proof}

The next proposition, whose proof we defer to Section \ref{sec:technical-proofs}, guarantees the existence of triangulations $\tau$ for which the SPs $\AStau$ and $\AWtau$ are right-equivalent.

\begin{prop}\label{prop:pop-existence} Let $\surf$ be a surface with empty boundary such that $|\marked|\geq 7$ if $\Sigma$ is a sphere, let $\mathcal{S}$ be a collection of $|\orb|$ distinct pending arcs, and let $i,j$ be a pair of arcs forming a self-folded triangle with $i$ as its folded side. If the elements $\mathcal{S}\cup\{i,j\}$ are all pairwise compatible, then there exists an ideal triangulation $\tau$ of $\surf$ such that $\AStau$ and $\AWtau$ are right-equivalent and $\{i,j\}\cup\mathcal{S}\subseteq\tau$.
\end{prop}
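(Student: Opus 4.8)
The plan is to mimic the strategy of \cite[Section 6]{Labardini-potsnoboundaryrevised}, constructing an explicit ideal triangulation $\tau$ containing the prescribed pending arcs $\mathcal{S}$ and the prescribed self-folded pair $\{i,j\}$, and then producing by hand a right-equivalence between $\AStau$ and $\AWtau$ via an \emph{ad hoc} limit process. First I would choose a convenient ``model'' triangulation $\tau$ near the self-folded triangle: since $\surf$ has empty boundary and (when $\Sigma$ is a sphere) enough marked points, the arcs $\mathcal{S}\cup\{i,j\}$ being pairwise compatible can be completed to an ideal triangulation; I would make this completion in such a way that the triangle $\triangle'$ adjacent to the enclosing loop $j$ has a prescribed, maximally ``spread-out'' local configuration (for instance so that the puncture $q$ inside the monogon is the only puncture adjacent to $i$, and so that $\triangle'$ is an interior non-orbifolded, non-self-folded triangle whose third vertex is a genuine marked point distinct from everything else nearby). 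This is possible precisely because the total number of marked points is large enough; this is where the sphere hypothesis $|\marked|\ge 7$ and the emptiness of the boundary get used.

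Next, for this particular $\tau$ I would compare $\Stau$ and $\Wtau=\pi_{i,j}^\tau(S(\tau,\mathbf{y}))$ explicitly. Writing out $\unredStau$ from Definition \ref{def:SP-of-tagged-triangulation} and $\Wtau$ from Definition \ref{def:popped-potential}, the two potentials differ only in the summands $\widehat{S}^q(\tau,\mathbf{x})$, $\widehat{S}^{\triangle'}(\tau)$ (and, if relevant, $\widehat{U}^{\triangle'}$), together with the effect of swapping $i\leftrightarrow j$ on the arrows incident to those two arcs; all other summands are visibly fixed. The core of the argument is then to exhibit an $R$-algebra automorphism $\varphi$ of $\RA{\Atau}$ which acts as the identity outside the arrows touching $i$, $j$ and $\triangle'$, which absorbs the sign change $x_q\mapsto -x_q$ together with the transposition of the two cycles through the self-folded region, and which sends $\Stau$ to a potential cyclically equivalent to $\Wtau$. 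Following \cite{Labardini-potsnoboundaryrevised}, such a $\varphi$ cannot in general be written down in closed form; instead I would build it as the limit in the $\maxid$-adic topology of an infinite composition of unitriangular automorphisms of strictly increasing depth (using property \eqref{eq:unitriangular-d}), each step killing the lowest-degree discrepancy between the images of $\Stau$ and $\Wtau$. Convergence of this composition is guaranteed by the depth bound, exactly as in the simply-laced case; the new feature is that at the vertices of weight $2$ one must keep track of the Galois labels and of the idempotents $\pi_\rho$ from Proposition \ref{prop:properties-of-J-K-bimodules} when verifying that each successive correction term lies in the correct image $\image\pi_{g_a}$, so that the partial compositions really are $R$-algebra endomorphisms.

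Finally, having constructed one such $\tau$, Proposition \ref{prop:pop-existence} follows as stated. I expect the main obstacle to be the bookkeeping in the limit process: one must set up the recursion so that the depth genuinely increases at every stage (this is where a careful choice of $\tau$, making the relevant local configuration as generic as possible, pays off, since it minimizes the number of competing low-degree terms), and one must check that the Galois-twisted bimodule structure does not obstruct any of the unitriangular corrections — concretely, that whenever we need a correction arrow parallel to some arrow $a$, the scalar we attach respects $g_a$. Once the recursion is correctly set up, the actual estimates are the same $\maxid$-adic convergence estimates used in \cite[Section 6]{Labardini-potsnoboundaryrevised}, and I would invoke Theorem \ref{thm:splitting-theorem} (uniqueness of reduced parts) to transfer the right-equivalence from the unreduced SPs $(\widehat{A}(\tau),\unredStau)$ and $(\widehat{A}(\tau),\widehat{W}(\tau,\mathbf{x}))$ down to their reduced parts $\AStau$ and $\AWtau$.
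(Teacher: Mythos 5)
Your proposal captures the right general spirit---build a convenient model triangulation and run the $\maxid$-adic limit process of \cite[Section~6]{Labardini-potsnoboundaryrevised}---but it misses the essential two-stage structure of the paper's argument. The paper first forgets the given arcs $\mathcal{S}\cup\{i,j\}$ entirely and, via Lemma~\ref{lemma:existence-certain-triang}, constructs \emph{from scratch} a triangulation $\tau'$ of $\surf$ (with its own pending arcs $\mathcal{S}'$ and its own self-folded pair $i',j'$) having the very specific cylinder-based global configuration prescribed in that lemma, subject only to the constraint $\val_{\tau',\orb}=\val_{\mathcal{S}\cup\{i,j\},\orb}$. For \emph{that} $\tau'$, Proposition~\ref{prop:specific-pop} produces the right-equivalence via the limit process. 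Only afterwards does one bring in the prescribed arcs: since the combinatorial data match (this is the point of the valence function), the change-of-coordinates principle for surfaces gives an orientation-preserving auto-diffeomorphism $\varphi$ of $\Sigma$ with $\varphi(\mathcal{S}')=\mathcal{S}$, $\varphi(i')=i$, $\varphi(j')=j$; then $\tau=\varphi(\tau')$ contains $\mathcal{S}\cup\{i,j\}$ and, because $S$ and $W$ are defined topologically, inherits the right-equivalence.

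The genuine gap in your one-stage version is the unjustified claim that $\mathcal{S}\cup\{i,j\}$ ``can be completed to an ideal triangulation with a prescribed, maximally spread-out local configuration.'' The configuration required for the limit process to converge is not merely local around the triangle $\triangle'$ adjacent to $j$; Lemma~\ref{lemma:existence-certain-triang} prescribes an entire chain of arcs $i_1,\dots,i_7$ with two loops based at possibly distinct punctures $p_1,p_3$, a bipartite cycle structure on the subquiver, and constraints on where the pending arcs attach. There is no evident reason that an arbitrary pairwise-compatible family $\mathcal{S}\cup\{i,j\}$ can always be extended to a triangulation with that global structure---the pending arcs in $\mathcal{S}$ may sit in topologically inconvenient positions. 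The auto-diffeomorphism step is precisely what dispenses with any such completion argument, and your proposal does not supply a replacement for it. (A minor further point: you invoke Theorem~\ref{thm:splitting-theorem} at the end, but it is not needed in this step---both $\Stau$ and $\Wtau$ are already reduced potentials, and the limit process directly produces a right-equivalence between the reduced SPs.)
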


\begin{thm}\label{thm:popping-is-right-equiv} Let $\surf$ be a surface with empty boundary. For every ideal triangulation $\tau$ of $\surf$ and every pair of arcs $i,j\in\tau$ forming a self-folded triangle with $i$ as its folded side the SPs $\AStau$ and $\AWtau$ are right-equivalent.
\end{thm}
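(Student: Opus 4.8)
The plan is to reduce Theorem \ref{thm:popping-is-right-equiv} to Proposition \ref{prop:pop-existence} via a connectivity-and-propagation argument in the spirit of \cite[Theorem 6.1]{Labardini-potsnoboundaryrevised}. Fix a surface $\surf$ with empty boundary, an ideal triangulation $\tau$, and arcs $i,j\in\tau$ forming a self-folded triangle with $i$ the folded side and $j$ the enclosing loop. The key observation is that the property ``$\AStau$ is right-equivalent to $\AWtau$'' propagates along flips that touch neither $i$, nor $j$, nor any pending arc: this is exactly the content of Proposition \ref{prop:pop-commutes-with-mutation} together with Theorem \ref{thm:SP-mut-well-defined-up-to-re}. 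Indeed, if $\sigma=f_k(\tau)$ for such a $k$, then $\mu_k\AStau$ is right-equivalent to $\ASsigma$ by Theorem \ref{thm:ideal-non-pending-flips<->SP-mutation}, while $\mu_k(\Atau,\Wtau)$ is right-equivalent to $(\Asigma,\Wsigma)$ by Proposition \ref{prop:pop-commutes-with-mutation}; since SP-mutation is well-defined up to right-equivalence (Theorem \ref{thm:SP-mut-well-defined-up-to-re}), a right-equivalence $\AStau\to\AWtau$ yields a right-equivalence $\mu_k\AStau\to\mu_k(\Atau,\Wtau)$, hence $\ASsigma$ is right-equivalent to $(\Asigma,\Wsigma)$. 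By Theorem \ref{thm:SP-mutation-is-involution} the implication also runs backwards, so the property is invariant under every flip at an arc outside $\{i,j\}\cup\{\text{pending arcs}\}$.

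Next I would fix, once and for all, a collection $\mathcal{S}$ of $|\orb|$ pairwise compatible pending arcs with $\mathcal{S}\subseteq\tau$ (one pending arc per orbifold point, as dictated by $\tau$ itself, so $\mathcal{S}$ is simply the set of pending arcs of $\tau$) and consider the set $\mathcal{T}$ of all ideal triangulations of $\surf$ that contain $\{i,j\}\cup\mathcal{S}$. The crucial combinatorial input is that $\mathcal{T}$ is connected under flips at arcs in $\tau\setminus(\{i,j\}\cup\mathcal{S})$: any two triangulations of $\surf$ sharing the fixed self-folded triangle on $\{i,j\}$ and the fixed set $\mathcal{S}$ of pending arcs are related by a sequence of flips none of which flips $i$, $j$, or an element of $\mathcal{S}$. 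This follows from the connectivity of the flip graph applied to the surface obtained by cutting out the once-punctured monogon bounded by $j$ and deleting small disks around the orbifold points (equivalently, treating pending arcs as self-folded triangles, exactly as in the discussion following Theorem \ref{thm:existence-of-puzzle-piece-decomps}); this reduction is the one used by Fomin-Shapiro-Thurston and in \cite{Labardini-potsnoboundaryrevised}. Combining connectivity with the propagation statement of the previous paragraph, the property ``$\AStau$ is right-equivalent to $\AWtau$'' either holds for all triangulations in $\mathcal{T}$ or for none. Proposition \ref{prop:pop-existence} furnishes at least one triangulation $\tau_0\in\mathcal{T}$ for which it holds, so it holds for every triangulation in $\mathcal{T}$, in particular for $\tau$. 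Finally, since every ideal triangulation $\tau$ containing a self-folded triangle on arcs $i,j$ lies in the set $\mathcal{T}$ built from its own pending arcs, this covers all cases in the statement.

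I expect the main obstacle to be the bookkeeping in the propagation step, namely verifying carefully that the right-equivalence supplied by Proposition \ref{prop:pop-commutes-with-mutation} really is compatible, under SP-mutation, with the one for the honest potentials --- this is where one must invoke the uniqueness-up-to-right-equivalence of reduced parts (Theorem \ref{thm:splitting-theorem}) rather than just the Splitting Theorem of \cite{DWZ1}, and where the automorphism $\psi=\pi^\sigma_{i,j}$ must be tracked through the identification of $(\Qsigma,\dtuple(\sigma))$ with a weighted subquiver of $\widetilde{\mu}_k(\Qtau,\dtuple(\tau))$. A secondary point requiring care is that the flip connectivity of $\mathcal{T}$ must be stated and used for surfaces with empty boundary (so that the exclusion ``$\partial\Sigma=\varnothing$, $|\marked|=1$'' of the flip-connectivity theorem does not cause trouble); since Definition \ref{def:surf-with-orb-points} and the standing hypotheses of Proposition \ref{prop:pop-existence} guarantee enough marked points (in particular $|\marked|\geq 7$ when $\Sigma$ is a sphere), this is not an actual issue, but it should be remarked explicitly. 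The remaining ingredients --- Proposition \ref{prop:pop-commutes-with-mutation}, Theorem \ref{thm:ideal-non-pending-flips<->SP-mutation}, Theorem \ref{thm:SP-mut-well-defined-up-to-re}, Theorem \ref{thm:SP-mutation-is-involution} and Proposition \ref{prop:pop-existence} --- are quoted as black boxes, so the proof itself is short.
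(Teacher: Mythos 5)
Your proposal is correct and follows essentially the same route as the paper: invoke Proposition~\ref{prop:pop-existence} to obtain one triangulation $\sigma\supseteq\{i,j\}\cup\mathcal{S}$ for which $S$ and $W^{ij}$ give right-equivalent SPs, connect $\tau$ to $\sigma$ by a flip sequence that avoids $i$, $j$ and all pending arcs (Mosher's algorithm), and propagate the right-equivalence along that sequence via Proposition~\ref{prop:pop-commutes-with-mutation}, Theorem~\ref{thm:ideal-non-pending-flips<->SP-mutation}, Theorem~\ref{thm:SP-mut-well-defined-up-to-re}, and the involutivity of SP-mutation. The only difference is presentational: you phrase the transport step as an invariance-of-a-property argument while the paper writes out the chain of right-equivalences explicitly, but the ingredients and their roles are identical.
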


\begin{proof} Let $\mathcal{S}$ be the set of pending arcs that belong to $\tau$. Thus $\mathcal{S}$ is a collection of $|\orb|$ distinct pending arcs and the elements of $\{i,j\}\cup\mathcal{S}$ are all pairwise compatible. By Proposition \ref{prop:pop-existence}, there exists an ideal triangulation $\sigma$ of $\surf$ such that $\ASsigma$ and $\AWsigma$ are right-equivalent and $\{i,j\}\cup\mathcal{S}\subseteq\sigma$. A straightforward modification of Mosher's algorithm \cite[Pages 36-41]{Mosher} shows that there exists a finite sequence $(\tau_0,\tau_1,\ldots,\tau_\ell)$ of ideal triangulations such that:
\begin{itemize}
\item $\tau_0=\tau$ and $\tau_\ell=\sigma$;
\item for every $s\in\{1,\ldots,\ell\}$, the triangulation $\tau_{s-1}$ can be obtained from $\tau_{s}$ by flipping an arc $k_s\in\tau_{s}$;
\item $\{i,j\}\cup\mathcal{S}\subseteq\tau_s$ for every $s\in\{0,1,\ldots,\ell\}$.
\end{itemize}
Therefore, using the symbol $\simeq$ to abbreviate ``is right-equivalent to",
\begin{eqnarray}
\nonumber\AStau&\simeq&\mu_{k_1}\mu_{k_2}\ldots\mu_{k_\ell}\ASsigma \ \ \ \ \ \ \ \ \text{{\footnotesize(by Theorem \ref{thm:ideal-non-pending-flips<->SP-mutation} and Theorem \ref{thm:SP-mut-well-defined-up-to-re})}}\\
\nonumber&\simeq&\mu_{k_1}\mu_{k_2}\ldots\mu_{k_\ell}\AWsigma \ \ \ \ \ \text{{\footnotesize(by Theorem \ref{thm:SP-mut-well-defined-up-to-re}, since $\ASsigma\simeq\AWsigma$)}}\\
\nonumber&\simeq&\AWtau \ \ \ \ \ \  \ \ \ \ \ \ \ \ \ \ \ \ \ \ \ \ \ \text{{\footnotesize(by Proposition \ref{prop:pop-commutes-with-mutation} and Theorem \ref{thm:SP-mut-well-defined-up-to-re}).}}
\end{eqnarray}
Theorem \ref{thm:popping-is-right-equiv} is proved.
\end{proof}

\subsection{Pop at an orbifold point}

Throughout this subsection we keep assuming that $\surf$ is a surface with empty boundary. We also suppose that $\orb\neq\varnothing$, for the entire content of this subsection will be vacuous when $\orb=\varnothing$.

Let $\tau$ be an ideal triangulation of $\surf$. Since $\orb\neq\varnothing$, the least common multiple of the tuple $\dtuple(\tau)$ is 2. Let $E/F$ be a field extension satisfying \eqref{eq:E/F-degree-d}, \eqref{eq:root-of-unity} and \eqref{eq:v^2=-1}, and $g=g(\tau)$ be the modulating function for $\tau$ over $E/F$ (see the paragraph preceding Definition \ref{def:unred-species-of-triangulation}). Let $q\in\orb$ be an orbifold point and $i$ be the unique arc in $\tau$ containing $q$. Then we have $\dtuple(\tau)_i=2$. We claim that there exists a unique bijection $\psi=\psi_i^\tau:\Qtau_1\rightarrow\Qtau_1$ such that the hypotheses of Corollary \ref{coro:non-triv-automorphism-Psi_i} are satisfied for $(Q,\dtuple)=(\Qtau,\dtuple(\tau))$. This is clear if the (unique) orbifolded triangle of $\tau$ where $q$ lies contains only one orbifold point, for in such situation we can take $\psi$ to be the identity of $\Qtau_1$. If the (unique) orbifolded triangle of $\tau$ where $q$ lies contains an orbifold point $q'\neq q$, then $\Qtau$ has exactly two arrows connecting $i$ and the unique arc $j\in\tau$ that contains $q'$. Let these two arrows be $a$ and $b$. By the definition of the modulating function $g=g(\tau)$, we have $g_a\neq g_b$. Since any arc $k\in\tau\setminus\{j\}$ connected to $i$ by means of an arrow of $\Qtau$ necessarily satisfies $\dtuple(\tau)_k=1$, we can take $\psi=\psi_i^\tau$ to be the permutation of $Q_1$ that interchanges $a$ and $b$ and fixes every other arrow of $Q$.

Thus, regardless of whether the number of orbifold points in the orbifolded triangle containing $q$ is 1 or 2, we have a bijection $\psi=\psi_i^\tau:\Qtau_1\rightarrow\Qtau_1$ such that the hypotheses of Corollary \ref{coro:non-triv-automorphism-Psi_i} are all met. Therefore, we have a unique ring automorphism $\Psi=\Psi_i^\tau:\RA{\Atau}\rightarrow\RA{\Atau}$ whose restriction to $R$ is given by \eqref{eq:automorphism-of-R} and whose restriction to $\Qtau_1$ equals $\psi=\psi_i^\tau$. Notice that $\Psi=\Psi_i^\tau$ is $F$-linear.

\begin{defi}\label{def:orb-popped-potential} Let $\tau$ be an ideal triangulation of $\surf$, and $q\in\orb$ be an orbifold point. Let $i$ be the unique arc of $\tau$ in which $q$ is contained, $p$ be the unique puncture contained in $i$, and $\Psi_i^\tau$ be the ring automorphism of $\RA{\Atau}$ from the previous paragraph.
We define the \emph{popped potential of $\tau$ with respect to the orbifold point $q$ (under the choice $\mathbf{x}=(x_p)_{p\in\punct}$ of non-zero elements of $F$)} to be the potential $\Vtauq=\Psi_i^\tau(S(\tau,\zeta_{q,\tau}\mathbf{x}))$, where $\zeta_{q,\tau}:\punct\rightarrow\{1,-1\}$ is the function that takes the value $-1$ at the puncture $p$ and the value $1$ at all the punctures different from $p$.
\end{defi}

\begin{remark}
Notice that $\Psi_i^\tau$ is not an $R$-algebra automorphism of $\RA{\Atau}$ since it does not restrict to the identity of $R$. Therefore, $\Psi_i^\tau$ is not a right-equivalence between $\AStau$ and $\AVtauq$ (nor between $(A(\tau),S(\tau,\zeta_{q,\tau}\mathbf{x}))$ and $\AVtauq$) because, by definition, a right-equivalence between two SPs on the same species must restrict to the identity of the vertex span.
\end{remark}

Our ultimate goal in this paper is to show that whenever $\tau$ and $\sigma$ are tagged triangulations of $\surf$ related by the flip of a tagged arc $k$, the associated SPs $\AStau$ and $(A(\sigma),S(\sigma,\zeta\mathbf{x}))$ are related by the corresponding SP-mutation $\mu_k$ for some function $\zeta:\punct\rightarrow\{1,-1\}$ that takes the value $-1$ on at most one puncture. The popped potentials $\AVtauq$ defined above will be a tool to pursue this goal. As a first step, we will show that if $\tau$ and $\sigma$ are ideal triangulations such that $\sigma=f_k(\tau)$ for an arc $k\in\tau$ which is neither a pending arc nor a folded side, then for any orbifold point $q$ the SPs $\AVtauq$ and $\AVsigmaq$ are related by the SP-mutation $\mu_k$. The proof of this fact requires some preparation.

Let $\surf$ be a surface with empty boundary, and let $\tau$ and $\sigma$ be ideal triangulations of $\surf$. Suppose that $\sigma$ can be obtained from $\tau$ by the flip of an arc $k\in\tau$ which is not a pending arc (since $\sigma$ is an ideal triangulation, $k$ cannot be a folded side of $\tau$). Pick any orbifold point $q\in\orb$ and let $i$ be the unique arc in $\tau$ that contains $q$. Then $i\neq k$ and hence $i$ belongs to $\sigma$ as well. Moreover, we have $\dtuple(\tau)_k=1$ since $k$ is not a pending arc.

Let $(\phi_j:F_j\rightarrow F_j)_{j\in \Qtau_0}$ be the tuple of field automorphisms given by \eqref{eq:tuple-phi_j-for-one-orbpop}, and $\psi^\tau_i:\Qtau_1\rightarrow \Qtau_1$ and $\Psi^\tau_i:\RA{\Atau}\rightarrow\RA{\Atau}$ be the bijection and the ring automorphism defined in the second and third paragraphs of the ongoing subsection. Via the bijection $\tau\rightarrow\sigma$ induced by the flip operation, we can view the tuple $(\phi_j:F_j\rightarrow F_j)_{j\in \Qtau_0}$ as a tuple indexed by the arcs in $\sigma$, i.e., $(\phi_j:F_j\rightarrow F_j)_{j\in \Qsigma_0}$. So, $\sigma$ has its own bijection $\psi^\sigma_i:\Qsigma_1\rightarrow \Qsigma_1$ and its own ring automorphism $\Psi^\sigma_i:\RA{\Asigma}\rightarrow\RA{\Asigma}$. We shall construct a bijection $\widetilde{\psi}_i^\tau:\widetilde{\mu}_k(\Qtau)_1\rightarrow\widetilde{\mu}_k(\Qtau)_1$ and a ring automorphism $\widetilde{\Psi}^\tau_i:\RA{\widetilde{\mu}_k(\Atau)}\rightarrow\RA{\widetilde{\mu}_k(\Atau)}$, and compare these with $\psi_i^\sigma$ and $\Psi_i^\sigma$.

Define a function $\widetilde{\psi} = \widetilde{\psi}^\tau_i:\widetilde{\mu}_k(\Qtau)_1\rightarrow\widetilde{\mu}_k(\Qtau)_1$ by
$$
\widetilde{\psi}(c)=\begin{cases}
\psi(c) & \text{if $c\in\widetilde{\mu}_k(\Qtau)_1\cap\Qtau_1$};\\
\gamma^* & \text{if $c=\gamma^*$ for an arrow $\gamma\in\Qtau_1$ incident to $k$};\\
[ab]_{\phi_{h(a)}|\rho\phi_{t(b)}^{-1}|} & \text{if $c=[ab]_\rho$ for $a,b\in\Qtau_1$ with $t(a)=k=h(b)$, and $\rho\in G_{h(a),t(b)}$};
\end{cases}
$$
where we are using the notations $\phi_{h(a)}|=\phi_{h(a)}|_{F_{h(a),t(b)}}$ and $\phi_{t(b)}^{-1}|=\phi_{t(b)}^{-1}|_{F_{h(a),t(b)}}$. It is clear that $\widetilde{\mu}_k(\Qtau)_1\cap\Qtau_1$, which is the set of all arrows that are common to $\Qtau$ and $\widetilde{\mu}_k(\Qtau)_1$, is contained in the image of $\widetilde{\psi}$. It is also clear that the arrows of $\widetilde{\mu}_k(\Qtau)_1$ that are incident to $k$ belong to the image of $\widetilde{\psi}$. If $a$ and $b$ are arrows of $\Qtau$ such that $t(a)=k=h(b)$, and $\rho\in G_{h(a),t(b)}$, then $\phi_{h(a)}^{-1}|\rho\phi_{t(b)}|\in G_{h(a),t(b)}$, which means that $[ab]_{\phi_{h(a)}^{-1}|\rho\phi_{t(b)}|}$ is an arrow of $\widetilde{\mu}_k(\Qtau)$; this arrow satisfies $[ab]_{\rho}=\widetilde{\psi}([ab]_{\phi_{h(a)}^{-1}|\rho\phi_{t(b)}|})$. Hence $\widetilde{\psi}=\widetilde{\psi}^\tau_i:\widetilde{\mu}_k(\Qtau)_1\rightarrow\widetilde{\mu}_k(\Qtau)_1$ is a surjective function and therefore bijective.

By the previous paragraph, $\widetilde{\psi}$ is indeed well defined. We leave in the hands of the reader to verify that the weighted quiver $(\widetilde{\mu}_k(\Qtau),\dtuple(\tau))$, its vertex $i$, the modulating function  $\widetilde{\mu}_k(g):\widetilde{\mu}_k(\Qtau)_1\rightarrow\Gal(E/F)\cup\Gal(F/F)$ given by applying Definition \ref{def:SP-premutation}, and the function $\widetilde{\psi}$ defined above satisfy the hypotheses of Corollary \ref{coro:non-triv-automorphism-Psi_i}, and hence yield a ring automorphism $\widetilde{\Psi}_i:\RA{\widetilde{\mu}_k(\Atau)}\rightarrow\RA{\widetilde{\mu}_k(\Atau)}$.

Recall that Theorem \ref{thm:ideal-non-pending-flips<->SP-mutation} states that, under the hypotheses we have currently imposed on $\tau$, $\sigma$ and $k$, there exists a trivial SP $(C,T)$ with the property that there exists a right-equivalence $\varphi:\ASsigma\oplus(C,T)\rightarrow\widetilde{\mu}_k\AStau$. A careful look at Subsection \ref{subsec:proof-ideal-non-pending-flips<->SP-mutation} reveals that, as part of the Theorem's proof strategy, we identify $\Qsigma$ with a specific subquiver of $\widetilde{\mu}_k(\Qtau)$ (recall that $\widetilde{\mu}_k(\Qtau)$ is the underlying quiver of the weighted quiver $\widetilde{\mu}_k(\Qtau,\dtuple(\tau))$).
Under this identification, a case-by-case check can be used to show that $\widetilde{\psi}_i^\tau(\Qsigma_1)=\Qsigma_1$ and that, actually, the restriction of $\widetilde{\psi}_i^\tau$ to $\Qsigma_1$ is equal to $\psi_i^\sigma$ (the cases of the case-by-case check coincide with the cases in the proof of Theorem \ref{thm:ideal-non-pending-flips<->SP-mutation}). Therefore, the restriction of $\widetilde{\Psi}_i^\tau$ to $\RA{\Asigma}$ is equal to $\Psi_i^\sigma$.

\begin{prop}\label{prop:orbipop-commutes-with-SP-mutation} Let $\surf$ be a surface with empty boundary, and let $\tau$ and $\sigma$ be ideal triangulations of $\surf$. If $\sigma$ can be obtained from $\tau$ by the flip of an arc $k\in\tau$ which is not a pending arc, then for any orbifold point $q\in\orb$ the SP $\mu_k\AVsigmaq$ is right-equivalent to the SP $\AVtauq$.
\end{prop}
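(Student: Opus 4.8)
The plan is to mirror, essentially verbatim, the proof of Proposition~\ref{prop:pop-commutes-with-mutation}, with the ring automorphism $\widetilde{\Psi}_i^\tau$ of $\RA{\widetilde{\mu}_k(\Atau)}$ constructed in the preceding paragraphs taking over the role played there by the quiver automorphism induced by $\pi_{i,j}^\tau$. Two preliminary observations will be used throughout. First, since $k$ is non-pending we have $i\neq k$, so the orbifold arc $i$ belongs to both $\tau$ and $\sigma$ and carries the same puncture in either; hence $\zeta_{q,\tau}=\zeta_{q,\sigma}$, and writing $\zeta$ for this common function we have $\Vsigmaq=\Psi_i^\sigma(S(\sigma,\zeta\mathbf{x}))$ by Definition~\ref{def:orb-popped-potential}. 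Second, $\dtuple(\tau)_k=1$, so $F_k=F$; consequently, in the notation of Definition~\ref{def:SP-premutation}, for every pair of arrows $a,b$ incident to $k$ one has $\B_{b,a}=\{1\}$ and $X_{b,a}=G_{h(b),t(a)}$, which simplifies the combinatorics of the pre-mutation considerably.

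The first step will be to check that $\widetilde{\Psi}_i^\tau$ intertwines the pre-mutation, i.e.\ that $\widetilde{\Psi}_i^\tau(\widetilde{\mu}_k(S(\tau,\zeta\mathbf{x})))=\widetilde{\mu}_k(\Psi_i^\tau(S(\tau,\zeta\mathbf{x})))=\widetilde{\mu}_k(\Vtauq)$. Writing $\widetilde{\mu}_k(S)=[S]+\triangle_k(\Atau)$, this reduces to the two identities $\widetilde{\Psi}_i^\tau([S])=[\Psi_i^\tau(S)]$ and $\widetilde{\Psi}_i^\tau(\triangle_k(\Atau))=\triangle_k(\Atau)$. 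Both should follow directly from the definition of $\widetilde{\psi}_i^\tau$: it agrees with $\psi_i^\tau$ on the arrows common to $\Qtau$ and $\widetilde{\mu}_k(\Qtau)$, it fixes each reversed arrow $\gamma^*$, and it sends each composite arrow to another composite arrow with the same endpoints whose $\rho$-label is obtained from $\rho$ by multiplying, on the left and on the right, by the relevant restrictions of the field automorphisms $\phi_j$. Since $\psi_i^\tau$ fixes every arrow incident to $k$ (the only arrows it moves join two weight-$2$ arcs, hence are not incident to the weight-$1$ arc $k$), and since the indicated $\phi$-conjugation permutes $X_{b,a}=G_{h(b),t(a)}$, the local expressions $\sum_\rho[b a]_\rho$ appearing in $[S]$ and the summands $\omega^{-1}b^*[b\omega a]_\rho a^*$ appearing in $\triangle_k(\Atau)$ are each carried to themselves (there being no scalar factors $\omega^{-1}\neq 1$, as $\B_{b,a}=\{1\}$). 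I would also record that $\widetilde{\Psi}_i^\tau$, being a ring automorphism carrying every cycle to a scalar multiple of a cycle, preserves cyclical equivalence.

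Granting the first step, the argument closes as in Proposition~\ref{prop:pop-commutes-with-mutation}. Since $\tau=f_k(\sigma)$ with both triangulations ideal and $k$ non-pending, Theorem~\ref{thm:ideal-non-pending-flips<->SP-mutation} together with the Splitting Theorem (Theorem~\ref{thm:splitting-theorem}) furnishes a trivial SP $(C,T)$ and a right-equivalence $\varphi\colon\widetilde{\mu}_k(\Atau,S(\tau,\zeta\mathbf{x}))\to(\Asigma,S(\sigma,\zeta\mathbf{x}))\oplus(C,T)$; as recorded just before the statement of the proposition, this identifies $\Qsigma$ with a subquiver of $\widetilde{\mu}_k(\Qtau)$ and $\Asigma$ with a subbimodule of $\widetilde{\mu}_k(\Atau)$ in such a way that $\widetilde{\Psi}_i^\tau$ restricts on $\RA{\Asigma}$ to $\Psi_i^\sigma$. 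Although $\widetilde{\Psi}_i^\tau$ is not an $R$-algebra automorphism (it acts as $\theta$ on the $e_i$-component of $R$), the conjugate $\widetilde{\Psi}_i^\tau\circ\varphi\circ(\widetilde{\Psi}_i^\tau)^{-1}$ does restrict to the identity on $R$, because $(\widetilde{\Psi}_i^\tau)^{-1}$ maps $R$ into $R$, $\varphi$ fixes $R$ pointwise, and $\widetilde{\Psi}_i^\tau$ undoes the action of $(\widetilde{\Psi}_i^\tau)^{-1}$ on $R$; hence it is an $R$-algebra isomorphism. By the first step it sends $\widetilde{\mu}_k(\Vtauq)=\widetilde{\Psi}_i^\tau(\widetilde{\mu}_k(S(\tau,\zeta\mathbf{x})))$ to an element cyclically equivalent to $\widetilde{\Psi}_i^\tau(S(\sigma,\zeta\mathbf{x})+T)=\Psi_i^\sigma(S(\sigma,\zeta\mathbf{x}))+\widetilde{\Psi}_i^\tau(T)=\Vsigmaq+\widetilde{\Psi}_i^\tau(T)$, so it is a right-equivalence $\widetilde{\mu}_k(\Atau,\Vtauq)\to(\Asigma,\Vsigmaq)\oplus(\widetilde{\Psi}_i^\tau(C),\widetilde{\Psi}_i^\tau(T))$ with $(\Asigma,\Vsigmaq)$ reduced and $(\widetilde{\Psi}_i^\tau(C),\widetilde{\Psi}_i^\tau(T))$ trivial. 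The uniqueness part of Theorem~\ref{thm:splitting-theorem} then gives $\mu_k(\Atau,\Vtauq)\simeq(\Asigma,\Vsigmaq)$, and applying $\mu_k$ and invoking Theorems~\ref{thm:SP-mut-well-defined-up-to-re} and~\ref{thm:SP-mutation-is-involution} turns this into $\AVtauq\simeq\mu_k\AVsigmaq$, which is the asserted statement.

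I expect the first step — the verification that $\widetilde{\Psi}_i^\tau$ commutes with pre-mutation and preserves cyclical equivalence — to be the only place demanding genuine work, and even there the hypothesis that $k$ is non-pending (which forces $F_k=F$, hence $\B_{b,a}=\{1\}$ and $X_{b,a}=G_{h(b),t(a)}$) collapses the bookkeeping to essentially the check already performed for the self-folded pop in the proof of Proposition~\ref{prop:pop-commutes-with-mutation}. Everything else is a transcription of that argument, resting on the already-established compatibility $\widetilde{\Psi}_i^\tau|_{\RA{\Asigma}}=\Psi_i^\sigma$.
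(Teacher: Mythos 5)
Your proposal is correct and follows essentially the same route as the paper: verify that $\widetilde{\Psi}_i^\tau$ intertwines the pre-mutation, conjugate the right-equivalence supplied by Theorem~\ref{thm:ideal-non-pending-flips<->SP-mutation} by $\widetilde{\Psi}_i^\tau$ to obtain an $R$-algebra isomorphism carrying $\widetilde{\mu}_k(\Vtauq)$ to something cyclically equivalent to $\Vsigmaq + \widetilde{\Psi}_i^\tau(T)$, and invoke the Splitting Theorem. Your additional remarks — that $\zeta_{q,\tau}=\zeta_{q,\sigma}$ since $i\neq k$, and that $F_k=F$ forces $\B_{b,a}=\{1\}$ and $X_{b,a}=G_{h(b),t(a)}$ — are correct and in fact flesh out details the paper leaves implicit.
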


\begin{proof} As before, let $i$ be the unique arc in $\tau$ containing $q$.
The ring automorphism $\widetilde{\Psi}_i^\tau:\RA{\widetilde{\mu}_k(\Atau)}\rightarrow\RA{\widetilde{\mu}_k(\Atau)}$ is $F$-linear and satisfies
$$
\widetilde{\Psi}_i^\tau([S(\tau,\zeta_{q,\tau}\mathbf{x})])=[\Psi_i^\tau(S(\tau,\zeta_{q,\tau}\mathbf{x}))], \ \ \ \text{and} \ \ \ \widetilde{\Psi}_i^\tau\left(\sum_{\overset{a}{\to}k\overset{b}{\to}}\sum_{\rho}b^*[ba]_{\rho}a^*\right)= \sum_{\overset{a}{\to}k\overset{b}{\to}}\sum_{\rho}b^*[ba]_{\rho}a^*,
$$
from which it follows that $\widetilde{\Psi}_i^\tau(\widetilde{\mu}_k(S(\tau,\zeta_{q,\tau}\mathbf{x})))=\widetilde{\mu}_k(\Psi^\tau_i(S(\tau,\zeta_{q,\tau}\mathbf{x})))=
\widetilde{\mu}_k(\Vtauq)$.

Since $\sigma=f_k(\tau)$ is an ideal triangulation and $k$ is not a pending arc, Theorem \ref{thm:ideal-non-pending-flips<->SP-mutation} guarantees the existence of a right-equivalence
$\varphi:\widetilde{\mu}_{k}(\Atau,S(\tau,\zeta_{q,\tau}\mathbf{x}))\rightarrow(\Asigma,S(\tau,\zeta_{q,\tau}\mathbf{x}))\oplus(C,T)$, where $(C,T)$ is a trivial SP.
This, the fact that the restriction of $\widetilde{\Psi}_i^\tau$ to $\RA{\Asigma}$ is equal to $\Psi_i^\sigma$, and the obvious fact that the $F$-algebra isomorphism
$\widetilde{\Psi}_i^\tau\varphi(\widetilde{\Psi}_i^{\tau})^{-1}:\RA{\widetilde{\mu}_k(\Atau)}\rightarrow \RA{\Asigma\oplus C}$ acts as the identity on the vertex span $R$, imply that
$\widetilde{\Psi}_i^\tau\varphi(\widetilde{\Psi}_i^{\tau})^{-1}$ is an $R$-algebra isomorphism such that
$\widetilde{\Psi}_i^\tau\varphi(\widetilde{\Psi}_i^{\tau})^{-1}(\widetilde{\mu}_k(\Vtauq))=
\widetilde{\Psi}_i^\tau\varphi(\widetilde{\mu}_{k}(S(\tau,\zeta_{q,\tau}\mathbf{x})))$ is cyclically-equivalent to
$\widetilde{\Psi}_i^\tau(S(\sigma,\zeta_{q,\tau}\mathbf{x})+T)=\Vsigmaq+\widetilde{\Psi}_i^\tau(T)$. That is, $\widetilde{\Psi}_i^\tau\varphi(\widetilde{\Psi}_i^{\tau})^{-1}$ is a right-equivalence
$\widetilde{\Psi}_i^\tau\varphi(\widetilde{\Psi}_i^{\tau})^{-1}:
\widetilde{\mu}_{k}(\Atau,\Vtauq)\rightarrow(\Asigma,\Vsigmaq)\oplus(\widetilde{\Psi}_i^\tau(C),\widetilde{\Psi}_i^\tau(T))$. Since $(\Asigma,\Vsigmaq)$ is a
reduced SP and $(\widetilde{\Psi}_i^\tau(C),\widetilde{\Psi}_i^\tau(T))$ is a trivial SP, Theorem \ref{thm:splitting-theorem} allows us to conclude that the SPs $\mu_k(\Atau,\Vtauq)$ and $(\Asigma,\Vsigmaq)$
are right-equivalent.
\end{proof}

The next proposition, whose proof we defer to Section \ref{sec:technical-proofs}, guarantees the existence of triangulations $\tau$ for which the SPs $\AStau$ and $\AVtauq$ are right-equivalent.

\begin{prop}\label{prop:orb-pop-stronger} Let $\surf$ be a surface with empty boundary such that $|\orb|\geq 1$, $\mathcal{S}$ an arbitrary collection of $|\orb|$ pairwise compatible pending arcs on $\surf$, and $q\in\orb$ any orbifold point. There exists an ideal triangulation $\tau$ of $\surf$ such that $\AStau$ and $\AVtauq$ are right-equivalent, and $\mathcal{S}\subseteq\tau$.
\end{prop}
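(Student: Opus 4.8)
\textbf{Proof strategy for Proposition \ref{prop:orb-pop-stronger}.}
The plan is to mirror the structure of the proof of Proposition \ref{prop:pop-existence} (whose own proof is deferred to Section \ref{sec:technical-proofs}) and of the analogous existence statement \cite[Proposition 6.4]{Labardini-potsnoboundaryrevised}: rather than trying to show the right-equivalence $\AStau\simeq\AVtauq$ for \emph{all} triangulations $\tau$ containing $\mathcal{S}$, I will exhibit \emph{one} sufficiently explicit ideal triangulation $\tau$ for which the right-equivalence can be written down by hand (via an \emph{ad hoc} limit process, as announced in the introduction), and then invoke the compatibility results already proved to propagate this to every triangulation in the flip-connected component determined by $\mathcal{S}$. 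So the first step is to fix a concrete model triangulation: given the collection $\mathcal{S}$ of $|\orb|$ pairwise compatible pending arcs, complete $\mathcal{S}$ to an ideal triangulation $\tau$ of $\surf$ in which the orbifolded triangle containing the distinguished orbifold point $q$ is in the simplest possible local configuration — namely, the orbifold point $q$ sits inside a digon bounded by its pending arc $i$ and by one further non-pending arc, with that digon glued into the surface in a way that leaves as much room as possible to maneuver. Because $|\marked|\geq 7$ when $\Sigma$ is a sphere (part of Definition \ref{def:surf-with-orb-points}), such a triangulation exists; this is the same kind of combinatorial packing argument used for Proposition \ref{prop:pop-existence}.

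The second step is the analytic heart: for this chosen $\tau$, construct an explicit $R$-algebra automorphism $\varphi$ of $\RA{\Atau}$ witnessing $\varphi(\Vtauq)\sim_{\operatorname{cyc}}\Stau$ (up to the usual reduction to reduced parts via Theorem \ref{thm:splitting-theorem}). Recall $\Vtauq=\Psi_i^\tau(S(\tau,\zeta_{q,\tau}\mathbf{x}))$, so the two potentials differ by the non-$R$-linear automorphism $\Psi_i^\tau$ (complex conjugation at the vertex $i$ together with the swap $\psi_i^\tau$ of the two parallel arrows at $i$ when $q$ shares its orbifolded triangle with a second orbifold point) \emph{and} by the sign change $\zeta_{q,\tau}$ at the puncture $p$ lying on $i$. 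The idea is to absorb the conjugation/arrow-swap into a genuine $R$-algebra change of arrows by rescaling the arrows incident to $i$ by suitable powers of $v$ (using $v^2=-1$ from \eqref{eq:v^2=-1}), in such a way that the cycles from interior triangles (Definition \ref{def:cycles-from-int-triangles}), from orbifolded triangles (Definition \ref{def:cycles-from-orb-triangles}), and the puncture cycle $\widehat{S}^p(\tau,\mathbf{x})$ (Definition \ref{def:cycles-from-punctures}) all transform correctly; the sign flip at $p$ is then compensated by a scalar rescaling of one arrow in the $p$-cycle. When the monogon/digon picture forces this rescaling to be only \emph{approximately} correct at finite stages, one passes to a limit — exactly as in the proof of \cite[Theorem 6.1]{Labardini-potsnoboundaryrevised} — taking a convergent (in the $\maxid$-adic topology) sequence of unitriangular automorphisms whose limit kills the error terms. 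This limit construction is where the real work lies.

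The third step is bookkeeping. Having the right-equivalence $\AStau\simeq\AVtauq$ for the one explicit $\tau$, I use Proposition \ref{prop:orbipop-commutes-with-SP-mutation} together with Theorem \ref{thm:ideal-non-pending-flips<->SP-mutation} and Theorem \ref{thm:SP-mut-well-defined-up-to-re} to conclude that the right-equivalence persists under any flip of a non-pending, non-folded arc that does not disturb $\mathcal{S}$ — precisely the move that Proposition \ref{prop:orbipop-commutes-with-SP-mutation} is designed to handle. A (straightforward modification of) Mosher's algorithm \cite[Pages 36--41]{Mosher}, just as invoked in the proof of Theorem \ref{thm:popping-is-right-equiv}, shows that any two ideal triangulations containing the fixed collection $\mathcal{S}$ of $|\orb|$ pairwise compatible pending arcs are connected by a sequence of such flips, all of whose intermediate triangulations also contain $\mathcal{S}$. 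Chaining the right-equivalences around this flip sequence — using that both $S(\tau,-)$ and $V^q(\tau,\mathbf{x})$ transform compatibly (the latter by Proposition \ref{prop:orbipop-commutes-with-SP-mutation}) — yields $\AStau\simeq\AVtauq$ for the arbitrary target triangulation $\tau\supseteq\mathcal{S}$, which is the assertion.

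\textbf{Main obstacle.} The hard part is the explicit limit construction of the right-equivalence in step two: one must choose the local configuration around $q$ so deftly that the discrepancy between $\Stau$ and $\Vtauq$ — which involves both the Galois twist built into $\Psi_i^\tau$ and the $v^2=-1$ arithmetic, \emph{and} the sign change at the puncture $p$ — can be corrected by an honest $R$-algebra automorphism rather than the non-$R$-linear $\Psi_i^\tau$. The case where the orbifolded triangle at $q$ contains a \emph{second} orbifold point is the delicate one, since then the two parallel arrows at $i$ carry the distinct Galois labels $\myid_E$ and $\theta$, and the rescaling must interact correctly with the potential $\widehat{S}^\triangle(\tau,\mathbf{x})=2v^{-1}\alpha^\triangle\beta^\triangle\gamma^\triangle+2\delta^\triangle\beta^\triangle\gamma^\triangle$ from Definition \ref{def:cycles-from-orb-triangles}(2). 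Getting the limit process to converge and to land exactly on $\Stau$ (modulo cyclical equivalence and reduction) is the technical crux, and is the reason this proof is deferred to Section \ref{sec:technical-proofs}.
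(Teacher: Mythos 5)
Your high-level strategy — pick one explicit model triangulation, write the right-equivalence $\Stau\simeq\Vtauq$ by hand via an $\maxid$-adic limit process, and then propagate — matches the spirit of the paper, but there are two issues worth flagging.

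First, your step three is not needed for this proposition and reveals a small misreading of what it asserts. The statement is purely existential: it asks for \emph{one} ideal triangulation $\tau$ containing $\mathcal{S}$ for which $\AStau\simeq\AVtauq$. The flip-propagation via Mosher's algorithm and Proposition \ref{prop:orbipop-commutes-with-SP-mutation} that you sketch in step three is exactly the content of Theorem \ref{thm:orbi-pop} (which extends the conclusion to \emph{all} triangulations), and the paper proves that separately, using this proposition as one of its inputs. Including it here is circular logic in spirit, not just superfluous.

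Second, and more substantively, step one has a gap. You propose to complete $\mathcal{S}$ directly to a triangulation whose local configuration around $q$ has the ``simplest possible'' form, but the configuration required to run the limit process is quite rigid — it is the cylinder-with-one-orbifold-point arrangement of Lemma \ref{lemma:orb-pop-existence-lemma}, with the two loops $i_1$, $i_6$ and the specific six-arc subquiver — and it is not at all obvious that an arbitrary $\mathcal{S}$ (with its pending arcs wherever they happen to sit) can be extended to a triangulation containing that exact local pattern. The paper sidesteps this by not completing $\mathcal{S}$ at all: it first builds, abstractly, a triangulation $\tau'$ of a surface of the same topological type whose pending arcs $\mathcal{S}'$ realize the correct valence function $\val_{\mathcal{S},\orb}$ and for which the limit process works (Lemma \ref{lemma:orb-pop-existence-lemma} and Proposition \ref{prop:specific-orb-pop-arbitrary-genus}), and then invokes an orientation-preserving diffeomorphism $\varphi$ of $\Sigma$ with $\varphi(\mathcal{S}')=\mathcal{S}$ to pull back $\tau'$ to a triangulation $\tau=\varphi(\tau')\supseteq\mathcal{S}$; right-equivalence of the two SPs is preserved under such a relabeling. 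Note also that you cannot repair the gap by appealing to your step three's flip moves, because Proposition \ref{prop:orbipop-commutes-with-SP-mutation} is only stated for flips of non-pending arcs, and any sequence of non-pending flips leaves the set of pending arcs — and hence $\mathcal{S}'$ versus $\mathcal{S}$ — unchanged. The diffeomorphism step is the missing ingredient, not the flip step.
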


We now combine Proposition \ref{prop:orbipop-commutes-with-SP-mutation} with Proposition \ref{prop:orb-pop-stronger} to deduce that the popped SP $\AVtauq$ is always right-equivalent to $(\Atau,\Stau)$.

\begin{thm}\label{thm:orbi-pop} Let $\surf$ be a surface with empty boundary. For any ideal triangulation $\tau$ of $\surf$ and any orbifold point $q\in\orb$, the SPs $(\Atau,\Stau)$ and $\AVtauq$ are right-equivalent.
\end{thm}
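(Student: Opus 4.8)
The plan is to follow the exact template established by Theorem~\ref{thm:popping-is-right-equiv}, replacing the self-folded-triangle pop with the orbifold pop and substituting the corresponding auxiliary results. First I would fix an ideal triangulation $\tau$ of $\surf$ and an orbifold point $q\in\orb$, and let $\mathcal{S}$ denote the set of all pending arcs belonging to $\tau$. Since each orbifold point lies on exactly one arc of any ideal triangulation, $\mathcal{S}$ is a collection of exactly $|\orb|$ pairwise compatible pending arcs, and in particular $|\orb|\geq 1$ here (if $\orb=\varnothing$ there is nothing to prove, as stated at the start of the subsection). By Proposition~\ref{prop:orb-pop-stronger} there is an ideal triangulation $\sigma$ of $\surf$ with $\mathcal{S}\subseteq\sigma$ such that $\ASsigma$ and $\AVsigmaq$ are right-equivalent.

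Next I would invoke the same modification of Mosher's algorithm used in the proof of Theorem~\ref{thm:popping-is-right-equiv} to produce a finite sequence of ideal triangulations $(\tau_0,\tau_1,\ldots,\tau_\ell)$ with $\tau_0=\tau$, $\tau_\ell=\sigma$, such that for each $s$ the triangulation $\tau_{s-1}$ is obtained from $\tau_s$ by flipping some arc $k_s\in\tau_s$, and such that $\mathcal{S}\subseteq\tau_s$ for every $s$. The key point here is that because the $k_s$ never flip a pending arc (each $k_s\notin\mathcal{S}$ since $\mathcal{S}$ is common to all the $\tau_s$), every flip in this chain is a flip of a non-pending arc between two ideal triangulations, so neither is $k_s$ a folded side of $\tau_{s-1}$ or $\tau_s$. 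Also, the arc $i$ containing $q$ lies in $\mathcal{S}\subseteq\tau_s$ for all $s$, so the orbifold pop $\AVtauq[\tau_s]$ is defined at every stage and Proposition~\ref{prop:orbipop-commutes-with-SP-mutation} applies at each step.

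Then I would run the three-line chain of right-equivalences, writing $\simeq$ for ``is right-equivalent to'':
\begin{eqnarray*}
\AStau&\simeq&\mu_{k_1}\mu_{k_2}\cdots\mu_{k_\ell}\ASsigma \qquad\text{(by Theorem~\ref{thm:ideal-non-pending-flips<->SP-mutation} and Theorem~\ref{thm:SP-mut-well-defined-up-to-re})}\\
&\simeq&\mu_{k_1}\mu_{k_2}\cdots\mu_{k_\ell}\AVsigmaq \qquad\text{(by Theorem~\ref{thm:SP-mut-well-defined-up-to-re}, since $\ASsigma\simeq\AVsigmaq$)}\\
&\simeq&\AVtauq \qquad\text{(by Proposition~\ref{prop:orbipop-commutes-with-SP-mutation} and Theorem~\ref{thm:SP-mut-well-defined-up-to-re}).}
\end{eqnarray*}
Applying $\mu_{k_1}\cdots\mu_{k_\ell}$ repeatedly and tracking that each intermediate SP is a reduced 2-acyclic SP on the relevant $\tau_s$ (so that the mutation symbol is unambiguous) completes the argument. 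The only genuine subtlety — and the step I expect to be the main obstacle — is making sure the hypotheses of Proposition~\ref{prop:orbipop-commutes-with-SP-mutation} are literally met at each flip, namely that $k_s$ is non-pending; this is exactly guaranteed by carrying the full set $\mathcal{S}$ of pending arcs (not just a single one) through the whole Mosher chain, which is why Proposition~\ref{prop:orb-pop-stronger} was stated with an arbitrary compatible collection of $|\orb|$ pending arcs rather than for just the one arc containing $q$. Everything else is a direct transcription of the proof of Theorem~\ref{thm:popping-is-right-equiv}.
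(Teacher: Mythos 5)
Your proposal is correct and reproduces the paper's own proof essentially verbatim: take $\mathcal{S}$ to be the full set of pending arcs of $\tau$, invoke Proposition~\ref{prop:orb-pop-stronger} to get a suitable $\sigma\supseteq\mathcal{S}$, run the Mosher chain through ideal triangulations all containing $\mathcal{S}$ (so the flipped arcs are never pending nor folded sides), and conclude by the three-term chain of right-equivalences using Theorems~\ref{thm:ideal-non-pending-flips<->SP-mutation}, \ref{thm:SP-mut-well-defined-up-to-re} and Proposition~\ref{prop:orbipop-commutes-with-SP-mutation}. The only difference from the paper is the cosmetic reversal of the index order on the Mosher chain.
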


\begin{proof} Let $\mathcal{S}$ be the set of all arcs in $\tau$ that are pending arcs. Then $\mathcal{S}$ is a collection of $|\orb|$ pairwise compatible arcs. By Proposition \ref{prop:orb-pop-stronger}, there exists an ideal triangulation $\sigma$ of $\surf$ such that $(\Asigma,\Ssigma)$ and $\AVsigmaq$ are right-equivalent and $\mathcal{S}\subseteq\sigma$. Since $\mathcal{S}\subseteq\sigma\cap\tau$, a minor modification of Mosher's algorithm \cite[Pages 36-41]{Mosher} shows that there exists a finite sequence $(\tau_0,\tau_1,\ldots,\tau_n)$ of ideal triangulations such that:
\begin{itemize}
\item $\tau_0=\sigma$ and $\tau_n=\tau$;
\item for every $t\in\{1,\ldots,n\}$, $\tau_t=f_{k_t}(\tau_{t-1})$ for some arc $k_t\in\tau_{t-1}$ which is not pending.
\end{itemize}
Since all $\tau_0,\tau_1,\ldots,\tau_n$ are ideal triangulations, there is no index $t\in\{1,\ldots,n\}$ for which $k_t$ be a folded side of $\tau_{t-1}$. Therefore, using the symbol $\simeq$ to abbreviate ``is right-equivalent to", we have
\begin{eqnarray}\nonumber
(\Atau,\Stau) & \simeq & \mu_{k_n}\ldots\mu_{k_1}(\Asigma,\Ssigma) \ \ \ \ \ \ \  \text{{\footnotesize (by Theorem \ref{thm:ideal-non-pending-flips<->SP-mutation} and Theorem \ref{thm:SP-mut-well-defined-up-to-re})}}\\
\nonumber
&\simeq& \mu_{k_n}\ldots\mu_{k_1}\AVsigmaq \ \ \ \ \ \text{{\footnotesize (by Theorem \ref{thm:SP-mut-well-defined-up-to-re}, since $(\Asigma,\Ssigma)\simeq\AVsigmaq$)}}\\
\nonumber
&\simeq& \AVtauq \ \ \ \ \ \ \ \ \ \ \ \ \ \ \ \ \ \ \ \text{{\footnotesize(by Proposition \ref{prop:orbipop-commutes-with-SP-mutation} and Theorem \ref{thm:SP-mut-well-defined-up-to-re}).}}
\end{eqnarray}
Theorem \ref{thm:orbi-pop} is proved.
\end{proof}

\section{Empty boundary: Flip is compatible with SP-mutation}\label{sec:flip-and-mutation-compatibility}

\subsection{Flip of a folded side of an ideal triangulation}

\begin{thm}\label{thm:leaving-positive-stratum} Let $\surf$ be a surface with empty boundary. Suppose $\tau$ is an ideal triangulation of $\surf$, and $i\in\tau$ is the folded side of a self-folded triangle of $\tau$. Then $\mu_{i}(A(\tau),S(\tau),\mathbf{x}))$ is right-equivalent to $\ASsigma$, where $\sigma=f_i(\tau)$ is the tagged triangulation obtained from $\tau$ by flipping the arc $i$.
\end{thm}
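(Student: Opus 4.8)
The plan is to reduce the claim about flipping a folded side $i$ to the already-known compatibility between flips and SP-mutations for \emph{non-pending, non-folded} arcs, via the popped potentials of Subsection~\ref{subsec:self-folded-pop}. Let $j\in\tau$ be the loop enclosing the self-folded triangle $\triangle$ with folded side $i$, and let $q\in\punct$ be the puncture inside $\triangle$. The key point is Theorem~\ref{thm:popping-is-right-equiv}, which tells us that $\AStau$ is right-equivalent to $\AWtau$, where $\Wtau=\pi_{i,j}^\tau(S(\tau,\mathbf{y}))$ and $\mathbf{y}=((-1)^{\delta_{p,q}}x_p)_{p\in\punct}$. Since right-equivalent SPs have right-equivalent mutations (Theorem~\ref{thm:SP-mut-well-defined-up-to-re}), it suffices to show that $\mu_i\AWtau$ is right-equivalent to $\ASsigma$.

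First I would analyze the local picture around $i$ and $j$. In $\tau$, the arc $i$ is incident only to the puncture $q$ (inside the monogon) and to the marked point on which $j$ is based; flipping $i$ replaces it by the tagged arc $\sigma$-arc that is notched at $q$ (the tagged triangulation $\sigma=f_i(\tau)$ has signature differing from $\tau$ exactly at $q$, cf.\ Part~(6) of Proposition~\ref{prop:tagfunction-and-circ}). The crucial structural observation is that after applying the swap $\pi_{i,j}^\tau$, the arc $i$ plays, inside $\Wtau$, the combinatorial role that $j$ played inside $\Stau$: it becomes a ``non-pending, non-folded''-type vertex in the relevant part of the quiver, so that the premutation $\widetilde\mu_i(\Atau,\Wtau)$ can be handled by the same local calculations used in Theorem~\ref{thm:ideal-non-pending-flips<->SP-mutation}. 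Concretely, $\widetilde\mu_i(\Wtau) = \widetilde\mu_i(\pi_{i,j}^\tau(S(\tau,\mathbf{y})))$, and since $\pi_{i,j}^\tau$ commutes with the bracket operation $[\,\cdot\,]$ and fixes $\triangle_i(\Atau)$ (the same computation as in the proof of Proposition~\ref{prop:pop-commutes-with-mutation}), we get $\widetilde\mu_i(\Wtau) = \pi^\sigma_{i,j}(\widetilde\mu_i(S(\tau,\mathbf{y})))$ after identifying $\widetilde\mu_i(\Qtau)$ with the relevant quiver. Then one invokes the explicit right-equivalence from the ideal-non-pending case (which applies because the configuration around $i$ in the $\Wtau$-picture matches one of the standard puzzle-piece configurations), conjugates it by $\pi^\sigma_{i,j}$ to keep it an $R$-algebra isomorphism, and applies the uniqueness of reduced parts (Theorem~\ref{thm:splitting-theorem}) to conclude that $\mu_i(\Atau,\Wtau)$ is right-equivalent to $\ASsigma$. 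Finally, unwinding the sign bookkeeping: the $(-1)^{\delta_{p,q}}$ twist in $\mathbf{y}$ is precisely absorbed so that the output scalars for $\sigma$ come out to the correct tuple $\mathbf{x}$ (or the appropriate sign-twisted tuple, which one checks equals $\mathbf{x}$ here since the notch deletion at $q$ undoes the twist).

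\textbf{Main obstacle.} The hard part will be the local verification that the puzzle-piece configuration around $i$, \emph{after} the $\pi_{i,j}^\tau$-swap, genuinely falls under a case already treated in the proof of Theorem~\ref{thm:ideal-non-pending-flips<->SP-mutation} — in other words, that the popped potential $\Wtau$ has, near $i$, exactly the shape of a genuine potential of an ideal triangulation with $i$ non-pending and non-folded. This requires carefully matching the cycles $\widehat S^\triangle$, $\widehat U^\triangle$, and $\widehat S^p$ contributing to $\Stau$ around $j$ with the corresponding cycles around $i$ in the swapped picture, and tracking how the adjacent triangle $\triangle'$ (which by the Remark following Figure~\ref{Fig:adj_sf_orbif} is non-self-folded and contains at most one orbifold point) contributes. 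The sign change $x_q\mapsto -x_q$ and its interaction with the term $\widehat S^q$ coming from the self-folded triangle is the delicate bookkeeping that must be done with care; this is exactly the phenomenon flagged in Remark~\ref{rem:on-popped-pots} and handled in \cite[Section~4]{Labardini-potsnoboundaryrevised} for the orbifold-free case, and here it proceeds along the same lines.
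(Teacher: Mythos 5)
Your reduction --- use Theorem \ref{thm:popping-is-right-equiv} and Theorem \ref{thm:SP-mut-well-defined-up-to-re} to reduce to showing $\mu_i(\Atau,\Wtau)\simeq\ASsigma$ --- is a sound reformulation, equivalent via Theorem \ref{thm:SP-mutation-is-involution} to the paper's reduction to $\mu_l\ASsigma\simeq\AWtau$. The gap is in the mechanism you propose for the remaining step. You assert that $\pi_{i,j}^\tau$ ``fixes $\triangle_i(\Atau)$'' and cite the argument of Proposition \ref{prop:pop-commutes-with-mutation}, but that proposition is stated and proved only for $k\in\tau\setminus\{i,j\}$, and the restriction is essential: the swap $i\leftrightarrow j$ carries arrows incident to $i$ to arrows incident to $j$, so it carries the summands of $\triangle_i(\Atau)$ (indexed by pairs of arrows through $i$) to cycles through $j$, not back to $\triangle_i(\Atau)$. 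More seriously, the swap does not induce a quiver automorphism of $\widetilde\mu_i(\Qtau)$ at all, because in $\widetilde\mu_i(\Qtau)$ the arrows incident to $i$ are reversed while those incident to $j$ are not; so there is no ``$\pi^\sigma_{i,j}$'' acting on $\RA{\widetilde\mu_i(\Atau)}$ in the sense you need. The correct commutation relation is $\widetilde\mu_i\circ\pi_{i,j}^\tau=\psi\circ\widetilde\mu_j$, with $\psi:\RA{\widetilde\mu_j(\Atau)}\to\RA{\widetilde\mu_i(\Atau)}$ the isomorphism induced by the swap.

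Using the corrected relation instead, you would compare $\mu_i(\Atau,\Wtau)$ with $\psi$ applied to $\mu_j(\Atau,S(\tau,\mathbf{y}))$, and Theorem \ref{thm:ideal-non-pending-flips<->SP-mutation} (applied to the non-pending, non-folded arc $j$) identifies the latter with $(A(f_j(\tau)),S(f_j(\tau),\mathbf{y}))$. But $f_j(\tau)\neq\sigma=f_i(\tau)$, so you would still need to match $\psi$ applied to $(A(f_j(\tau)),S(f_j(\tau),\mathbf{y}))$ with $\ASsigma$, reconciling $\sigma^\circ$ with $f_j(\tau)$ and checking that the factor $\epsilon_\sigma(q)=-1$ in $\Ssigma$ cancels correctly against the $(-1)^{\delta_{p,q}}$ twist hidden in $\mathbf{y}$. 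That bookkeeping is exactly the combinatorial content of the theorem and cannot be outsourced to Theorem \ref{thm:ideal-non-pending-flips<->SP-mutation}. Indeed, after its reduction the paper makes no further appeal to Theorem \ref{thm:ideal-non-pending-flips<->SP-mutation}: it computes $\mu_l\ASsigma$ directly in the four possible configurations of the triangle $\triangle'$ adjacent to the self-folded triangle and exhibits an explicit right-equivalence to $\AWtau$ in each one (for instance $\gamma^*\mapsto\gamma^*-\gamma^*v$ in Case~3). Your proposal would need a comparable explicit computation, not a citation.
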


\begin{proof}Let $l$ be the unique tagged arc such that $\sigma=(\tau\setminus\{i\})\cup\{l\}$. Since SP-mutations are involutive up to right-equivalence, to prove Theorem \ref{thm:leaving-positive-stratum} it suffices to show that $\mu_l\ASsigma$ and $(A(\tau),S(\tau,\mathbf{x}))$ are right-equivalent.

Let $j$ be the arc in $\tau$ that together with $i$ forms a self-folded triangle. By Theorem \ref{thm:popping-is-right-equiv}, the SPs $\AStau$ and $\AWtau$ are right-equivalent.  We shall show that $(A(\tau),\Wtau)$ is right-equivalent to $\mu_{l}\ASsigma$.

Let $\triangle$ be the self-folded triangle formed by $i$ and $j$, and let $\triangle'$ be the unique triangle of $\tau$ which is different from $\triangle$ and contains $j$. Then $\triangle'$ contains exactly two arcs $k$ and $k'$ different from $j$, see Figure \ref{Fig:pos_stratum_abandoned_notation}. Note that $\triangle'$ may be an orbifolded triangle, but it cannot be a self-folded triangle nor an orbifolded triangle with exactly two orbifold points.
% BASIC NOTATION FOR POSITIVE STRATUM ABANDONED
        \begin{figure}[!ht]
                \centering
                \includegraphics[scale=.4]{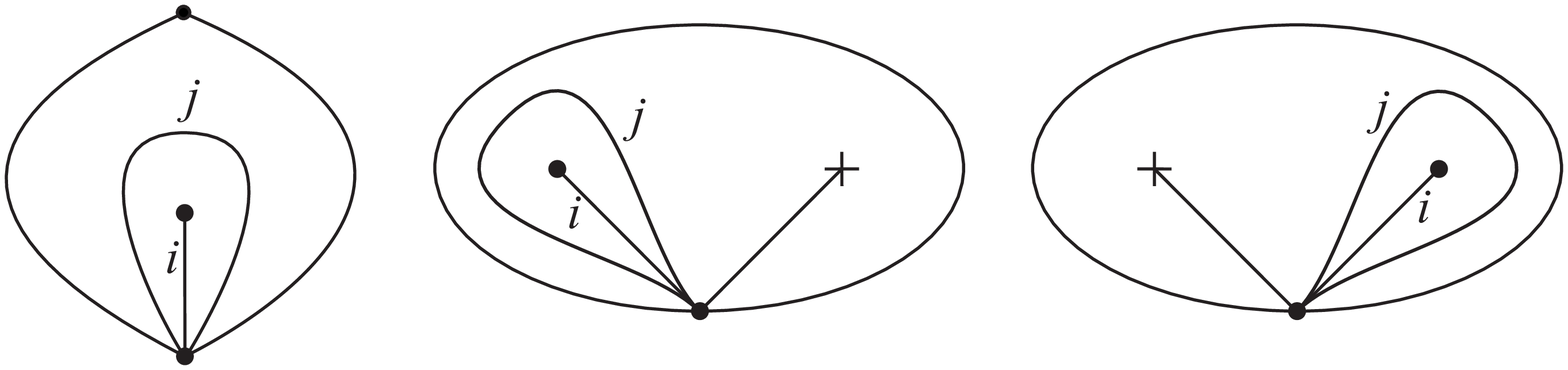}
                \caption{}\label{Fig:pos_stratum_abandoned_notation}
        \end{figure}
We have 4 possibilities:
\begin{enumerate}
\item Either $\triangle'$ is not orbifolded, and neither of $k$ and $k'$ is a loop enclosing a self-folded triangle of $\tau$; or
\item $\triangle'$ is not orbifolded, and exactly one of $k$ and $k'$ is a loop enclosing a self-folded triangle of $\tau$; or
\item $\triangle'$ is orbifolded, and $\Qtau$ has an arrow going from the (unique) pending arc contained in $\triangle'$ to $i$; or
\item $\triangle'$ is orbifolded, and $\Qtau$ has an arrow going from $i$ to the (unique) pending arc contained in $\triangle'$.
\end{enumerate}

Note that $k$ and $k'$ cannot both happen to be loops enclosing self-folded triangles, for otherwise $\surf$ would be a sphere with exactly 4 punctures (situation that we have disallowed all throughout the paper, see Definition \ref{def:surf-with-orb-points}). We will prove that $(A(\tau),\Wtau)$ is right-equivalent to $\mu_{l}\ASsigma$ in the 4 cases that we have just listed as possibilities.

\begin{case}
If $\triangle'$ is not orbifolded, and neither of $k$ and $k'$ is a loop enclosing a self-folded triangle of $\tau$, then the computation made in \cite[Proof of Proposition 7.1]{Labardini-potsnoboundaryrevised} can be applied here as is in order to show $(A(\tau),\Wtau)$ and $\mu_{l}\ASsigma$ are right-equivalent.
\end{case}

\begin{case} If $\triangle'$ is not orbifolded, and exactly one of $k$ and $k'$ is a loop enclosing a self-folded triangle of $\tau$, then the computation made in \cite[Proof of Proposition 7.1]{Labardini-potsnoboundaryrevised} can be applied here as is in order to show $(A(\tau),\Wtau)$ and $\mu_{l}\ASsigma$ are right-equivalent.
\end{case}

\begin{case}\label{case:pos-strat-abandoned-orb-triang-adjacent} Suppose $\triangle'$ is orbifolded, and $\Qtau$ has an arrow going from the (unique) pending arc contained in $\triangle'$ to $i$. Without loss of generality, we can assume that the unique pending arc contained in $\triangle'$ is $k$. Then, the configurations that $\sigma$ and $\tau$ respectively present around $l$ and $i$ are the ones shown in Figure \ref{Fig:pos_stratum_abandoned_orb_1},
% POSITIVE STRATUM ABANDONED, ORBIFOLDED TRIANGLE ADJACENT TO GIVEN SELF-FOLDED TRIANGLE
        \begin{figure}[!ht]
                \centering
                \includegraphics[scale=.5]{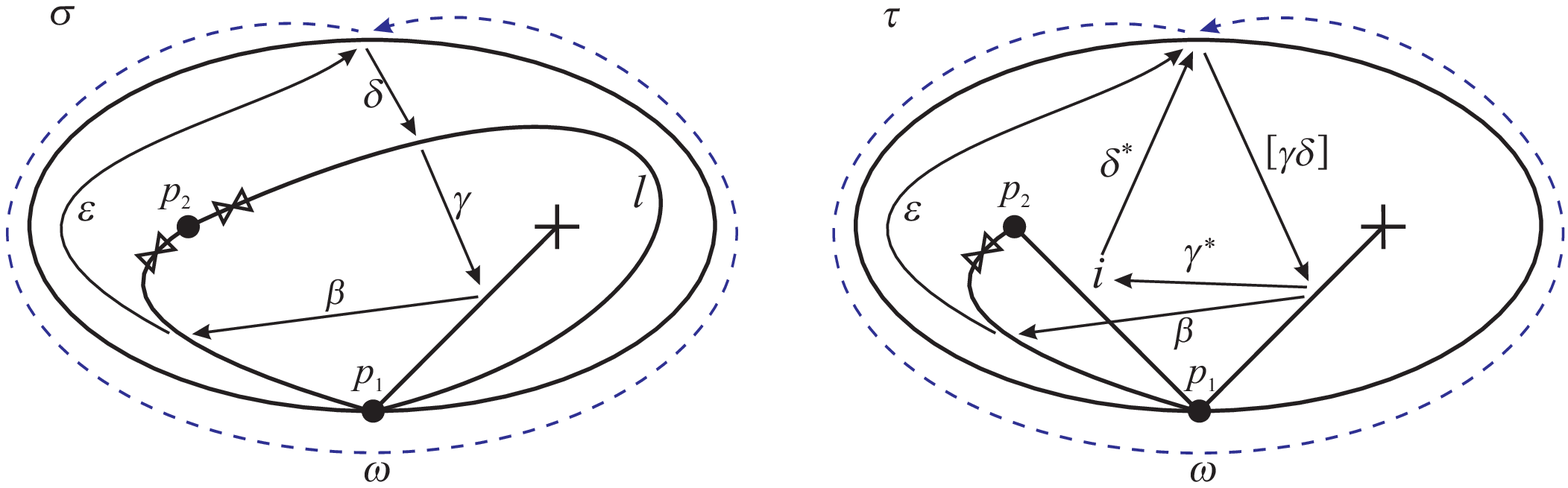}
                \caption{Left: $\sigma$. Right: $\tau$.}\label{Fig:pos_stratum_abandoned_orb_1}
        \end{figure}
and the potentials $\Ssigma$ and $\Wtau$ are given by
\begin{eqnarray}\nonumber
\Ssigma & = & x_{p_1}\omega\varepsilon\beta\gamma\delta + x_{p_2}^{-1}\delta\varepsilon(\beta\gamma-\beta v\gamma) + S'(\sigma) \ \ \ \text{and}\\
\nonumber
\Wtau & = & \gamma^*[\gamma\delta]\delta^*
-\gamma^* v[\gamma\delta]\delta^*
-x_{p_2}^{-1}\beta v[\gamma\delta]\varepsilon+x_{p_2}^{-1}\beta[\gamma\delta]\varepsilon
+x_{p_1}\omega\varepsilon\beta[\gamma\delta]
+S'(\sigma),
\end{eqnarray}
where $\omega$ and $S'(\sigma)$ are
potentials that can be written as an $F$-linear combination of paths not involving any of the arrows $\beta$, $\gamma$, $\delta$ and $\varepsilon$.
Furthermore, $\mu_l(\Qsigma,\dtuple(\sigma))=\widetilde{\mu}_l(\Qsigma,\dtuple(\sigma))=(Q(\tau),\dtuple(\tau))$  (with the vertex $i\in Q(\tau)_0$ replacing the vertex $l\in\Qsigma_0$), which means that $\mu_l(\Qsigma,\dtuple(\sigma))$ is 2-acyclic and that hence we can unambiguously write $\mu_l(\Asigma)$ to denote the underlying species of $\mu_l\ASsigma$. At the level of potentials, we have
$$
\mu_l(\Ssigma)=\widetilde{\mu}_l(\Ssigma)= x_{p_1}\omega\varepsilon\beta[\gamma\delta] + x_{p_2}^{-1}\varepsilon\beta[\gamma\delta] -x_{p_2}^{-1}\varepsilon\beta v[\gamma\delta]+S'(\sigma)+\gamma^*[\gamma\delta]\delta^*.
$$
Therefore, the $R$-algebra isomorphism $\varphi:\RA{\mu_l(\Asigma)}\rightarrow\RA{A(\tau)}$ given by $\gamma^*\mapsto \gamma^*-\gamma^*v$ and the identity on the rest of the arrows of $\mu_l(\Qsigma,\dtuple(\sigma))$ is a right-equivalence $\mu_l\ASsigma\rightarrow(A(\tau),\Wtau)$.
\end{case}

\begin{case} Suppose $\triangle'$ is orbifolded, and $\Qtau$ has an arrow going from $i$ to the (unique) pending arc contained in $\triangle'$. Without loss of generality, we can assume that the unique pending arc contained in $\triangle'$ is $k$. Then, the configurations that $\sigma$ and $\tau$ respectively present around $l$ and $i$ are the ones shown in Figure \ref{Fig:pos_stratum_abandoned_orb_2}.
% POSITIVE STRATUM ABANDONED, ORBIFOLDED TRIANGLE ADJACENT TO GIVEN SELF-FOLDED TRIANGLE
        \begin{figure}[!ht]
                \centering
                \includegraphics[scale=.5]{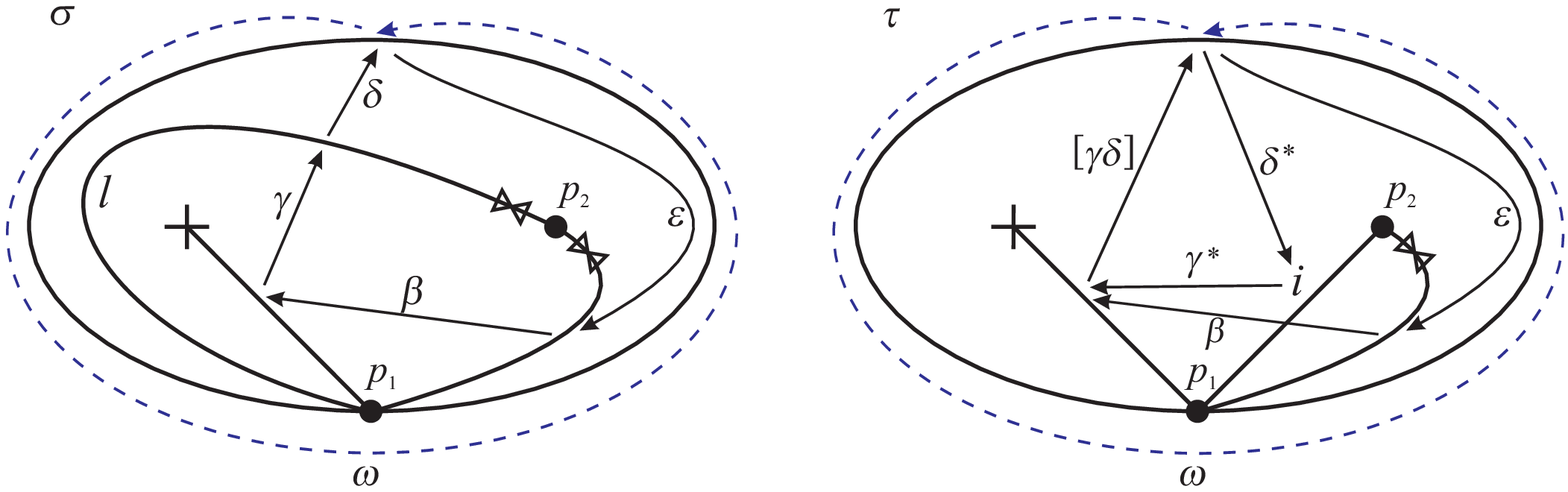}
                \caption{Left: $\sigma$. Right: $\tau$.}\label{Fig:pos_stratum_abandoned_orb_2}
        \end{figure}
The calculation showing that $\mu_l\ASsigma$ and $(A(\tau),\Wtau)$ are right-equivalent is very similar to the calculation made in Case \ref{case:pos-strat-abandoned-orb-triang-adjacent}.
\end{case}

We have thus proved that $\mu_l(\Asigma,\Ssigma)$ and $(A(\tau),\Wtau)$ are right-equivalent in all cases. Since $(A(\tau),\Wtau)$ is right-equivalent to $(A(\tau),S(\tau,\mathbf{x}))$ by Theorem \ref{thm:popping-is-right-equiv}, and since the composition of right equivalences is again a right equivalence, it follows that $\mu_l(\Asigma,\Ssigma)$ is right-equivalent to $(A(\tau),S(\tau,\mathbf{x}))$.
\end{proof}

\subsection{Flip of a pending arc of an ideal triangulation}

\begin{thm}\label{thm:flip-pending-arc<->SP-mut} Let $\surf$ be a surface with empty boundary. Let $\tau$ be any ideal triangulation of $\surf$ and $k$ be any pending arc of $\tau$. The SP $\mu_k\AStau$ is right-equivalent to the SP $(A(\sigma),S(\sigma,\zeta_{q,\sigma}\mathbf{x}))$, where $\sigma=f_k(\tau)$ and $q$ is the unique orbifold point lying on $k$.
\end{thm}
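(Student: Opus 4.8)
We outline a proof that mimics the proof of Theorem~\ref{thm:leaving-positive-stratum}, with the popped potential at the orbifold point $q$ in the role played there by the popped potential at a self-folded triangle. Here $k$ is the unique arc of $\tau$ containing $q$, so $\dtuple(\tau)_k=2$; write $k'=f_k(k)$ for the new pending arc of $\sigma$ (it again contains $q$), and let $p$ be the puncture endpoint of $k'$, which is a puncture because $\partial\Sigma=\varnothing$. Recall from Definition~\ref{def:orb-popped-potential} the sign functions $\zeta_{q,\tau},\zeta_{q,\sigma}:\punct\to\{1,-1\}$ and the (non-$R$-linear) ring automorphisms $\Psi_k^\tau,\Psi_{k'}^\sigma$, so that $\Vtauq=\Psi_k^\tau\!\left(S(\tau,\zeta_{q,\tau}\mathbf{x})\right)$ and $V^q(\sigma,\mathbf{z})=\Psi_{k'}^\sigma(S(\sigma,\zeta_{q,\sigma}\mathbf{z}))$. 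Since the scalar tuple in Theorem~\ref{thm:orbi-pop} is arbitrary, that theorem applied to $\tau$ gives $\AStau\simeq\AVtauq$, hence $\mu_k\AStau\simeq\mu_k\AVtauq$ by Theorem~\ref{thm:SP-mut-well-defined-up-to-re}; applied to $\sigma$ with the tuple $\zeta_{q,\sigma}\mathbf{x}$, and using $\zeta_{q,\sigma}\zeta_{q,\sigma}=\mathbf{1}$, it gives
$$
(A(\sigma),S(\sigma,\zeta_{q,\sigma}\mathbf{x}))\ \simeq\ (A(\sigma),V^q(\sigma,\zeta_{q,\sigma}\mathbf{x}))\ =\ \bigl(A(\sigma),\Psi_{k'}^\sigma(S(\sigma,\mathbf{x}))\bigr).
$$
So it suffices to prove $\mu_k\bigl(A(\tau),\Psi_k^\tau(S(\tau,\zeta_{q,\tau}\mathbf{x}))\bigr)\simeq\bigl(A(\sigma),\Psi_{k'}^\sigma(S(\sigma,\mathbf{x}))\bigr)$.

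Next I would establish this reduced statement by an explicit local computation, as in the proofs of Theorems~\ref{thm:ideal-non-pending-flips<->SP-mutation} and~\ref{thm:leaving-positive-stratum}. The arc $k$ lies in a unique orbifolded triangle $\triangle$ of $\tau$, which has either one orbifold point (so $\triangle$ is enclosed by a digon) or two orbifold points (so $\triangle$ is enclosed by a loop and $k$ is one of its two pending arcs, both based at the same marked point). The flip of $k$ is supported in $\triangle$ together with the triangle(s) adjacent to the enclosing digon or loop, which yields a short list of configurations: in the one-orbifold-point case, according to whether the adjacent triangle is non-orbifolded and not adjacent to any self-folded triangle, is adjacent to a self-folded triangle (so its contribution to the potential is a $\widehat{U}^\triangle$-term as in Definition~\ref{def:cycles-from-self-folded-triangles}(3)), or is itself orbifolded; the two-orbifold-point case is essentially self-contained. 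For each configuration I would write out $(\unredQtau,\dtuple(\tau))$ and $(\widehat{Q}(\sigma),\dtuple(\sigma))$ near $k$, together with the modulating functions, the cyclic summands of $S(\tau,\zeta_{q,\tau}\mathbf{x})$ and $S(\sigma,\mathbf{x})$ arising from $\triangle$ and from the puncture $p$ (Definitions~\ref{def:cycles-from-int-triangles}--\ref{def:cycles-from-punctures}; the ones from $\triangle$ already involve the scalar $v\in E=F_k$), and their images under $\Psi_k^\tau$ and $\Psi_{k'}^\sigma$; then compute $\widetilde{\mu}_k(\Vtauq)$, split off the trivial quadratic part produced by $\triangle_k$ via the Splitting Theorem~\ref{thm:splitting-theorem}, and identify the reduced part with $\bigl(A(\sigma),\Psi_{k'}^\sigma(S(\sigma,\mathbf{x}))\bigr)$ by an explicit $R$-algebra isomorphism which is a ``change of arrows'' rescaling a few starred arrows by $\pm v$, exactly as in Case~\ref{case:pos-strat-abandoned-orb-triang-adjacent} of the proof of Theorem~\ref{thm:leaving-positive-stratum}.

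The main obstacle is the bookkeeping in this computation. Mutating at the weight-$2$ vertex $k$ genuinely invokes the twisted bimodules $F^\rho\otimes F$, the Galois-labelled arrows, and the reversed arrows of Definition~\ref{def:SP-premutation}, and one must track how the new quadratic term and the cyclic derivatives interact both with the occurrences of $v$ already present in the orbifold cycles and with the non-$R$-linear automorphisms $\Psi_k^\tau$ and $\Psi_{k'}^\sigma$. The very point of passing to the popped potentials is that these automorphisms ``pre-absorb'' the Galois discrepancy between $\tau$ and $\sigma$, so that after mutation the only residual difference is the sign of the scalar $x_p$, which is precisely the sign recorded by $\zeta_{q,\sigma}$. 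This requires care with the standing convention $v^2=-1$ from~\eqref{eq:v^2=-1} and with the convention $g_{\alpha^\triangle}=\myid_E$, $g_{\delta^\triangle}=\theta$ for an orbifolded triangle with two orbifold points (Definition~\ref{def:cycles-from-orb-triangles}(2)), since these govern the signs and labels that appear. Once every configuration has been checked, the theorem follows by combining the local identification with the two applications of Theorem~\ref{thm:orbi-pop} above.
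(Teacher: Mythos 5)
Your high-level strategy matches the paper's: pop on the side of $\sigma$ via Theorem~\ref{thm:orbi-pop} to replace $(A(\sigma),S(\sigma,\zeta_{q,\sigma}\mathbf{x}))$ by $(A(\sigma),V^q(\sigma,\zeta_{q,\sigma}\mathbf{x}))=(A(\sigma),\Psi_{k'}^\sigma(S(\sigma,\mathbf{x})))$, then verify the reduced claim by a case-by-case local computation on the orbifolded triangle containing $q$, exhibiting explicit $R$-algebra automorphisms that split off the trivial quadratic part via Theorem~\ref{thm:splitting-theorem}. However, you insert an extra step the paper does not take and which does not help: applying Theorem~\ref{thm:orbi-pop} to $\tau$ as well and mutating $\Vtauq$ rather than $\Stau$. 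The two reduced problems are logically equivalent (since $\AStau\simeq\AVtauq$ and Theorem~\ref{thm:SP-mut-well-defined-up-to-re}), so your reduction is valid, but it complicates rather than simplifies the local computation: $\Vtauq=\Psi_k^\tau(S(\tau,\zeta_{q,\tau}\mathbf{x}))$ carries both the Galois twist by $\theta$ at the vertex $k$ being mutated and a sign change on $x_p$, whereas $\Stau=S(\tau,\mathbf{x})$ carries neither. The paper keeps the bookkeeping on one side only: it compares $\widetilde{\mu}_k(\Stau)$ (unpopped, unsigned) directly to $V^q(\sigma,\zeta_{q,\sigma}\mathbf{x})=\Psi_{k'}^\sigma(S(\sigma,\mathbf{x}))$ (popped but with the $\zeta_{q,\sigma}^2$ cancellation), which is the cleanest way to isolate the twist. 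Relatedly, your rationale that the pops "pre-absorb the Galois discrepancy ... so that after mutation the only residual difference is the sign of the scalar $x_p$" is a bit off: in the actual argument the sign change does not survive to the post-mutation comparison at all, it is entirely carried by the single application of Theorem~\ref{thm:orbi-pop} on the $\sigma$ side.

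On the case enumeration, you collapse two distinct configurations and add a superfluous one. The paper's Cases~2 and~3 must be treated separately according to whether the arrow touching the self-folded triangle goes into or out of $k$, since the resulting $\widehat{U}^\triangle$-term sits on opposite sides of $k$ and the automorphisms rescaling $\alpha^*$, $\delta^*$ (or their analogues) differ. Conversely, "the adjacent triangle is itself orbifolded" is not a separate case: the arrows between the other pending arc and the digon sides do not touch $k$, so they are absorbed into the generic tails $\rho$, $\lambda$, $S'(\tau)$ of Case~1 and need no special treatment. Likewise the two-orbifold-point case splits into two (head vs.\ tail of arrows at $k$), one of which follows from the other by involutivity. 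Finally, what you have written is an outline; the substance of the paper's proof is the five explicit case computations with their $R$-algebra automorphisms (in particular the coefficient manipulations exploiting $v^2=-1$ and the $\{\myid_E,\theta\}$-labelling), which you only describe in general terms.
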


\begin{proof}
By Theorem \ref{thm:orbi-pop}, we know that $(A(\sigma),S(\sigma,\zeta_{q,\sigma}\mathbf{x}))$ is right-equivalent to $(A(\sigma),V^q(\sigma,\zeta_{q,\sigma}\mathbf{x}))$. We will show that $\mu_k\AStau$ is right-equivalent to $(A(\sigma),V^q(\sigma,\zeta_{q,\sigma}\mathbf{x}))$.

Let $q$ be the unique orbifold point lying on $k$, and let $\triangle$ be the unique orbifolded triangle containing $q$. We have the following possibilities:
\begin{enumerate}
\item Either $\triangle$ contains exactly one orbifold point, and $\triangle$ does not share sides with any self-folded triangle; or
\item $\triangle$ contains exactly one orbifold point, and the arc in $\triangle$ from which an arrow goes to $k$ is contained in a self-folded triangle; or
\item $\triangle$ contains exactly one orbifold point, and the arc in $\triangle$ to which an arrow goes from $k$ is contained in a self-folded triangle; or
\item $\triangle$ contains exactly two orbifold points, and $k$ is the tail of exactly two arrows of $\Qtau$; or
\item $\triangle$ contains exactly two orbifold points, and $k$ is the head of exactly two arrows of $\Qtau$.
\end{enumerate}

\setcounter{case}{0}
\begin{case} Suppose $\triangle$ contains exactly one orbifold point, and $\triangle$ does not share sides with any self-folded triangle. Then $\triangle$ is either homeomorphic to the puzzle piece IV from Figure \ref{Fig:puzzle_pieces_2}, or it can be obtained from it by identifying the two marked points in the boundary of the latter. These two possibilities have been taken into account in Figure \ref{Fig:flip_mut_orb_triangle_1} (the figure consists of two parts, the left corresponds to the situation where $\triangle$ is homeomorphic to the puzzle piece IV, and the right corresponds to the situation where $\triangle$ is obtained from the puzzle piece IV by identifying the two marked points on the boundary of the latter; in each of them we have drawn both $\triangle$ and the configuration obtained from it by applying the flip of $k$).
% ONE ORBIFOLD POINT, NOT ADJACENT TO SELF-FOLDED TRIANGLES
        \begin{figure}[!ht]
                ~\\
                \centering
                \includegraphics[scale=.5]{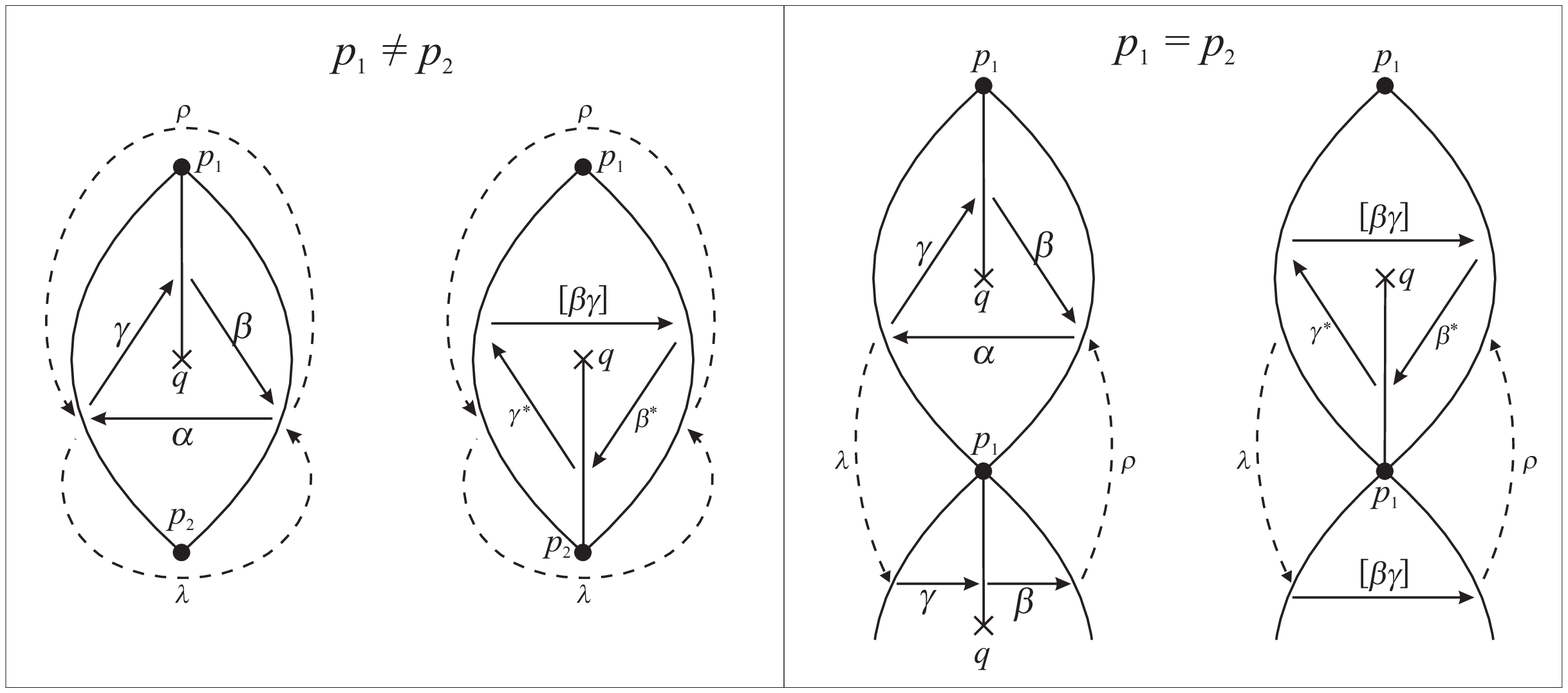}
                \caption{}\label{Fig:flip_mut_orb_triangle_1}
        \end{figure}
With the notation of Figure \ref{Fig:flip_mut_orb_triangle_1}, we have
\begin{eqnarray*}
\Stau &=& \alpha\beta\gamma-\alpha\beta v\gamma+X(x_{p_1}\rho\beta\gamma+x_{p_2}\alpha\lambda)+Yx_{p_1}\alpha\rho\beta\gamma\lambda+S'(\tau)\\
\widetilde{\mu}_k(\Stau) &=& \alpha[\beta\gamma]-\alpha[\beta v\gamma]+X(x_{p_1}\rho[\beta\gamma]+x_{p_2}\alpha\lambda)+Yx_{p_1}\alpha\rho[\beta\gamma]\lambda+S'(\tau)+\frac{1}{2}\left(\gamma^*\beta^*[\beta\gamma]-\gamma^*v\beta^*[\beta v\gamma]\right)\\
V^q(\sigma,\zeta_{q,\sigma}\mathbf{x}) &=& \gamma^*\beta^*[\beta\gamma]+\gamma^*v\beta^*[\beta\gamma]+X(x_{p_1}\rho[\beta\gamma]+x_{p_2}\gamma^*\beta^*\lambda)
+Yx_{p_1}\gamma^*\beta^*\rho[\beta\gamma]\lambda+S'(\tau),
\end{eqnarray*}
where $X=1$ and $Y=0$ if $\triangle$ is homeomorphic to puzzle piece IV, and $X=0$ and $Y=1$ if $\triangle$ is obtained from puzzle piece IV by identifying the two marked points on the boundary of the latter.

Define $R$-algebra automorphisms $\varphi_0,\varphi_1,\varphi_2,\varphi_3:\RA{\widetilde{\mu}_k(\Atau)}\rightarrow\RA{\widetilde{\mu}_k(\Atau)}$ by means of the rules
\begin{center}
\begin{tabular}{ccll}
$\varphi_0$ &:& $\beta^*\mapsto 2\beta^*$ & \\
$\varphi_1$ &:& $[\beta v\gamma]\mapsto -[\beta v\gamma]+[\beta\gamma]$ & \\
$\varphi_2$ &:& $\alpha\mapsto\alpha-\gamma^*v\beta^*$, & $[\beta v\gamma]\mapsto[\beta v\gamma]-Xx_{p_2}\lambda-Yx_{p_1}\rho[\beta\gamma]\lambda$\\
$\varphi_3$ &:& $\beta^*\mapsto v\beta^*$. &
\end{tabular}
\end{center}
%\begin{eqnarray*}
%\varphi_0 &:& \beta^*\mapsto 2\beta^*\\
%\varphi_1 &:& [\beta v\gamma]\mapsto -[\beta v\gamma]+[\beta\gamma]\\
%\varphi_2 &:& \alpha\mapsto\alpha-\gamma^*v\beta^*, \ \ \ \ \ [\beta v\gamma]\mapsto[\beta v\gamma]-Xx_{p_2}\lambda-Yx_{p_1}\rho[\beta\gamma]\lambda\\
%\varphi_3 &:& \beta^*\mapsto v\beta^*
%\end{eqnarray*}
Direct computation shows that $\varphi_3\varphi_2\varphi_1\varphi_0(\widetilde{\mu}_k(\Stau)) = \alpha[\beta v\gamma]+V^q(\sigma,\zeta_{q,\sigma}\mathbf{x})$, and this implies that $\mu_k\AStau$ is right-equivalent to $(A(\sigma),V^q(\sigma,\zeta_{q,\sigma}\mathbf{x}))$.
\end{case}

\begin{case}\label{case:mut-pend-arc-orb-&-sf-triangles} Suppose $\triangle$ contains exactly one orbifold point, and the arc in $\triangle$ from which an arrow goes to $k$ is adjacent to a self-folded triangle.
% ONE ORBIFOLD POINT, k BEING ON THE RIGHT OF A SELF-FOLDED TRIANGLE
\begin{figure}[!ht]
                \centering
                \includegraphics[scale=.5]{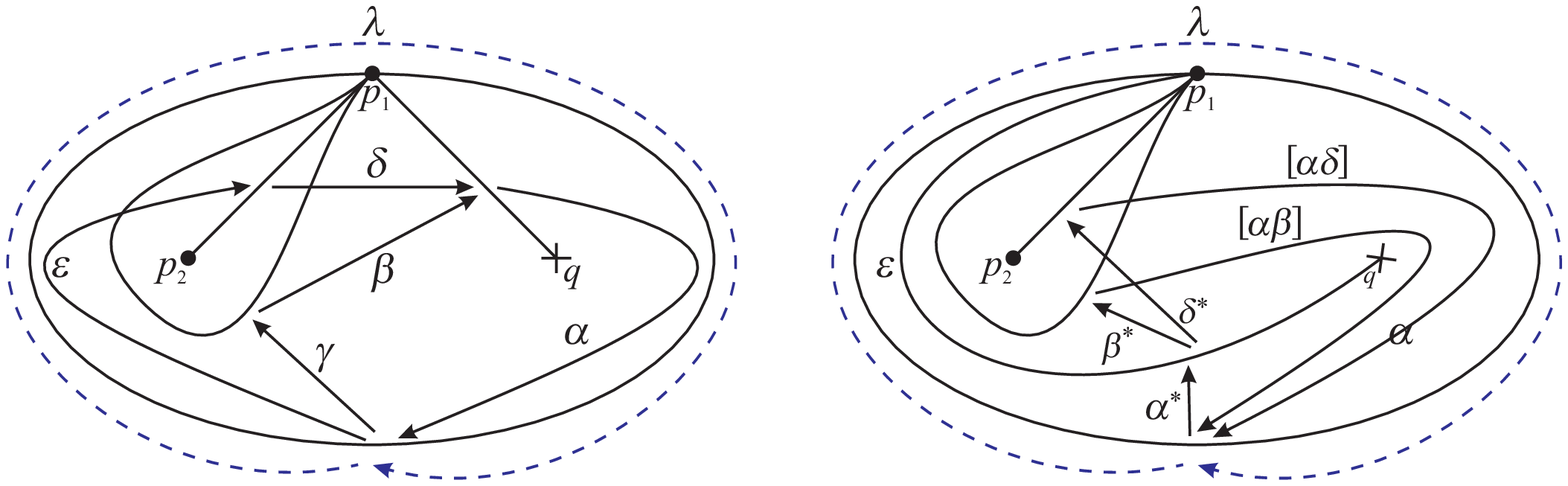}
                \caption{}\label{Fig:flip_mut_orb_triangle_3}
        \end{figure}
With the notation of Figure \ref{Fig:flip_mut_orb_triangle_3} we then have
\begin{eqnarray*}
\Stau &=& \alpha\beta\gamma-\alpha v\beta\gamma-x_{p_2}^{-1}\alpha\delta\varepsilon+ x_{p_2}^{-1}\alpha v\delta\varepsilon+x_{p_1}\alpha\delta\varepsilon\lambda+S'(\tau),\\
\widetilde{\mu}_k(\Stau) &=& [\alpha\beta]\gamma-[\alpha v\beta]\gamma-x_{p_2}^{-1}[\alpha\delta]\varepsilon+ x_{p_2}^{-1}[\alpha v\delta]\varepsilon+x_{p_1}[\alpha\delta]\varepsilon\lambda+S'(\tau)\\
&&
+\frac{1}{2}\left(\beta^*\alpha^*[\alpha\beta]-\beta^*v\alpha^*[\alpha v\beta]+\delta^*\alpha^*[\alpha\delta]-\delta^*v\alpha^*[\alpha v\delta]\right),\\
V^q(\sigma,\zeta_{q,\sigma}\mathbf{x}) &=& [\alpha\beta]\beta^*\alpha^*+[\alpha\beta]\beta^*v\alpha^*-x_{p_2}^{-1}[\alpha\delta]\delta^*\alpha^*-x_{p_2}^{-1}[\alpha\delta]\delta^*v\alpha^*
+x_{p_1}[\alpha\delta]\delta^*\alpha^*\lambda+S'(\tau).
\end{eqnarray*}
Define $R$-algebra automorphisms $\varphi_0,\varphi_1,\varphi_2,\varphi_3:\RA{\widetilde{\mu}_k(\Atau)}\rightarrow\RA{\widetilde{\mu}_k(\Atau)}$ by means of the rules
\begin{center}
\begin{tabular}{cclll}
$\varphi_0$ &:& $\alpha^*\mapsto 2\alpha^*$, & &\\
$\varphi_1$ &:& $[\alpha v\beta]\mapsto-[\alpha v\beta]+[\alpha\beta]$, & $[\alpha v\delta]\mapsto[\alpha v\delta]+[\alpha\delta]$, &\\
$\varphi_2$ &:& $\gamma\mapsto\gamma-\beta^*v\alpha^*$, & $[\alpha v\delta]\mapsto[\alpha v\delta]-x_{p_1}x_{p_2}\lambda[\alpha\delta]$, & $\varepsilon\mapsto\varepsilon+x_{p_2}\delta^*v\alpha^*$,\\
$\varphi_3$ &:& $\alpha^*\mapsto v\alpha^*$, & $\delta^*\mapsto-x_{p_2}^{-1}\delta^*$. &
\end{tabular}
\end{center}
%\begin{eqnarray*}
%\varphi_0 &:& \alpha^*\mapsto 2\alpha^*\\
%\varphi_1 &:& [\alpha v\beta]\mapsto-[\alpha v\beta]+[\alpha\beta], \ \ \ \ \ [\alpha v\delta]\mapsto[\alpha v\delta]+[\alpha\delta],\\
%\varphi_2 &:& \gamma\mapsto\gamma-\beta^*v\alpha^*, \ \ \ \ \ [\alpha v\delta]\mapsto[\alpha v\delta]-x_{p_1}x_{p_2}\lambda[\alpha\delta],\ \ \ \ \ \varepsilon\mapsto\varepsilon+x_{p_2}\delta^*v\alpha^*,\\
%\varphi_3 &:& \alpha^*\mapsto v\alpha^*,\ \ \ \ \ \delta^*\mapsto-x_{p_2}^{-1}\delta^*.
%\end{eqnarray*}
Direct computation shows that $\varphi_3\varphi_2\varphi_1\varphi_0(\widetilde{\mu}_k(\Stau))=V^q(\sigma,\zeta_{q,\sigma}\mathbf{x})$.
\end{case}

\begin{case} The case where $\triangle$ contains exactly one orbifold point, and where the arc in $\triangle$ to which an arrow goes from $k$ is adjacent to a self-folded triangle, can be established in a way similar to Case \ref{case:mut-pend-arc-orb-&-sf-triangles} above.
\end{case}

\begin{case} Suppose $\triangle$ contains exactly two orbifold points, and $k$ is the tail of exactly two arrows of $\Qtau$.
% TWO ORBIFOLD POINTS, k BEING THE TAIL OF TWO ARROWS OF Q(\tau)
        \begin{figure}[!ht]
                \centering
                \includegraphics[scale=.5]{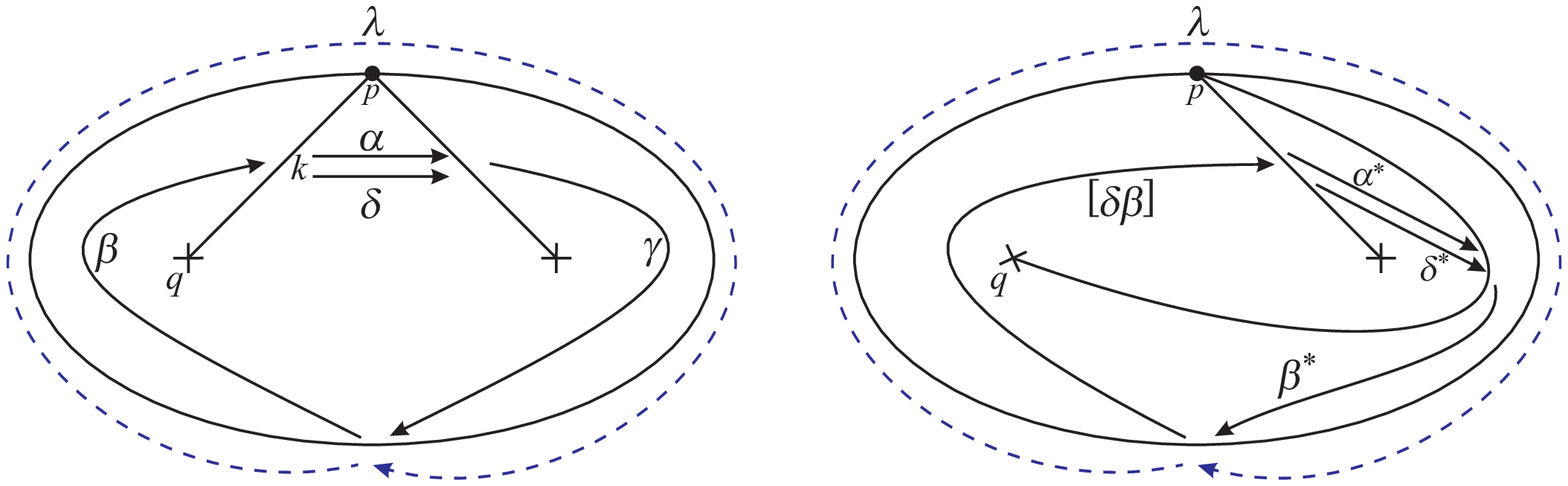}
                \caption{}\label{Fig:flip_mut_orb_triangle_2}
        \end{figure}
With the notation of Figure \ref{Fig:flip_mut_orb_triangle_2} we then have
\begin{eqnarray*}
\Stau &=& 2v^{-1}\alpha\beta\gamma+2\delta\beta\gamma+ x_p\alpha\beta\lambda\gamma+x_p\delta\beta\lambda\gamma+S'(\tau)\\
\widetilde{\mu}_k(\Stau) &=& 2v^{-1}[\alpha\beta]\gamma+2[\delta\beta]\gamma+ x_p[\alpha\beta]\lambda\gamma+x_p[\delta\beta]\lambda\gamma+S'(\tau)+[\alpha\beta]\beta^*\alpha^*+[\delta\beta]\beta^*\delta^*\\
V^q(\sigma,\zeta_{q,\sigma}\mathbf{x}) &=& 2[\delta\beta]\beta^*\alpha^*+2[\delta\beta]\beta^*v\delta^*+x_p[\delta\beta]\lambda\beta^*\alpha^*+x_p[\delta\beta]\lambda\beta^*\delta^*+S'(\tau),
\end{eqnarray*}
where $S'(\tau)$ is a potential not involving any of the arrows $\alpha$, $\beta$, $\gamma$ and $\delta$.
\end{case}
Define $R$-algebra automorphisms $\varphi_1,\varphi_2,\varphi_3,\varphi_4,\varphi_5:\RA{\widetilde{\mu}_k(\Atau)}\rightarrow\RA{\widetilde{\mu}_{k}(\Atau)}$ through the rules
\begin{center}
\begin{tabular}{ccll}
$\varphi_1$ &:& $[\delta\beta]\mapsto[\delta\beta]-[\alpha\beta]$, & \\
$\varphi_2$ &:& $[\alpha\beta]\mapsto\left(\frac{-1+v}{4}\right)[\alpha\beta]$, & \\
$\varphi_3$ &:& $[\alpha\beta]\mapsto[\alpha\beta]-2[\delta\beta]$, & \\
$\varphi_4$ &:& $[\alpha\beta]\mapsto[\alpha\beta]-x_p[\delta\beta]\lambda$, & $\gamma\mapsto\gamma-\beta^*\alpha^*\left(\frac{-1+v}{4}\right)+\beta^*\delta^*\left(\frac{-1+v}{4}\right)$,\\
$\varphi_5$ &:& $\alpha^*\mapsto\alpha^*\left(\frac{4}{1-v}\right)$, &  $\delta^*\mapsto\delta^*\left(\frac{4}{-1+v}\right)$.
\end{tabular}
\end{center}
%\begin{eqnarray*}
%\varphi_1 &:& [\delta\beta]\mapsto[\delta\beta]-[\alpha\beta],\\
%\varphi_2 &:& [\alpha\beta]\mapsto\left(\frac{-1+v}{4}\right)[\alpha\beta],\\
%\varphi_3 &:& [\alpha\beta]\mapsto[\alpha\beta]-2[\delta\beta],\\
%\varphi_4 &:& [\alpha\beta]\mapsto[\alpha\beta]-x_p[\delta\beta]\lambda,\\
%&& \gamma\mapsto\gamma-\beta^*\alpha^*\left(\frac{-1+v}{4}\right)+\beta^*\delta^*\left(\frac{-1+v}{4}\right),\\
%\varphi_5 &:& \alpha^*\mapsto\alpha^*\left(\frac{4}{1-v}\right),\\
%&& \delta^*\mapsto\delta^*\left(\frac{4}{-1+v}\right).
%\end{eqnarray*}
Direct computation shows that
$\varphi_5\varphi_4\varphi_3\varphi_2\varphi_1(\widetilde{\mu}_k(\Stau))\sim_{\operatorname{cyc}}[\alpha\beta]\gamma+V^q(\sigma,\zeta_{q,\sigma}\mathbf{x})$, and this means that $\mu_k\AStau$ is right-equivalent to $(A(\sigma),V^q(\sigma,\zeta_{q,\sigma}\mathbf{x}))$.

\begin{case} The case where $\triangle$ contains exactly two orbifold points, and $k$ is the head of exactly two arrows of $\Qtau$, can be deduced from the previous case using the fact that flips and SP-mutations are involutive operations (the latter up to right-equivalence). Alternatively, it can be treated directly, without any reference to the alluded involutions, in a way similar to the previous case.
\end{case}%
%
%Theorem \ref{thm:flip-pending-arc<->SP-mut} is proved.
\end{proof}

\subsection{Flip of an arbitrary arc of a tagged triangulation}

\begin{thm}\label{thm:tagged-flips<->SP-mutation} Let $\surf$ be a surface with empty boundary, and let $\mathbf{x}=(x_p)_{p\in\marked}$ be a choice of non-zero elements of the base field $F$. Let $\tau$ and $\sigma$ be tagged triangulations of $\surf$. If $\sigma$ is obtained from $\tau$ by the flip of a tagged arc $k\in\tau$, then the SPs $\mu_k(\Atau,\Stau)$ and $(\Asigma,S(\sigma,\mathbf{y}))$ are right-equivalent for some tuple $\mathbf{y}=(y_p)_{p\in\marked}$ of non-zero elements of $F$.
\end{thm}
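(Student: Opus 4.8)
The plan is to reduce the statement to the three cases already handled and then use the ``notch deletion'' technique of Fomin-Shapiro-Thurston to pass from ideal to tagged triangulations. More precisely, I would first recall Proposition~\ref{prop:tagfunction-and-circ}, which tells us that for a tagged triangulation $\tau$ the associated ideal triangulation $\tau^{\circ}$ and the weak signature $\epsilon_{\tau}$ completely reconstruct $\tau$ via $\tau = \tagfunction_{\epsilon_{\tau}}(\tau^{\circ})$, and moreover that a flip $\sigma = f_k(\tau)$ induces either an ordinary flip $\sigma^{\circ} = f_{k^{\circ}}(\tau^{\circ})$ when $\epsilon_{\tau} = \epsilon_{\sigma}$ (diagram~\eqref{eq:commutative-diagram-ordinary-flips}), or a flip of a folded side followed by a change of weak signature at exactly one puncture when $\epsilon_{\tau} \neq \epsilon_{\sigma}$ (diagram~\eqref{eq:commutative-diagram-tagged-flips}). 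Since the SP $\AStau$ is by Definition~\ref{def:SP-of-tagged-triangulation} defined as $\tagfunction_{\epsilon_{\tau}}$ applied to a potential built entirely from $\tau^{\circ}$, and since $\tagfunction_{\epsilon_{\tau}}$ is an $R$-$R$-bimodule isomorphism that commutes with the whole SP-mutation formalism (it just relabels vertices), the compatibility of flips with SP-mutations for tagged triangulations is \emph{equivalent} to the same compatibility for the underlying ideal triangulations, provided one keeps track of how the weak signature moves the scalars $\mathbf{x}$.

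With this translation in place, I would split into two cases according to whether $\epsilon_{\tau} = \epsilon_{\sigma}$. In the case $\epsilon_{\tau} = \epsilon_{\sigma}$, the flip $k^{\circ}$ of $\tau^{\circ}$ is an honest flip of an ideal triangulation. If $k^{\circ}$ is neither pending nor a folded side, Theorem~\ref{thm:ideal-non-pending-flips<->SP-mutation} applies directly and gives right-equivalence with $\mathbf{y} = \mathbf{x}$. If $k^{\circ}$ is a folded side, then actually $\sigma^{\circ}$ would not be an ideal triangulation in the naive sense; this is precisely the situation where $\epsilon$ would change, so under the hypothesis $\epsilon_{\tau} = \epsilon_{\sigma}$ this sub-case does not arise --- one has to be careful here and check that $k^{\circ}$ being a folded side forces $\epsilon$ to change. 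If $k^{\circ}$ is pending, I would invoke Theorem~\ref{thm:flip-pending-arc<->SP-mut}, which yields right-equivalence with $(A(\sigma^{\circ}), S(\sigma^{\circ}, \zeta_{q,\sigma^{\circ}}\mathbf{x}))$, and then transport along $\tagfunction_{\epsilon_{\sigma}}$; here $\mathbf{y}$ is obtained from $\mathbf{x}$ by flipping the sign at the puncture $q$ lying on $k^{\circ}$. In the case $\epsilon_{\tau} \neq \epsilon_{\sigma}$, Part~(6) of Proposition~\ref{prop:tagfunction-and-circ} tells us that $k^{\circ}$ is a folded side of $\tau^{\circ}$ incident to some puncture $q$; I would apply Theorem~\ref{thm:leaving-positive-stratum}, which says $\mu_{k^{\circ}}(A(\tau^{\circ}), S(\tau^{\circ}, \mathbf{x}))$ is right-equivalent to $\ASsigma$ where $\sigma$ here is the tagged triangulation obtained by flipping the folded side, and then identify this tagged triangulation with our $\sigma$ via the commutative diagram~\eqref{eq:commutative-diagram-tagged-flips} and Definition~\ref{def:SP-of-tagged-triangulation}; in this case one can take $\mathbf{y} = \mathbf{x}$.

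The bookkeeping step --- matching the scalar tuples $\mathbf{x}$ and $\mathbf{y}$ correctly through the three theorems and through the bimodule isomorphisms $\tagfunction_{\epsilon_{\tau}}$, $\tagfunction_{\epsilon_{\sigma}}$ --- is where the real care is needed, and I expect it to be the main obstacle. Concretely, one must verify that $\tagfunction_{\epsilon_{\tau}}(S(\tau^{\circ}, \mathbf{x}))$ and $S(\tau, \mathbf{x})$ really do agree (this is essentially the content of Definition~\ref{def:SP-of-tagged-triangulation}, but the sign conventions hidden in $\epsilon_{\tau}(p)\widehat{S}^p(\tau^{\circ}, \mathbf{x})$ interact with the sign changes coming from $\zeta_{q,\tau}$ in the popped potentials), and that when $\epsilon$ changes at a puncture $q$, the corresponding sign flip in the scalar is \emph{absorbed} by the change in $\epsilon$ rather than appearing on top of it. I would organize this as a short lemma stating that for tagged triangulations $\tau, \sigma$ related by a flip, there is a sign function $\zeta \colon \punct \to \{-1,1\}$, equal to $1$ or differing from $1$ at exactly one puncture, such that applying $\mu_k$ to the SP of $\tau$ (computed on the ideal level via the appropriate one of the three theorems, then transported) yields the SP of $\sigma$ built with scalars $\zeta\mathbf{x}$; the sign function is $\zeta = \epsilon_{\tau}\epsilon_{\sigma}$ adjusted by $\zeta_{q,\cdot}$ in the pending-arc case. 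Once the sign lemma is in hand, the theorem follows by combining it with Theorem~\ref{thm:SP-mut-well-defined-up-to-re} (to propagate right-equivalences through mutation) exactly as in the analogous argument of~\cite{Labardini-potsnoboundaryrevised}.
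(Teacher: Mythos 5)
Your proposal follows essentially the same route as the paper's proof: split according to whether $\epsilon_\tau = \epsilon_\sigma$, reduce to Theorems~\ref{thm:ideal-non-pending-flips<->SP-mutation}, \ref{thm:leaving-positive-stratum}, \ref{thm:flip-pending-arc<->SP-mut} on the ideal level, and transport along the vertex-relabeling isomorphism $\tagfunction_{\epsilon_\tau}$. The sign-bookkeeping you flag as the main obstacle is handled in the paper simply by working throughout with the twisted tuple $\mathbf{z}=(\epsilon_\tau(p)x_p)_{p\in\punct}$, so that $S(\tau^\circ,\mathbf{z})$ is exactly the untagged counterpart of $S(\tau,\mathbf{x})$; your instinct that a short sign lemma would suffice is therefore correct, and no further obstruction arises.
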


Before entering the proof, let us give a completely explicit description of the tuple $\mathbf{y}=(y_p)_{p\in\marked}$:
\begin{enumerate}
\item If $k$ is not a pending arc, then $\mathbf{y}=\mathbf{x}$, which means that $\mu_k(\Atau,\Stau)$ is right-equivalent to $\ASsigma$;
\item if $k$ is a pending arc, $k'$ is the (unique) arc in $\sigma$ such that $\sigma=(\tau\setminus\{k\})\cup\{k'\}$, and $p'$ is the unique marked point lying on $k'$, then $y_p=(-1)^{\delta_{p,p'}}x_p$ for all $p\in\marked$, where $\delta_{p,p'}$ is \emph{Kronecker's delta}.
\end{enumerate}

\begin{proof}[Proof of Theorem \ref{thm:tagged-flips<->SP-mutation}] We have two possibilities: the weak signatures $\epsilon_\tau$ and $\epsilon_\sigma$ either are equal, or they differ at exactly one puncture $p_0$. Let $\mathbf{z}=(\epsilon_\tau(p)x_{p})_{p\in\marked}$.

Suppose that $\epsilon_\tau$ and $\epsilon_\sigma$ differ at exactly one puncture $p_0$. Since flips and SP-mutations are involutive up to right-equivalence, without loss of generality we can assume that $\epsilon_\tau(p_0)=1=-\epsilon_\sigma(p_0)$. Then $k^\circ$ is a folded side of $\tau^\circ$ incident to $p_0$, and $\tagfunction_{\epsilon_\tau\epsilon_\sigma}(\sigma^\circ)=f_{k^\circ}(\tagfunction_{\mathbf{1}}(\tau^\circ))$. By Theorem \ref{thm:leaving-positive-stratum}, the SP $\mu_{k^\circ}(A(\tagfunction_{\mathbf{1}}(\tau^\circ)),S(\tagfunction_{\mathbf{1}}(\tau^\circ),\mathbf{z}))$ is right-equivalent to
$(A(\tagfunction_{\epsilon_\tau\epsilon_\sigma}(\sigma^\circ)),S(\tagfunction_{\epsilon_\tau\epsilon_\sigma}(\sigma^\circ),\mathbf{z}))$.
By the definition of the species $A(\sigma)$ and of the underlying species of $\mu_{k}(A(\tau),S(\tau,\mathbf{x}))$ on the one hand, and of the species $A(\tagfunction_{\epsilon_\tau\epsilon_\sigma}(\sigma^\circ))$ and of the underlying species of $\mu_{k^\circ}(A(\tagfunction_{\mathbf{1}}(\tau^\circ)),S(\tagfunction_{\mathbf{1}}(\tau^\circ),\mathbf{z}))$ on the other, we see that $A(\sigma)$ and the underlying species of $\mu_{k}(A(\tau),S(\tau,\mathbf{x}))$ can be respectively obtained from $A(\tagfunction_{\epsilon_\tau\epsilon_\sigma}(\sigma^\circ))$ and the underlying species of $\mu_{k^\circ}(A(\tagfunction_{\mathbf{1}}(\tau^\circ)),S(\tagfunction_{\mathbf{1}}(\tau^\circ),\mathbf{z}))$ using the same relabeling of the vertices of the quiver $Q(\tagfunction_{\epsilon_\tau\epsilon_\sigma}(\sigma^\circ))$. When performing this relabeling of vertices, it is clear that the potentials we obtain from $S(\tagfunction_{\epsilon_\tau\epsilon_\sigma}(\sigma^\circ),\mathbf{z})$ and the underlying potential of $\mu_{k^\circ}(A(\tagfunction_{\mathbf{1}}(\tau^\circ)),S(\tagfunction_{\mathbf{1}}(\tau^\circ),\mathbf{z}))$ are precisely $S(\sigma,\mathbf{x})$ and the underlying potential of $\mu_{k}(A(\tau),S(\tau,\mathbf{x}))$. Furthermore, the right-equivalence between $\mu_{k^\circ}(A(\tagfunction_{\mathbf{1}}(\tau^\circ)),S(\tagfunction_{\mathbf{1}}(\tau^\circ),\mathbf{z}))$ and
$(A(\tagfunction_{\epsilon_\tau\epsilon_\sigma}(\sigma^\circ)),S(\tagfunction_{\epsilon_\tau\epsilon_\sigma}(\sigma^\circ),\mathbf{z}))$ becomes a right-equivalence between $\mu_{k}(A(\tau),S(\tau,\mathbf{x}))$ and $(A(\sigma),S(\sigma,\mathbf{x}))$.

Suppose now that $\varepsilon_\tau=\varepsilon_\sigma$. Then $\sigma^\circ=f_{k^\circ}(\tau^\circ)$. We have two possible situations to consider, namely, either $k$ is a (tagged) pending arc, or it is not.

If $k$ is a (tagged) pending arc, let $q\in\orb$ be the unique orbifold point lying on $k$. Then Theorem \ref{thm:flip-pending-arc<->SP-mut} tells us that $\mu_{k^\circ}(A(\tau^\circ),S(\tau^\circ,\mathbf{z}))$ is right-equivalent to $(A(\sigma^\circ),S(\sigma^\circ,\zeta_{q,\sigma^\circ}\mathbf{z}))$.
By the definition of the species $A(\sigma)$ and of the underlying species of $\mu_{k}(A(\tau),S(\tau,\mathbf{x}))$ on the one hand, and of the species $A(\sigma^\circ)$ and of the underlying species of $\mu_{k^\circ}(A(\tau^\circ),S(\tau^\circ,\mathbf{z}))$ on the other, we see that $A(\sigma)$ and the underlying species of $\mu_{k}(A(\tau),S(\tau,\mathbf{x}))$ can be respectively obtained from $A(\sigma^\circ)$ and the underlying species of $\mu_{k^\circ}(A(\tau^\circ),S(\tau^\circ,\mathbf{z}))$ using the same relabeling of the vertices of the quiver $Q(\sigma^\circ)$. When performing this relabeling of vertices, it is clear that the potentials we obtain from $S(\sigma^\circ,\zeta_{q,\sigma^\circ}\mathbf{z})$ and the underlying potential of $\mu_{k^\circ}(A(\tau^\circ),S(\tau^\circ,\mathbf{z}))$ are precisely $S(\sigma,\zeta_{q,\sigma}\mathbf{x})$ and the underlying potential of $\mu_{k}(A(\tau),S(\tau,\mathbf{x}))$. Furthermore, the right-equivalence between $\mu_{k^\circ}(A(\tau^\circ),S(\tau^\circ,\mathbf{z}))$ and
$(A(\sigma^\circ),S(\sigma^\circ,\zeta_{q,\sigma^\circ}\mathbf{z}))$ becomes a right-equivalence between $\mu_{k}(A(\tau),S(\tau,\mathbf{x}))$ and $(A(\sigma),S(\sigma,\zeta_{q,\sigma}\mathbf{x}))$.

Finally, if $\varepsilon_\tau=\varepsilon_\sigma$ and $k$ is not a (tagged) pending arc, then Theorem \ref{thm:ideal-non-pending-flips<->SP-mutation} tells us that $\mu_{k^\circ}(A(\tau^\circ),S(\tau^\circ,\mathbf{z}))$ is right-equivalent to $(A(\sigma^\circ),S(\sigma^\circ,\mathbf{z}))$.
By the definition of the species $A(\sigma)$ and of the underlying species of $\mu_{k}(A(\tau),S(\tau,\mathbf{x}))$ on the one hand, and of the species $A(\sigma^\circ)$ and of the underlying species of $\mu_{k^\circ}(A(\tau^\circ),S(\tau^\circ,\mathbf{z}))$ on the other, we see that $A(\sigma)$ and the underlying species of $\mu_{k}(A(\tau),S(\tau,\mathbf{x}))$ can be respectively obtained from $A(\sigma^\circ)$ and the underlying species of $\mu_{k^\circ}(A(\tau^\circ),S(\tau^\circ,\mathbf{z}))$ using the same relabeling of the vertices of the quiver $Q(\sigma^\circ)$. When performing this relabeling of vertices, it is clear that the potentials we obtain from $S(\sigma^\circ,\mathbf{z})$ and the underlying potential of $\mu_{k^\circ}(A(\tau^\circ),S(\tau^\circ,\mathbf{z}))$ are precisely $S(\sigma,\mathbf{x})$ and the underlying potential of $\mu_{k}(A(\tau),S(\tau,\mathbf{x}))$. Furthermore, the right-equivalence between $\mu_{k^\circ}(A(\tau^\circ),S(\tau^\circ,\mathbf{z}))$ and
$(A(\sigma^\circ),S(\sigma^\circ,\mathbf{z}))$ becomes a right-equivalence between $\mu_{k}(A(\tau),S(\tau,\mathbf{x}))$ and $(A(\sigma),S(\sigma,\mathbf{x}))$.
%
%Theorem \ref{thm:tagged-flips<->SP-mutation} is proved.
\end{proof}

\section{From empty boundary to the general case}\label{sec:empty=>general}

In this section we show that Theorem \ref{thm:tagged-flips<->SP-mutation} still holds if we remove the assumption of $\Sigma$ having empty boundary. The strategy of proof consists in realizing the SPs associated to triangulations of surfaces with non-empty boundary as restrictions of SPs associated to triangulations of surfaces with empty boundary.

The notion of restriction of an SP to a non-empty subset of the vertex set of its underlying quiver is a straightforward generalization of {\cite[Definition 8.8]{DWZ1}}.

\begin{defi}
 \label{def:restriction}
 Let $(A,S)$ be an SP with underlying weighted quiver~$(Q,\dtuple)$ and modulating function~$g$, and let $I$ be a non-empty subset of the vertex set $Q_0$.
 The \emph{restriction} of $(A,S)$ to $I$ is the SP $(A,S)|_I = (A|_I,S|_I)$ with underlying weighted quiver~$(Q|_I,\dtuple)$ and modulating function~$g|_I$ where
 \begin{itemize}
  \item $Q|_I$ is the quiver obtained from $Q$ by deleting all arrows incident to vertices outside $I$;
  \item $g|_I$ is defined to be the function obtained by restricting $g$ to the arrow set of $Q|_I$;
  \item $S|_I$ is the image of $S$ under the $R$-algebra homomorphism $\psi_I: \RA{A} \to \RA{A|_I}$ defined on $a\in Q_1$ as $\psi_I(a) = a$ if both head and tail of $a$ belong to $I$, and as $\psi_I(a) = 0$ otherwise.
 \end{itemize}
\end{defi}

The next lemma is an adaptation of \cite[Lemma~6.8]{Labardini-potsnoboundaryrevised} to the SP setup. The proof is easy.

\begin{lemma}
 \label{lemma:res-re=re-res}
 Let $(A,S)$ and $(A,S')$ be SPs with underlying weighted quiver $(Q,\dtuple)$, and let $I$ be a non-empty subset of $Q_0$.
 If $\varphi : (A,S) \rightarrow (A,S')$ is a right-equivalence, then the $R$-algebra automorphism $\RA{A|_I} \rightarrow \RA{A|_I}$ given by $u \mapsto \psi_I\circ\varphi(u)$ defines a right-equivalence $(A|_I,S|_I) \to (A|_I,S'|_I)$.
 Moreover, if $k\in I$ and $Q$ has no $2$-cycles incident to $k$, then the SP $\mu_k(A,S)|_I$ is right-equivalent to $\mu_k((A,S)|_I)$.
\end{lemma}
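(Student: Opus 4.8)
\textbf{Proof plan for Lemma \ref{lemma:res-re=re-res}.}

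The plan is to verify the two assertions separately, both by unwinding the definitions of restriction (Definition \ref{def:restriction}), right-equivalence (Definition \ref{def:cyclic-stuff}), and SP-premutation (Definition \ref{def:SP-premutation}), and by appealing to the functoriality of the restriction homomorphism $\psi_I$. For the first statement, I would start from the given right-equivalence $\varphi\colon\RA{A}\to\RA{A}$ with $\varphi|_R=\myid_R$ and $\varphi(S)\sim_{\operatorname{cyc}}S'$, and consider the composite $\psi_I\circ\varphi\colon\RA{A}\to\RA{A|_I}$. The key observation is that $\psi_I\circ\varphi$ kills all arrows incident to vertices outside $I$ (since $\varphi$ preserves the span $\maxid$ and $\psi_I$ annihilates any path meeting $Q_0\setminus I$), hence it factors through $\psi_I$ to give an $R$-algebra homomorphism $\overline\varphi\colon\RA{A|_I}\to\RA{A|_I}$, i.e.\ $\overline\varphi\circ\psi_I=\psi_I\circ\varphi$. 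One then checks $\overline\varphi$ is an isomorphism: its degree-$1$ component is $\psi_I\circ\varphi^{(1)}|_{A|_I}$, which is an $R$-$R$-bimodule isomorphism of $A|_I$ because $\varphi^{(1)}$ restricts to an isomorphism on the sub-bimodule of $A$ spanned by arrows internal to $I$; then Proposition \ref{prop:automorphisms} (invertibility criterion) applies. Finally, applying $\psi_I$ to $\varphi(S)\sim_{\operatorname{cyc}}S'$ and using that $\psi_I$ sends cyclically equivalent potentials to cyclically equivalent potentials (it is $F$-linear, continuous, and sends the generators $\omega_0a_1\cdots a_\ell\omega_\ell-\omega_1a_2\cdots a_\ell\omega_\ell\omega_0a_1$ of cyclical equivalence either to zero or to a generator of the same type in $\RA{A|_I}$), one gets $\overline\varphi(S|_I)=\psi_I(\varphi(S))\sim_{\operatorname{cyc}}\psi_I(S')=S'|_I$. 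Hence $\overline\varphi$ is the desired right-equivalence $(A|_I,S|_I)\to(A|_I,S'|_I)$.

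For the second statement, with $k\in I$ and no $2$-cycles of $Q$ incident to $k$, the plan is to show $(\widetilde\mu_k(A),\widetilde\mu_k(S))|_I$ is right-equivalent to $\widetilde\mu_k((A,S)|_I)$, and then to pass to reduced parts via Theorem \ref{thm:splitting-theorem}, which together with the first statement of this lemma (applied to identify restrictions of right-equivalent SPs) yields the claim for $\mu_k$. To do this I would compare the two constructions arrow by arrow. Both premutated quivers agree on arrows not incident to $k$ and on the starred arrows $c^*$ for $c$ incident to $k$ with both endpoints in $I$; the composite arrows $[b\omega a]_\rho$ produced by the pair $\overset a\to k\overset b\to$ survive the restriction to $I$ exactly when both $t(a)$ and $h(b)$ lie in $I$, and this is precisely the set of composite arrows of $\widetilde\mu_k(Q|_I)$. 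So $\widetilde\mu_k(Q|_I)=\widetilde\mu_k(Q)|_I$ as weighted quivers with modulating functions, once one checks the labels $X_{b,a}$ and $\B_{b,a}$ are computed from the same local data (which depends only on $d_{h(b)},d_k,d_{t(a)}$ and the fields $F_{h(b)},F_k,F_{t(a)}$, all unchanged by restriction). Then one must check $\widetilde\mu_k(S)|_I=\widetilde\mu_k(S|_I)+(\text{terms that vanish})$: writing $\widetilde\mu_k(S)=[S]+\triangle_k(A)$, the term $\triangle_k(A)=\sum_{\overset a\to k\overset b\to}\frac1{|\B_{b,a}|}\sum_{\omega,\rho}\omega^{-1}b^*[b\omega a]_\rho a^*$ is mapped by $\psi_I$ to exactly $\triangle_k(A|_I)$ (a summand survives iff all of $h(b),k,t(a)$ are in $I$, equivalently iff $b,a$ are arrows internal to $I$, since $k\in I$), and $\psi_I$ commutes with the bracket operation $[\bullet]$ of Remark \ref{rem:on-def-of-[S]}(3) in the sense that $\psi_I([S])=[\psi_I(S)]=[S|_I]$ — this follows because the bimodule isomorphisms \eqref{eq:tensor-prod-decomp-premut} inducing $[\bullet]$ are local to each vertex triple and $\psi_I$ simply forgets those triples not contained in $I$. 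Hence $\psi_I(\widetilde\mu_k(S))=\widetilde\mu_k(S|_I)$, i.e.\ $(\widetilde\mu_k(A),\widetilde\mu_k(S))|_I=\widetilde\mu_k((A,S)|_I)$ on the nose.

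To finish, I would invoke Theorem \ref{thm:splitting-theorem}: applying it to $\widetilde\mu_k((A,S)|_I)$ gives a right-equivalence onto $\mu_k((A,S)|_I)\oplus(\text{trivial})$; on the other hand, applying it to $(\widetilde\mu_k(A),\widetilde\mu_k(S))$ gives a right-equivalence $\varphi$ onto $\mu_k(A,S)\oplus(C,T)$ with $(C,T)$ trivial, and restricting via the first part of this lemma (legitimate since $k\in I$ and there are no $2$-cycles of $Q$ incident to $k$, so $\widetilde\mu_k(Q)$ has none through $k$ either — this is where the $2$-acyclicity hypothesis is used to keep $\mu_k$ well defined) yields a right-equivalence $(\widetilde\mu_k(A),\widetilde\mu_k(S))|_I\to(\mu_k(A,S)\oplus(C,T))|_I=\mu_k(A,S)|_I\oplus(C|_I,T|_I)$, where $(C|_I,T|_I)$ is again trivial (restriction of a trivial SP is trivial, since $\psi_I$ maps the cyclic derivatives generating $C$ onto those generating $C|_I$). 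Combining with $(\widetilde\mu_k(A),\widetilde\mu_k(S))|_I=\widetilde\mu_k((A,S)|_I)$ and the uniqueness of reduced parts in Theorem \ref{thm:splitting-theorem}, we conclude $\mu_k(A,S)|_I$ and $\mu_k((A,S)|_I)$ are right-equivalent. The main obstacle I anticipate is the bookkeeping in verifying that $\psi_I$ genuinely commutes with the bracket $[\bullet]$ and with the formation of composite arrows and their Galois labels — i.e.\ that restriction and premutation are strictly compatible and not merely compatible up to right-equivalence; everything else is a routine transport of the analogous arguments from \cite[Lemma 6.8]{Labardini-potsnoboundaryrevised} and \cite[Section 8]{DWZ1}.
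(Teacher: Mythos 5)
Your proposal is correct and matches what the paper has in mind: the paper merely asserts "The proof is easy" and points to the analogous statement in \cite[Lemma 6.8]{Labardini-potsnoboundaryrevised}, and your write-up is precisely the expected adaptation — factor the restriction homomorphism $\psi_I$ through the right-equivalence (which works because $\varphi$ preserves the $R$-bimodule decomposition $e_i A e_j$, hence preserves $A|_I$), verify that $\psi_I$ respects cyclic equivalence and commutes on the nose with the bracket $[\bullet]$, $\triangle_k$, and the composite-arrow bookkeeping, and then invoke uniqueness of reduced parts from the Splitting Theorem. All the steps you flag as potential obstacles (compatibility of $\psi_I$ with $[\bullet]$, the Galois labels, triviality being preserved under restriction) do in fact go through by the local arguments you sketch.
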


Both statement and proof of the next lemma are similar to \cite[Lemma~29]{Labardini1} and \cite[Lemma 6.9]{Labardini-potsnoboundaryrevised}.

\begin{lemma}
 \label{lemma:all-are-restrictions}
 For every tagged triangulation $\tau$ of a surface~$\surf$ there exists a tagged triangulation~$\widetilde{\tau} \supseteq \tau$ of an empty-boundary surface $(\widetilde{\Sigma}, \widetilde{\marked}, \widetilde{\orb}) \supseteq \surf$ such that $\AStau = (A(\widetilde{\tau}), S(\widetilde{\tau}, \widetilde{\mathbf{x}}))|_\tau$ for any pair $\widetilde{\mathbf{x}} = (\widetilde{x}_m)_{m\in\widetilde{M}}$ and $\mathbf{x}=(x_p)_{p\in\punct}$ of tuples of non-zero elements of $F$ such that $x_{p}=\widetilde{x}_p$ for $p\in\punct$.
\end{lemma}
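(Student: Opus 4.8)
\textbf{Proof proposal for Lemma \ref{lemma:all-are-restrictions}.}

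The plan is to construct $\widetilde{\tau}$ by capping off each boundary component of $\Sigma$ with a triangulated once-punctured polygon, in the spirit of \cite[Lemma 29]{Labardini1} and \cite[Lemma 6.9]{Labardini-potsnoboundaryrevised}. Explicitly, for each boundary component $C$ of $\Sigma$ I would glue along $C$ a disc $D_C$ carrying one new interior marked point $p_C$ and the marked points of $C$ on its boundary, so that $\widetilde{\Sigma}$ is a compact oriented surface with $\partial\widetilde{\Sigma}=\varnothing$; set $\widetilde{\marked}=\marked\cup\{p_C\}_C$ and $\widetilde{\orb}=\orb$ (adding no orbifold points, and, if $\widetilde{\Sigma}$ happens to be a sphere, adding enough further marked points inside the caps so that $|\widetilde{\marked}|\geq 7$). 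Inside each $D_C$ I fix the fan triangulation emanating from $p_C$ (which degenerates to a single self-folded triangle when $C$ has only one marked point): this turns each boundary segment of $C$ into an ordinary arc of $\widetilde{\Sigma}$ and adds the spokes from $p_C$. Let $\widetilde{\tau}$ consist of $\tau$ together with all these new arcs, tagged plain on the caps; then $\widetilde{\tau}$ is a tagged triangulation of $(\widetilde{\Sigma},\widetilde{\marked},\widetilde{\orb})\supseteq\surf$ with $\tau\subseteq\widetilde{\tau}$. (If $\partial\Sigma=\varnothing$ one simply takes $\widetilde{\tau}=\tau$.)

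Next I would establish the combinatorial compatibility at the level of weighted quivers and signatures. Since the capping is performed away from $\punct$ and $\orb$, the pending/non-pending status of every arc of $\tau$ is unchanged, so $\dtuple(\widetilde{\tau})$ restricts to $\dtuple(\tau)$; the tags of $\widetilde{\tau}$ at the punctures of $\punct$ coincide with those of $\tau$ and the new punctures $p_C$ carry only plain tags, whence $\delta_{\widetilde{\tau}}|_\punct=\delta_\tau$, $\epsilon_{\widetilde{\tau}}|_\punct=\epsilon_\tau$, $\epsilon_{\widetilde{\tau}}(p_C)=1$, and $\widetilde{\tau}^\circ$ restricts to $\tau^\circ$ with $\pi_{\widetilde{\tau}^\circ}$ restricting to $\pi_{\tau^\circ}$. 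The key point is the identity $(Q(\widetilde{\tau}),\dtuple(\widetilde{\tau}),g(\widetilde{\tau}))|_\tau=(Q(\tau),\dtuple(\tau),g(\tau))$: an ideal triangle of $\widetilde{\tau}^\circ$ all of whose sides are arcs of $\tau^\circ$ cannot meet a cap (it would then meet a cap arc of $\widetilde{\tau}^\circ$), hence is an ideal triangle of $\tau^\circ$, interior in both; an ideal triangle of $\widetilde{\tau}^\circ$ with exactly two sides among the arcs of $\tau^\circ$ is precisely a non-interior triangle of $\tau$ whose boundary side has been replaced by a new arc, and it contributes the same single clockwise arrow between its two arc-sides in $Q(\tau)$ as in $Q(\widetilde{\tau})|_\tau$. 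Thus the restricted arrow sets coincide with matching multiplicities, and $g(\widetilde{\tau})$ can be chosen to extend $g(\tau)$, the only constraint concerning pairs of parallel arrows between two weight-$2$ arcs, which are literally the same pairs in both quivers.

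The heart of the argument is the potential. By Definition \ref{def:SP-of-tagged-triangulation}, $S(\widetilde{\tau},\widetilde{\mathbf{x}})$ is the image under $\tagfunction_{\epsilon_{\widetilde{\tau}}}$ of a sum of cycles $\widehat{S}^\triangle(\widetilde{\tau}^\circ)$, $\widehat{U}^\triangle(\widetilde{\tau}^\circ,\widetilde{\mathbf{x}})$ and $\epsilon_{\widetilde{\tau}}(p)\widehat{S}^p(\widetilde{\tau}^\circ,\widetilde{\mathbf{x}})$. Applying the restriction homomorphism $\psi_\tau$ of Definition \ref{def:restriction} annihilates every monomial traversing an arrow incident to an arc of $\widetilde{\tau}\setminus\tau$, and I would go through the summands: a triangle entirely contained in $\Sigma$ is interior in $\tau^\circ$ and its contribution ($\widehat{S}^\triangle$, $\widehat{U}^\triangle$) is unchanged and $\psi_\tau$-invariant; a triangle straddling an old boundary or lying in a cap has a side among the new arcs, so $\psi_\tau$ kills its cycle, which matches $\Stau$ where such triangles (non-interior in $\tau$, or absent) contribute nothing. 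For punctures, a neighbourhood of any $p\in\punct$ is disjoint from the caps, so the cyclic sequence of arcs of $\widetilde{\tau}^\circ$ around $p$ equals that of $\tau^\circ$ and $\widehat{S}^p(\widetilde{\tau}^\circ,\widetilde{\mathbf{x}})=\widehat{S}^p(\tau^\circ,\mathbf{x})$ since $\widetilde{x}_p=x_p$; a new puncture $p_C$ is surrounded by at least one former boundary segment, so $\psi_\tau$ annihilates $\widehat{S}^{p_C}$. Since $\epsilon_{\widetilde{\tau}}(p)=\epsilon_\tau(p)$ for $p\in\punct$ and $\tagfunction_{\epsilon_{\widetilde{\tau}}}$ restricts to $\tagfunction_{\epsilon_\tau}$ on the arrows among arcs of $\tau$, one gets $\psi_\tau(S(\widetilde{\tau},\widetilde{\mathbf{x}}))=\Stau$, that is $(A(\widetilde{\tau}),S(\widetilde{\tau},\widetilde{\mathbf{x}}))|_\tau=\AStau$; here one also uses that restriction of a reduced SP stays reduced and that $Q(\widetilde{\tau})|_\tau=Q(\tau)$ is already $2$-acyclic.

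The main obstacle, as usual, is not any individual step but the completeness of the case-analysis around the former boundary: one must make sure that self-folded triangles abutting an old boundary component, orbifolded triangles touching it, and triangles adjacent to two self-folded triangles one of which is hidden inside a cap all contribute zero after $\psi_\tau$, and that the passage to ideal triangulations via $?^\circ$ and $\tagfunction_\epsilon$ commutes with restriction. Precisely these verifications, minus the orbifold-specific cycles $\widehat{U}^\triangle$ and the orbifolded $\widehat{S}^\triangle$, were carried out in \cite{Labardini1} and \cite{Labardini-potsnoboundaryrevised}, so I expect the present proof to reduce to a careful but essentially routine transcription of that bookkeeping, the genuinely new ingredients being only the weight tuple, the modulating function, and the small collection of orbifold cycles, none of which interacts with the capped region.
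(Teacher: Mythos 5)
Your construction and verification take essentially the same route as the paper's proof, which consists of three sentences: cap each boundary component $b$ with a triangulated punctured $m_b$-gon, complete $\tau$ to a tagged triangulation $\widetilde{\tau}$ of the resulting empty-boundary surface, and cite the argument of \cite[Lemma~6.9]{Labardini-potsnoboundaryrevised}. You have, correctly, unfolded the bookkeeping that this citation compresses, and your analysis of weights, modulating functions, signatures, and the vanishing of the cap-touching contributions under $\psi_\tau$ is sound.

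One genuine difference worth flagging is the choice of cap. The paper glues a $5$-punctured $m_b$-gon; you glue a once-punctured $m_b$-gon (with extra marked points thrown in only to satisfy the sphere constraint). The once-punctured cap is perfectly workable, but it introduces a degeneracy you pass over too quickly. When $m_b=1$, the former boundary segment of $b$ becomes a loop cutting out a once-punctured monogon around $p_C$. Such a loop is a legitimate \emph{ideal} arc but not a legitimate \emph{tagged} arc, so the phrase ``$\widetilde{\tau}$ consist of $\tau$ together with all these new arcs, tagged plain on the caps'' does not literally produce a tagged triangulation. What one must do is take the ideal triangulation of $\widetilde{\Sigma}$ and pass through $\tagfunction_{\mathbf{1}}$; the loop then becomes the spoke with a notch at $p_C$, making $\delta_{\widetilde{\tau}}(p_C)=0$ and $\epsilon_{\widetilde{\tau}}(p_C)=1$, which is exactly what your subsequent signature argument needs. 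The $5$-punctured cap sidesteps this because the former boundary loop then cuts out a many-punctured region and hence is already a valid tagged arc; this is the (small) thing the more elaborate cap buys. A second, smaller omission: in checking the puncture terms you treat $p\in\punct$ and the cap centres $p_C$, but the former boundary marked points of $\Sigma$ also become elements of $\widetilde{\punct}$, and you should note (it is immediate) that each of their cycles passes through a former boundary arc and is therefore annihilated by $\psi_\tau$. Neither point is a real obstacle; both are exactly the sort of degeneracy the paper's choice of cap was designed to dodge.
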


\begin{proof}
 The same proof strategy of~\cite[Lemma 6.9]{Labardini-potsnoboundaryrevised} works here. Namely, cap each boundary component~$b$ of $\Sigma$ with a $5$-punctured $m_b$-gon, where $m_b$ is the number of points in $\marked$ that lie on $b$. This yields a surface $(\widetilde{\Sigma},\widetilde{\marked},\widetilde{\orb})$ with empty boundary.
 Now complete $\tau$ to a tagged triangulation~$\widetilde{\tau}$ of $\widetilde{\Sigma}$.
\end{proof}

The following is the main result of this article. It extends Theorem~\ref{thm:tagged-flips<->SP-mutation} by allowing the surface $\Sigma$ to have non-empty boundary. We remind the reader that according to Definition \ref{def:surf-with-orb-points} we always assume that $\surf$ satisfies that if $\Sigma$ is a sphere, then $|\marked|\geq 7$.

\begin{thm}
 \label{thm:tagged-flips<->SP-mutation--non-empty-boundary}
 Let $\surf$ be a surface with marked points and orbifold points of order 2, and let $\mathbf{x}=(x_p)_{p\in\marked}$ be a choice of non-zero elements of the base field $F$. Let $\tau$ and $\sigma$ be tagged triangulations of $\surf$. If $\sigma$ is obtained from $\tau$ by the flip of a tagged arc $k\in\tau$, then the SPs $\mu_k(\Atau,\Stau)$ and $(\Asigma,S(\sigma,\mathbf{y}))$ are right-equivalent for some tuple $\mathbf{y}=(y_p)_{p\in\marked}$ of non-zero elements of $F$.
\end{thm}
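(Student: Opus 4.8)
The plan is to deduce Theorem~\ref{thm:tagged-flips<->SP-mutation--non-empty-boundary} from its empty-boundary counterpart Theorem~\ref{thm:tagged-flips<->SP-mutation} by means of the restriction machinery set up in Definition~\ref{def:restriction} and Lemmas~\ref{lemma:res-re=re-res} and~\ref{lemma:all-are-restrictions}. First I would apply Lemma~\ref{lemma:all-are-restrictions} to the tagged triangulation $\tau$ of $\surf$, obtaining an empty-boundary surface $(\widetilde{\Sigma},\widetilde{\marked},\widetilde{\orb})\supseteq\surf$ together with a tagged triangulation $\widetilde{\tau}\supseteq\tau$ such that $\AStau=(A(\widetilde{\tau}),S(\widetilde{\tau},\widetilde{\mathbf{x}}))|_\tau$ for any compatible extension $\widetilde{\mathbf{x}}$ of $\mathbf{x}$; fix such an $\widetilde{\mathbf{x}}$ once and for all. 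The key point is that flipping the arc $k\in\tau$ inside $\surf$ is ``the same'' as flipping $k$ viewed as an arc of $\widetilde{\tau}$: since the completion of $\tau$ to $\widetilde{\tau}$ only adds arcs living in the capping polygons and $k$ does not cut out a once-punctured monogon situation that would interfere, the tagged arc $j=f_k(\tau)$ produced inside $\surf$ and the tagged arc $\widetilde{j}=f_k(\widetilde{\tau})$ produced inside $\widetilde{\Sigma}$ coincide, so that $\widetilde{\sigma}:=f_k(\widetilde{\tau})=(\widetilde{\tau}\setminus\{k\})\cup\{j\}\supseteq\sigma$. I would then invoke Lemma~\ref{lemma:all-are-restrictions} a second time, applied to $\sigma$, to see that the SP of $\sigma$ with respect to any tuple $\mathbf{y}$ is the restriction to $\sigma$ of the SP of $\widetilde{\sigma}$ with respect to a suitable extension $\widetilde{\mathbf{y}}$; more precisely, since $\widetilde{\sigma}$ is obtained from $\widetilde{\tau}$ purely inside the already-triangulated capping polygons together with the flip at $k$, one checks directly from the local nature of Definition~\ref{def:SP-of-tagged-triangulation} that $(A(\widetilde{\sigma}),S(\widetilde{\sigma},\widetilde{\mathbf{y}}))|_\sigma=(A(\sigma),S(\sigma,\mathbf{y}))$ whenever $\widetilde{\mathbf{y}}$ restricts to $\mathbf{y}$ on $\punct$.

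The next step is to run Theorem~\ref{thm:tagged-flips<->SP-mutation} on the empty-boundary surface $(\widetilde{\Sigma},\widetilde{\marked},\widetilde{\orb})$: since $\widetilde{\sigma}=f_k(\widetilde{\tau})$, it gives a tuple $\widetilde{\mathbf{y}}=(\widetilde{y}_m)_{m\in\widetilde{\marked}}$ of non-zero elements of $F$ such that $\mu_k(A(\widetilde{\tau}),S(\widetilde{\tau},\widetilde{\mathbf{x}}))$ is right-equivalent to $(A(\widetilde{\sigma}),S(\widetilde{\sigma},\widetilde{\mathbf{y}}))$. Moreover, the explicit description of $\widetilde{\mathbf{y}}$ recorded right before the proof of Theorem~\ref{thm:tagged-flips<->SP-mutation} shows that $\widetilde{\mathbf{y}}$ differs from $\widetilde{\mathbf{x}}$ at most at the single puncture lying on the flipped arc (and only when $k$ is pending); in particular $\widetilde{\mathbf{y}}$ and $\widetilde{\mathbf{x}}$ agree outside $\punct$ provided the puncture at which they possibly differ lies in $\punct$ — which is automatic because that puncture is the orbifold-point-adjacent marked point of $j$, an arc of $\surf$. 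Hence $\widetilde{\mathbf{y}}$ restricts on $\punct$ to a well-defined tuple $\mathbf{y}=(y_p)_{p\in\punct}$ of non-zero scalars, and by the previous paragraph $(A(\widetilde{\sigma}),S(\widetilde{\sigma},\widetilde{\mathbf{y}}))|_\sigma=(A(\sigma),S(\sigma,\mathbf{y}))$.

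It remains to transport the right-equivalence through restriction. Applying the first part of Lemma~\ref{lemma:res-re=re-res} to the right-equivalence $\mu_k(A(\widetilde{\tau}),S(\widetilde{\tau},\widetilde{\mathbf{x}}))\to(A(\widetilde{\sigma}),S(\widetilde{\sigma},\widetilde{\mathbf{y}}))$ and the subset $I=\sigma\subseteq\widetilde{\sigma}$ (note $k\notin\sigma$, but this is fine: $\sigma$ is the relevant vertex subset of the mutated quiver), together with the second part of the same lemma which gives that $\mu_k(A(\widetilde{\tau}),S(\widetilde{\tau},\widetilde{\mathbf{x}}))|_{\widetilde{\tau}}$-type restrictions commute with $\mu_k$ as long as $\widetilde{Q}(\widetilde{\tau})$ has no $2$-cycles incident to $k$, we obtain
$$
\mu_k(\Atau,\Stau)\;=\;\mu_k\bigl((A(\widetilde{\tau}),S(\widetilde{\tau},\widetilde{\mathbf{x}}))|_{\tau}\bigr)\;\simeq\;\mu_k(A(\widetilde{\tau}),S(\widetilde{\tau},\widetilde{\mathbf{x}}))|_{\tau}\;\simeq\;(A(\widetilde{\sigma}),S(\widetilde{\sigma},\widetilde{\mathbf{y}}))|_{\sigma}\;=\;(\Asigma,S(\sigma,\mathbf{y})),
$$
where $\simeq$ denotes right-equivalence and where, to pass from $|_\tau$ to $|_\sigma$, one uses that the underlying quiver of the mutated SP has the same vertex set on both sides and that restriction only depends on the ambient arrow set. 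The main obstacle I expect is the bookkeeping in the second paragraph: verifying carefully that flipping $k$ in the big triangulation $\widetilde{\tau}$ restricts correctly to flipping $k$ in $\tau$, i.e.\ that $\widetilde{\sigma}=f_k(\widetilde{\tau})$ really extends $\sigma=f_k(\tau)$ and that the possibly-changed scalar $\widetilde{y}$ sits at a puncture of $\surf$ rather than at one of the auxiliary punctures introduced by the capping polygons. This is a local check around $k$ using the ``puzzle-piece'' description and the explicit form of $\mathbf{y}$ from Theorem~\ref{thm:tagged-flips<->SP-mutation}, and it also supplies the no-$2$-cycle-at-$k$ hypothesis needed for Lemma~\ref{lemma:res-re=re-res}, since $\widetilde{Q}(\widetilde{\tau})$ has no $2$-cycles incident to pending arcs and, when $k$ is non-pending, one may as in Definition~\ref{def:SP-of-tagged-triangulation} pass to the reduced species first.
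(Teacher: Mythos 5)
Your proof is correct and follows exactly the route the paper takes: the paper's own proof of Theorem~\ref{thm:tagged-flips<->SP-mutation--non-empty-boundary} is the single sentence "This follows from a combination of Lemma~\ref{lemma:res-re=re-res}, Lemma~\ref{lemma:all-are-restrictions} and Theorem~\ref{thm:tagged-flips<->SP-mutation}," and you have simply filled in the implicit bookkeeping — that $f_k(\widetilde{\tau})$ extends $f_k(\tau)$, that the $2$-acyclicity of $Q(\widetilde{\tau})$ supplies the hypothesis of Lemma~\ref{lemma:res-re=re-res}, and that the single possible sign change in $\widetilde{\mathbf{y}}$ occurs at a puncture of the original surface $\surf$ rather than at an auxiliary puncture of the capping polygons.
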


\begin{proof} This follows from a combination of Lemma \ref{lemma:res-re=re-res}, Lemma \ref{lemma:all-are-restrictions} and Theorem \ref{thm:tagged-flips<->SP-mutation}.
\end{proof}

The following is a direct consequence of Theorem \ref{thm:tagged-flips<->SP-mutation--non-empty-boundary}.

\begin{coro}\label{coro:non-degeneracy-of-S(tau)} Let $\surf$ be a surface with marked points and orbifold points of order 2 (see Definition \ref{def:surf-with-orb-points}). For any tagged triangulation $\tau$ of $\surf$ and any choice $\mathbf{x}=(x_p)_{p\in\punct}$ of non-zero elements of $F$, the SP $\AStau$ is non-degenerate in the sense of Derksen-Weyman-Zelevinsky (cf. \cite[Definition 7.2]{DWZ1}). That is, for any finite sequence $(i_1,\ldots,i_\ell)$ of vertices of $\Qtau$, each of the SPs $\mu_{i_1}\AStau$, $\mu_{i_2}\mu_{i_1}\AStau$, $\ldots$, $\mu_{i_\ell}\ldots\mu_{i_2}\mu_{i_1}\AStau$ is 2-acyclic.
\end{coro}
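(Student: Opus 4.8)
The plan is to derive the corollary directly from Theorem~\ref{thm:tagged-flips<->SP-mutation--non-empty-boundary} by induction on the length $\ell$ of the mutation sequence. Besides that theorem, the only ingredient needed is the observation that for \emph{every} tagged triangulation $\tau'$ of $\surf$ and \emph{every} choice $\mathbf{x}'$ of non-zero scalars, the underlying weighted quiver of $(A(\tau'),S(\tau',\mathbf{x}'))$ is $(Q(\tau'),\dtuple(\tau'))$, which is $2$-acyclic: this is built into Definitions~\ref{def:weighted-quiver-of-ideal-triangulation} and~\ref{def:weighted-quivers-of-triangulations}, where all $2$-cycles are deleted (cf.\ also the remark following Definition~\ref{def:SP-of-tagged-triangulation}).

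Fix a finite sequence $(i_1,\dots,i_\ell)$ of vertices of $\Qtau$, and recall that the vertices of $\Qtau$ are the arcs of $\tau$. I would prove, by induction on $j\in\{0,1,\dots,\ell\}$, that there exist a tagged triangulation $\tau_j$ of $\surf$ and a tuple $\mathbf{x}^{(j)}$ of non-zero elements of $F$ such that $\mu_{i_j}\cdots\mu_{i_1}\AStau$ is right-equivalent to $(A(\tau_j),S(\tau_j,\mathbf{x}^{(j)}))$; the base case $j=0$ is trivial, with $\tau_0=\tau$ and $\mathbf{x}^{(0)}=\mathbf{x}$. For the inductive step, the right-equivalence $\mu_{i_{j-1}}\cdots\mu_{i_1}\AStau\simeq(A(\tau_{j-1}),S(\tau_{j-1},\mathbf{x}^{(j-1)}))$, together with the standard identification of vertex sets induced by flips, turns the vertex $i_j$ into a well-defined tagged arc $k_j\in\tau_{j-1}$; set $\tau_j=f_{k_j}(\tau_{j-1})$, which is again a tagged triangulation. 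Theorem~\ref{thm:tagged-flips<->SP-mutation--non-empty-boundary} provides a tuple $\mathbf{x}^{(j)}$ with $\mu_{k_j}(A(\tau_{j-1}),S(\tau_{j-1},\mathbf{x}^{(j-1)}))\simeq(A(\tau_j),S(\tau_j,\mathbf{x}^{(j)}))$, while Theorem~\ref{thm:SP-mut-well-defined-up-to-re} shows that SP-mutation in direction $i_j$ preserves right-equivalence --- and is applicable here, since by Proposition~\ref{prop:automorphisms} the underlying weighted quiver of $\mu_{i_{j-1}}\cdots\mu_{i_1}\AStau$ is isomorphic to the $2$-acyclic weighted quiver $(Q(\tau_{j-1}),\dtuple(\tau_{j-1}))$ and hence has no $2$-cycle incident to $i_j$. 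Composing these two right-equivalences gives $\mu_{i_j}\cdots\mu_{i_1}\AStau\simeq(A(\tau_j),S(\tau_j,\mathbf{x}^{(j)}))$, which completes the induction.

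To conclude, note that each SP $(A(\tau_j),S(\tau_j,\mathbf{x}^{(j)}))$ has $2$-acyclic underlying weighted quiver by the observation of the first paragraph, and that $2$-acyclicity of the underlying weighted quiver is invariant under right-equivalence (once more by Proposition~\ref{prop:automorphisms}); therefore $\mu_{i_j}\cdots\mu_{i_1}\AStau$ is $2$-acyclic for every $j$, which is exactly the non-degeneracy of $\AStau$ in the sense of \cite[Definition~7.2]{DWZ1}. I do not expect any genuine obstacle in carrying this out: it is a formal induction, and the only point needing care is the bookkeeping of the flip-induced relabelings of vertex sets (and of the at-most-one sign change from $\mathbf{x}^{(j-1)}$ to $\mathbf{x}^{(j)}$) --- the latter being irrelevant to the final conclusion, since only the shape of the underlying weighted quiver enters there.
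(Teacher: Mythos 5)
Your proof is correct and is precisely the routine induction the paper has in mind when it says the corollary "is a direct consequence" of Theorem~\ref{thm:tagged-flips<->SP-mutation--non-empty-boundary}. You have simply spelled out the bookkeeping (identification of vertex sets along flips, invariance of $2$-acyclicity under right-equivalence via Proposition~\ref{prop:automorphisms}, and the use of Theorem~\ref{thm:SP-mut-well-defined-up-to-re}) that the authors leave implicit.
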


The next two results show that if the boundary of $\Sigma$ is not empty, then we can take $\mathbf{y}=\mathbf{x}$ in Theorem~\ref{thm:tagged-flips<->SP-mutation--non-empty-boundary}.

\begin{prop} If $\surf$ is a surface with non-empty boundary, then for every tagged triangulation $\tau$ of $\surf$ and any two tuples $\mathbf{x}=(x_{p})_{p\in\punct}$ and $\mathbf{y}=(y_{p})_{p\in\punct}$ of non-zero elements of $F$, the SPs $\AStau$ and $(A(\tau),S(\tau,\mathbf{y}))$ are right-equivalent.
\end{prop}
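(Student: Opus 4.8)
The plan is to prove this by induction on the number of punctures of $\surf$, reducing to the case of a single puncture and then handling that case directly. The key observation is that the scalars $x_p$ enter the potential $\Stau$ only through the cycles $\widehat{S}^p(\tau^\circ,\mathbf{x})$, $\widehat{U}^\triangle(\tau^\circ,\mathbf{x})$ and $\widehat{S}^\triangle(\tau^\circ,\mathbf{x})$ associated to punctures and to triangles adjacent to self-folded or orbifolded triangles; in every such cycle the dependence on a given $x_p$ is \emph{homogeneous} of a fixed degree (either $+1$ or $-1$, or a combination coming from two self-folded triangles). The idea is to construct a "rescaling" $R$-algebra automorphism of $\RA{\Atau}$ that absorbs the change from $\mathbf{x}$ to $\mathbf{y}$ by multiplying each arrow by a suitable scalar depending only on the marked point canonically associated to that arrow (recall that every arrow of $\Qtau$ has such a canonical marked point, by the comment after Definition \ref{def:weighted-quiver-of-ideal-triangulation}).

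Concretely, first I would reduce to ideal triangulations: by Proposition \ref{prop:tagfunction-and-circ} and Definition \ref{def:SP-of-tagged-triangulation} the SP $\AStau$ is obtained from $(A(\tau^\circ), S(\tau^\circ,\mathbf{x}))$ by applying the weighted-quiver isomorphism $\tagfunction_{\epsilon_\tau}$ (which does not touch the arrow set or the scalars), so it suffices to treat ideal triangulations $\tau$. Next, since $\partial\Sigma\neq\varnothing$, every puzzle-piece decomposition of $\tau$ (Theorem \ref{thm:existence-of-puzzle-piece-decomps}) uses at least one non-interior triangle, and more importantly every puncture is "reachable" from the boundary; I want to use this to define, for each puncture $p$, a scalar $c_p^a\in F^\times$ attached to each arrow $a$, built from ratios of the form $y_q/x_q$ along the marked points $q$, so that the homogeneity of $x$-dependence in each $\widehat{S}^p$, $\widehat{U}^\triangle$, $\widehat{S}^\triangle$ is exactly cancelled. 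The cleanest route: order the punctures $p_1,\dots,p_r$ and treat them one at a time. Fixing a puncture $p$, I would exhibit an $R$-algebra automorphism $\varphi_p$ of $\RA{\Atau}$ that is the identity on $R$ and sends each arrow $a$ to $\lambda_a a$ for appropriate $\lambda_a\in F^\times$, chosen so that $\varphi_p$ carries $S(\tau, \dots, x_p, \dots)$ to $S(\tau, \dots, y_p, \dots)$ while leaving all other scalars unchanged; composing the $\varphi_{p_i}$ then finishes the induction.

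The main technical point — and the step I expect to be the genuine obstacle — is the existence of a \emph{consistent} choice of arrow-scalars $\lambda_a$ realizing a prescribed rescaling of a single $x_p$. One has to check that the three types of $x_p$-dependent cycles (puncture cycles $\widehat{S}^p$, "doubly self-folded" triangle cycles $\widehat{U}^\triangle$ which carry $x_p^{-1}x_q^{-1}$, and orbifolded-adjacent-to-self-folded cycles carrying $x_p^{-1}$) all rescale in a way that can be produced by multiplying arrows by scalars associated to \emph{marked points} in a well-defined (i.e. loop-consistent) manner, so that no cycle not involving $x_p$ is disturbed. This is where the hypothesis $\partial\Sigma\neq\varnothing$ is used: in the empty-boundary case there can be "monodromy" obstructions to such a rescaling (this is precisely why in the empty-boundary theorems one is forced to allow sign changes $\mathbf{y}=\zeta\mathbf{x}$), whereas with boundary present one can anchor the rescaling scalars along the boundary and propagate them without obstruction. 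The verification that the required system of scalar equations is solvable is essentially the content of \cite[Section 6]{Labardini-potsnoboundaryrevised} in the orbifold-free case, and I would adapt that argument, the only new input being the treatment of orbifolded triangles, where the extra summands $\alpha^\triangle\beta^\triangle v\gamma^\triangle$ and $\delta^\triangle\beta^\triangle\gamma^\triangle$ scale the same way as $\alpha^\triangle\beta^\triangle\gamma^\triangle$ (so that $v$ and the choice of modulating function cause no trouble), together with the observation that no 2-cycle incident to a pending arc is ever involved.
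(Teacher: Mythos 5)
Your overall idea — absorb the scalar change $\mathbf{x}\mapsto\mathbf{y}$ by an arrow-rescaling automorphism, using the boundary to kill the potential monodromy obstruction — is the right one, and the paper's proof is built on exactly this kind of rescaling. But you miss the paper's key structural simplification, and that omission is what makes your plan substantially harder than it needs to be (and leaves a real gap where you wave your hands).

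The paper does \emph{not} construct the rescaling for an arbitrary ideal triangulation. Instead, it first picks a single ideal triangulation $\sigma$ \emph{without self-folded triangles} and proves $(A(\sigma),S(\sigma,\mathbf{x}))\simeq(A(\sigma),S(\sigma,\mathbf{1}))$ by pulling scalars to the boundary; this case is much cleaner because a triangulation without self-folded triangles has \emph{no} $\widehat{U}^\triangle(\tau,\mathbf{x})$ summands and no $\widehat{S}^p(\tau,\mathbf{x})$ summands carrying $x_p^{-1}$ — every $x_p$ appears only once, linearly, in the cycle $\widehat{S}^p$ around $p$. The proposition for arbitrary tagged $\tau$ is then \emph{deduced} from this special case by invoking Theorem \ref{thm:tagged-flips<->SP-mutation--non-empty-boundary} (which is already proved at this point) together with the involutivity (Theorem \ref{thm:SP-mutation-is-involution}) and well-definedness up to right-equivalence (Theorem \ref{thm:SP-mut-well-defined-up-to-re}) of SP-mutation: one flips $\tau$ to $\sigma$, observes that both $\AStau$ and $(A(\tau),S(\tau,\mathbf{y}))$ mutate to $(A(\sigma),S(\sigma,\cdot))$ with possibly different scalars, identifies these via the special case, and transports back. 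Your plan tries to build the rescaling automorphism directly on $\tau$ with all its self-folded triangles and $x_p^{\pm1}$-weighted cycles intact. The sentence where you say the "main technical point" is to find a "consistent choice of arrow-scalars" realizing a prescribed rescaling of a single $x_p$ "so that no cycle not involving $x_p$ is disturbed" is precisely the nontrivial combinatorics the paper avoids; you give no argument that this system is solvable when $\widehat{U}^\triangle$ cycles (carrying $x_p^{-1}x_q^{-1}$ or $x_p^{-1}$) share arrows with the genuine $\widehat{S}^p$ cycles and with $\widehat{S}^\triangle$ cycles that must not be rescaled. Also, the reference you would want for the unobstructed special case is \cite[Proposition 10.2]{Labardini-potsnoboundaryrevised}, not Section 6 (Section 6 there is about pops, not scalar rescaling). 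Finally, a small inaccuracy in your reduction from tagged to ideal: $\unredStau$ is \emph{not} $\tagfunction_{\epsilon_\tau}(\widehat{S}(\tau^\circ,\mathbf{x}))$; there are extra signs $\epsilon_\tau(p)$ in front of each $\widehat{S}^p$. This does not break the argument since those signs are common to $\mathbf{x}$ and $\mathbf{y}$, but it should be stated correctly.
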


\begin{proof} A minor modification of the proof of \cite[Proposition 10.2]{Labardini-potsnoboundaryrevised} works here. Roughly speaking, the proof consists in showing that for an ideal triangulation $\sigma$ of $\surf$ without self-folded triangles, $\ASsigma$ is right-equivalent to $(A(\sigma),S(\sigma,\mathbf{1}))$ through a composition of right-equivalences that ``pull the scalars $x_p$ to the boundary of $\Sigma$''. The statement for arbitrary tagged triangulations then follows from Theorem \ref{thm:tagged-flips<->SP-mutation--non-empty-boundary}, the fact that any two tagged triangulations are related by a sequence of flips, and the fact that the compositions of right-equivalences is a right-equivalence.
\end{proof}

\begin{coro} If $\surf$ is a surface with non-empty boundary, $\tau$ is any tagged triangulation of $\surf$ and $k$ is any tagged arc in $\tau$, then $\mu_k\AStau$ is right-equivalent to $(A(f_k(\tau)),S(f_k(\tau),\mathbf{x}))$.%$\ASsigma$, where $\sigma=f_{k}(\tau)$.
\end{coro}

\section{Examples}\label{sec:examples}

\subsection{Unpunctured surfaces with orbifold points}
\label{subsec:unpunctured-surfaces}

\noindent
For triangulations~$\tau$ of an unpunctured surface the associated potential can be put in a particularly simple form:
Due to the absence of punctures, $\unredStau$ contains no summands of the form~$\widehat{S}^p(\tau,\mathbf{x})$ or $\widehat{U}^\triangle(\tau,\mathbf{x})$ (cf.\ Definitions~\ref{def:cycles-from-self-folded-triangles} and \ref{def:cycles-from-punctures}),
%In particular, $S(\tau) := \Stau = \unredStau$
and one can easily see that $\AStau$ is right-equivalent to $(A(\tau),P(\tau))$, where
%\begin{displaymath}
 $P(\tau) = \sum_{\triangle} P^\triangle(\tau)$,
%\end{displaymath}
with the sum running over all interior triangles~$\triangle$ of $\tau$. Here, $P^\triangle(\tau) = \alpha^\triangle\beta^\triangle\gamma^\triangle$ if $\triangle$ is a triangle containing at most one orbifold point (see Figure \ref{Fig:normal_triangle} and the left side of Figure \ref{Fig:orb_triangles_1and2pts}), and $P^\triangle(\tau) = (\alpha^\triangle+\delta^\triangle)\beta^\triangle\gamma^\triangle$ if $\triangle$ is a triangle containing exactly two orbifold points (see the right side of Figure \ref{Fig:orb_triangles_1and2pts}).

\subsection{Types $\C_n$ and $\tildeC_n$}
\label{subsec:type-C}
Subsection \ref{subsec:unpunctured-surfaces} has the unpunctured $n$-gons with at most two orbifold points as particular examples.
Following the convention of \cite[Section 3]{Musiker-classical} (see also \cite[Remark 6]{Musiker-classical}), let us say that the matrices
{\small $$
\left[\begin{array}{rrrrrr}
0 & -1 & 0 &        & 0 & 0\\
2 & 0 & -1 & \ldots & 0 & 0\\
0 & 1 & 0 &         &0 & 0\\
 & \vdots &  & \ddots & & \\
0 & 0 & 0 &  & 0 & -1\\
0  & 0 & 0 &  & 1 & 0
\end{array}\right]\in\mathbb{Z}^{(n-1)\times (n-1)}
\ \ \ \text{and} \ \ \
\left[\begin{array}{rrrrrrr}
0 & -1 & 0 &        & 0 & 0 & 0\\
2 & 0 & -1 & \ldots & 0 & 0 &0\\
0 & 1 & 0 &         &0 & 0&0\\
 & \vdots &  & \ddots & & &\\
0 & 0 & 0 &  & 0 & -1 & 0 \\
0  & 0 & 0 &  & 1 & 0 & -2\\
0  & 0 & 0 &  & 0 & 1 & 0
\end{array}\right]\in\mathbb{Z}^{(n+1)\times (n+1)}
$$}
are of type $\CC_{n-1}$ and of type $\tildeC_n$, respectively. The weighted quivers of the triangulations of an unpunctured~$n$-gon with one orbifold point are the weighted quivers which correspond to the skew-symmetrizable matrices that are mutation-equivalent to the matrix of type $\CC_{n-1}$ above. See Example \ref{ex:type-C_4}. Similarly, the weighted quivers of the triangulations of an unpunctured~$n$-gon with two orbifold points are the weighted quivers which correspond to the skew-symmetrizable matrices that are mutation-equivalent to the matrix of type $\tildeC_{n}$.

Thus, for any  matrix $B$ mutation-equivalent to a matrix of type $\CC_n$ or $\tildeC_n$
we have a way to construct a species realization of $B$ and an explicit non-degenerate potential on this species.

\subsection{Once-punctured torus with one orbifold point}
\label{subsec:torus-1-orb-point}
Every triangulation of the empty-boundary torus with one puncture~$p$ and one orbifold point has the following form:
\begin{center}
  \begin{tikzpicture}[scale=0.4]
    \coordinate (0) at ( 0, 0);
    \coordinate (1) at (-3, 3);
    \coordinate (2) at (-3,-3);
    \coordinate (3) at ( 3,-3);
    \coordinate (4) at ( 3, 3);

    \node[rotate=45] at (0) {$\times$};
    \foreach \i in {1,...,4}
      \node at (\i) {$\bullet$};

    \draw (0) to (4);
    \draw (1) to node {\rotatebox{90}{$<$}} (2);
    \draw (2) to node {$\gg$} (3);
    \draw (3) to node {\rotatebox{90}{$<$}} (4);
    \draw (4) to node {$\gg$} (1);

    \draw[bend right] (2) to (4);
    \draw[bend left]  (2) to (4);
  \end{tikzpicture}
\end{center}
Hence, for any such triangulation $\tau$, every weighted quiver in the mutation class of $(\Qtau,\dtuple(\tau))$ is isomorphic to the following weighted quiver:
\[
 \xygraph{
   !{<0cm,0cm>;<1.5cm,0cm>:<0cm,1.5cm>::}
   !{( 0, 0)}*+{\bullet}="1"
   !{( 0,-1)}*+{\bullet}="2"
   !{(-1, 0)}*+{\bullet}="3"
   !{( 1, 0)}*+{\bullet}="4"
   !{( 0, 1)}*+{\bullet}="5"
   "1" :                 "3"  ^/-3pt/*-<2pt>{\labelstyle \gamma_1}
   "3" :                 "2"  _/0pt/*-<8pt>{\labelstyle \beta_1}
   "2" :@<-2pt>          "1"  ^/0pt/*+<2pt>{\labelstyle \alpha_1}
   "1" :                 "4"  _/-3pt/*-<2pt>{\labelstyle \gamma_2}
   "4" :                 "2"  ^/-1pt/*-<6pt>{\labelstyle \beta_2}
   "2" :@<+2pt>          "1"  _/0pt/*+<5pt>{\labelstyle \alpha_2}
   "3" :                 "5"  ^/0pt/*-<4pt>{\labelstyle \zeta}
   "5" :                 "4"  ^/0pt/*-<4pt>{\labelstyle \varepsilon}
   "4" :@<+3pt>@(lu,ru)  "3"  _/0pt/*-<3pt>{\labelstyle \delta}
 } \ \ \ \ \
 \xygraph{
   !{<0cm,0cm>;<1.5cm,0cm>:<0cm,1.5cm>::}
   !{( 0, 0)}*+{1}="1"
   !{( 0,-1)}*+{1}="2"
   !{(-1, 0)}*+{1}="3"
   !{( 1, 0)}*+{1}="4"
   !{( 0, 1)}*+{2}="5"
 }
\]
The potential associated with $\tau$ is~$\Stau = \alpha_1\beta_1\gamma_1 + \alpha_2\beta_2\gamma_2 + \delta\varepsilon(1-v)\zeta + x_p \alpha_1\beta_2\varepsilon\zeta\gamma_1\alpha_2\beta_1\delta\gamma_2$.

\section{Technical proofs}\label{sec:technical-proofs}

\subsection{On the proof of the Splitting Theorem}\label{subsec:proof-Splitting-Thm}

Every potential $\RA{A}$ is right-equivalent to a potential of the form
\begin{equation}
 \label{eq:potential-canonical-form}
 \sum_{k=1}^N (a_k b_k + a_k u_k + v_k b_k) + S'
\end{equation}
where
$a_1, b_1,\ldots,a_N, b_N$ are $2N$ different arrows of $Q$ such that each $a_k b_k$ is a $2$-cycle with $g_{a_k} g_{b_k} = \myid\in G_{h(a_k),t(a_k)}$, the elements $u_k$ and $v_k$ satisfy
\smash{$u_k \in \pi_{g_{b_k}} \big(e_{t(a_k)} {\mathfrak m}^2 e_{h(a_k)} \big)$}, \smash{$v_k \in \pi_{g_{a_k}} \big(e_{t(b_k)} {\mathfrak m}^2 e_{h(b_k)} \big)$} for all $k\in\{1,\ldots,N\}$, and $S' \in {\mathfrak m}^3$ is a possibly infinite $F$-linear combination of cyclic paths containing none of the arrows $a_k$ or $b_k$. Compare this with \cite[Equation (4.6)]{DWZ1}.

Since \smash{$u_k \in \pi_{g_{b_k}} \big(e_{t(a_k)} {\mathfrak m}^2 e_{h(a_k)} \big)$}, \smash{$v_k \in \pi_{g_{a_k}} \big(e_{t(b_k)} {\mathfrak m}^2 e_{h(b_k)} \big)$} for all $k\in\{1,\ldots,N\}$, the rule
$$
\psi(a_k) = a_k - v_k, \ \ \ \psi(b_k) = b_k - u_k \ \ \ \text{for $k\in\{1,\ldots,N\}$}, \ \ \ \psi(c)=c \ \ \ \text{for $c\in Q_1\setminus\{a_1,b_1,\ldots,a_N,b_N\}$},
$$
defines an $R$-algebra automorphism $\psi:\RA{A}\rightarrow\RA{A}$. Knowing this, the proof of \cite[Lemma~4.7]{DWZ1} can be applied here with no change to obtain the following.

\begin{lemma}
 \label{lem:splitting-theorem-existence}
 For any potential $S \in \RA{A}$ of the form~(\ref{eq:potential-canonical-form}) there exists an $R$-algebra automorphism~$\varphi$ of $\RA{A}$ such that $\varphi(S)$ is cyclically equivalent to a potential of the form~(\ref{eq:potential-canonical-form}) with $u_k = v_k = 0$ for all $k$.
\end{lemma}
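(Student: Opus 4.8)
The statement to be established is Lemma~\ref{lem:splitting-theorem-existence}, which is the species analog of \cite[Lemma~4.7]{DWZ1}. The strategy is to construct the desired automorphism~$\varphi$ as an infinite composition of unitriangular automorphisms of strictly increasing depth, each of which strictly decreases the ``complexity'' of the terms $u_k$ and $v_k$ attached to the distinguished $2$-cycles $a_kb_k$. Concretely, given a potential $S$ of the form \eqref{eq:potential-canonical-form}, write $S = S_0 + S'$ where $S_0 = \sum_{k=1}^N(a_kb_k + a_ku_k + v_kb_k)$, and let $\delta = \min_k(\operatorname{depth} \text{ of the lowest-degree term appearing in some } u_k \text{ or } v_k)$, so $\delta \geq 1$ since $u_k,v_k\in\maxid^2$. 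The aim of a single step is to produce a unitriangular automorphism $\psi$ of depth $\geq\delta$ such that $\psi(S)$ is cyclically equivalent to a potential of the same shape \eqref{eq:potential-canonical-form} but with all the new $u_k,v_k$ lying in $\maxid^{\delta+2}$ (i.e.\ the minimal depth has strictly increased). The map $\psi$ is the one displayed just before the statement of the lemma, namely $\psi(a_k)=a_k-v_k$, $\psi(b_k)=b_k-u_k$, and $\psi$ the identity on all other arrows; the key point already recorded in the excerpt is that this is a well-defined $R$-algebra automorphism precisely because $u_k\in\pi_{g_{b_k}}(e_{t(a_k)}\maxid^2e_{h(a_k)})$ and $v_k\in\pi_{g_{a_k}}(e_{t(b_k)}\maxid^2e_{h(b_k)})$, which is exactly the compatibility with the twisted bimodule structure guaranteed by Proposition~\ref{prop:properties-of-J-K-bimodules}.

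The core computation, to be carried out in the single-step lemma, is to expand $\psi(S)$. One gets
\[
\psi(a_kb_k + a_ku_k + v_kb_k) = (a_k-v_k)(b_k-u_k) + (a_k-v_k)\psi(u_k) + \psi(v_k)(b_k-u_k),
\]
and since $u_k,v_k$ do not involve any of the arrows $a_j,b_j$ we have $\psi(u_k)=u_k$, $\psi(v_k)=v_k$; expanding and cancelling the $\pm a_ku_k$ and $\pm v_kb_k$ terms leaves $a_kb_k + v_ku_k + (\text{terms in }\maxid^{\geq 2\delta+2})$. The new potential is therefore $\sum_k a_kb_k + S' + \sum_k v_ku_k + (\text{higher order})$, and $v_ku_k\in\maxid^{2\delta+2}\subseteq\maxid^{\delta+2}$. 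One then re-expresses this potential in the canonical form \eqref{eq:potential-canonical-form} again: the terms $v_ku_k$ and the higher-order corrections get absorbed — after possibly a further cyclical reshuffling — either into a new $S'\in\maxid^3$ or, if they still contain an arrow $a_k$ or $b_k$, into new $u_k,v_k$ of depth $\geq\delta+1$. Here one must be slightly careful, exactly as in \cite{DWZ1}, to argue that the ``linear in $a_k$ or $b_k$'' parts can always be projected via the relevant $\pi_{g_{a_k}}$, $\pi_{g_{b_k}}$ so that the new $u_k,v_k$ again lie in the correct images of these idempotents; this is where Example~\ref{ex:2-cycle-equivalent-to-0} and the $\pi_\rho$-manipulations of Proposition~\ref{prop:properties-of-J-K-bimodules} are used, since cyclical equivalence lets us move scalars from $F_{h(a_k),t(a_k)}$ across the tensor factors and the condition $g_{a_k}g_{b_k}=\myid$ ensures the $2$-cycle $a_kb_k$ itself is not cyclically trivial.

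Once the single-step lemma is in hand, the conclusion follows by the standard limiting argument: starting from $S$ with minimal depth $\delta_1\geq 1$, apply $\psi_1$ of depth $\geq\delta_1$ to reach $S^{(1)}$ with minimal depth $\delta_2\geq\delta_1+1$, then $\psi_2$ of depth $\geq\delta_2$, and so on. By \eqref{eq:unitriangular-d}, a unitriangular automorphism of depth $\delta$ changes any element of $\maxid^n$ only modulo $\maxid^{n+\delta}$, so the infinite composite $\varphi = \lim_{n\to\infty}\psi_n\circ\cdots\circ\psi_1$ converges in the $\maxid$-adic topology to a well-defined $R$-algebra automorphism of $\RA{A}$ (continuity and the fact that the limit is an automorphism are exactly as in \cite[Proof of Lemma~4.7]{DWZ1}, using Proposition~\ref{prop:automorphisms}). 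The resulting $\varphi(S)$ has all its $u_k,v_k$-contributions pushed to infinite depth, hence is cyclically equivalent to $\sum_{k=1}^N a_kb_k + S''$ for some $S''\in\maxid^3$ not involving the arrows $a_k,b_k$, which is precisely a potential of the form \eqref{eq:potential-canonical-form} with all $u_k=v_k=0$.

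\textbf{Main obstacle.} The delicate point — and the only place where the species setting genuinely differs from \cite{DWZ1} — is verifying in the single-step lemma that the freshly generated terms can always be rewritten, up to cyclical equivalence, so that the new $u_k$ (respectively $v_k$) again lies in $\pi_{g_{b_k}}(e_{t(a_k)}\maxid^2e_{h(a_k)})$ (respectively $\pi_{g_{a_k}}(e_{t(b_k)}\maxid^2e_{h(b_k)})$). In the simply-laced case this is automatic because the bimodules are untwisted; here one has to invoke the $\pi_\rho$-decomposition and the identity $mn\sim_{\operatorname{cyc}}\sum_\rho\pi_{\rho^{-1}}(m)\pi_\rho(n)$ from Example~\ref{ex:2-cycle-equivalent-to-0}, together with $g_{a_k}g_{b_k}=\myid$, to argue that only the component with the correct Galois twist survives when one pairs a leftover term against $a_k$ or $b_k$. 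I expect this bookkeeping, rather than any conceptual difficulty, to be the bulk of the work; everything else is a faithful transcription of the DWZ argument using Propositions~\ref{prop:automorphisms}, \ref{prop:ring-endomorphisms-of-RA} and \ref{prop:properties-of-J-K-bimodules} in place of their quiver counterparts.
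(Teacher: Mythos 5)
Your proposal is correct and follows essentially the same route as the paper: the paper's proof consists precisely of the observation that the projection conditions $u_k \in \pi_{g_{b_k}}(\cdot)$, $v_k \in \pi_{g_{a_k}}(\cdot)$ make $\psi$ a well-defined $R$-algebra automorphism, after which it defers to \cite[Lemma~4.7]{DWZ1} ``with no change.'' You simply unpack the DWZ iteration-and-limit argument that the paper leaves implicit, and correctly flag the one genuinely species-specific point (re-projecting the newly generated terms via $\pi_{g_{a_k}}$, $\pi_{g_{b_k}}$); the only minor slip is the unjustified claim that $\psi(u_k)=u_k$ --- the canonical form does not forbid $u_k,v_k$ from containing the arrows $a_j,b_j$, but since $\psi$ is unitriangular of positive depth, $\psi(u_k)-u_k$ lies deeper in $\mathfrak{m}$ and the depth-increase argument goes through unchanged.
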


The existence of $(A_{\operatorname{red}},S_{\operatorname{red}})$, $(A_{\operatorname{triv}},S_{\operatorname{triv}})$ and the right-equivalence~$\varphi$ in Theorem~\ref{thm:splitting-theorem} is a direct consequence of Lemma~\ref{lem:splitting-theorem-existence}.
Since the uniqueness of $(A_{\operatorname{triv}},S_{\operatorname{triv}})$ up to right-equivalence is clear, it remains to verify that the right-equivalence class of $(A_{\operatorname{red}},S_{\operatorname{red}})$ is determined by the right-equivalence class of $(A,S)$.
To achieve this goal, we formulate a version of the cyclic chain rule from~\cite[Lemma~3.9]{DWZ1} for our species setting.

Let us write $\RA{A} \,\widehat{\otimes}\, \RA{A} = \prod_{x,y \in \mathbb{Z}_{\geq 0}} (A^x \otimes_F A^y)$.
As a generalization of \cite[Section~3]{DWZ1} we define a continuous $R$-$R$-bimodule homomorphism $\Delta_a : \RA{A} \to \RA{A} \,\widehat{\otimes}\, \RA{A}$
by requiring $\Delta_a(R) = 0$ and by setting
\[
  \Delta_a(\omega_0 a_1 \omega_1 \cdots a_\ell \omega_\ell) \:=\: \sum_{k : a_k = a} \omega_0 \cdot a_1 \omega_1 \cdots a_{k-1} \omega_{k-1} \cdot \xi_a \cdot \omega_k a_{k+1} \cdots \omega_{\ell-1} a_\ell \cdot \omega_\ell
\]
for any path~$\omega_0 a_1 \omega_1 \cdots a_\ell \omega_\ell$ of length $\ell \geq 1$,
where $\xi_a = \frac{1}{d_{h(a),t(a)}} \sum_{\nu \in \B_{h(a),t(a)}} g_a(\nu) \otimes \nu^{-1}$.
Note that $\Delta_a$ is well-defined because we have $\xi_a x = g_a(x) \xi_a$ for all $x \in F_{h(a),t(a)}$.
Furthermore, we use the notation $f \sqr g$ for the image of $f \otimes g$ under the continuous $F$-linear map $(\RA{A} \,\widehat{\otimes}\, \RA{A}) \otimes_F \RA{A} \to \RA{A}$ defined by
$ (u \otimes w) \sqr g \:=\: wgu$
for $u, w \in R\langle{A}\rangle$ and $g \in \RA{A}$.

The following lemma is similar to~\cite[Lemma~3.8]{DWZ1}.

\begin{lemma}[Cyclic Leibniz rule]
  \label{lem:cyclic-leibniz-rule}
  For any finite sequence $(i_1,\ldots,i_\ell,i_{\ell+1})$ of vertices of $Q$ such that $i_{\ell+1}=i_1$, and for any elements~$f_k \in e_{i_k} \RA{A} e_{i_{k+1}}$, $k\in\{1,\ldots,\ell\}$, we have
  \[
    \partial_a(f_1 \cdots f_\ell) \:=\: \sum_{k=1}^\ell \Delta_a(f_k) \sqr f_{\widehat{k}} \,,
  \]
  where $f_{\widehat{k}} = f_{k+1} \cdots f_\ell f_1 \cdots f_{k-1}$.
  If, moreover, for all $k \in\{1,\ldots,\ell\}$ we have $f_k = \pi_{g_k}(f_k)$ for some $g_k \in G_{i_k,i_{k+1}}$, the same equality holds with $f_{\widehat{k}} = \pi_{g_k^{-1}}(f_{k+1} \cdots f_\ell f_1 \cdots f_{k-1})$.
\end{lemma}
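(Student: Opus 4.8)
The plan is to prove the Cyclic Leibniz rule (Lemma~\ref{lem:cyclic-leibniz-rule}) by adapting the proof of \cite[Lemma~3.8]{DWZ1}, with the extra bookkeeping needed to keep track of the Galois-twisted passage of scalars through arrows. The statement concerns only the \emph{cyclic} derivative applied to a product of elements lying in the appropriate corners $e_{i_k}\RA{A}e_{i_{k+1}}$, and both sides are continuous $F$-linear, so it suffices to verify the identity when each $f_k$ is a single path $\omega^{(k)}_0 a^{(k)}_1 \omega^{(k)}_1 \cdots a^{(k)}_{\ell_k}\omega^{(k)}_{\ell_k}$. First I would unwind the definition of $\partial_a$ from \eqref{eq:cyclic-derivative}: applied to the cyclic path $f_1\cdots f_\ell$, it sums over all occurrences of $a$ among the arrows $a^{(k)}_r$, and for each occurrence it produces $\pi_{g_a^{-1}}$ applied to the cyclic rotation of the word bringing that copy of $a$ to the front and deleting it. The task is to reorganize this sum by grouping the occurrences according to which factor $f_k$ they sit in.

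The key step is the compatibility between the operator $\Delta_a$ and the projector $\pi_{g_a^{-1}}$ that defines $\partial_a$. Concretely, for a single path $f_k$, $\Delta_a(f_k)$ collects the ``split'' of $f_k$ at each copy of $a$, inserting the element $\xi_a=\frac{1}{d_{h(a),t(a)}}\sum_{\nu\in\B_{h(a),t(a)}}g_a(\nu)\otimes\nu^{-1}$ in place of that copy; then $\sqr f_{\widehat k}$ reassembles the global cyclic word with the two tensor legs of $\xi_a$ wrapped around $f_{\widehat k}=f_{k+1}\cdots f_\ell f_1\cdots f_{k-1}$. I would verify that
$$
(u\otimes w)\sqr f_{\widehat k} \;=\; \frac{1}{d_{h(a),t(a)}}\sum_{\nu\in\B_{h(a),t(a)}} g_a(\nu)^{-1}\,(\text{tail of }f_k)\,\nu\,f_{\widehat k}\,(\text{head of }f_k)\cdot g_a(\nu),
$$
wait---more carefully, the point is that summing $g_a(\nu)\,(\cdot)\,\nu^{-1}$ over the eigenbasis $\B_{h(a),t(a)}$ is exactly $d_{h(a),t(a)}\cdot\pi_{g_a^{-1}}$ applied in the corner $e_{h(a)}\RA{A}e_{t(a)}$ (compare Proposition~\ref{prop:properties-of-J-K-bimodules} and the formula for $\pi_{g_a^{-1}}$ right after \eqref{eq:cyclic-derivative}). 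So $\Delta_a(f_k)\sqr f_{\widehat k}$ reproduces precisely the contribution to $\partial_a(f_1\cdots f_\ell)$ coming from the occurrences of $a$ inside $f_k$, and summing over $k$ yields all occurrences, establishing the first equality. The well-definedness remark $\xi_a x = g_a(x)\xi_a$ for $x\in F_{h(a),t(a)}$ is needed to see that $\Delta_a$ descends to the tensor-over-$R$ product and that the reassembly via $\sqr$ is independent of the chosen representatives.

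For the second, ``projected'' assertion: when each $f_k=\pi_{g_k}(f_k)$ the same computation goes through, but now the head of $f_k$ and the tail of $f_k$ can be passed across the neighbouring factors using the idempotent relations $m x = g_k(x) m$ for $m\in\image\pi_{g_k}$ (Proposition~\ref{prop:properties-of-J-K-bimodules}(5)); this lets one replace $f_{\widehat k}$ by $\pi_{g_k^{-1}}(f_{k+1}\cdots f_\ell f_1\cdots f_{k-1})$ at the cost of conjugating the eigenbasis sum, which is absorbed back into $\pi$. I expect the main obstacle to be purely notational: carefully tracking the Galois elements $g_a$ versus $g_k$ through the cyclic rotations and through the tensor legs of $\xi_a$, and making sure the normalizing factor $\frac{1}{d_{h(a),t(a)}}$ matches on both sides. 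Once the bookkeeping is set up correctly, the combinatorics of grouping occurrences of $a$ is identical to \cite{DWZ1} and the proof is short.
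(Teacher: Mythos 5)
Your approach is essentially the same as the paper's, which likewise reduces to paths by linearity and continuity and then verifies the identity by grouping the occurrences of $a$ according to the factor $f_k$ in which they sit; the paper just packages this more cleanly via two intermediate observations, namely $\partial_a(f_1\cdots f_\ell)=\Delta_a(f_1\cdots f_\ell)\sqr 1$ and the product rule $\Delta_a(f_1\cdots f_\ell)=\sum_k f_1\cdots f_{k-1}\cdot\Delta_a(f_k)\cdot f_{k+1}\cdots f_\ell$, which together make the first equality a one-line computation using the cyclic property of $\sqr$.

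One concrete correction to the bookkeeping you were uneasy about: after reassembly the contribution of an occurrence of $a$ inside $f_k$ is $\tfrac{1}{d_{h(a),t(a)}}\sum_{\nu}\,\nu^{-1}\,X\,g_a(\nu)$ with $X=(\text{tail of }f_k)\,f_{\widehat k}\,(\text{head of }f_k)\in e_{t(a)}\RA{A}e_{h(a)}$, and writing $g_a(\nu)=\lambda_\nu\nu$ one checks $\nu^{-1}Xg_a(\nu)=\lambda_\nu\nu^{-1}X\nu=g_a^{-1}(\nu^{-1})X\nu$, so this sum is $\pi_{g_a^{-1}}(X)$ as required; the expression you wrote, $\sum_\nu g_a(\nu)(\cdot)\nu^{-1}$ in the corner $e_{h(a)}\RA{A}e_{t(a)}$, has both the corner and the placement of $g_a(\nu)$ versus $\nu^{-1}$ reversed and would yield $\pi_{g_a}$ rather than $\pi_{g_a^{-1}}$. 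For the projected form of the assertion, the paper's version of your ``pass the eigenbasis across'' remark is the single identity $\pi_{g_k}(\Delta_a(f_k))\sqr f_{\widehat k}=\Delta_a(f_k)\sqr\pi_{g_k^{-1}}(f_{\widehat k})$, which you should verify directly from the definition of $\sqr$ and Proposition \ref{prop:properties-of-J-K-bimodules}.
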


\begin{proof}
 By $F$-linearity and continuity of $\partial_a$, $\Delta_a$ and $\sqr$, it suffices to prove the first statement when the $f_k$ are paths.
 Then it is obvious that $\Delta_a(f_1 \cdots f_\ell) = \sum_k f_1 \cdots f_{k-1} \cdot \Delta_a(f_k) \cdot f_{k+1} \cdots f_\ell$.
 Thus
 \[
  \begin{array}{lllll}
   \partial_a(f_1 \cdots f_\ell)
   &=&
   \Delta_a(f_1 \cdots f_\ell) \sqr 1
   &=&
   \sum_k (f_1 \cdots f_{k-1} \cdot \Delta_a(f_k) \cdot f_{k+1} \cdots f_\ell) \sqr 1
   \\ [0.5em]
   &&&=&
   \sum_k \Delta_a(f_k) \sqr f_{k+1} \cdots f_\ell f_1 \cdots f_{k-1} \,.
  \end{array}
 \]
 The last statement follows from the identity $\pi_{g_k}(\Delta_a(f_k)) \sqr f_{\widehat{k}} = \Delta_a(f_k) \sqr \pi_{g_k^{-1}}(f_{\widehat{k}})$ for $g_k \in G_{i_k,i_{k+1}}$.
\end{proof}

As an easy consequence of Lemma~\ref{lem:cyclic-leibniz-rule} we get the cyclic chain rule (cf.\ \cite[Lemma~3.9]{DWZ1}), which implies that the Jacobian ideal of any potential~$S \in \RA{A}$ is mapped onto the Jacobian ideal of~$\varphi(S)$ under any $R$-algebra automorphism~$\varphi$ of~$\RA{A}$ (cf.\ \cite[Proposition~3.7]{DWZ1}).

\begin{lemma}[Cyclic chain rule]
  Let $\varphi : \RA{A} \to \RA{A}$ be an $R$-algebra endomorphism.
  Then for any $a \in Q_1$ and for any potential $S \in \RA{A}$ we have
  \[
    \partial_a(\varphi(S)) \:=\: \sum_{b \in Q_1} \Delta_a(\varphi(b)) \sqr \varphi(\partial_b(S)) \,.
  \]
  In particular, $J(\varphi(S)) = \varphi(J(S))$ if $\varphi$ is an automorphism.
\end{lemma}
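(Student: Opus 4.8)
The plan is to follow the route of \cite[Lemma~3.9 and Proposition~3.7]{DWZ1}, the only genuinely new feature being the need to keep track of the Galois twists carried by $\Delta_a$ (through the element $\xi_a$) and by $\partial_a$ (through the projection $\pi_{g_a^{-1}}$). Since $\partial_a$, $\Delta_a$, the pairing $\sqr$ and $\varphi$ are continuous and $F$-linear, and since finite $F$-linear combinations of cyclic paths are dense in $\RA{A}_{\operatorname{cyc}}$, it suffices to establish the identity when $S$ is a single cyclic path $S=\omega_0a_1\omega_1\cdots a_\ell\omega_\ell$ with $h(a_1)=t(a_\ell)$. Because $\varphi|_R=\myid_R$ and $\varphi$ is multiplicative, $\varphi(S)=\omega_0\varphi(a_1)\omega_1\cdots\varphi(a_\ell)\omega_\ell$. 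Grouping the scalars $\omega_i$ with the arrows, this exhibits $\varphi(S)$ as a cyclic product $f_1\cdots f_\ell$ with $f_k\in e_{h(a_k)}\RA{A}e_{t(a_k)}$ and, crucially, $f_k=\pi_{g_{a_k}}(f_k)$: indeed $\varphi(a_k)=\pi_{g_{a_k}}(\varphi(a_k))$ because $\varphi$ restricts to an $R$-$R$-bimodule homomorphism $A\to\maxid$ (see the Remark following Proposition~\ref{prop:automorphisms}), and left/right multiplication by elements of $F_{h(a_k)}$, $F_{t(a_k)}$ preserves $\image\pi_{g_{a_k}}$ since $\pi_{g_{a_k}}$ is an $F_{h(a_k)}$-$F_{t(a_k)}$-bimodule homomorphism (Proposition~\ref{prop:properties-of-J-K-bimodules}).

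Next I would apply the second form of the Cyclic Leibniz rule (Lemma~\ref{lem:cyclic-leibniz-rule}):
\[
\partial_a(\varphi(S))=\partial_a(f_1\cdots f_\ell)=\sum_{k=1}^\ell\Delta_a(f_k)\sqr f_{\widehat k},\qquad f_{\widehat k}=\pi_{g_{a_k}^{-1}}(f_{k+1}\cdots f_\ell f_1\cdots f_{k-1}).
\]
Since $\Delta_a$ is an $R$-$R$-bimodule homomorphism, $\Delta_a(f_k)$ differs from $\Delta_a(\varphi(a_k))$ only by the scalars $\omega_i$, which can be reabsorbed into the $\sqr$-argument. On the other hand, because $\varphi$ commutes with $\pi_{g_b^{-1}}$ (as $\varphi$ fixes $R$ pointwise and is multiplicative) and with concatenation of paths, one checks directly that $f_{k+1}\cdots f_\ell f_1\cdots f_{k-1}=\varphi(\omega_k a_{k+1}\cdots a_\ell\omega_\ell\omega_0a_1\cdots a_{k-1}\omega_{k-1})$. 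Grouping the summands according to the value $b=a_k$ of the removed arrow and using the definition~\eqref{eq:cyclic-derivative} of $\partial_b$ then yields
\[
\partial_a(\varphi(S))=\sum_{b\in Q_1}\Delta_a(\varphi(b))\sqr\varphi\bigl(\partial_b(S)\bigr),
\]
where the projection $\pi_{g_b^{-1}}$ appearing in $f_{\widehat k}$ is precisely the one built into $\partial_b$ in~\eqref{eq:cyclic-derivative}; its presence (rather than an unprojected rotation) is exactly what makes the terms assemble into $\varphi(\partial_b(S))$, and this is where the second statement of Lemma~\ref{lem:cyclic-leibniz-rule} is essential. This proves the chain rule. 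For the last assertion, if $\varphi$ is an $R$-algebra automorphism then the chain rule shows that each $\partial_a(\varphi(S))$ lies in the two-sided ideal generated by $\{\varphi(\partial_b(S)):b\in Q_1\}$; since $\varphi$ preserves $\maxid$ and is therefore continuous, the closure of that ideal is $\varphi(J(S))$, whence $J(\varphi(S))\subseteq\varphi(J(S))$. Applying this to $\varphi^{-1}$ and the potential $\varphi(S)$ gives $J(S)=J\bigl(\varphi^{-1}(\varphi(S))\bigr)\subseteq\varphi^{-1}\bigl(J(\varphi(S))\bigr)$, i.e.\ $\varphi(J(S))\subseteq J(\varphi(S))$, so equality holds.

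The main obstacle is the projection bookkeeping: verifying that the twist $\xi_a$ inside $\Delta_a$, the projection $\pi_{g_a^{-1}}$ inside $\partial_a$, and the extra projections $\pi_{g_k^{\pm1}}$ produced by the Cyclic Leibniz rule are all mutually compatible, so that the right-hand side of the chain rule is \emph{literally} $\sum_b\Delta_a(\varphi(b))\sqr\varphi(\partial_b(S))$ and not merely a cyclically-equivalent expression. In the simply-laced case of \cite{DWZ1} these twists are trivial and the matching is purely formal; here the relation $\xi_a x=g_a(x)\xi_a$ (which is what makes $\Delta_a$ well defined) together with Proposition~\ref{prop:properties-of-J-K-bimodules} does the work, and it must be checked that carrying the scalars $\omega_i$ and the structure functions $f,m$ along the cyclic product does not disturb this matching.
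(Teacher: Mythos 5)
Your argument is correct and follows essentially the same route as the paper's proof: reduce to a single cyclic path, apply the second form of the Cyclic Leibniz rule, and use the facts that $\Delta_a$ vanishes on $R$, that $\varphi(a_k)\in\image\pi_{g_{a_k}}$, and that $\pi_{g_{a_k}^{-1}}$ commutes with $\varphi$ since $\varphi|_R=\myid_R$. The only (cosmetic) difference is that you absorb the scalars $\omega_i$ into the arrow factors before applying the Leibniz rule, whereas the paper keeps them as separate factors and lets $\Delta_a(\varphi(\omega_i))=0$ kill their contributions.
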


\begin{proof}
 By the $F$-linearity and continuity of the maps $\partial_a$, $\partial_b$, $\Delta_a$, $\sqr$ and $\varphi$, we may assume $S = \omega_0 a_1 \omega_1 \cdots a_\ell \omega_\ell$ is a path.
 With the notation $S_{\widehat{k}} = \omega_k a_{k+1} \cdots a_\ell \cdot \omega_\ell \omega_0 \cdot a_1 \cdots a_{k-1} \omega_{k-1}$ one has \smash{$\partial_b(S) = \pi_{g_b^{-1}} \big( \sum_{k:a_k =b} S_{\widehat{k}} \big)$}.
 By Lemma~\ref{lem:cyclic-leibniz-rule} and straightforward calculations (note that $\Delta_a(\varphi(\omega_k)) = 0$ for all $0 \leq k \leq \ell$) we obtain
 \[
  \begin{array}{llllll}
   \partial_a(\varphi(S))
   &=&
   \sum_{k=1}^\ell \Delta_a(\varphi(a_k)) \sqr \varphi \big( \pi_{g_{a_k}^{-1}} \big( S_{\widehat{k}} \big) \big)
   \\ [0.5em]
   &=&
   \sum_{b \in Q_1} \Delta_a(\varphi(b)) \sqr \varphi \big( \pi_{g_{b}^{-1}} \big( \sum_{k : a_k = b} S_{\widehat{k}} \big) \big)
   &=&
   \sum_{b \in Q_1} \Delta_a(\varphi(b)) \sqr \varphi(\partial_b(S)) \,.
  \end{array}
 \]
 ~\\ [-1.7em]
\end{proof}

With the Leibniz and cyclic chain rule at hand, it possible to see that the statements as well as the proofs of Propositions~4.9, 4.10 and Lemmas~4.11, 4.12 from \cite{DWZ1} remain correct word for word in our species setting.
In particular, the statement of Proposition~4.9 from \cite{DWZ1}, interpreted in our setting, implies the uniqueness up to right-equivalence of $(A_{\operatorname{red}},S_{\operatorname{red}})$ in Theorem~\ref{thm:splitting-theorem}.

\subsection{Proof of Theorem \ref{thm:SP-mut-well-defined-up-to-re}}\label{subsec:proof-SP-mut-well-defined-up-to-re}

The proof of Theorem \ref{thm:SP-mut-well-defined-up-to-re} is similar to the proof of \cite[Theorem 8.3]{LZ}, and just as the latter, it is based on the same main idea of Derksen-Weyman-Zelevinsky's proof of \cite[Theorem 5.2]{DWZ1}. However, it requires an extra consideration that, although elementary, may be considered non-obvious. This consideration was not present (nor necessary) neither in \cite{DWZ1} nor in \cite{LZ}, hence we have decided not to omit it here.

Fix vertices $i,j\in Q_0$. For $\rho\in G_{i,j}$,
and $\myid_{F_{i}}\in G_{i}$ we define functions $\pi_{\rho^{-1}}:e_i \RA{A} e_j\rightarrow e_i \RA{A} e_j$, $\pi_{\rho}:e_j \RA{A} e_i\rightarrow e_j \RA{A} e_i$
and $\pi_{\myid_{F_{i}}}:e_i \RA{A} e_i\rightarrow e_i \RA{A} e_i$ according to the rules
$$
\pi_{\rho^{-1}}(m)=\frac{1}{d_{i,j}}\sum_{\omega\in\B_{i,j}}\rho^{-1}(\omega^{-1})m\omega,
\ \ \
\pi_{\rho}(n)=\frac{1}{d_{i,j}}\sum_{\omega\in\B_{i,j}}\rho(\omega^{-1})n\omega,\ \ \
\pi_{\myid_{F_i}}(x) = \frac{1}{d_i}\sum_{\omega\in\B_i}\omega^{-1}x\omega.
$$
These functions are well defined since $F$ acts centrally on $e_i \RA{A} e_j$, $e_j \RA{A} e_i$ and $e_i \RA{A} e_i$, and since the product of any two elements of $\B_{i,j}$ (resp. $\B_i$) is an $F$-multiple of some element of $\B_{i,j}$ (resp. $\B_i$).
Compare to Part (2) of Proposition \ref{prop:properties-of-J-K-bimodules}.

Take $\rho_1,\rho_2\in G_{i,j}$, $y\in\image\pi_{\rho_{1}^{-1}}$, and $z\in\image\pi_{\rho_2}$. We have
$yzv_{i,j}=y\rho_2(v_{i,j})z=\rho_1^{-1}\rho_2(v_{i,j})yz=\zeta v_{i,j} yz$ for some $d^{\operatorname{th}}$ root of unity $\zeta\in F$. If $\zeta\neq 1$, a straightforward calculation shows that $\pi_{\myid_{F_i}}(yz)=0$. Furthermore, we have $\zeta\neq 1$ if and only if $\rho_1\neq\rho_2$. We deduce that $\pi_{\myid_{F_i}}(yz)=0$ if $\rho_1\neq\rho_2$. Therefore, for all $m\in e_i \RA{A} e_j$ and $n\in e_j \RA{A} e_i$ we have
\begin{equation}\label{eq:iterated-isotypical-projections}
\pi_{\myid_{F_i}}(mn)=\pi_{\myid_{F_i}}\left(\left(\sum_{\rho_1\in G_{i,j}}\pi_{\rho_1^{-1}}(m)\right)\left(\sum_{\rho_2\in G_{i,j}}\pi_{\rho_2}(n)\right)\right)=
\sum_{\rho\in G_{i,j}}\pi_{\myid_{F_i}}\left(\pi_{\rho^{-1}}(m)\pi_{\rho}(n)\right).
\end{equation}

Equation \eqref{eq:iterated-isotypical-projections} is part of the extra consideration mentioned at the beginning of Subsection \ref{subsec:proof-SP-mut-well-defined-up-to-re}.

Consider the $R$-$R$-bimodule
$   \widehat{A} \:=\: A \oplus (e_k A)^* \oplus (A e_k)^* \,$,
where $(e_k A)^*=\operatorname{Hom}_F(e_kA,F)$ and $(A e_k)^*=\operatorname{Hom}_F(Ae_k,F)$.
The inclusion of $A$ as a direct summand of $\widehat{A}$ identifies $\RA{A}$ with an $R$-subalgebra of $\RA{\widehat{A}}$.
Furthermore, the function
$\iota:\widetilde{\mu}_k(Q)_1\rightarrow\RA{\widehat{A}}$ defined by
$$
\iota(c)=\begin{cases}
\pi_{\rho}(b\omega a) & \text{if $c=[b\omega a]_{\rho}$ is a composite arrow of $\widetilde{\mu}_k(A)$;}\\
c & \text{if $c$ is not a composite arrow in $\widetilde{\mu}_k(A)$},
\end{cases}
$$
where $\pi_{\rho}(b\omega a)=\frac{1}{d_{h(b),t(a)}}\sum_{\nu\in\B_{h(b),t(a)}}\rho(\nu^{-1})b\omega a\nu$, identifies $\widetilde{\mu}_k(A)$ with an $R$-$R$-subbimodule of $\RA{\widehat{A}}$ and hence identifies $\RA{\widetilde{\mu}_k(A)}$ with an $R$-subalgebra of $\RA{\widehat{A}}$.
Henceforth, we view $\RA{A}$ and $\RA{\widetilde{\mu}_k(A)}$ as $R$-subalgebras of $\RA{\widehat{A}}$. Under these identifications we have
$
\widetilde{\mu}_k(S) = S + \Delta_k(A),
$
where
\begin{equation}
  \Delta_k(A)
  \::=\:
  \sum_{\overset{a}{\to}k\overset{b}{\to}} \frac{1}{|\B_{b,a}|} \sum_{\omega\in\B_{b,a}} \omega^{-1} b^* b \omega a a^*
  \,.
 \end{equation}

Notice that
 \[
  \frac{1}{|\B_{b,a}|} \sum_{\omega\in\B_{b,a}} \omega^{-1} b^* b \omega a a^*
  \:=\: \frac{1}{|\B_{k}||\B_{k}|} \sum_{u \in \B_{k}, \, \nu \in \B_{k}} \nu^{-1} u^{-1} b^* b u \nu a a^*
  \:\sim_{\operatorname{cyc}}\:
  \pi_{\myid_{F_k}}(b^* b) \pi_{\myid_{F_k}}(a a^*),
  \, .
 \]
 which means that if we set $\Delta_{k,\mathrm{out}} = \sum_{b \in Q_1 \cap A e_k} \pi_{\myid_{F_k}}(b^* b)$ and $\Delta_{k,\mathrm{in}} = \sum_{a \in Q_1 \cap e_k A} \pi_{\myid_{F_k}}(a a^*)$, then we have
 \begin{equation}
  \label{eq:delta_k-ce-to-delta_out-delta_in}
  \Delta_k(A)
  \:\sim_{\operatorname{cyc}}\:
  \Delta_{k,\mathrm{out}} \Delta_{k,\mathrm{in}}
  \,.
 \end{equation}
 The observation~(\ref{eq:delta_k-ce-to-delta_out-delta_in}) makes it clear that Theorem~\ref{thm:SP-mut-well-defined-up-to-re} is a consequence the following lemma.

 \begin{lemma}
  Every $R$-algebra automorphism~$\varphi$ of $\RA{A}$ can be extended to an $R$-algebra automorphism~$\overline{\varphi}$ of $\RA{\widehat{A}}$ satisfying \ \
  $\overline{\varphi} \big ( \RA{\widetilde{\mu}_k(A)} \big) = \RA{\widetilde{\mu}_k(A)}$, \ \   $\overline{\varphi}(\Delta_{k,\mathrm{out}}) \:=\: \Delta_{k,\mathrm{out}}$ \ \ and \ \
   $\overline{\varphi}(\Delta_{k,\mathrm{in}}) \:=\: \Delta_{k,\mathrm{in}}$.
  %\begin{equation*}
%   \overline{\varphi} \big ( \RA{\widetilde{\mu}_k(A)} \big) = \RA{\widetilde{\mu}_k(A)} \,,
%  \end{equation*}
%  \\ [-1.5em]
%  \begin{equation*}
%   \overline{\varphi}(\Delta_{k,\mathrm{out}}) \:=\: \Delta_{k,\mathrm{out}}
%   \ \ \
%   \text{and}
%   \ \ \
%   \overline{\varphi}(\Delta_{k,\mathrm{in}}) \:=\: \Delta_{k,\mathrm{in}}
%   \,.
%  \end{equation*}
 \end{lemma}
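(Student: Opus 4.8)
The statement to be proved is the extension lemma: any $R$-algebra automorphism $\varphi$ of $\RA{A}$ extends to an $R$-algebra automorphism $\overline{\varphi}$ of $\RA{\widehat{A}}$ that preserves $\RA{\widetilde{\mu}_k(A)}$ and fixes both $\Delta_{k,\mathrm{out}}$ and $\Delta_{k,\mathrm{in}}$. The approach is the one from \cite[Theorem 5.2]{DWZ1} and \cite[Theorem 8.3]{LZ}: build $\overline{\varphi}$ by declaring how it acts on the three direct summands $A$, $(e_kA)^*$ and $(Ae_k)^*$ of $\widehat{A}$, then invoke Proposition \ref{prop:automorphisms} to produce the (unique, continuous) $R$-algebra homomorphism with those restrictions, and finally check it is an isomorphism and has the three desired properties. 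On $A$ we set $\overline{\varphi}|_A = \varphi|_A$. For the dual summands, first note that $\varphi$ fixes $e_k$ and hence restricts to $F$-linear automorphisms of $e_k\RA{A}$ and of $\RA{A}e_k$; after composing with the canonical projections onto $e_kA$ and $Ae_k$ (these are the degree-1 parts), one gets $F$-linear automorphisms $\varphi_{\mathrm{in}}$ of $e_kA$ and $\varphi_{\mathrm{out}}$ of $Ae_k$, whose leading terms are the bijective $\varphi^{(1)}$-components. Then define $\overline{\varphi}$ on $(e_kA)^*$ by $f \mapsto f \circ \varphi_{\mathrm{in}}^{-1}$ and on $(Ae_k)^*$ by $h \mapsto h\circ \varphi_{\mathrm{out}}^{-1}$. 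Because the leading term of $\varphi_{\mathrm{in}},\varphi_{\mathrm{out}}$ is an $R$-$R$-bimodule isomorphism, $\overline{\varphi}^{(1)}$ is an $R$-$R$-bimodule isomorphism on $\widehat{A}$, so by Proposition \ref{prop:automorphisms} the induced $\overline{\varphi}$ is an $R$-algebra automorphism of $\RA{\widehat{A}}$ extending $\varphi$.

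\textbf{Checking the three identities.} The heart of the matter is verifying $\overline{\varphi}(\Delta_{k,\mathrm{out}})=\Delta_{k,\mathrm{out}}$, $\overline{\varphi}(\Delta_{k,\mathrm{in}})=\Delta_{k,\mathrm{in}}$, and the preservation of $\RA{\widetilde{\mu}_k(A)}$. The key point is a ``pairing-invariance'' computation: for arrows $a,a'\in Q_1\cap e_kA$ one has, writing the dual basis appropriately, that $\sum_{a} \pi_{\myid_{F_k}}(a\,a^*)$ is intrinsically the image of the evaluation/coevaluation element $\sum_a a\otimes a^*$ under the natural contraction $e_kA \otimes_F (e_kA)^* \to e_k\RA{\widehat{A}}e_k$, symmetrized by $\pi_{\myid_{F_k}}$. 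Under $\overline{\varphi}$, the element $a$ maps to $\varphi_{\mathrm{in}}(a)$ plus higher-order corrections coming from $\varphi^{(2)}$, while $a^*$ maps to $a^*\circ\varphi_{\mathrm{in}}^{-1}$; the leading terms cancel by the standard linear-algebra identity $\sum_a \varphi_{\mathrm{in}}^{(1)}(a)\otimes (a^*\circ (\varphi_{\mathrm{in}}^{(1)})^{-1}) = \sum_a a\otimes a^*$, and the correction terms contribute elements killed by $\pi_{\myid_{F_k}}$ — this is exactly where equation \eqref{eq:iterated-isotypical-projections} (the ``extra consideration'') enters, guaranteeing that $\pi_{\myid_{F_k}}(mn) = \sum_{\rho\in G_{i,j}}\pi_{\myid_{F_k}}(\pi_{\rho^{-1}}(m)\pi_\rho(n))$ and hence that cross-terms with mismatched isotypical components drop out. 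The same reasoning handles $\Delta_{k,\mathrm{in}}$. For the preservation of $\RA{\widetilde{\mu}_k(A)}$, one observes that $\overline{\varphi}$ sends each composite ``arrow'' $\iota([b\omega a]_\rho)=\pi_\rho(b\omega a)$ to $\pi_\rho(\varphi(b)\,\omega\,\varphi(a))$, which lies in $\RA{\widetilde{\mu}_k(A)}$ because $\varphi(b)\in e_{h(b)}\RA{A}e_k$, $\varphi(a)\in e_k\RA{A}e_{t(a)}$, and any product of such elements, projected by $\pi_\rho$, expands into the composite arrows $[b'\omega' a']_{\rho'}$ via the decomposition \eqref{eq:tensor-prod-decomp-premut}; similarly $\overline{\varphi}$ sends $b^*,a^*$ to $F$-linear combinations of $b'^*,a'^*$, so $\overline{\varphi}$ stabilizes the subalgebra generated by $\widetilde{\mu}_k(A)$.

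\textbf{Main obstacle.} The routine linear algebra (dual bases, the identity for $\sum a\otimes a^*$ under automorphisms) is entirely standard; the genuine subtlety — and the only place the species setting departs from \cite{DWZ1} — is controlling the cross-terms in $\overline{\varphi}(\Delta_{k,\mathrm{out}})$ and $\overline{\varphi}(\Delta_{k,\mathrm{in}})$ when $\varphi^{(2)}\neq 0$, i.e.\ showing these extra contributions are cyclically trivial after applying $\pi_{\myid_{F_k}}$. This is precisely the content of \eqref{eq:iterated-isotypical-projections}, and so the proof reduces to: (a) expanding $\overline{\varphi}(\Delta_{k,\mathrm{out}})$ and $\overline{\varphi}(\Delta_{k,\mathrm{in}})$ as in \cite[proof of Theorem 8.3]{LZ}; (b) separating leading and higher-order terms; (c) applying \eqref{eq:iterated-isotypical-projections} to kill the mismatched isotypical pieces; and (d) recognizing the surviving leading term as $\Delta_{k,\mathrm{out}}$ (resp.\ $\Delta_{k,\mathrm{in}}$). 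I would present (a)–(d) following \cite{LZ} closely, pausing only at step (c) to invoke the displayed identity, since that is the one ingredient genuinely new to the species framework.
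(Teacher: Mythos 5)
Your proposal has a genuine gap in how $\overline{\varphi}$ is defined on the dual summands, and it misconstrues what equation~\eqref{eq:iterated-isotypical-projections} is for. You define $\overline{\varphi}$ on $(e_kA)^*$ by $a^* \mapsto a^* \circ \varphi_{\mathrm{in}}^{-1}$, where $\varphi_{\mathrm{in}}$ is the degree-$1$ truncation $\varphi^{(1)}|_{e_kA}$. This makes $\overline{\varphi}(a^*)$ a \emph{degree-one} element with no higher-order corrections. But then, writing $\varphi(a) = \varphi^{(1)}(a) + \varphi^{(2)}(a)$, one finds
\[
  \overline{\varphi}(\Delta_{k,\mathrm{in}})
  = \sum_a \pi_{\myid_{F_k}}\bigl(\varphi^{(1)}(a)\,\overline{\varphi}(a^*)\bigr)
  + \sum_a \pi_{\myid_{F_k}}\bigl(\varphi^{(2)}(a)\,\overline{\varphi}(a^*)\bigr).
\]
The first sum is $\Delta_{k,\mathrm{in}}$ by the dual-basis identity, but the second sum is a nonzero element of $e_k\maxid^3e_k$ whenever $\varphi^{(2)}|_{e_kA}\neq 0$, and $\pi_{\myid_{F_k}}$ does \emph{not} kill it: $\pi_{\myid_{F_k}}$ is an averaging projection, not an annihilation operator, and \eqref{eq:iterated-isotypical-projections} only expresses $\pi_{\myid_{F_k}}(mn)$ as a sum of terms $\pi_{\myid_{F_k}}(\pi_{\rho^{-1}}(m)\pi_\rho(n))$ — it does not make degree-$\geq 3$ cycles vanish. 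So with your naive definition, $\overline{\varphi}(\Delta_{k,\mathrm{in}}) \ne \Delta_{k,\mathrm{in}}$ in general.

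What the paper (following \cite[Lemma 8.4]{LZ}) actually does is build $\widehat{\varphi}(a^*)$ and $\widehat{\varphi}(b^*)$ as honest power series in $\RA{\widetilde{\mu}_k(A)}$, whose higher-order pieces are engineered precisely to absorb the contributions of $\varphi^{(2)}$. The cancellation identities \eqref{eq:phi-hat-leaves-delta_in-invariant} and \eqref{eq:phi-hat-leaves-delta_out-invariant} are nontrivial and imported wholesale from \cite{LZ}; they are not consequences of a dual-basis computation. Equation~\eqref{eq:iterated-isotypical-projections} is then used for something else entirely: since the $\widehat{\varphi}(c^*)$ from \cite{LZ} need not lie in the correct isotypical component $\pi_{g_c^{-1}}(e_{t(c)}\maxid e_{h(c)})$, one must set $\overline{\varphi}(c^*) := \pi_{g_c^{-1}}(\widehat{\varphi}(c^*))$, and \eqref{eq:iterated-isotypical-projections} — together with the fact that each arrow $c$ already lies in its own isotypical component — shows that this replacement leaves $\sum_a \pi_{\myid_{F_k}}(a\,\widehat{\varphi}(a^*))$ unchanged, so $\overline{\varphi}(\Delta_{k,\mathrm{in}}) = \Delta_{k,\mathrm{in}}$ follows. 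Your argument has the right shape for the leading term, but it has no mechanism for the higher-order terms and misattributes to~\eqref{eq:iterated-isotypical-projections} a cancellation it does not perform.
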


 \begin{proof}
  Set $\overline{\varphi}(c) = \varphi(c)$ for all $c \in Q_1$.
  This will ensure that $\overline{\varphi}$ is indeed an extension of $\varphi$.
  Now it only remains to define \smash{$\overline{\varphi}(a^*) \in \pi_{g_a^{-1}}(e_{t(a)} \mathfrak{m} e_{h(a)})$} and \smash{$\overline{\varphi}(b^*) \in \pi_{g_b^{-1}}(e_{t(b)} \mathfrak{m} e_{h(b)})$} for $a \in Q_1 \cap e_k A$ and $b \in Q_1 \cap A e_k$ such that $\overline{\varphi}$ is invertible and leaves the elements $\Delta_{k,\mathrm{out}}$ and $\Delta_{k,\mathrm{in}}$ and the subalgebra~\smash{$\RA{\widetilde{\mu}_k(A)}$} invariant.

  For $a \in Q_1 \cap e_k A$ and $b \in Q_1 \cap A e_k$ let $\widehat{\varphi}(a^*) \in e_{t(a)} \RA{\widetilde{\mu}_k(A)} e_{h(a)}$ and $\widehat{\varphi}(b^*) \in e_{t(b)} \RA{\widetilde{\mu}_k(A)} e_{h(b)}$ be the elements defined in the exact same way as in the proof of~\cite[Lemma~8.4]{LZ}.
  Then, the arguments given there guarantee the identities
  \begin{equation}
   \label{eq:phi-hat-leaves-delta_in-invariant}
   \sum_{a \in Q_1 \cap e_k A} \sum_{\omega \in \B_k} \omega a \widehat{\varphi}(a^*) \omega^{-1}
   \:=\:
   \sum_{a \in Q_1 \cap e_k A} \sum_{\omega \in \B_k} \omega a a^* \omega^{-1}
  \end{equation}
  \begin{equation}
   \label{eq:phi-hat-leaves-delta_out-invariant}
   \sum_{b \in Q_1 \cap A e_k} \sum_{\omega \in \B_k} \omega^{-1} \widehat{\varphi}(b^*) b \omega
   \:=\:
   \sum_{b \in Q_1 \cap A e_k} \sum_{\omega \in \B_k} \omega^{-1} b^* b \omega
  \end{equation}
  and they also imply that the map~\smash{$\widehat{\varphi}^{(1)}_{k}:(e_k A)^* \oplus (A e_k)^*\rightarrow (e_k A)^* \oplus (A e_k)^*$} given by \smash{$c^* \mapsto (\widehat{\varphi}(c^*))^{(1)}$} for $c \in Q_1 \cap (A e_k \cup e_k A)$ is an automorphism of the $R$-$R$-bimodule~\smash{$(e_k A)^* \oplus (A e_k)^*$}. In particular, \smash{$\pi_{g_c^{-1}}((\widehat{\varphi}_{k}(c^*))^{(1)}) = (\widehat{\varphi}_{k}(c^*))^{(1)}$}.

  Note that the identities (\ref{eq:phi-hat-leaves-delta_in-invariant}) and (\ref{eq:phi-hat-leaves-delta_out-invariant}) are stated as (8.15) and (8.20) in \cite{LZ} whereas the fact that \smash{$\widehat{\varphi}^{(1)}_{k}$} is invertible follows from the invertibility of the matrices $C_0$ and $D_0$ in \cite{LZ}.

  Let us now define \smash{$\overline{\varphi}(a^*) = \pi_{g_a^{-1}}(\widehat{\varphi}(a^*))$} and \smash{$\overline{\varphi}(b^*) = \pi_{g_b^{-1}}(\widehat{\varphi}(b^*))$} for $a \in Q_1 \cap e_k A$ and $b \in Q_1 \cap A e_k$.
  Then the invertibility of $\widehat{\varphi}^{(1)}_{k}$ and the observation that for $c \in Q_1 \cap (A e_k \cup e_k A)$ one has
  \[
   (\overline{\varphi}(c^*))^{(1)} = \pi_{g_c^{-1}}\big((\widehat{\varphi}(c^*))^{(1)}\big) = \widehat{\varphi}^{(1)}_{k}(c^*)
  \]
  imply that the $R$-$R$-bimodule endomorphism $\overline{\varphi}^{(1)} = \varphi^{(1)} \oplus \widehat{\varphi}^{(1)}_{k}$ of $\widehat{A} = A \oplus M$ is invertible.
  Thus $\overline{\varphi}$ is indeed an automorphism by Proposition~\ref{prop:automorphisms}.
  It is clear from the definition that
   $\overline{\varphi} \big ( \RA{\widetilde{\mu}_k(A)} \big) = \RA{\widetilde{\mu}_k(A)} \,$.
  So all that remains to be checked is $\overline{\varphi}(\Delta_{k,\mathrm{in}}) = \Delta_{k,\mathrm{in}}$ and $\overline{\varphi}(\Delta_{k,\mathrm{out}}) = \Delta_{k,\mathrm{out}}$.
  We will only verify the first of these identities because the proof of the other is similar.

  By definition of the projection~$\pi_{\myid_{F_k}}$ the identity (\ref{eq:phi-hat-leaves-delta_in-invariant}) can be rewritten as
  \[
   \sum_{a \in Q_1 \cap e_k A} \pi_{\myid_{F_k}} (a \widehat{\varphi}(a^*)) \:=\: \sum_{a \in Q_1 \cap e_k A} \pi_{\myid_{F_k}} (a a^*) \,.
  \]
  Applying (\ref{eq:iterated-isotypical-projections}) to each summand of the left hand side, one obtains
  \begin{equation}
   \label{eq:phi-bar-leaves-delta_in-invariant}
   \overline{\varphi}(\Delta_{k,\mathrm{in}})
   \:=\:
   \sum_{a \in Q_1 \cap e_k A} \pi_{\myid_{F_k}} (a \overline{\varphi}(a^*))
   \:=\:
   \sum_{a \in Q_1 \cap e_k A} \pi_{\myid_{F_k}} (a a^*)
   \:=\:
   \Delta_{k,\mathrm{in}}
   \ \,.
  \end{equation}
  ~\\ [-1em]
 \end{proof}

\subsection{Proof of Theorem \ref{thm:ideal-non-pending-flips<->SP-mutation}}\label{subsec:proof-ideal-non-pending-flips<->SP-mutation}

The proof will be achieved through a case-by-case check. We shall start with a description of all the cases and with a couple of considerations that will allow us to focus in a relatively small number of cases. In all cases, the general strategy will consist in identifying $(\Qsigma,\dtuple(\sigma))$ with a weighted subquiver of $\widetilde{\mu}_k(\Qtau,\dtuple(\tau))$ and exhibiting a right equivalence between $(\widetilde{\mu}_k(\Atau),\widetilde{\mu}_k(\Stau))$ and an SP $(\widetilde{\mu}_k(\Atau),\Ssigma^\sharp)$ that has $\ASsigma$ as reduced part (recall that $\widetilde{\mu}_k(\Atau)$ is the species of $(\widetilde{\mu}_k(\Qtau),\dtuple(\tau),\widetilde{\mu}_k(g(\tau)))$, see Definition \ref{def:SP-premutation}). The uniqueness of reduced parts up to right equivalence (cf. Theorem \ref{thm:splitting-theorem}) will then imply that $\mu_k\AStau$ is right-equivalent to $\ASsigma$ as desired.

Consider a puzzle-piece decomposition of $\tau$. In order to prove Theorem \ref{thm:ideal-non-pending-flips<->SP-mutation} it suffices to verify that its statement is true in the cases where $k$ is either an arc shared by two puzzle pieces, or a non-folded non-pending arc inside one of the puzzle pieces II, III, VI, VII from Figure \ref{Fig:puzzle_pieces_2}. The full set of possibilities for an arc shared by two puzzle pieces is given in Figure \ref{Fig:all_possible_matchings_2} (that this is the full set of possibilities can be seen by analyzing all possible ways of gluing outer sides of pairs of puzzle pieces).
        \begin{figure}[!ht]
                \centering
                \includegraphics[scale=.5]{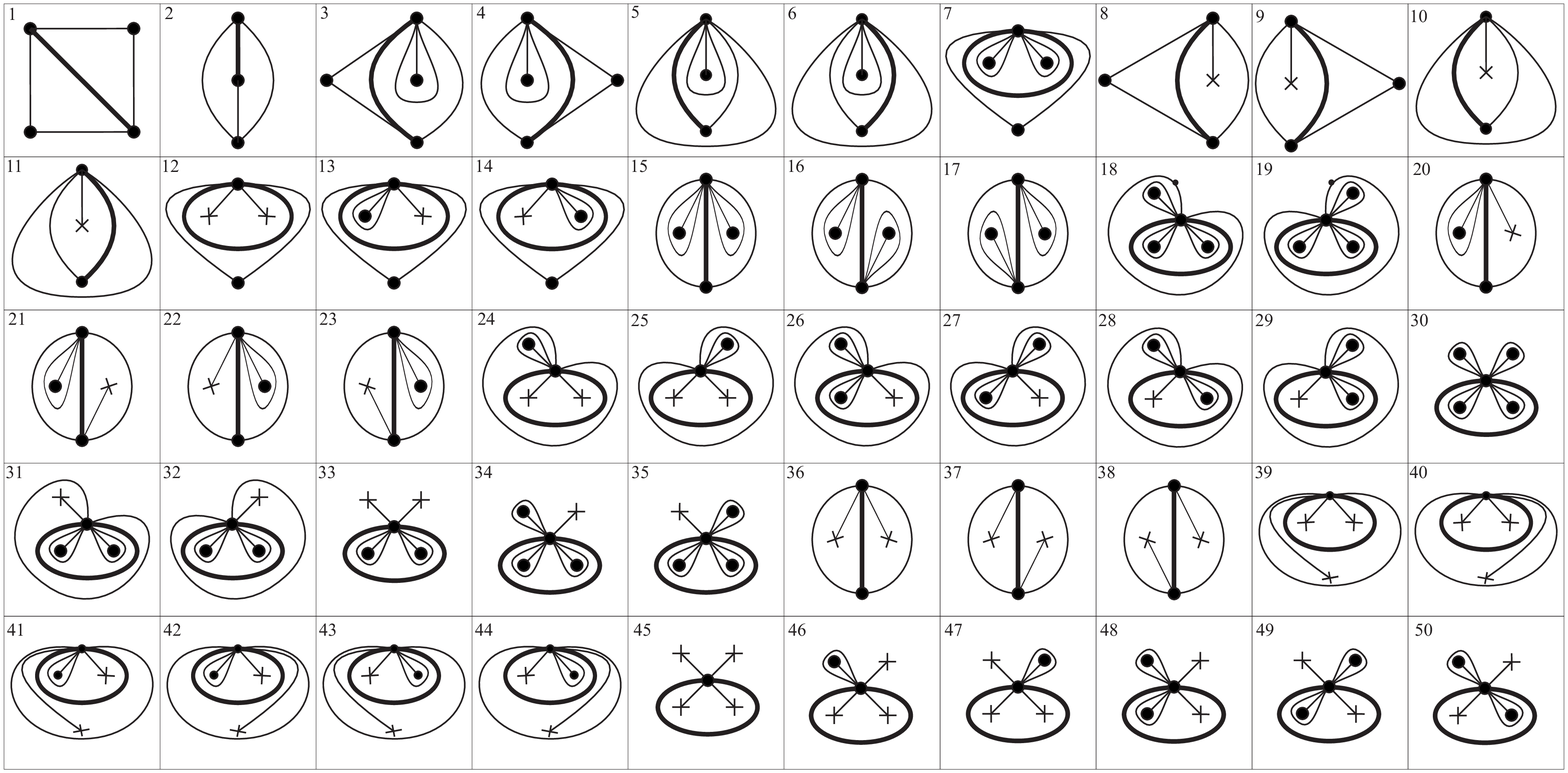}
                \caption{}\label{Fig:all_possible_matchings_2}
        \end{figure}
Note that in the configurations with numbers 2, 5, 6, 10, 11 in Figure \ref{Fig:all_possible_matchings_2}, the flip of the indicated arc results in a puzzle piece from Figure \ref{Fig:puzzle_pieces_2}.

Since we are assuming that $|\marked\cup\orb|>5$ in case $\Sigma$ is a (closed) sphere, we can, and will, ignore the configurations 30, 33, 34, 35, 45, 46, 47, 48, 49 and 50 in Figure \ref{Fig:all_possible_matchings_2}. Amongst the remaining configurations we notice the following flip correspondences:
\begin{equation}\label{eq:matching-cases}
\begin{array}{ccccc}
1 \leftrightarrow 1, &
3 \leftrightarrow 4, &
7 \leftrightarrow 15, &
8 \leftrightarrow 9, &
12 \leftrightarrow 36,\\
13 \leftrightarrow 20, &
14 \leftrightarrow 22, &
16 \leftrightarrow 17, &
18 \leftrightarrow 19, &
21 \leftrightarrow 23,\\
24 \leftrightarrow 42,&
25 \leftrightarrow 43,&
26 \leftrightarrow 32,&
27 \leftrightarrow 28,&
29 \leftrightarrow 31,\\
37 \leftrightarrow 38,&
39 \leftrightarrow 40,&
41 \leftrightarrow 44& &
\end{array}
\end{equation}

Thus, since flips and SP-mutations are involutive (the latter up to right-equivalence, cf.\ Theorem \ref{thm:SP-mutation-is-involution}), and since SP-mutations of right-equivalent SPs are right-equivalent (Theorem \ref{thm:SP-mut-well-defined-up-to-re}), in order to prove that the statement of Theorem \ref{thm:ideal-non-pending-flips<->SP-mutation} is true in the cases where $k$ is either an arc shared by two puzzle pieces, or a non-folded non-pending arc inside one of the puzzle pieces II, III, VI and VII from Figure \ref{Fig:puzzle_pieces_2}, it suffices to verify the statement of Theorem \ref{thm:ideal-non-pending-flips<->SP-mutation} for the flips of non-folded non-pending arcs inside the puzzle pieces II, III, VI and VII, and for the flips indicated in the configurations 1, 3, 7, 8, 13, 14, 16, 18, 21, 24, 25, 26, 27, 29, 36, 37, 39 and 41 from Figure \ref{Fig:all_possible_matchings_2}.

It is easy to see that if $k$ is a non-folded arc inside one of the puzzle pieces II and III, or if $k$ is the bold arc in one of the configurations 1, 3, 7, 16 and 18, then the computations made in \cite{Labardini1} to prove \cite[Theorem 30]{Labardini1} (which is nothing but the particular case $\orb=\varnothing$ of Theorem \ref{thm:ideal-non-pending-flips<->SP-mutation}) can be applied \emph{as are} in our current, more general, situation to show that
$\mu_k\AStau$ and $\ASsigma$ are right-equivalent. Thus, in order to prove Theorem \ref{thm:ideal-non-pending-flips<->SP-mutation} we can restrict our attention to the flips of non-folded non-pending arcs inside the puzzle pieces VI and VII, and to the flips indicated in the configurations 8, 13, 14, 21, 24, 25, 26, 27, 29, 36, 37, 39 and 41 from Figure \ref{Fig:all_possible_matchings_2}. These are the cases we shall check in a few moments.

\setcounter{case}{0}
\begin{case}\label{case:VI}\emph{Flip inside puzzle piece VI}. We adopt the notation from Figure \ref{Fig:flip_VI}.
\begin{figure}[!ht]
%                ~\\
                \centering
                \includegraphics[scale=.5]{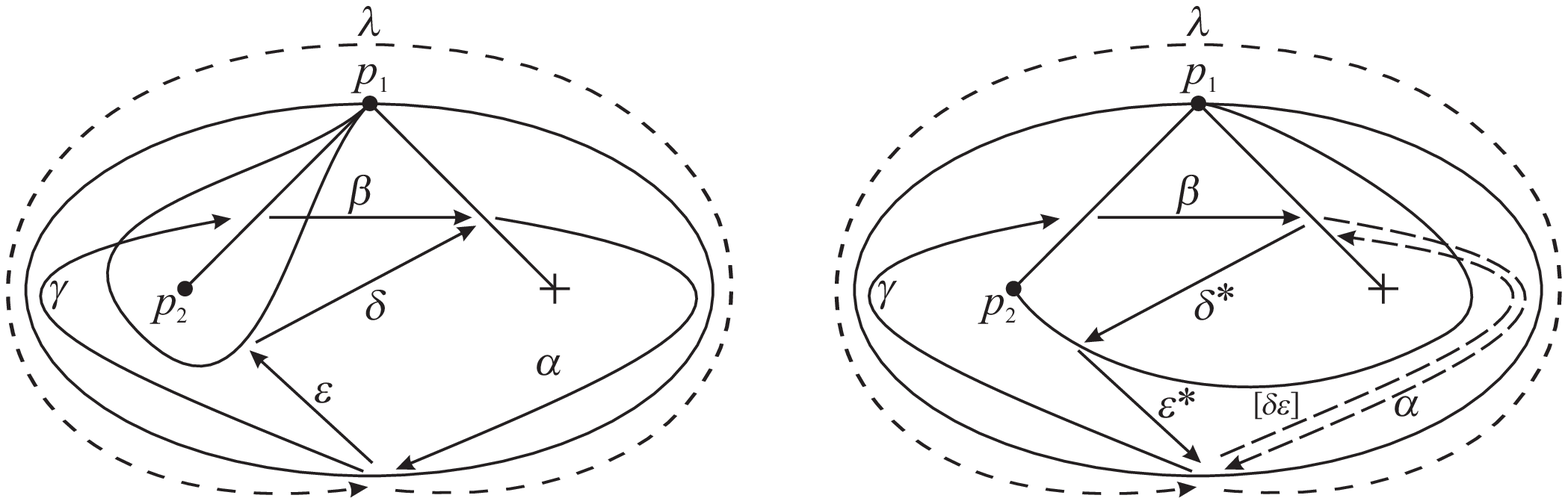}
                \caption{Flip of the unique non-pending non-folded arc inside puzzle piece VI.\ \ \ \ \ \ \ \ \ \ \ \ \ \ \ \ \ \ \ \
                Left: $\tau$ and $\Qtau$.
                Right: $\sigma$ and $\widetilde{\mu}_k(\Qtau)$.}
                \label{Fig:flip_VI}
        \end{figure}
Note that $\ASsigma$ is the reduced part of $(\widetilde{\mu}_k(\Atau),\Ssigma^\sharp)$, where
\begin{eqnarray}\nonumber
\Ssigma^\sharp  & = &
\alpha[\delta\varepsilon]+
-x_{p_2}^{-1}\delta^*\beta \gamma\varepsilon^*+x_{p_2}^{-1}\delta^*v\beta \gamma\varepsilon^*+x_{p_1}\varepsilon^*\delta^*\beta\gamma\lambda
+S(\tau,\sigma)
\in\RA{\widetilde{\mu}_k(\Atau)},
\end{eqnarray}
with $S(\tau,\sigma)\in\RA{\Atau}\cap\RA{\Asigma}$. % being a potential not involving any of the arrows $\alpha$, $\beta$, $\gamma$, $\delta$, $\varepsilon$.
Furthermore,
\begin{eqnarray}
\nonumber
\Stau & = &
\alpha\delta\varepsilon - \alpha v\delta\varepsilon
-x_{p_2}^{-1}\alpha\beta\gamma + x_{p_2}^{-1}\alpha v\beta\gamma
+
x_{p_1}\alpha\beta\gamma\lambda
+S(\tau,\sigma)\\
\nonumber
\text{and} \ \ \ \ \widetilde{\mu}_k(\Stau)
& \sim_{\operatorname{cyc}} &
[\delta\varepsilon]\alpha -  v[\delta\varepsilon]\alpha
-x_{p_2}^{-1}\alpha\beta\gamma + x_{p_2}^{-1}\alpha v\beta\gamma
+
x_{p_1}\alpha\beta\gamma\lambda
+
[\delta\varepsilon]\varepsilon^*\delta^*
+S(\tau,\sigma).
\end{eqnarray}

Define $R$-algebra automorphisms $\psi,\varphi,\Phi:\RA{\widetilde{\mu}_k(\Atau)}\rightarrow\RA{\widetilde{\mu}_k(\Atau)}$ according to the rules
\begin{center}
\begin{tabular}{ccll}
$\psi$ & : &
$[\delta\varepsilon]\mapsto\left(\frac{1+v}{2}\right)[\delta\varepsilon]$ &\\
$\varphi$ &:&
$[\delta\varepsilon]\mapsto[\delta\varepsilon]
+x_{p_2}^{-1}\beta\gamma - x_{p_2}^{-1} v\beta\gamma-x_{p_1}\beta\gamma\lambda$,&
$\alpha\mapsto\alpha-\varepsilon^*\delta^*\left(\frac{1+v}{2}\right)$\\
$\Phi$ &:&
$\delta^*\mapsto\delta^*(1-v)$,&
$\varepsilon^*\mapsto-\varepsilon^*$.
\end{tabular}
\end{center}
%\begin{eqnarray}
%\nonumber
%\psi & : &
%[\delta\varepsilon]\mapsto\left(\frac{1+v}{2}\right)[\delta\varepsilon]\\
%\nonumber
%\varphi &:&
%[\delta\varepsilon]\mapsto[\delta\varepsilon]
%+x_{p_2}^{-1}\beta\gamma - x_{p_2}^{-1} v\beta\gamma-x_{p_1}\beta\gamma\lambda\\
%\nonumber
%&&
%\alpha\mapsto\alpha-\varepsilon^*\delta^*\left(\frac{1+v}{2}\right)
%\end{eqnarray}
%\begin{eqnarray}
%\nonumber
%\Phi &:&
%\delta^*\mapsto\delta^*(1-v)\\
%\nonumber
%&&
%\varepsilon^*\mapsto-\varepsilon^*
%\end{eqnarray}

Direct computation shows that
the composition $\Phi\varphi\psi$ is a right-equivalence $(\widetilde{\mu}_k(\Atau),\widetilde{\mu}_k(\Stau))\rightarrow(\widetilde{\mu}_k(\Atau),\Ssigma^\sharp)$. Whence the reduced parts of these two SPs are right-equivalent. But these reduced parts are precisely $\mu_k\AStau$ and $\ASsigma$, respectively.
\end{case}

\begin{case}\emph{Flip inside puzzle piece VII}. Similar to Case \ref{case:VI}.
\end{case}

\begin{case}\label{case:flip-8}\emph{Flip of configuration 8}.
Configuration 8 has been drawn in Figure \ref{Fig:all_possible_matchings_2} as a planar figure; more precisely, as a planar disc with 3 marked points on its boundary and 1 orbifold point. However, different marked points on the boundary of the disc may actually represent the same puncture in $\surf$. Thus, when proving that the statement of Theorem \ref{thm:ideal-non-pending-flips<->SP-mutation} holds for Configuration 8, we have to consider all the possible ways of identifying the 3 different marked points. This consideration is reflected in Figure \ref{Fig:flip_8},
where the 6 possible identifications are drawn. The reader can easily see that the terms contributed to the potentials $\Stau$ and $\Ssigma$ by the 3 marked points on the boundary of Configuration 8 really depend on how these marked points are identified on the surface $\surf$. The reader can also see that the terms contributed to $\Stau$ and $\Ssigma$ by the ideal triangles contained in Configuration 8  are independent from the identifications. Consequently, we will write each of the potentials $\Stau$ and $\Ssigma^\sharp$ as a sum $P+P'$, where $P$ is independent of the identifications and $P'$ takes all possible identifications into account. To be able to encode in $P'$ all 6 identifications, we will write $P'$ as a sum $Y_1P_1''+Y_2P_2''+Y_3P_3''+Y_4P_4''+Y_5P_5''+Y_6P_6''$, where each $P_\ell''$ is determined by an identification, and $Y_1,Y_2,Y_3,Y_4,Y_5,Y_6\in\{0,1\}\subseteq F$ are scalars with the property that exactly one of them is equal to $1$ and all the rest are equal to $0$.

We warn the reader that a similar consideration about identifications will be (tacitly) made in Cases \ref{case:13}, \ref{case:flip-21}, \ref{case:flip-36} and \ref{case:flip-37} below, which respectively deal with the configurations  13, %14,
21, 36 and 37 of Figure \ref{Fig:all_possible_matchings_2}. In those cases, the potentials will be directly written in the form $P+P'$ as above, but no further explanation or apology will be given.

In the case currently under analysis, with the notation from Figure \ref{Fig:flip_8} we see that $\ASsigma$ is the reduced part of $(\widetilde{\mu}_k(\Atau),\Ssigma^\sharp)$, where
\begin{eqnarray}\nonumber
\Ssigma^\sharp  & = &  [\varepsilon\eta]\delta+[\gamma\alpha]\beta+
[\varepsilon\alpha]\alpha^*\varepsilon^*+[\gamma\eta]\eta^*\gamma^*-[\gamma\eta]\eta^*\gamma^*v\\
\nonumber
& + & Y_1(x_{p_1}\alpha^*\gamma^*[\gamma\eta]\nu + x_{p_2}\eta^*\varepsilon^*\lambda + x_{p_3}[\varepsilon\alpha]\rho)
\ \ + \ \ Y_2(x_{p_1} \alpha^*\gamma^*[\gamma\eta]\lambda\eta^*\varepsilon^*\nu + x_{p_3}[\varepsilon\alpha]\rho)\\
\nonumber
& + & Y_3(x_{p_1}\alpha^*\gamma^*[\gamma\eta]\nu[\varepsilon\alpha]\rho +x_{p_2}\eta^*\varepsilon^*\lambda)
\ \ + \ \ Y_4(x_{p_1}\alpha^*\gamma^*[\gamma\eta]\nu + x_{p_2}\eta^*\varepsilon^*\lambda[\varepsilon\alpha]\rho )\\
\nonumber
& + & Y_5(x_{p_1}\alpha^*\gamma^*[\gamma\eta]\lambda[\varepsilon\alpha]\nu\eta^*\varepsilon^*\rho)
\ \ + \ \ Y_6(x_{p_1}\alpha^*\gamma^*[\gamma\eta]\rho\eta^*\varepsilon^*\lambda[\varepsilon\alpha]\nu)
\ \ +\ \ S(\tau,\sigma)
\in\RA{\widetilde{\mu}_k(\Atau)},
\end{eqnarray}
with $S(\tau,\sigma)\in\RA{\Atau}\cap\RA{\Asigma}$. % being a potential not involving any of the arrows $\alpha$, $\beta$, $\gamma$, $\delta$, $\varepsilon$, $\eta$.
\begin{figure}[!ht]
%                ~\\
                \centering
                \includegraphics[scale=.3]{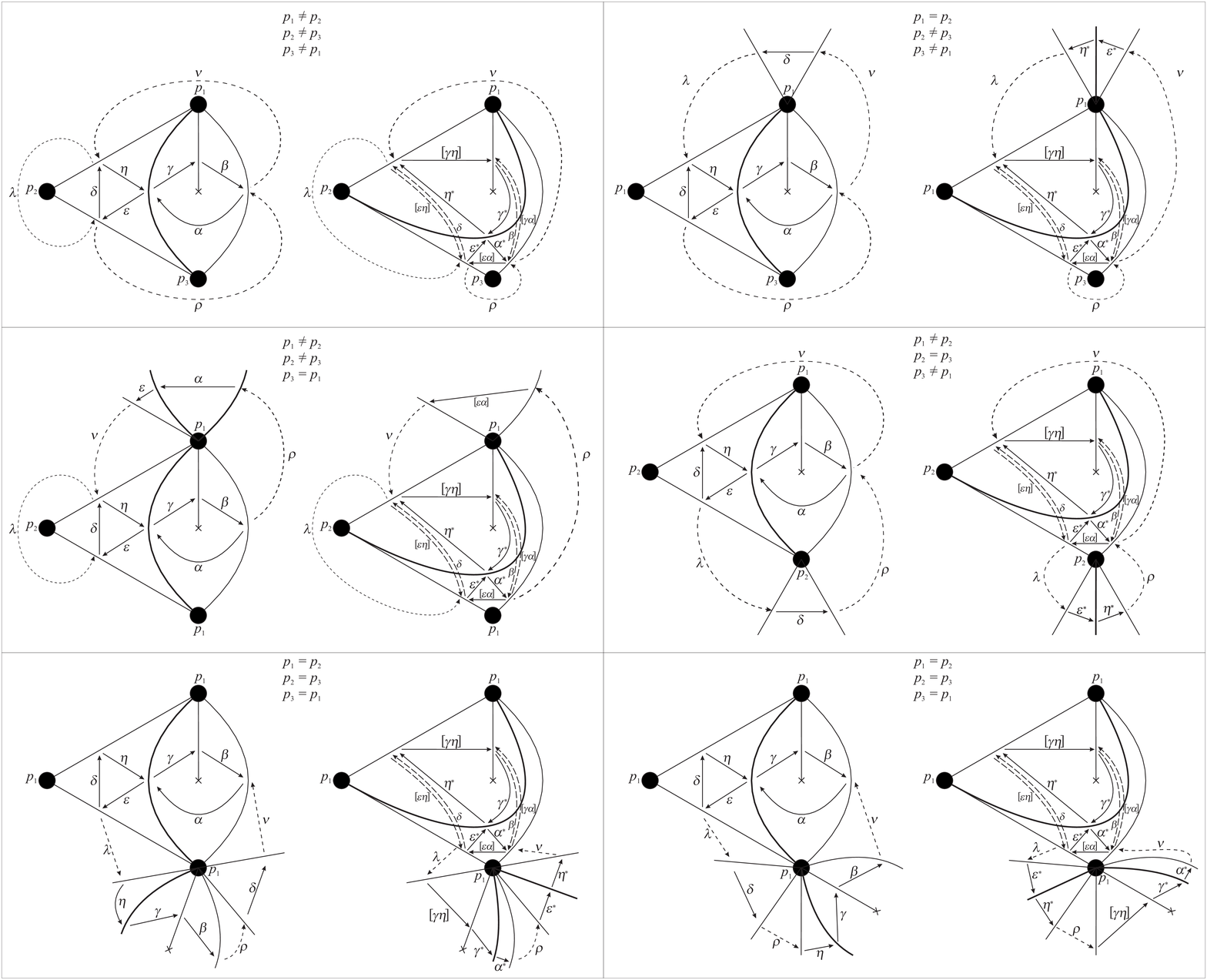}
                 \caption{Flip of the $8^{\operatorname{th}}$ configuration of Figure \ref{Fig:all_possible_matchings_2}.
                 Here we see the six possible ways to identify the three marked points in the boundary of Configuration 8. Inside each rectangle, $\tau$ and $\Qtau$ appear on the left, and $\sigma$ and $\widetilde{\mu}_k(\Qtau)$ appear on the right.}
                \label{Fig:flip_8}
        \end{figure}
Furthermore,
\begin{eqnarray}\nonumber
\Stau & = &
\varepsilon\eta\delta + \alpha\beta\gamma -\alpha\beta v\gamma\\
\nonumber
& + & Y_1(x_{p_1}\beta\gamma\eta\nu + x_{p_2}\delta\lambda + x_{p_3}\varepsilon\alpha\rho)
 \ \ + \ \ Y_2(x_{p_1}\beta\gamma\eta\lambda\delta\nu+x_{p_3}\varepsilon\alpha\rho)\\
\nonumber
& + & Y_3(x_{p_1}\beta\gamma\eta\nu\varepsilon\alpha\rho+x_{p_2}\delta\lambda)
\ \ + \ \ Y_4(x_{p_1}\beta\gamma\eta\nu+x_{p_2}\varepsilon\alpha\rho\delta\lambda)\\
\nonumber
& + & Y_5(x_{p_1}\beta\gamma\eta\lambda\varepsilon\alpha\nu\delta\rho)
\ \ + \ \ Y_6(x_{p_1}\beta\gamma\eta\rho\delta\lambda\varepsilon\alpha\nu)
\ \ +\ \ S(\tau,\sigma)\\
\nonumber
\text{and} \ \ \ \ \
\widetilde{\mu}_k(\Stau) & = &
[\varepsilon\eta]\delta + [\gamma\alpha]\beta -[\gamma\alpha]\beta v\\
\nonumber
& + & Y_1(x_{p_1}\beta[\gamma\eta]\nu + x_{p_2}\delta\lambda + x_{p_3}[\varepsilon\alpha]\rho)
\ \  + \ \  Y_2(x_{p_1}\beta[\gamma\eta]\lambda\delta\nu+x_{p_3}[\varepsilon\alpha]\rho)\\
\nonumber
& + & Y_3(x_{p_1}\beta[\gamma\eta]\nu[\varepsilon\alpha]\rho+x_{p_2}\delta\lambda)
\ \  + \ \ Y_4(x_{p_1}\beta[\gamma\eta]\nu+x_{p_2}[\varepsilon\alpha]\rho\delta\lambda)\\
\nonumber
& + & Y_5(x_{p_1}\beta[\gamma\eta]\lambda[\varepsilon\alpha]\nu\delta\rho)
\ \  + \ \ Y_6(x_{p_1}\beta[\gamma\eta]\rho\delta\lambda[\varepsilon\alpha]\nu)\\
\nonumber
& + & [\varepsilon\eta]\eta^*\varepsilon^*+
[\gamma\alpha]\alpha^*\gamma^*+
[\varepsilon\alpha]\alpha^*\varepsilon^*+
[\gamma\eta]\eta^*\gamma^*
+S(\tau,\sigma).
\end{eqnarray}

Define $R$-algebra automorphisms $\psi,\varphi_1,\varphi_2,\Phi:\RA{\widetilde{\mu}_k(\Atau)}\rightarrow\RA{\widetilde{\mu}_k(\Atau)}$ according to the rules
\begin{center}
{\renewcommand{\arraystretch}{1.125}
\begin{tabular}{ccl}
$\psi$ & : & $\beta\mapsto \beta\left(\frac{1+v}{2}\right)$,\\
$\varphi_1$ & : &
$[\varepsilon\eta]\mapsto[\varepsilon\eta]-\left(Y_1( x_{p_2}\lambda)+Y_2(x_{p_1}\nu\beta\left(\frac{1+v}{2}\right)[\gamma\eta]\lambda) +  Y_3(x_{p_2}\lambda) +  Y_4(x_{p_2}\lambda[\varepsilon\alpha]\rho)\right.$ \\
&&
 $\left. + Y_5(x_{p_1}\rho\beta\left(\frac{1+v}{2}\right)[\gamma\eta]\lambda[\varepsilon\alpha]\nu)
 +  Y_6(x_{p_1}\lambda[\varepsilon\alpha]\nu\beta\left(\frac{1+v}{2}\right)[\gamma\eta]\rho)\right)$, \\
&&$\delta\mapsto\delta-\eta^*\varepsilon^*$,\\
&&$[\gamma\alpha]\mapsto[\gamma\alpha]-\left(Y_1(x_{p_1}\left(\frac{1+v}{2}\right)[\gamma\eta]\nu) +  Y_3(x_{p_1}\left(\frac{1+v}{2}\right)[\gamma\eta]\nu[\varepsilon\alpha]\rho) +  Y_4(x_{p_1}\left(\frac{1+v}{2}\right)[\gamma\eta]\nu)\right)$,\\
&&$\beta\mapsto\beta-\alpha^*\gamma^*$,\\
$\varphi_2$ & : &
$[\varepsilon\eta]\mapsto[\varepsilon\eta]-\left( Y_2(-x_{p_1}\nu\alpha^*\gamma^*\left(\frac{1+v}{2}\right)[\gamma\eta]\lambda )
 +
Y_5(-x_{p_1}\rho\alpha^*\gamma^*\left(\frac{1+v}{2}\right)[\gamma\eta]\lambda[\varepsilon\alpha]\nu)\right.$ \\
&& $\left.+Y_6(-x_{p_1}\lambda[\varepsilon\alpha]\nu\alpha^*\gamma^*\left(\frac{1+v}{2}\right)[\gamma\eta]\rho)\right)$,\\
&&
$[\gamma\alpha]\mapsto[\gamma\alpha]-\left(
   Y_2(-x_{p_1}\left(\frac{1+v}{2}\right)[\gamma\eta]\lambda\eta^*\varepsilon^*\nu )
   +
Y_5(-x_{p_1}\left(\frac{1+v}{2}\right)[\gamma\eta]\lambda[\varepsilon\alpha]\nu\eta^*\varepsilon^*\rho)\right.$\\
&& $\left.+  Y_6(-x_{p_1}\left(\frac{1+v}{2}\right)[\gamma\eta]\rho\eta^*\varepsilon^*\lambda[\varepsilon\alpha]\nu)\right)$,\\
$\Phi$ &:&
$\gamma^*\mapsto\gamma^*(1-v), \ \ \ \alpha^*\mapsto-\alpha^*, \ \ \ \varepsilon^*\mapsto-\varepsilon^*$.
\end{tabular}
}
\end{center}

Direct computation shows that
the composition $\Phi\varphi_2\varphi_1\psi$ is a right-equivalence $(\widetilde{\mu}_k(\Atau),\widetilde{\mu}_k(\Stau))\rightarrow(\widetilde{\mu}_k(\Atau),\Ssigma^\sharp)$. Whence the reduced parts of these two SPs are right-equivalent. But these reduced parts are precisely $\mu_k\AStau$ and $\ASsigma$, respectively.
\end{case}

\begin{case}\label{case:13}\emph{Flip of configuration 13}. See the discussion in the first and second paragraphs of Case \ref{case:flip-8}. We adopt the notation from Figure \ref{Fig:flip_13}.
Note that $\ASsigma$ is the reduced part of $(\widetilde{\mu}_k(\Atau),\Ssigma^\sharp)$, where
\begin{eqnarray}\nonumber
\Ssigma^\sharp  & = &
[\varepsilon\lambda]\eta  + [\kappa\alpha]\delta + [\gamma\alpha]\beta
+ [\kappa\lambda]\lambda^*\kappa^* + [\varepsilon\alpha]\alpha^*\varepsilon^* - [\varepsilon\alpha]v\alpha^*\varepsilon^*- x_{p_3}^{-1}[\gamma\lambda]\lambda^*\gamma^*\\
\nonumber
&+& Y_1(x_{p_1}[\varepsilon\alpha]\alpha^*\gamma^*[\gamma\lambda]\nu+x_{p_2}\lambda^*\varepsilon^*\rho)
\ \ + \ \ Y_2(x_{p_1}[\varepsilon\alpha]\alpha^*\gamma^*[\gamma\lambda]\nu\lambda^*\varepsilon^*\rho)
\ \ +\ \ S(\tau,\sigma)
\in\RA{\widetilde{\mu}_k(\Atau)},
\end{eqnarray}
with $S(\tau,\sigma)\in\RA{\Atau}\cap\RA{\Asigma}$. % being a potential not involving any of the arrows $\alpha$, $\beta$, $\gamma$, $\delta$, $\varepsilon$, $\eta$, $\lambda$, $\kappa$.
Here, $Y_1$ and $Y_2$ are elements of $F$ with the property that exactly one of them is equal to $1$ and exactly one is equal to $0$.
Furthermore,
\begin{eqnarray}\nonumber
\Stau & = &
\varepsilon\lambda\eta + \kappa\alpha\delta-\kappa\alpha v\delta
+ x_{p_3}^{-1}\gamma\alpha v\beta
- x_{p_3}^{-1}\gamma\alpha\beta\\
\nonumber
&+& Y_1(x_{p_1}\varepsilon\alpha\beta\gamma\lambda\nu+x_{p_2}\eta\rho)
\ \ + \ \ Y_2(x_{p_1}\varepsilon\alpha\beta\gamma\lambda\nu\eta\rho)
\ \ + \ \ S(\tau,\sigma)\\
\nonumber
\text{and} \ \ \ \ \
\widetilde{\mu}_k(\Stau) & = &
[\varepsilon\lambda]\eta + [\kappa\alpha]\delta-[\kappa\alpha] v\delta
+ x_{p_3}^{-1}[\gamma\alpha] v\beta
- x_{p_3}^{-1}[\gamma\alpha]\beta\\
\nonumber
&+& Y_1(x_{p_1}[\varepsilon\alpha]\beta[\gamma\lambda]\nu+x_{p_2}\eta\rho)
\ \ + \ \ Y_2(x_{p_1}[\varepsilon\alpha]\beta[\gamma\lambda]\nu\eta\rho)\\
\nonumber
&+& [\varepsilon\lambda]\lambda^*\varepsilon^*
+ [\kappa\alpha]\alpha^*\kappa^*
+[\gamma\alpha]\alpha^*\gamma^*
+[\varepsilon\alpha]\alpha^*\varepsilon^*
+[\gamma\lambda]\lambda^*\gamma^*
+[\kappa\lambda]\lambda^*\kappa^*
+S(\tau,\sigma).
\end{eqnarray}
\begin{figure}[!ht]
%                ~\\
                \centering
                \includegraphics[scale=.5]{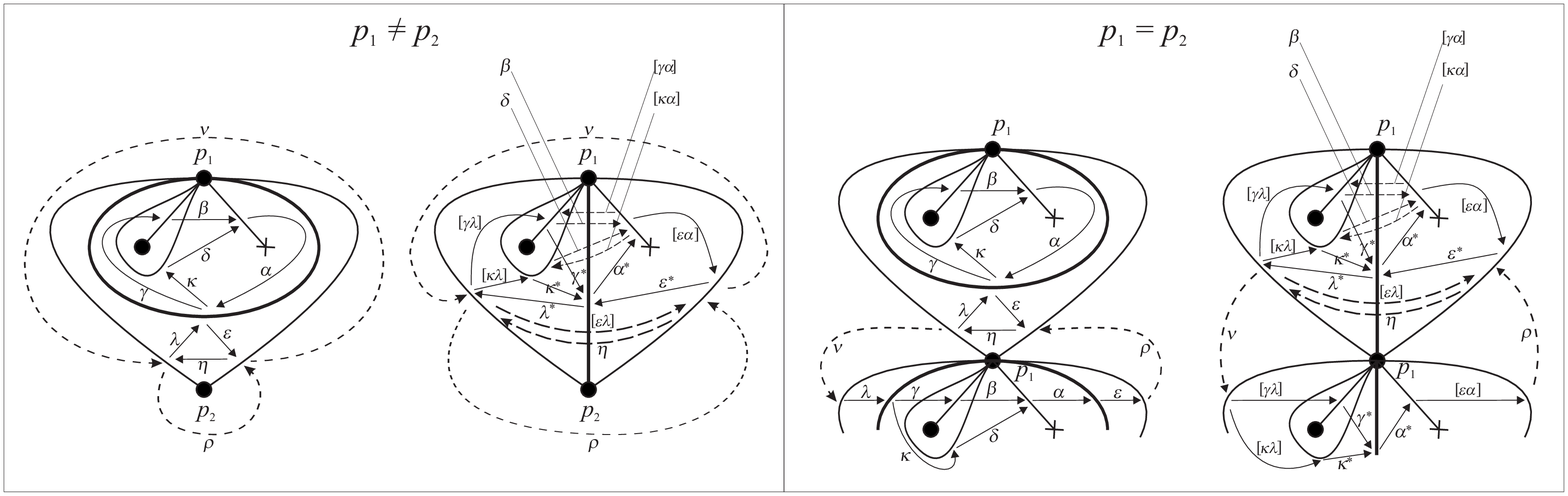}
                \caption{Flip of the $13^{\operatorname{th}}$ configuration of Figure \ref{Fig:all_possible_matchings_2}. Here we see the two possible ways to identify the two marked points in the boundary of Configuration 13. Inside each rectangle, $\tau$ and $\Qtau$ appear on the left, and $\sigma$ and $\widetilde{\mu}_k(\Qtau)$ appear on the right.}
                \label{Fig:flip_13}
        \end{figure}
Define $R$-algebra automorphisms $\psi,\varphi_1,\varphi_2,\Phi:\RA{\widetilde{\mu}_k(\Atau)}\rightarrow\RA{\widetilde{\mu}_k(\Atau)}$ according to the rules
\begin{center}
{\renewcommand{\arraystretch}{1.125}
\begin{tabular}{ccll}
$\psi$ &:& $[\kappa\alpha]\mapsto[\kappa\alpha]\left(\frac{1+v}{2}\right)$, &$[\gamma\alpha]\mapsto[\gamma\alpha]\left(\frac{-1-v}{2}\right)$, \\
$\varphi_1$ & : &
$[\varepsilon\lambda]\mapsto[\varepsilon\lambda]-Y_1x_{p_2}\rho-Y_2x_{p_1}\rho[\varepsilon\alpha]\beta[\gamma\lambda]\nu$,&
$\eta\mapsto\eta-\lambda^*\varepsilon^*$, \\
&&
$\delta\mapsto\delta-\left(\frac{1+v}{2}\right)\alpha^*\kappa^*$, &
$[\gamma\alpha]\mapsto [\gamma\alpha]-Y_1x_{p_1}x_{p_3}[\gamma\lambda]\nu[\varepsilon\alpha]$,\\
&&
$\beta\mapsto\beta-x_{p_3}\left(\frac{-1-v}{2}\right)\alpha^*\gamma^*$,\\
$\varphi_2$ &:&
$[\varepsilon\lambda]\mapsto[\varepsilon\lambda]+Y_2x_{p_1}x_{p_3}\rho[\varepsilon\alpha]\left(\frac{-1-v}{2}\right)\alpha^*\gamma^*[\gamma\lambda]\nu$,&
$[\gamma\alpha]\mapsto [\gamma\alpha]+Y_2x_{p_1}x_{p_3}[\gamma\lambda]\nu\lambda^*\varepsilon^*\rho[\varepsilon\alpha]$, \\
$\Phi$ &:&
$[\gamma\alpha]\mapsto x_{p_3}[\gamma\alpha], \ \ \
\lambda^*\mapsto -\lambda^*, \ \ \
[\gamma\lambda]\mapsto x_{p_3}^{-1}[\gamma\lambda]$,
& $[\varepsilon\alpha]\mapsto[\varepsilon\alpha](1-v), \ \ \
[\kappa\lambda]\mapsto-[\kappa\lambda]$.
\end{tabular}
}
\end{center}
%\begin{eqnarray}\nonumber
%\psi &:& [\kappa\alpha]\mapsto[\kappa\alpha]\left(\frac{1+v}{2}\right)\\
%\nonumber && [\gamma\alpha]\mapsto[\gamma\alpha]\left(\frac{-1-v}{2}\right)\\
%\nonumber
%\varphi_1 & : & [\varepsilon\lambda]\mapsto[\varepsilon\lambda]-Y_1x_{p_2}\rho-Y_2x_{p_1}\rho[\varepsilon\alpha]\beta[\gamma\lambda]\nu\\
%\nonumber
%&&
%\eta\mapsto\eta-\lambda^*\varepsilon^*\\
%\nonumber
%&&
%\delta\mapsto\delta-\left(\frac{1+v}{2}\right)\alpha^*\kappa^*\\
%\nonumber
%&&
%[\gamma\alpha]\mapsto [\gamma\alpha]-Y_1x_{p_1}x_{p_3}[\gamma\lambda]\nu[\varepsilon\alpha]\\
%\nonumber
%&&
%\beta\mapsto\beta-x_{p_3}\left(\frac{-1-v}{2}\right)\alpha^*\gamma^*\\
%\nonumber
%\varphi_2 &:&
%[\varepsilon\lambda]\mapsto[\varepsilon\lambda]+Y_2x_{p_1}x_{p_3}\rho[\varepsilon\alpha]\left(\frac{-1-v}{2}\right)\alpha^*\gamma^*[\gamma\lambda]\nu\\
%\nonumber &&
%[\gamma\alpha]\mapsto [\gamma\alpha]+Y_2x_{p_1}x_{p_3}[\gamma\lambda]\nu\lambda^*\varepsilon^*\rho[\varepsilon\alpha]\\
%\nonumber
%\Phi &:&
%[\gamma\alpha]\mapsto x_{p_3}[\gamma\alpha]\\
%\nonumber
%&&
%\lambda^*\mapsto -\lambda^*\\
%\nonumber
%&&
%[\gamma\lambda]\mapsto x_{p_3}^{-1}[\gamma\lambda]\\
%\nonumber
%&&
%[\varepsilon\alpha]\mapsto[\varepsilon\alpha](1-v)\\
%\nonumber
%&&
%[\kappa\lambda]\mapsto-[\kappa\lambda]
%\end{eqnarray}

Direct computation shows that
the composition $\Phi\varphi_2\varphi_1\psi$ is a right-equivalence $(\widetilde{\mu}_k(\Atau),\widetilde{\mu}_k(\Stau))\rightarrow(\widetilde{\mu}_k(\Atau),\Ssigma^\sharp)$. Whence the reduced parts of these two SPs are right-equivalent. But these reduced parts are precisely $\mu_k\AStau$ and $\ASsigma$, respectively.
\end{case}

\begin{case}\emph{Flip of configuration 14}. Similar to Case \ref{case:13}.
\end{case}

\begin{case}\label{case:flip-21}\emph{Flip of configuration 21}. See the discussion in the first and second paragraph of Case \ref{case:flip-8}. We adopt the notation from Figure \ref{Fig:flip_21}.
\begin{figure}[!ht]
%                ~\\
                \centering
                \includegraphics[scale=.65]{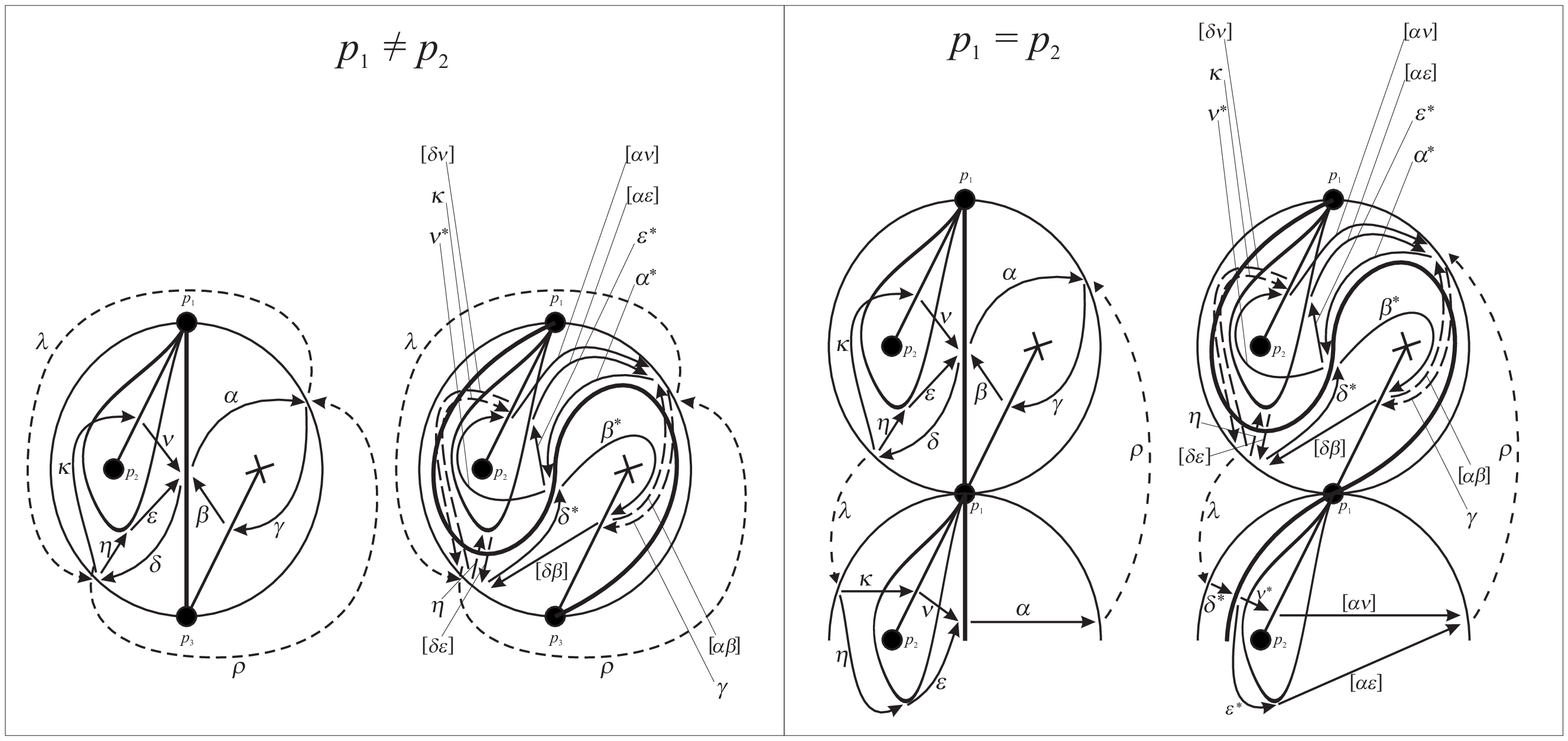}
                \caption{Flip of the $21^{\operatorname{st}}$ configuration of Figure \ref{Fig:all_possible_matchings_2}. Here we see the two possible ways to identify the two marked points in the boundary of Configuration 21. Inside each rectangle, $\tau$ and $\Qtau$ appear on the left, and $\sigma$ and $\widetilde{\mu}_k(\Qtau)$ appear on the right.}
                \label{Fig:flip_21}
        \end{figure}
Note that $\ASsigma$ is the reduced part of $(\widetilde{\mu}_k(\Atau),\Ssigma^\sharp)$, where
\begin{eqnarray}\nonumber
\Ssigma^\sharp  & = &
[\alpha\beta]\gamma
+[\delta\varepsilon]\eta-x_{p_2}^{-1}[\delta\nu]\kappa
+
[\delta\beta]\beta^*\delta^*-[\delta\beta]v\beta^*\delta^*
+[\alpha\varepsilon]\varepsilon^*\alpha^*
- x_{p_2}^{-1} [\alpha\nu]\nu^*\alpha^*\\
\nonumber
&+&
Y_1(x_{p_1}[\alpha\nu]\nu^*\delta^*\lambda + x_{p_3}[\delta\beta]\beta^*\alpha^*\rho )
\ \ +\ \
Y_2(x_{p_1}[\alpha\nu]\nu^*\delta^*\lambda[\delta\beta]\beta^*\alpha^*\rho )
\ \ + \ \ S(\tau,\sigma)
\in\RA{\widetilde{\mu}_k(\Atau)},
\end{eqnarray}
with $S(\tau,\sigma)\in\RA{\Atau}\cap\RA{\Asigma}$. % being a potential not involving any of the arrows $\alpha$, $\beta$, $\gamma$, $\delta$, $\varepsilon$, $\eta$, $\kappa$, $\nu$.
Here, $Y_1$ and $Y_2$ are elements of $F$ with the property that exactly one of them is equal to $1$ and exactly one is equal to $0$.
Furthermore,
\begin{eqnarray}\nonumber
\Stau & = &
\alpha\beta\gamma-\alpha\beta v\gamma
+\delta\varepsilon\eta
- x_{p_2}^{-1}\delta\nu\kappa\\
\nonumber
&+&Y_1(x_{p_1}\alpha\nu\kappa\lambda+x_{p_3}\delta\beta\gamma\rho)
\ \ + \ \  Y_2(x_{p_1}\alpha\nu\kappa\lambda\delta\beta\gamma\rho)
\ \ + \ \ S(\tau,\sigma)\\
\nonumber
\text{and} \ \ \ \ \
\widetilde{\mu}_k(\Stau) & = &
[\alpha\beta]\gamma-[\alpha\beta] v\gamma
+[\delta\varepsilon]\eta
- x_{p_2}^{-1}[\delta\nu]\kappa\\
\nonumber
&+& Y_1(x_{p_1}[\alpha\nu]\kappa\lambda+x_{p_3}[\delta\beta]\gamma\rho)
\ \ + \ \  Y_2(x_{p_1}[\alpha\nu]\kappa\lambda[\delta\beta]\gamma\rho)\\
\nonumber
&+& [\alpha\beta]\beta^*\alpha^*
+[\delta\varepsilon]\varepsilon^*\delta^*
+ [\delta\nu]\nu^*\delta^*
+[\alpha\nu]\nu^*\alpha^*
+[\delta\beta]\beta^*\delta^*
+[\alpha\varepsilon]\varepsilon^*\alpha^*
+S(\tau,\sigma).
\end{eqnarray}

Define $R$-algebra automorphisms $\psi,\varphi_1,\varphi_2,\Phi:\RA{\widetilde{\mu}_k(\Atau)}\rightarrow\RA{\widetilde{\mu}_k(\Atau)}$ according to the rules
\begin{center}
{\renewcommand{\arraystretch}{1.125}
\begin{tabular}{ccll}
$\psi$ &:& $[\alpha\beta]\mapsto[\alpha\beta]\left(\frac{1+v}{2}\right)$, & \\
$\varphi_1$ &:&
$[\alpha\beta]\mapsto[\alpha\beta]-Y_1x_{p_3}\rho[\delta\beta]-Y_2x_{p_1}\rho[\alpha\nu]\kappa\lambda[\delta\beta]$,&
$\gamma\mapsto\gamma-\left(\frac{1+v}{2}\right)\beta^*\alpha^*$, \\
&&
$\eta\mapsto\eta-\varepsilon^*\delta^*$, \ \ \
$[\delta\nu]\mapsto[\delta\nu]+Y_1x_{p_1}x_{p_2}\lambda[\alpha\nu]$, &
$\kappa\mapsto\kappa+x_{p_2} \nu^*\delta^*$, \\
$\varphi_2$ &:&
$[\alpha\beta]\mapsto[\alpha\beta]-Y_2x_{p_1}x_{p_2}\rho[\alpha\nu]\nu^*\delta^*\lambda[\delta\beta]$, &
$[\delta\nu]\mapsto[\delta\nu]-Y_2x_{p_1}x_{p_2}\lambda[\delta\beta]\left(\frac{1+v}{2}\right)\beta^*\alpha^*\rho[\alpha\nu]$,\\
$\Phi$ &:&
$[\delta\beta]\mapsto[\delta\beta](1-v)$, &
$\alpha^*\mapsto-\alpha^*$,\\
&&
$[\alpha\nu]\mapsto x_{p_2}^{-1}[\alpha\nu]$,&
$[\alpha\varepsilon]\mapsto-[\alpha\varepsilon]$.
\end{tabular}
}
\end{center}
%\begin{eqnarray}\nonumber
%\psi &:& [\alpha\beta]\mapsto[\alpha\beta]\left(\frac{1+v}{2}\right)
%\end{eqnarray}
%\begin{eqnarray}\nonumber
%\varphi_1 &:&
%[\alpha\beta]\mapsto[\alpha\beta]-Y_1x_{p_3}\rho[\delta\beta]-Y_2x_{p_1}\rho[\alpha\nu]\kappa\lambda[\delta\beta]\\
%\nonumber
%&&
%\gamma\mapsto\gamma-\left(\frac{1+v}{2}\right)\beta^*\alpha^*\\
%\nonumber
%&&
%\eta\mapsto\eta-\varepsilon^*\delta^*\\
%\nonumber
%&&
%[\delta\nu]\mapsto[\delta\nu]+Y_1x_{p_1}x_{p_2}\lambda[\alpha\nu]\\
%\nonumber
%&&
%\kappa\mapsto\kappa+x_{p_2} \nu^*\delta^*\\
%\nonumber
%\varphi_2&:&
%[\alpha\beta]\mapsto[\alpha\beta]-Y_2x_{p_1}x_{p_2}\rho[\alpha\nu]\nu^*\delta^*\lambda[\delta\beta]\\
%\nonumber
%&&
%[\delta\nu]\mapsto[\delta\nu]-Y_2x_{p_1}x_{p_2}\lambda[\delta\beta]\left(\frac{1+v}{2}\right)\beta^*\alpha^*\rho[\alpha\nu]\\
%\nonumber
%\Phi &:&
%[\delta\beta]\mapsto[\delta\beta](1-v)\\
%\nonumber
%&&
%\alpha^*\mapsto-\alpha^*\\
%\nonumber
%&&
%[\alpha\nu]\mapsto x_{p_2}^{-1}[\alpha\nu]\\
%\nonumber
%&&
%[\alpha\varepsilon]\mapsto-[\alpha\varepsilon]
%\end{eqnarray}

Direct computation shows that
the composition $\Phi\varphi_2\varphi_1\psi$ is a right-equivalence $(\widetilde{\mu}_k(\Atau),\widetilde{\mu}_k(\Stau))\rightarrow(\widetilde{\mu}_k(\Atau),\Ssigma^\sharp)$. Whence the reduced parts of these two SPs are right-equivalent. But these reduced parts are precisely $\mu_k\AStau$ and $\ASsigma$, respectively.
\end{case}

\begin{case}\label{case:24}\emph{Flip of configuration 24}. We adopt the notation from Figure \ref{Fig:flip_24}).
Note that $\ASsigma$ is the reduced part of $(\widetilde{\mu}_k(\Atau),\Ssigma^\sharp)$, where
\begin{eqnarray}\nonumber
\Ssigma^\sharp  & = &
[\rho\varepsilon]\eta -x_{p_2}^{-1}[\rho\nu]\kappa + 2v^{-1}\alpha[\beta\gamma]_{\myid} + 2\delta[\beta\gamma]_{\theta}
+
[\rho\gamma]\gamma^*\rho^* - [\rho\gamma]v\gamma^*\rho^*
+[\beta\varepsilon]\varepsilon^*\beta^*-[\beta\varepsilon]\varepsilon^*\beta^*v\\
\nonumber
&+&
x_{p_2}^{-1}[\beta\nu]\nu^*\beta^*v- x_{p_2}^{-1}[\beta\nu]\nu^*\beta^*
+x_{p_1}[\rho\gamma]\gamma^*\beta^*[\beta\nu]\nu^*\rho^*\lambda
+S(\tau,\sigma)
\in\RA{\widetilde{\mu}_k(\Atau)},
\end{eqnarray}
with $S(\tau,\sigma)\in\RA{\Atau}\cap\RA{\Asigma}$. % being a potential not involving any of the arrows $\alpha$, $\beta$, $\gamma$, $\delta$, $\varepsilon$, $\eta$, $\kappa$, $\nu$, $\rho$.
Furthermore,
\begin{eqnarray}\nonumber
\Stau & = &
\rho\varepsilon\eta -x_{p_2}^{-1}\rho\nu\kappa + 2v^{-1}\alpha\beta\gamma + 2\delta\beta\gamma
+
x_{p_1}\rho\gamma(\alpha+\delta)\beta\nu\kappa\lambda
+S(\tau,\sigma),
\end{eqnarray}
\begin{eqnarray}\nonumber
\text{and} \ \ \ \ \
\widetilde{\mu}_k(\Stau)
&\sim_{\operatorname{cyc}}&
[\rho\varepsilon]\eta -x_{p_2}^{-1}[\rho\nu]\kappa + 2v^{-1}\alpha[\beta\gamma]_{\myid} + 2\delta[\beta\gamma]_{\theta}\\
\nonumber
&+&
x_{p_1}\alpha\left(\frac{1}{2}([\beta\nu]\kappa\lambda[\rho\gamma]+v^{-1}[\beta\nu]\kappa\lambda[\rho\gamma]v)\right)\\
\nonumber
&+&x_{p_1}\delta\left(\frac{1}{2}([\beta\nu]\kappa\lambda[\rho\gamma]+\theta(v^{-1})[\beta\nu]\kappa\lambda[\rho\gamma]v)\right)\\
\nonumber
&+&
[\rho\varepsilon]\varepsilon^*\rho^*
+[\rho\nu]\nu^*\rho^*
+[\rho\gamma]\gamma^*\rho^*
+[\beta\nu]\nu^*\beta^*
+[\beta\varepsilon]\varepsilon^*\beta^*\\
\nonumber
&+&
[\beta\gamma]_{\myid}\left(\frac{1}{2}(\gamma^*\beta^*+v^{-1}\gamma^*\beta^*v)\right)
+[\beta\gamma]_{\theta}\left(\frac{1}{2}(\gamma^*\beta^*+\theta(v^{-1})\gamma^*\beta^*v)\right)+
S(\tau,\sigma).
\end{eqnarray}
\begin{figure}[!ht]
%                ~\\
                \centering
                \includegraphics[scale=.65]{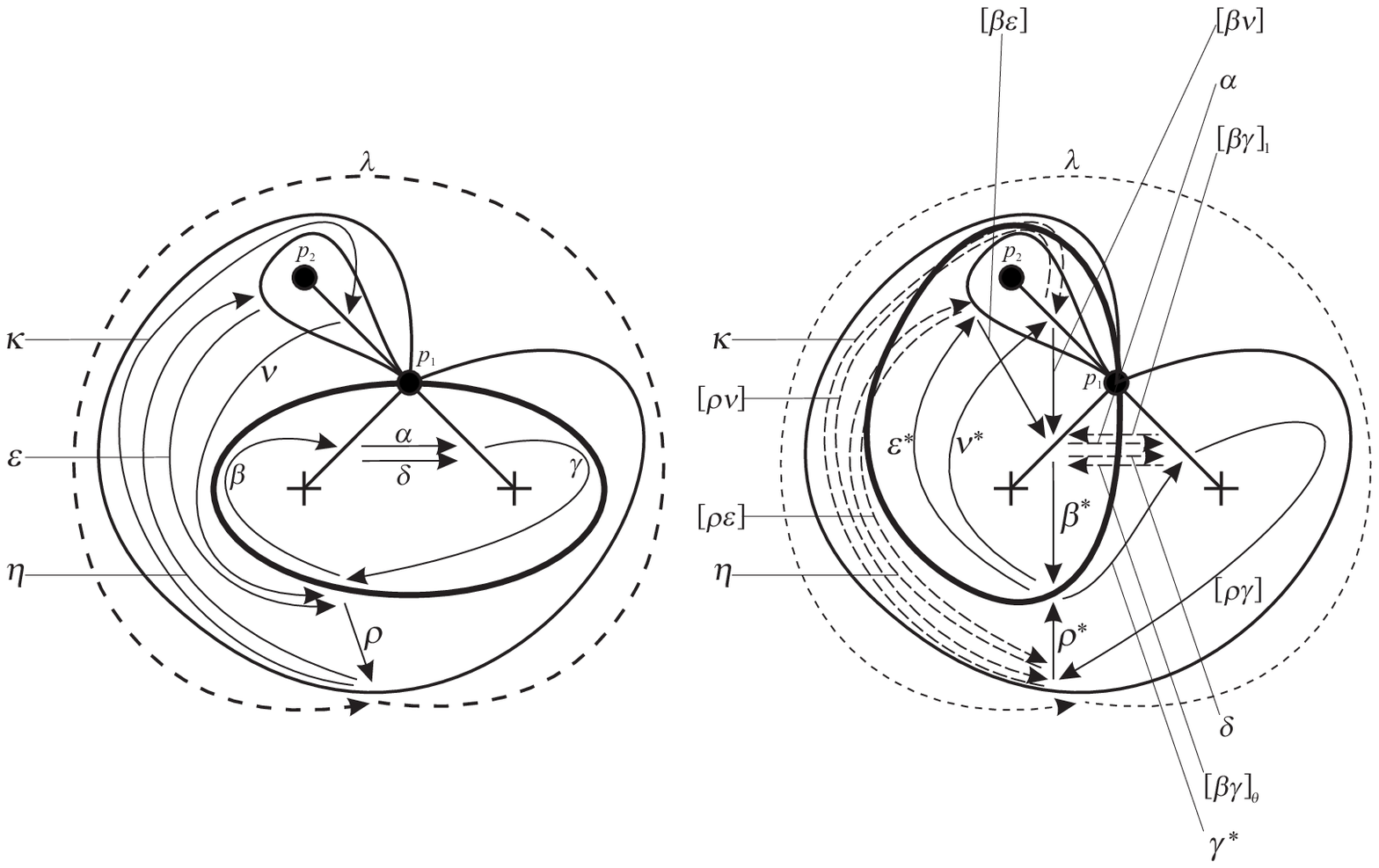}
                 \caption{Flip of the $24^{\operatorname{th}}$ configuration of Figure \ref{Fig:all_possible_matchings_2}.
                Left: $\tau$ and $\Qtau$.
                Right: $\sigma$ and $\widetilde{\mu}_k(\Qtau)$.}
                \label{Fig:flip_24}
        \end{figure}

Define $R$-algebra automorphisms $\varphi_1,\varphi_2,\Phi:\RA{\widetilde{\mu}_k(\Atau)}\rightarrow\RA{\widetilde{\mu}_k(\Atau)}$ according to the rules
\begin{center}
{\renewcommand{\arraystretch}{1.125}
\begin{tabular}{ccl}
$\varphi_1$ &:&
$\eta\mapsto\eta-\varepsilon^*\rho^*, \ \ \ [\rho\nu]\mapsto[\rho\nu]+x_{p_1}x_{p_2}\lambda[\rho\gamma](\alpha+\delta)[\beta\nu], \ \ \
\kappa\mapsto\kappa +x_{p_2}\nu^*\rho^*$,\\
&&
$\alpha\mapsto \alpha-\left(\frac{v}{4}(\gamma^*\beta^*+v^{-1}\gamma^*\beta^*v)\right), \ \ \
\delta\mapsto \delta- \left(\frac{1}{4}(\gamma^*\beta^*+\theta(v^{-1})\gamma^*\beta^*v)\right)$,\\
$\varphi_2$ &:&
$[\rho\nu]\mapsto
[\rho\nu] - x_{p_1}x_{p_2}\lambda[\rho\gamma]\left(\frac{v}{4}(\gamma^*\beta^*+v^{-1}\gamma^*\beta^*v)\right)[\beta\nu]
-x_{p_1}x_{p_2}\lambda[\rho\gamma]\left(\frac{1}{4}(\gamma^*\beta^*+\theta(v^{-1})\gamma^*\beta^*v)\right)[\beta\nu]$,\\
&&
$[\beta\gamma]_{\myid}\mapsto
[\beta\gamma]_{\myid} - \frac{x_{p_1}x_{p_2}v}{4}\left([\beta\nu]\nu^*\rho^*\lambda[\rho\gamma]+v^{-1}[\beta\nu]\nu^*\rho^*\lambda[\rho\gamma]v\right)$,\\
&&
$[\beta\gamma]_{\theta}\mapsto
[\beta\gamma]_{\theta}-\frac{x_{p_1}x_{p_2}}{4}\left([\beta\nu]\nu^*\rho^*\lambda[\rho\gamma]+\theta(v^{-1})[\beta\nu]\nu^*\rho^*\lambda[\rho\gamma]v\right)$,\\
\nonumber
$\Phi$ &:&
$[\beta\nu]\mapsto-x_{p_2}^{-1}(1-v)[\beta\nu], \ \ \
[\rho\gamma]\mapsto[\rho\gamma](1-v), \ \ \
[\beta\varepsilon]\mapsto(1-v)[\beta\varepsilon]$.
\end{tabular}
}
\end{center}
%\begin{eqnarray}\nonumber
%\varphi_1 &:&
%\eta\mapsto\eta-\varepsilon^*\rho^*\\
%\nonumber
%&&
%-x_{p_2}^{-1}[\rho\nu]\mapsto-x_{p_2}^{-1}[\rho\nu]-x_{p_1}\lambda[\rho\gamma](\alpha+\delta)[\beta\nu]\\
%\nonumber
%&&
%\kappa\mapsto\kappa +x_{p_2}\nu^*\rho^*\\
%\nonumber
%&&
%2v^{-1}\alpha\mapsto 2v^{-1}\alpha-\left(\frac{1}{2}(\gamma^*\beta^*+v^{-1}\gamma^*\beta^*v)\right)\\
%\nonumber
%&&
%2\delta\mapsto 2\delta- \left(\frac{1}{2}(\gamma^*\beta^*+\theta(v^{-1})\gamma^*\beta^*v)\right)\\
%\nonumber
%\varphi_2 &:&
%-x_{p_2}^{-1}[\rho\nu]\mapsto
%-x_{p_2}^{-1}[\rho\nu] + x_{p_1}\lambda[\rho\gamma]\left(\frac{v}{4}(\gamma^*\beta^*+v^{-1}\gamma^*\beta^*v)\right)[\beta\nu]
%+x_{p_1}\lambda[\rho\gamma]\left(\frac{1}{4}(\gamma^*\beta^*+\theta(v^{-1})\gamma^*\beta^*v)\right)[\beta\nu]\\
%\nonumber
%&&
%2v^{-1}[\beta\gamma]_{\myid}\mapsto
%2v^{-1}[\beta\gamma]_{\myid} - \frac{x_{p_1}x_{p_2}}{2}\left([\beta\nu]\nu^*\rho^*\lambda[\rho\gamma]+v^{-1}[\beta\nu]\nu^*\rho^*\lambda[\rho\gamma]v\right)\\
%\nonumber
%&&
%2[\beta\gamma]_{\theta}\mapsto
%2[\beta\gamma]_{\theta}-\frac{x_{p_1}x_{p_2}}{2}\left([\beta\nu]\nu^*\rho^*\lambda[\rho\gamma]+\theta(v^{-1})[\beta\nu]\nu^*\rho^*\lambda[\rho\gamma]v\right)\\
%\nonumber
%\Phi &:&
%[\beta\nu]\mapsto-x_{p_2}^{-1}(1-v)[\beta\nu]\\
%\nonumber
%&&
%[\rho\gamma]\mapsto[\rho\gamma](1-v)\\
%\nonumber
%&&
%[\beta\varepsilon]\mapsto(1-v)[\beta\varepsilon]
%\end{eqnarray}

Direct computation shows that
the composition $\Phi\varphi_2\varphi_1$ is a right-equivalence $(\widetilde{\mu}_k(\Atau),\widetilde{\mu}_k(\Stau))\rightarrow(\widetilde{\mu}_k(\Atau),\Ssigma)$. Whence the reduced parts of these two SPs are right-equivalent. But these reduced parts are precisely $\mu_k\AStau$ and $\ASsigma$, respectively.
\end{case}

\begin{case}{Flip of configuration 25}. Similar to Case \ref{case:24}.
\end{case}

\begin{case}\label{case:26}\emph{Flip of configuration 26}. We adopt the notation from Figure \ref{Fig:flip_26}.
\begin{figure}[!ht]
%                ~\\
                \centering
                \includegraphics[scale=.5]{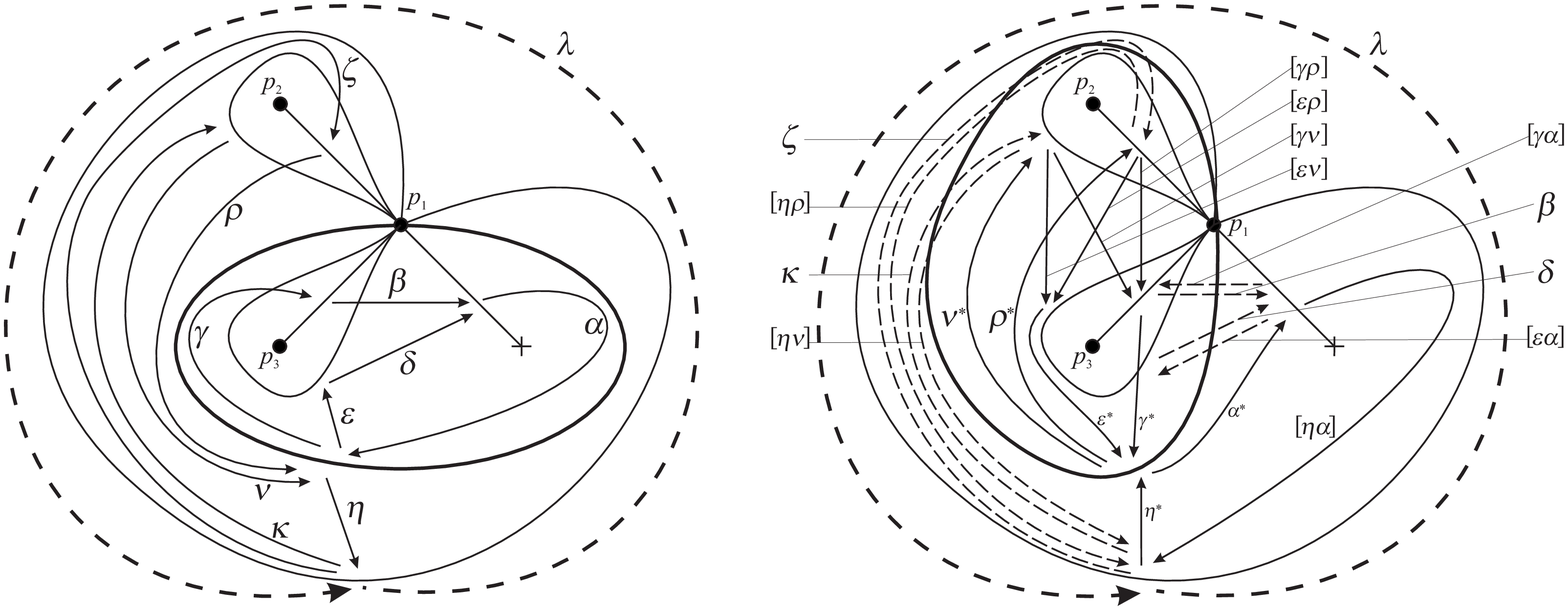}
                \caption{Flip of the $26^{\operatorname{th}}$ configuration of Figure \ref{Fig:all_possible_matchings_2}.
                Left: $\tau$ and $\Qtau$.
                Right: $\sigma$ and $\widetilde{\mu}_k(\Qtau)$.}
                \label{Fig:flip_26}
        \end{figure}
Note that $\ASsigma$ is the reduced part of $(\widetilde{\mu}_k(\Atau),\Ssigma^\sharp)$, where
\begin{eqnarray}\nonumber
\Ssigma^\sharp  & = &
[\eta\nu]\kappa
+ [\varepsilon\alpha]\delta
-x_{p_2}^{-1}[\eta\rho]\zeta
+[\gamma\alpha]\beta
+
[\eta\alpha]\alpha^*\eta^*-[\eta\alpha]v\alpha^*\eta^*
+[\varepsilon\nu]\nu^*\varepsilon^*\\
\nonumber
&-&x_{p_2}^{-1}[\varepsilon\rho]\rho^*\varepsilon^*
-x_{p_3}^{-1}[\gamma\nu]\nu^*\gamma^*
+x_{p_2}^{-1}x_{p_3}^{-1}[\gamma\rho]\rho^*\gamma^*
+x_{p_1}[\eta\alpha]\alpha^*\gamma^*[\gamma\rho]\rho^*\eta^*\lambda\\
\nonumber
&+&S(\tau,\sigma)
\in\RA{\widetilde{\mu}_k(\Atau)},
\end{eqnarray}
with $S(\tau,\sigma)\in\RA{\Atau}\cap\RA{\Asigma}$. % being a potential not involving any of the arrows $\alpha$, $\beta$, $\gamma$, $\delta$, $\varepsilon$, $\eta$, $\kappa$, $\nu$, $\rho$, $\zeta$.
Furthermore,
\begin{eqnarray}\nonumber
\Stau & = &
\eta\nu\kappa + \alpha\delta\varepsilon - \alpha v\delta\varepsilon+
x_{p_1}\eta\alpha\beta\gamma\rho\zeta\lambda
-x_{p_2}^{-1}\eta\rho\zeta + x_{p_3}^{-1}\alpha v\beta\gamma - x_{p_3}^{-1}\alpha\beta\gamma
+S(\tau,\sigma)\\
\nonumber
\text{and} \ \ \ \ \ \widetilde{\mu}_k(\Stau)
&\sim_{\operatorname{cyc}}&
[\eta\nu]\kappa + [\varepsilon\alpha]\delta - [\varepsilon\alpha] v\delta+
x_{p_1}[\eta\alpha]\beta[\gamma\rho]\zeta\lambda
-x_{p_2}^{-1}[\eta\rho]\zeta + x_{p_3}^{-1}[\gamma\alpha] v\beta - x_{p_3}^{-1}[\gamma\alpha]\beta\\
\nonumber
&+&
[\eta\nu]\nu^*\eta^*
+[\varepsilon\alpha]\alpha^*\varepsilon^*
+[\eta\rho]\rho^*\eta^*
+[\gamma\alpha]\alpha^*\gamma^*
+[\eta\alpha]\alpha^*\eta^*\\
\nonumber
&+&
[\varepsilon\nu]\nu^*\varepsilon^*
+[\varepsilon\rho]\rho^*\varepsilon^*
+[\gamma\nu]\nu^*\gamma^*
+[\gamma\rho]\rho^*\gamma^*
+S(\tau,\sigma).
\end{eqnarray}

Define $R$-algebra automorphisms $\psi,\varphi_1,\varphi_2,\Phi:\RA{\widetilde{\mu}_k(\Atau)}\rightarrow\RA{\widetilde{\mu}_k(\Atau)}$ according to the rules
\begin{center}
{\renewcommand{\arraystretch}{1.125}
\begin{tabular}{ccll}
$\psi$ &:&
$[\varepsilon\alpha]\mapsto[\varepsilon\alpha]\left(\frac{1+v}{2}\right), \ \ \ \ \ [\gamma\alpha]\mapsto-x_{p_3}[\gamma\alpha]\left(\frac{1+v}{2}\right)$, & \\
$\varphi_1$ &:&
$\kappa\mapsto\kappa- \nu^*\eta^*, \ \ \ \ \ \delta\mapsto\delta-\left(\frac{1+v}{2}\right)\alpha^*\varepsilon^*$, &
$[\eta\rho]\mapsto [\eta\rho]+x_{p_1}x_{p_2}\lambda[\eta\alpha]\beta[\gamma\rho]$,\\
&&
$\zeta\mapsto\zeta+x_{p_2}\rho^*\eta^*, \ \ \ \ \ \beta\mapsto\beta+x_{p_3}\left(\frac{1+v}{2}\right)\alpha^*\gamma^*$, & \\
$\varphi_2$ &:&
$[\eta\rho]\mapsto[\eta\rho]+x_{p_1}x_{p_2}x_{p_3}\lambda[\eta\alpha]\left(\frac{1+v}{2}\right)\alpha^*\gamma^*[\gamma\rho]$, &
$[\gamma\alpha]\mapsto[\gamma\alpha]-x_{p_1}x_{p_2}[\gamma\rho]\rho^*\eta^*\lambda[\eta\alpha]$,\\
$\Phi$ &:&
$[\eta\alpha]\mapsto[\eta\alpha](1-v), \ \ \
\rho^*\mapsto-x_{p_2}^{-1}\rho^*, \ \ \
\gamma^*\mapsto-x_{p_3}^{-1}\gamma^*$.
\end{tabular}
}
\end{center}
%\begin{eqnarray}\nonumber
%\psi &:&
%[\varepsilon\alpha]\mapsto[\varepsilon\alpha]\left(\frac{1+v}{2}\right)\\
%\nonumber
%&&
%[\gamma\alpha]\mapsto-x_{p_3}[\gamma\alpha]\left(\frac{1+v}{2}\right)\\
%\nonumber
%\varphi_1 &:&
%\kappa\mapsto\kappa- \nu^*\eta^*\\
%\nonumber
%&&
%\delta\mapsto\delta-\left(\frac{1+v}{2}\right)\alpha^*\varepsilon^*\\
%\nonumber
%&&
%-x_{p_2}^{-1}[\eta\rho]\mapsto -x_{p_2}^{-1}[\eta\rho]-x_{p_1}\lambda[\eta\alpha]\beta[\gamma\rho]\\
%\nonumber
%&&
%\zeta\mapsto\zeta+x_{p_2}\rho^*\eta^*\\
%\nonumber
%&&
%\beta\mapsto\beta+x_{p_3}\left(\frac{1+v}{2}\right)\alpha^*\gamma^*\\
%\nonumber
%\varphi_2 &:&
%-x_{p_2}^{-1}[\eta\rho]\mapsto-x_{p_2}^{-1}[\eta\rho]-x_{p_1}x_{p_3}\lambda[\eta\alpha]\left(\frac{1+v}{2}\right)\alpha^*\gamma^*[\gamma\rho]\\
%\nonumber
%&&
%[\gamma\alpha]\mapsto[\gamma\alpha]-x_{p_1}x_{p_2}[\gamma\rho]\rho^*\eta^*\lambda[\eta\alpha]\\
%\nonumber
%\Phi &:&
%[\eta\alpha]\mapsto[\eta\alpha](1-v)\\
%\nonumber
%&&
%\rho^*\mapsto-x_{p_2}^{-1}\rho^*\\
%\nonumber
%&&
%\gamma^*\mapsto-x_{p_3}^{-1}\gamma^*
%\end{eqnarray}

Direct computation shows that
the composition $\Phi\varphi_2\varphi_1\psi$ is a right-equivalence $(\widetilde{\mu}_k(\Atau),\widetilde{\mu}_k(\Stau))\rightarrow(\widetilde{\mu}_k(\Atau),\Ssigma^\sharp)$. Whence the reduced parts of these two SPs are right-equivalent. But these reduced parts are precisely $\mu_k\AStau$ and $\ASsigma$, respectively.
\end{case}

\begin{case}\emph{Flip of configuration 27}. We adopt the notation from Figure \ref{Fig:flip_27}.
Note that $\ASsigma$ is the reduced part of $(\widetilde{\mu}_k(\Atau),\Ssigma^\sharp)$, where
\begin{eqnarray}\nonumber
\Ssigma^\sharp  & = &
[\nu\eta]\kappa
+ [\varepsilon\alpha]\delta
+[\gamma\alpha]\beta
+[\rho\eta]\zeta
+
[\varepsilon\eta]\eta^*\varepsilon^*+[\nu\alpha]\alpha^*\nu^*-[\nu\alpha]v\alpha^*\nu^*\\
\nonumber
&+&
x_{p_1} \eta^*\rho^*[\rho\alpha]\alpha^*\gamma^*[\gamma\eta]\lambda
-x_{p_2}^{-1}[\gamma\eta]\eta^*\gamma^*
 + x_{p_3}^{-1}[\rho\alpha]v\alpha^*\rho^* - x_{p_3}^{-1}[\rho\alpha]\alpha^*\rho^*\\
\nonumber
&+&
S(\tau,\sigma)
\in\RA{\widetilde{\mu}_k(\Atau)},
\end{eqnarray}
with $S(\tau,\sigma)\in\RA{\Atau}\cap\RA{\Asigma}$. % being a potential not involving any of the arrows $\alpha$, $\beta$, $\gamma$, $\delta$, $\varepsilon$, $\eta$ $\kappa$, $\nu$, $\rho$, $\zeta$.
Furthermore,
\begin{eqnarray}\nonumber
\Stau & = &
\eta\kappa\nu + \alpha\delta\varepsilon -\alpha v\delta\varepsilon+
x_{p_1}\zeta\rho\alpha\beta\gamma\eta\lambda
+ x_{p_2}^{-1}\alpha v\beta\gamma - x_{p_2}^{-1}\alpha\beta\gamma
-x_{p_3}^{-1}\eta\zeta\rho
+S(\tau,\sigma)\\
\nonumber
\text{and} \ \ \ \ \ \widetilde{\mu}_k(\Stau)
&\sim_{\operatorname{cyc}}&
[\nu\eta]\kappa + [\varepsilon\alpha]\delta - [\varepsilon\alpha] v\delta+
x_{p_1}\zeta[\rho\alpha]\beta[\gamma\eta]\lambda
+ x_{p_2}^{-1}[\gamma\alpha] v\beta - x_{p_2}^{-1}[\gamma\alpha]\beta
-x_{p_3}^{-1}[\rho\eta]\zeta\\
\nonumber
&+&
[\nu\eta]\eta^*\nu^*
+[\varepsilon\alpha]\alpha^*\varepsilon^*
+[\gamma\alpha]\alpha^*\gamma^*
+[\rho\eta]\eta^*\rho^*
+[\rho\alpha]\alpha^*\rho^*\\
\nonumber
&+&
[\gamma\eta]\eta^*\gamma^*
+[\varepsilon\eta]\eta^*\varepsilon^*
+[\nu\alpha]\alpha^*\nu^*+
S(\tau,\sigma).
\end{eqnarray}
\begin{figure}[!ht]
%                ~\\
                \centering
                \includegraphics[scale=.5]{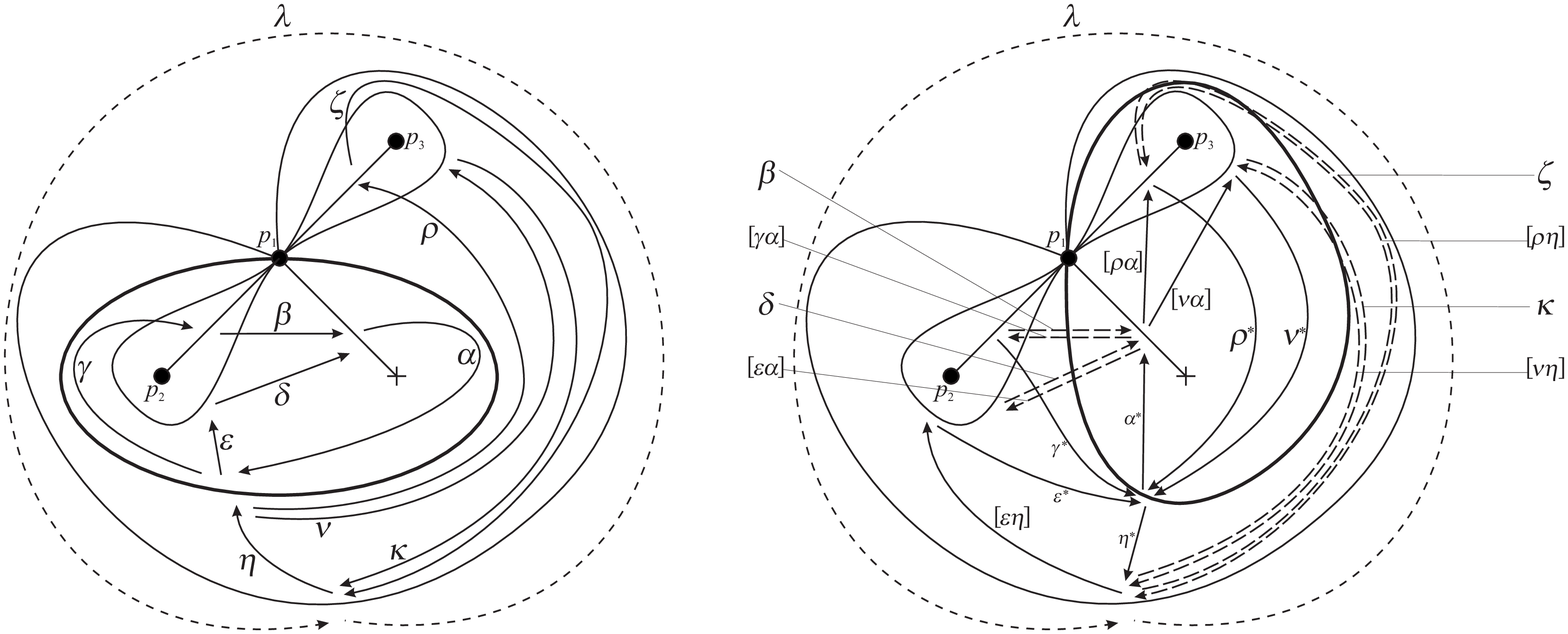}
                \caption{Flip of the $27^{\operatorname{th}}$ configuration of Figure \ref{Fig:all_possible_matchings_2}.
                Left: $\tau$ and $\Qtau$.
                Right: $\sigma$ and $\widetilde{\mu}_k(\Qtau)$.}
                \label{Fig:flip_27}
        \end{figure}

Define $R$-algebra automorphisms $\psi,\varphi_1,\varphi_2,\Phi:\RA{\widetilde{\mu}_k(\Atau)}\rightarrow\RA{\widetilde{\mu}_k(\Atau)}$ according to the rules
\begin{center}
{\renewcommand{\arraystretch}{1.125}
\begin{tabular}{ccll}
$\psi$ &:&
$[\varepsilon\alpha]\mapsto[\varepsilon\alpha]\left(\frac{1+v}{2}\right), \ \ \
[\gamma\alpha]\mapsto -x_{p_2}[\gamma\alpha]\left(\frac{1+v}{2}\right)$,
&
$[\rho\eta]\mapsto-x_{p_3}[\rho\eta]$,\\
$\varphi_1$ &:&
$\kappa\mapsto\kappa-\eta^*\nu^*$, &
$\delta\mapsto\delta- \left(\frac{1+v}{2}\right)\alpha^*\varepsilon^*$,\\
&&
$[\gamma\alpha]\mapsto[\gamma\alpha]-x_{p_1}[\gamma\eta]\lambda\zeta[\rho\alpha]$, &
$\beta\mapsto\beta+x_{p_2}\left(\frac{1+v}{2}\right)\alpha^*\gamma^*, \ \ \ \
\zeta\mapsto\zeta+x_{p_3}\eta^*\rho^*$,\\
$\varphi_2$ &:&
$[\rho\eta]\mapsto[\rho\eta]-x_{p_1}x_{p_2}[\rho\alpha]\left(\frac{1+v}{2}\right)\alpha^*\gamma^*[\gamma\eta]\lambda$, &
$[\gamma\alpha]\mapsto[\gamma\alpha]-x_{p_1}x_{p_3}[\gamma\eta]\lambda\eta^*\rho^*[\rho\alpha]$,\\
$\Phi$ &:&
$\alpha^*\mapsto(1-v)\alpha^*, \ \ \ \ \ \  [\rho\alpha]\mapsto-x_{p_3}^{-1}[\rho\alpha]$, &
$[\gamma\eta]\mapsto-x_{p_2}^{-1}[\gamma\eta]$.
\end{tabular}
}
\end{center}
%\begin{eqnarray}\nonumber
%\psi &:&
%[\varepsilon\alpha]\mapsto[\varepsilon\alpha]\left(\frac{1+v}{2}\right)\\
%\nonumber
%&&
%[\gamma\alpha]\mapsto -x_{p_2}[\gamma\alpha]\left(\frac{1+v}{2}\right)\\
%\nonumber
%&&
%[\rho\eta]\mapsto-x_{p_3}[\rho\eta]
%\end{eqnarray}
%\begin{eqnarray}
%\nonumber
%\varphi_1 &:&
%\kappa\mapsto\kappa-\eta^*\nu^*\\
%\nonumber
%&&
%\delta\mapsto\delta- \left(\frac{1+v}{2}\right)\alpha^*\varepsilon^*\\
%\nonumber
%&&
%[\gamma\alpha]\mapsto[\gamma\alpha]-x_{p_1}[\gamma\eta]\lambda\zeta[\rho\alpha]\\
%\nonumber
%&&
%\beta\mapsto\beta+x_{p_2}\left(\frac{1+v}{2}\right)\alpha^*\gamma^*\\
%\nonumber
%&&
%\zeta\mapsto\zeta+x_{p_3}\eta^*\rho^*\\
%\nonumber
%\varphi_2 &:&
%[\rho\eta]\mapsto[\rho\eta]-x_{p_1}x_{p_2}[\rho\alpha]\left(\frac{1+v}{2}\right)\alpha^*\gamma^*[\gamma\eta]\lambda\\
%\nonumber
%&&
%[\gamma\alpha]\mapsto[\gamma\alpha]-x_{p_1}x_{p_3}[\gamma\eta]\lambda\eta^*\rho^*[\rho\alpha]\\
%\nonumber
%\Phi &:&
%\alpha^*\mapsto(1-v)\alpha^*\\
%\nonumber
%&&
%[\rho\alpha]\mapsto-x_{p_3}^{-1}[\rho\alpha]\\
%\nonumber
%&&
%[\gamma\eta]\mapsto-x_{p_2}^{-1}[\gamma\eta]
%\end{eqnarray}

Direct computation shows that
the composition $\Phi\varphi_2\varphi_1\psi$ is a right-equivalence $(\widetilde{\mu}_k(\Atau),\widetilde{\mu}_k(\Stau))\rightarrow(\widetilde{\mu}_k(\Atau),\Ssigma^\sharp)$. Whence the reduced parts of these two SPs are right-equivalent. But these reduced parts are precisely $\mu_k\AStau$ and $\ASsigma$, respectively.
\end{case}

\begin{case}\emph{Flip of configuration 29}. Similar to Case \ref{case:26}.
\end{case}

\begin{case}\label{case:flip-36}\emph{Flip of configuration 36}. See the discussion in the first and second paragraphs of Case \ref{case:flip-8}. We adopt the notation from Figure \ref{Fig:flip_36}.
Note that $\ASsigma$ is the reduced part of $(\widetilde{\mu}_k(\Atau),\Ssigma^\sharp)$, where
\begin{eqnarray}\nonumber
\Ssigma^\sharp  & = &
[\alpha\beta]\gamma
+[\varepsilon\delta]\eta\\
\nonumber
&+&
Y_1( x_{p_1}\nu\delta^*\varepsilon^*([\varepsilon\beta]_{\myid}+[\varepsilon\beta]_{\theta})\beta^*\alpha^*  +  x_{p_2}[\alpha\delta]\rho  )
\ \ + \ \
Y_2(
x_{p_1}\nu[\alpha\delta]\rho\delta^*\varepsilon^*([\varepsilon\beta]_{\myid}+[\varepsilon\beta]_{\theta})\beta^*\alpha^*
)\\
\nonumber
&+&
[\alpha\delta]\delta^*\alpha^*
+2v^{-1}[\varepsilon\beta]_{\myid}\beta^*\varepsilon^*+2[\varepsilon\beta]_{\theta}\beta^*\varepsilon^*
+S(\tau,\sigma)
\in\RA{\widetilde{\mu}_k(\Atau)},
\end{eqnarray}
with $S(\tau,\sigma)\in\RA{\Atau}\cap\RA{\Asigma}$. % being a potential not involving any of the arrows $\alpha$, $\beta$, $\gamma$, $\delta$, $\varepsilon$, $\eta$.
Here, $Y_1$ and $Y_2$ are elements of $F$ with the property that exactly one of them is equal to $1$ and exactly one is equal to $0$.
Furthermore,
\begin{eqnarray}
\nonumber
\Stau & = &
\alpha\beta\gamma - \alpha\beta v\gamma +\varepsilon\delta\eta-\varepsilon\delta\eta v+
Y_1(  x_{p_1}\eta\varepsilon\beta\gamma\nu  +  x_{p_2}\alpha\delta\rho  )
+
Y_2(  x_{p_1}\eta\varepsilon\beta\gamma\nu\alpha\delta\rho  )+S(\tau,\sigma)\\
\nonumber
\text{and}  \ \ \ \widetilde{\mu}_k(\Stau)
& \sim_{\operatorname{cyc}} &
[\alpha\beta]\gamma - [\alpha\beta] v\gamma + [\varepsilon\delta]\eta - v[\varepsilon\delta]\eta\\
\nonumber
&+&
Y_1(  x_{p_1}\eta([\varepsilon\beta]_{\myid}+[\varepsilon\beta]_{\theta})\gamma\nu  +  x_{p_2}[\alpha\delta]\rho  )
\ \ + \ \
Y_2(  x_{p_1}\eta([\varepsilon\beta]_{\myid}+[\varepsilon\beta]_{\theta})\gamma\nu[\alpha\delta]\rho  )\\
\nonumber
&+&
[\alpha\beta]\beta^*\alpha^*
+[\alpha\delta]\delta^*\alpha^*
+\frac{1}{2}[\varepsilon\beta]_{\myid}\left(\beta^*\varepsilon^* + v^{-1}\beta^*\varepsilon^*v\right)\\
\nonumber
&+&\frac{1}{2}[\varepsilon\beta]_{\theta}\left(\beta^*\varepsilon^* + \theta(v^{-1})\beta^*\varepsilon^*v\right)
+[\varepsilon\delta]\delta^*\varepsilon^*+
S(\tau,\sigma).
\end{eqnarray}
\begin{figure}[!ht]
%                ~\\
                \centering
                \includegraphics[scale=.5]{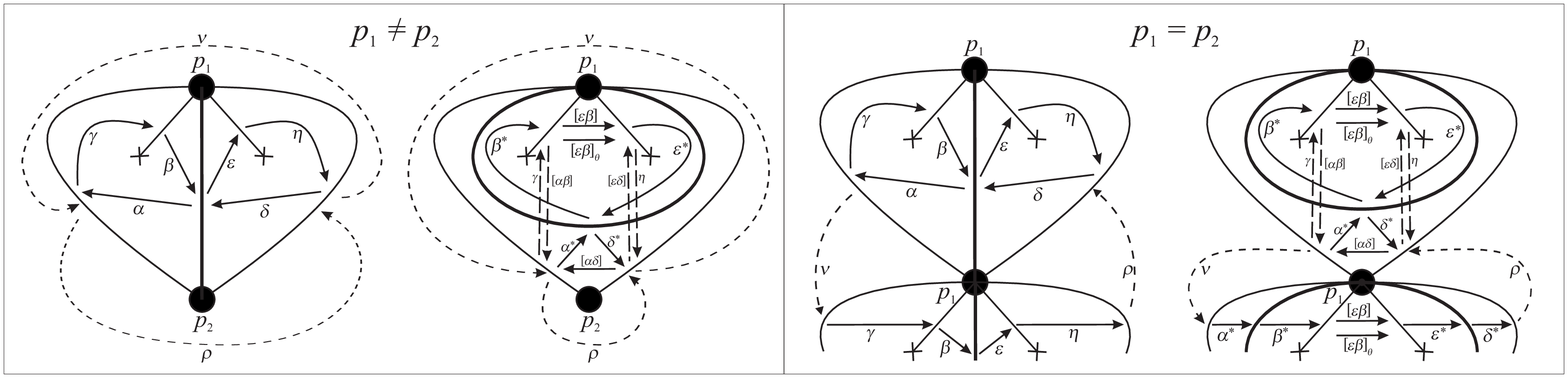}
                \caption{Flip of the $36^{\operatorname{th}}$ configuration of Figure \ref{Fig:all_possible_matchings_2}. Here we see the two possible ways to identify the two marked points in the boundary of Configuration 36. Inside each rectangle, $\tau$ and $\Qtau$ appear on the left, and $\sigma$ and $\widetilde{\mu}_k(\Qtau)$ appear on the right.}
                \label{Fig:flip_36}
        \end{figure}

Define $R$-algebra automorphisms $\psi,\varphi_1,\varphi_2,\Phi:\RA{\widetilde{\mu}_k(\Atau)}\rightarrow\RA{\widetilde{\mu}_k(\Atau)}$ according to the rules
\begin{eqnarray}
\nonumber
\psi & : &
[\alpha\beta]\mapsto[\alpha\beta]\left(\frac{1+v}{2}\right), \ \ \ \ \
[\varepsilon\delta]\mapsto\left(\frac{1+v}{2}\right)[\varepsilon\delta],\\
\nonumber
\varphi_1 &:&
[\alpha\beta]\mapsto[\alpha\beta]-Y_1  x_{p_1}\nu\eta([\varepsilon\beta]_{\myid}+[\varepsilon\beta]_{\theta})
-Y_2  x_{p_1}\nu[\alpha\delta]\rho\eta([\varepsilon\beta]_{\myid}+[\varepsilon\beta]_{\theta}),  \\
\nonumber
&&
\gamma\mapsto\gamma-\left(\frac{1+v}{2}\right)\beta^*\alpha^*, \ \ \ \ \
\eta\mapsto\eta-\delta^*\varepsilon^*\left(\frac{1+v}{2}\right),\\
\nonumber
\varphi_2 &:&
[\alpha\beta]\mapsto[\alpha\beta]
+Y_1x_{p_1}\nu\delta^*\varepsilon^*\left(\frac{1+v}{2}\right)([\varepsilon\beta]_{\myid}+[\varepsilon\beta]_{\theta})
+Y_2x_{p_1}\nu[\alpha\delta]\rho\delta^*\varepsilon^*\left(\frac{1+v}{2}\right)([\varepsilon\beta]_{\myid}+[\varepsilon\beta]_{\theta}),\\
\nonumber
&&
[\varepsilon\delta]\mapsto[\varepsilon\delta]
+Y_1x_{p_1}([\varepsilon\beta]_{\myid}+[\varepsilon\beta]_{\theta})\left(\frac{1+v}{2}\right)\beta^*\alpha^*\nu
+Y_2x_{p_1}([\varepsilon\beta]_{\myid}+[\varepsilon\beta]_{\theta})\left(\frac{1+v}{2}\right)\beta^*\alpha^*\nu[\alpha\delta]\rho,\\
\nonumber
\Phi &:&
\beta^*\mapsto(1-v)\beta^*, \ \ \ \ \
\varepsilon^*\mapsto\varepsilon^*(1-v).
\end{eqnarray}

Direct computation shows that
the composition $\Phi\varphi_2\varphi_1\psi$ is a right-equivalence $(\widetilde{\mu}_k(\Atau),\widetilde{\mu}_k(\Stau))\rightarrow(\widetilde{\mu}_k(\Atau),\Ssigma^\sharp)$. Whence the reduced parts of these two SPs are right-equivalent. But these reduced parts are precisely $\mu_k\AStau$ and $\ASsigma$, respectively.
\end{case}

\begin{case}\label{case:flip-37}\emph{Flip of configuration 37}. See the discussion in the first and second paragraphs of Case \ref{case:flip-8}. We adopt the notation from  Figure \ref{Fig:flip_37}.
Note that $\ASsigma$ is the reduced part of $(\widetilde{\mu}_k(\Atau),\Ssigma^\sharp)$, where
\begin{eqnarray}\nonumber
\Ssigma^\sharp  & = &
[\alpha\beta]\gamma
+[\delta\varepsilon]\eta\\
\nonumber
&+&
Y_1 ( x_{p_1} \lambda[\alpha\varepsilon]\varepsilon^*\delta^* +x_{p_2}\rho[\delta\beta]\beta^*\alpha^* )
\ \ + \ \
 Y_2( x_{p_1} [\alpha\varepsilon]\varepsilon^*\delta^*\lambda[\delta\beta]\beta^*\alpha^*\rho  )\\
\nonumber
&+&
[\alpha\varepsilon](1-v)\varepsilon^*\alpha^*
+[\delta\beta](1-v)\beta^*\delta^*
+S(\tau,\sigma)
\in\RA{\widetilde{\mu}_k(\Atau)},
\end{eqnarray}
with $S(\tau,\sigma)\in\RA{\Atau}\cap\RA{\Asigma}$. %being a potential not involving any of the arrows $\alpha$, $\beta$, $\gamma$, $\delta$, $\varepsilon$, $\eta$.
Here, $Y_1$ and $Y_2$ are elements of $F$ with the property that exactly one of them is equal to $1$ and exactly one is equal to $0$.
Furthermore,
\begin{eqnarray}
\nonumber
\Stau & = &
\alpha\beta\gamma-\alpha\beta v\gamma + \delta\varepsilon\eta -\delta\varepsilon v\eta
\ \ + \ \
Y_1 ( x_{p_1} \alpha\varepsilon\eta\lambda + x_{p_2}\delta\beta\gamma\rho ) \ \ + \ \
Y_2 ( x_{p_1} \alpha\varepsilon\eta\lambda\delta\beta\gamma\rho  )
\ \ + \ \ S(\tau,\sigma)\\
\nonumber
\text{and} \ \ \
\widetilde{\mu}_k(\Stau)
& = &
[\alpha\beta]\gamma-[\alpha\beta] v\gamma + [\delta\varepsilon]\eta -[\delta\varepsilon] v\eta+
Y_1 ( x_{p_1} [\alpha\varepsilon]\eta\lambda + x_{p_2}[\delta\beta]\gamma\rho )
\ \ + \ \
Y_2 ( x_{p_1} [\alpha\varepsilon]\eta\lambda[\delta\beta]\gamma\rho  )\\
\nonumber
&+&
[\alpha\beta]\beta^*\alpha^*
+[\delta\varepsilon]\varepsilon^*\delta^*
+[\alpha\varepsilon]\varepsilon^*\alpha^*
+[\delta\beta]\beta^*\delta^*+
S(\tau,\sigma).
\end{eqnarray}
\begin{figure}[!ht]
%                ~\\
                \centering
                \includegraphics[scale=.45]{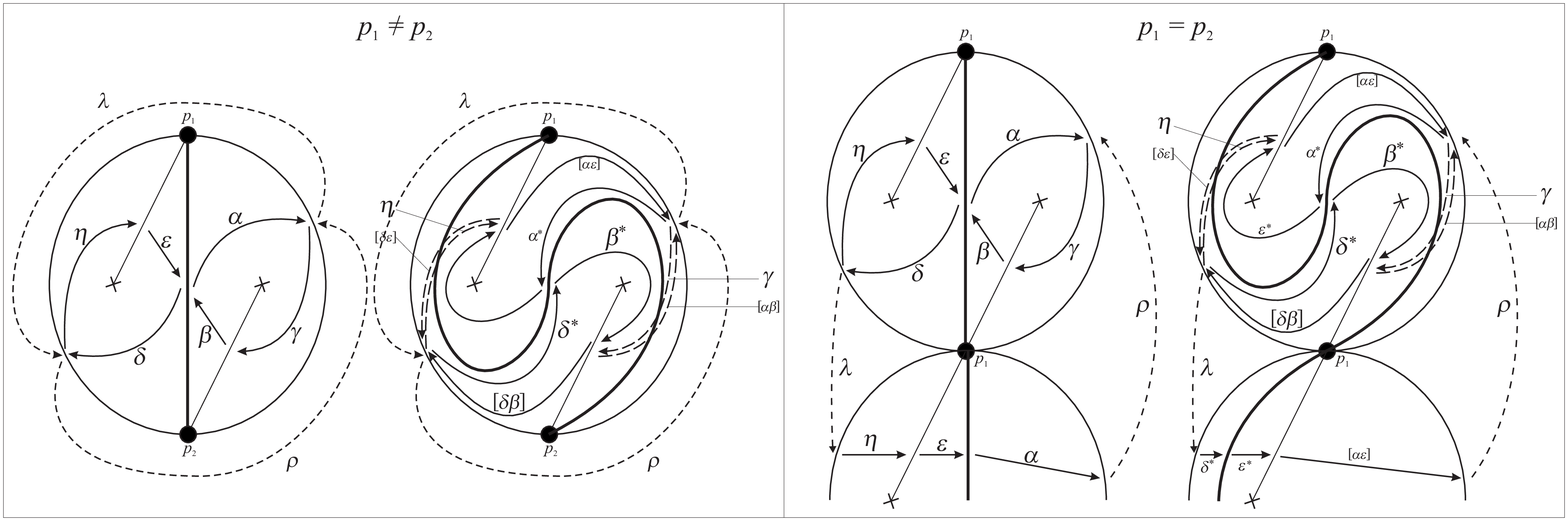}
                 \caption{Flip of the $37^{\operatorname{th}}$ configuration of Figure \ref{Fig:all_possible_matchings_2}. Here we see the two possible ways to identify the two marked points in the boundary of Configuration 37. Inside each rectangle, $\tau$ and $\Qtau$ appear on the left, and $\sigma$ and $\widetilde{\mu}_k(\Qtau)$ appear on the right.}
                \label{Fig:flip_37}
        \end{figure}

Define $R$-algebra automorphisms $\psi,\varphi_1,\varphi_2,\Phi:\RA{\widetilde{\mu}_k(\Atau)}\rightarrow\RA{\widetilde{\mu}_k(\Atau)}$ according to the rules
\begin{center}
{\renewcommand{\arraystretch}{1.125}
\begin{tabular}{ccll}
$\psi$ & : &
$[\alpha\beta]\mapsto[\alpha\beta]\left(\frac{1+v}{2}\right)$,&
$[\delta\varepsilon]\mapsto[\delta\varepsilon]\left(\frac{1+v}{2}\right)$,\\
$\varphi_1$ &:&
$[\alpha\beta]\mapsto[\alpha\beta]- Y_1 x_{p_2}\rho[\delta\beta] -Y_2  x_{p_1} \rho[\alpha\varepsilon]\eta\lambda[\delta\beta]$, &
$\gamma\mapsto\gamma-\left(\frac{1+v}{2}\right)\beta^*\alpha^*$,\\
&&
$[\delta\varepsilon]\mapsto[\delta\varepsilon]-Y_1  x_{p_1} \lambda[\alpha\varepsilon]$, &
$\eta\mapsto\eta-\left(\frac{1+v}{2}\right)\varepsilon^*\delta^*$,\\
$\varphi_2$ & : &
$[\alpha\beta]\mapsto[\alpha\beta]+Y_2x_{p_1} \rho[\alpha\varepsilon]\left(\frac{1+v}{2}\right)\varepsilon^*\delta^*\lambda[\delta\beta]$, &
$[\delta\varepsilon]\mapsto[\delta\varepsilon]+Y_2x_{p_1} \lambda[\delta\beta]\left(\frac{1+v}{2}\right)\beta^*\alpha^*\rho[\alpha\varepsilon]$,\\
$\Phi$ &:&
$\beta^*\mapsto-\beta^*, \ \ \ \
\delta^*\mapsto-\delta^*, \ \ \ \  [\alpha\varepsilon]\mapsto[\alpha\varepsilon](1-v)$, &
$[\delta\beta]\mapsto[\delta\beta](1-v)$.
\end{tabular}
}
\end{center}
%\begin{eqnarray}
%\nonumber
%\psi & : &
%[\alpha\beta]\mapsto[\alpha\beta]\left(\frac{1+v}{2}\right)\\
%\nonumber
%&&
%[\delta\varepsilon]\mapsto[\delta\varepsilon]\left(\frac{1+v}{2}\right)\\
%\nonumber
%\varphi_1 &:&
%[\alpha\beta]\mapsto[\alpha\beta]- Y_1 x_{p_2}\rho[\delta\beta] -Y_2  x_{p_1} \rho[\alpha\varepsilon]\eta\lambda[\delta\beta]  \\
%\nonumber
%&&
%\gamma\mapsto\gamma-\left(\frac{1+v}{2}\right)\beta^*\alpha^*\\
%&&
%\nonumber
%[\delta\varepsilon]\mapsto[\delta\varepsilon]-Y_1  x_{p_1} \lambda[\alpha\varepsilon]\\
%&&
%\nonumber
%\eta\mapsto\eta-\left(\frac{1+v}{2}\right)\varepsilon^*\delta^*
%\end{eqnarray}
%\begin{eqnarray}\nonumber
%\varphi_2 & : &
%[\alpha\beta]\mapsto[\alpha\beta]+Y_2x_{p_1} \rho[\alpha\varepsilon]\left(\frac{1+v}{2}\right)\varepsilon^*\delta^*\lambda[\delta\beta]\\
%\nonumber
%&&
%[\delta\varepsilon]\mapsto[\delta\varepsilon]+Y_2x_{p_1} \lambda[\delta\beta]\left(\frac{1+v}{2}\right)\beta^*\alpha^*\rho[\alpha\varepsilon]
%\\
%\nonumber
%\Phi &:&
%\beta^*\mapsto-\beta^*\\
%\nonumber
%&&
%\delta^*\mapsto-\delta^*\\
%\nonumber
%&&
%[\alpha\varepsilon]\mapsto[\alpha\varepsilon](1-v)\\
%\nonumber
%&&
%[\delta\beta]\mapsto[\delta\beta](1-v)
%\end{eqnarray}

Direct computation shows that
the composition $\Phi\varphi_2\varphi_1\psi$ is a right-equivalence $(\widetilde{\mu}_k(\Atau),\widetilde{\mu}_k(\Stau))\rightarrow(\widetilde{\mu}_k(\Atau),\Ssigma^\sharp)$. Whence the reduced parts of these two SPs are right-equivalent. But these reduced parts are precisely $\mu_k\AStau$ and $\ASsigma$, respectively.
\end{case}

\begin{case}\emph{Flip of configuration 39}. We adopt the notation from Figure \ref{Fig:flip_39}.
\begin{figure}[!ht]
%                ~\\
                \centering
                \includegraphics[scale=.5]{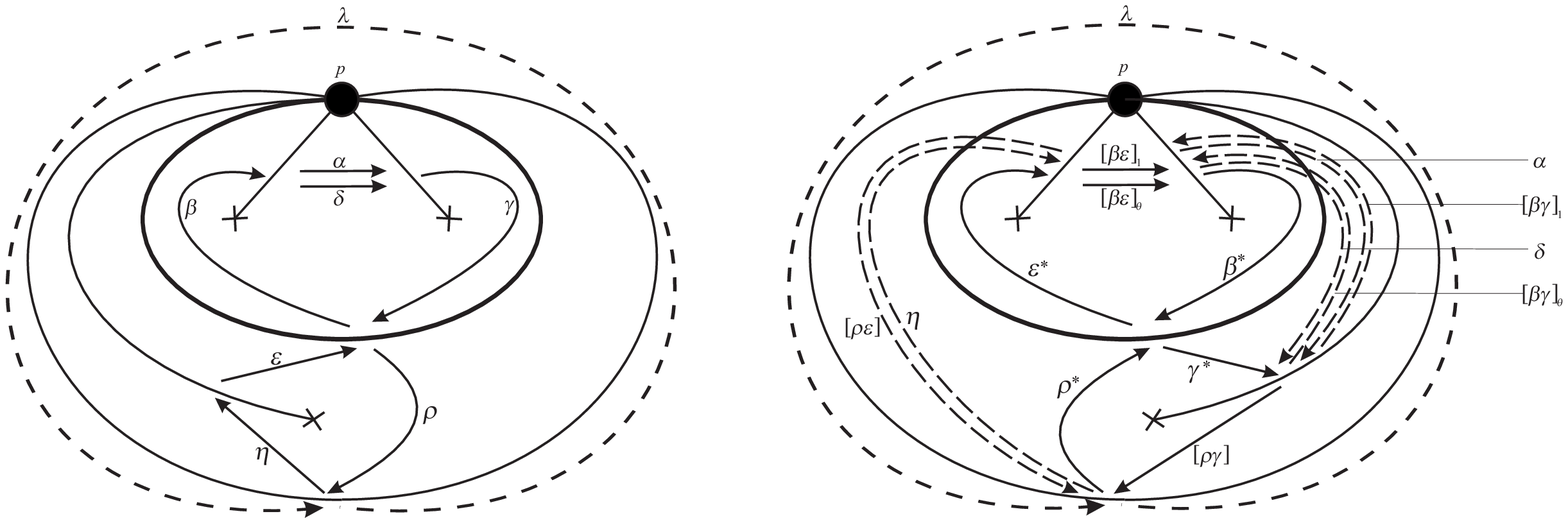}
                \caption{Flip of the $39^{\operatorname{th}}$ configuration of Figure \ref{Fig:all_possible_matchings_2}.
                Left: $\tau$ and $\Qtau$.
                Right: $\sigma$ and $\widetilde{\mu}_k(\Qtau)$.}
                \label{Fig:flip_39}
        \end{figure}
Note that $\ASsigma$ is the reduced part of $(\widetilde{\mu}_k(\Atau),\Ssigma^\sharp)$, where
\begin{eqnarray}\nonumber
\Ssigma^\sharp  & = &
[\rho\varepsilon]\eta
+2[\beta\gamma]_{\myid}v^{-1}\alpha
+2[\beta\gamma]_{\theta}\delta+
x_{p}([\beta\varepsilon]_{\myid}+[\beta\varepsilon]_{\theta}) \varepsilon^*\rho^*\lambda[\rho\gamma]\gamma^*\beta^*\\
\nonumber
&+&
[\rho\gamma](1-v)\gamma^*\rho^*
+2v^{-1}[\beta\varepsilon]_{\myid}\varepsilon^*\beta^*
+2[\beta\varepsilon]_{\theta}\varepsilon^*\beta^*+
S(\tau,\sigma)
\in\RA{\widetilde{\mu}_k(\Atau)},
\end{eqnarray}
with $S(\tau,\sigma)\in\RA{\Atau}\cap\RA{\Asigma}$. % being a potential not involving any of the arrows $\alpha$, $\beta$, $\gamma$, $\delta$, $\varepsilon$, $\eta$, $\rho$.
Furthermore,
\begin{eqnarray}
\nonumber
\Stau & = &
\rho\varepsilon\eta - \rho\varepsilon v\eta + 2v^{-1}\alpha\beta\gamma + 2\delta\beta\gamma+
x_{p}\rho\gamma(\alpha+\delta)\beta\varepsilon\eta\lambda
+S(\tau,\sigma)
\end{eqnarray}
\begin{eqnarray}
\nonumber
\text{and} \ \ \ \ \
\widetilde{\mu}_k(\Stau)
&\sim_{\operatorname{cyc}}&
[\rho\varepsilon]\eta - [\rho\varepsilon] v\eta + 2[\beta\gamma]_{\myid}v^{-1}\alpha + 2[\beta\gamma]_{\theta}\delta\\
\nonumber
&+&
x_{p}\alpha\left(\frac{1}{2}([\beta\varepsilon]\eta\lambda[\rho\gamma]+v^{-1}[\beta\varepsilon]\eta\lambda[\rho\gamma]v\right)+
x_{p}\delta\left(\frac{1}{2}([\beta\varepsilon]\eta\lambda[\rho\gamma]+\theta(v^{-1})[\beta\varepsilon]\eta\lambda[\rho\gamma]v\right) \\
\nonumber
&+&
[\rho\varepsilon]\varepsilon^*\rho^*
+[\beta\gamma]_{\myid}\left(\frac{1}{2}(\gamma^*\beta^*+v^{-1}\gamma^*\beta^*v\right)\\
\nonumber
&+&[\beta\gamma]_{\theta}\left(\frac{1}{2}(\gamma^*\beta^*+\theta(v^{-1})\gamma^*\beta^*v\right)
+[\rho\gamma]\gamma^*\rho^*
+[\beta\varepsilon]\varepsilon^*\beta^*+
S(\tau,\sigma).
\end{eqnarray}

Define $R$-algebra automorphisms $\psi,\varphi_1,\varphi_2,\Phi:\RA{\widetilde{\mu}_k(\Atau)}\rightarrow\RA{\widetilde{\mu}_k(\Atau)}$ according to the rules
\begin{eqnarray}
\nonumber
\psi & : &
[\rho\varepsilon]\mapsto[\rho\varepsilon]\left(\frac{1+v}{2}\right),\\
\nonumber
\varphi_1 & : &
\eta\mapsto\eta- \left(\frac{1+v}{2}\right)\varepsilon^*\rho^*, \ \ \ \ \ \ \ \ \ \ \ \ \ \ \
[\beta\gamma]_{\myid}\mapsto
[\beta\gamma]_{\myid}-x_{p}\left(\frac{1}{4}([\beta\varepsilon]\eta\lambda[\rho\gamma]+v^{-1}[\beta\varepsilon]\eta\lambda[\rho\gamma]v)\right)v,\\
\nonumber
&&
\alpha\mapsto\alpha-\left(\frac{v}{4}(\gamma^*\beta^*+v^{-1}\gamma^*\beta^*v\right),\ \ \ \ \
[\beta\gamma]_{\theta}\mapsto
[\beta\gamma]_{\theta}-x_{p}\left(\frac{1}{4}([\beta\varepsilon]\eta\lambda[\rho\gamma]+\theta(v^{-1})[\beta\varepsilon]\eta\lambda[\rho\gamma]v)\right),\\
\nonumber
&&
\delta\mapsto\delta-\left(\frac{1}{4}(\gamma^*\beta^*+\theta(v^{-1})\gamma^*\beta^*v\right),\\
\nonumber
\varphi_2 &:&
[\rho\varepsilon]\mapsto
[\rho\varepsilon]+
x_{p}\left(\frac{1}{2}\left[\lambda[\rho\gamma]\left(\frac{v}{4}(\gamma^*\beta^*+v^{-1}\gamma^*\beta^*v)\right)[\beta\varepsilon]+\lambda[\rho\gamma]v
\left(\frac{v}{4}(\gamma^*\beta^*+v^{-1}\gamma^*\beta^*v)\right)v^{-1}[\beta\varepsilon]\right]\right),\\
\nonumber
&&
+x_{p}\left(\frac{1}{2}\left[\left(\frac{1}{4}\lambda[\rho\gamma](\gamma^*\beta^*+\theta(v^{-1})\gamma^*\beta^*v)\right) [\beta\varepsilon]+\left(\frac{1}{4}\lambda[\rho\gamma]v(\gamma^*\beta^*+\theta(v^{-1})\gamma^*\beta^*v)\right) \theta(v^{-1})[\beta\varepsilon]\right]\right),\\
\nonumber
&&
[\beta\gamma]_{\myid}\mapsto
[\beta\gamma]_{\myid}+
x_{p}\left(\frac{1}{4}\left[[\beta\varepsilon]\left(\frac{1+v}{2}\right)\varepsilon^*\rho^*\lambda[\rho\gamma]+v^{-1}[\beta\varepsilon] \left(\frac{1+v}{2}\right)\varepsilon^*\rho^*\lambda[\rho\gamma]v\right]\right)v,\\
\nonumber
&&
[\beta\gamma]_{\theta}\mapsto
[\beta\gamma]_{\theta}+
x_{p}\left(\frac{1}{4}\left[[\beta\varepsilon] \left(\frac{1+v}{2}\right)\varepsilon^*\rho^*\lambda[\rho\gamma]+\theta(v^{-1})[\beta\varepsilon] \left(\frac{1+v}{2}\right)\varepsilon^*\rho^*\lambda[\rho\gamma]v\right]\right),\\
\nonumber
\Phi &:&
[\rho\gamma]\mapsto[\rho\gamma](1-v), \ \ \ \ \
[\beta\varepsilon]_{\myid}\mapsto
2v^{-1}[\beta\varepsilon]_{\myid}, \ \ \ \ \
[\beta\varepsilon]_{\theta}\mapsto
2[\beta\varepsilon]_{\theta}.
\end{eqnarray}

Direct computation shows that
the composition $\Phi\varphi_2\varphi_1\psi$ is a right-equivalence $(\widetilde{\mu}_k(\Atau),\widetilde{\mu}_k(\Stau))\rightarrow(\widetilde{\mu}_k(\Atau),\Ssigma^\sharp)$. Whence the reduced parts of these two SPs are right-equivalent. But these reduced parts are precisely $\mu_k\AStau$ and $\ASsigma$, respectively.
\end{case}

\begin{case}\emph{Flip of configuration 41}. We adopt the notation from Figure \ref{Fig:flip_41}.
\begin{figure}[!ht]
%                ~\\
                \centering
                \includegraphics[scale=.5]{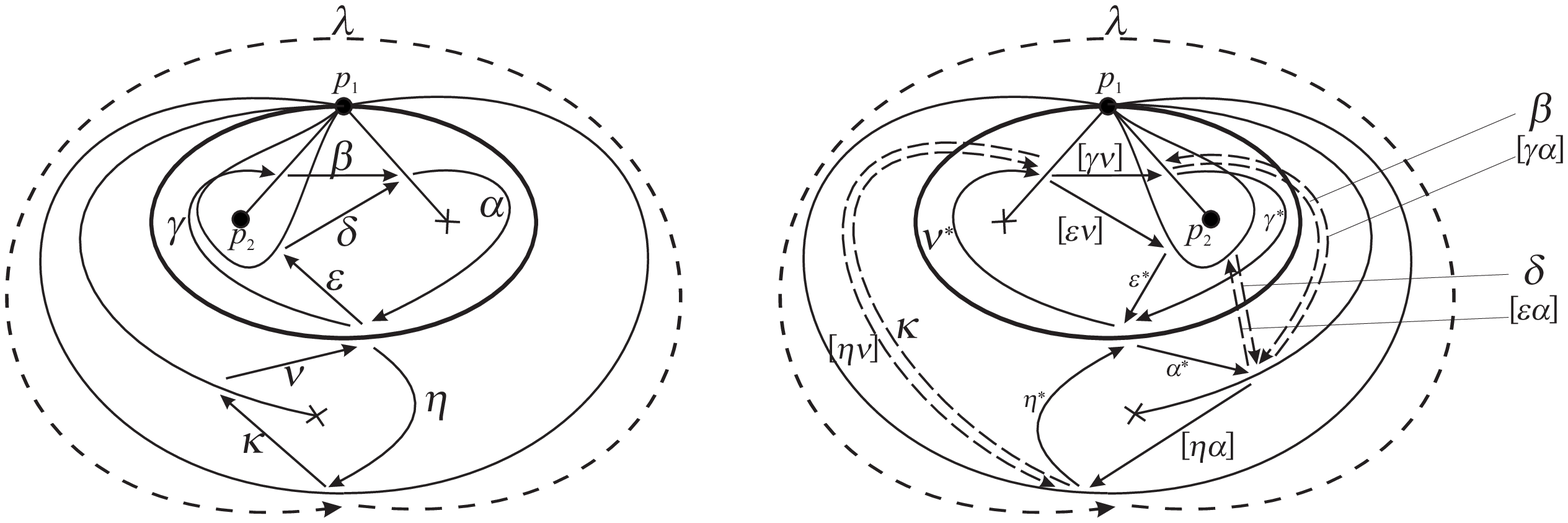}
                \caption{Flip of the $41^{\operatorname{th}}$ configuration of Figure \ref{Fig:all_possible_matchings_2}.
                Left: $\tau$ and $\Qtau$.
                Right: $\sigma$ and $\widetilde{\mu}_k(\Qtau)$.}
                \label{Fig:flip_41}
        \end{figure}
Note that $\ASsigma$ is the reduced part of $(\widetilde{\mu}_k(\Atau),\Ssigma^\sharp)$, where
\begin{eqnarray}\nonumber
\Ssigma^\sharp  & = &
[\eta\nu]\kappa
+[\varepsilon\alpha]\delta
+[\gamma\alpha]\beta\\
\nonumber
&+&
x_{p_1}[\eta\alpha]\alpha^*\gamma^*[\gamma\nu]\nu^*\eta^*\lambda
-x_{p_2}^{-1}[\gamma\nu]\nu^*\gamma^*+x_{p_2}^{-1}[\gamma\nu]v\nu^*\gamma^*
\\
\nonumber
&+&
[\eta\alpha]\alpha^*\eta^*-[\eta\alpha]v\alpha^*\eta^*+[\varepsilon\nu]\nu^*\varepsilon^*-[\varepsilon\nu]v\nu^*\varepsilon^*+S(\tau,\sigma)
\in\RA{\widetilde{\mu}_k(\Atau)},
\end{eqnarray}
with $S(\tau,\sigma)\in\RA{\Atau}\cap\RA{\Asigma}$. % being a potential not involving any of the arrows $\alpha$, $\beta$, $\gamma$, $\delta$, $\varepsilon$, $\eta$, $\kappa$,  $\nu$.
Furthermore,
\begin{eqnarray}
\nonumber
\Stau & = &
\eta\nu\kappa - \eta\nu v\kappa + \varepsilon\alpha\delta - \varepsilon\alpha v\delta
+x_{p_2}^{-1}\alpha v\beta\gamma - x_{p_2}^{-1}\alpha\beta\gamma+
x_{p_1}\eta\alpha\beta\gamma\nu\kappa\lambda
+S(\tau,\sigma)\\
\nonumber
\text{and} \ \ \
\widetilde{\mu}_k(\Stau)
& = &
[\eta\nu]\kappa - [\eta\nu] v\kappa + [\varepsilon\alpha]\delta - [\varepsilon\alpha] v\delta
+x_{p_2}^{-1}[\gamma\alpha] v\beta - x_{p_2}^{-1}[\gamma\alpha]\beta
+
x_{p_1}[\eta\alpha]\beta[\gamma\nu]\kappa\lambda\\
\nonumber
&+&
[\eta\nu]\nu^*\eta^*
+[\varepsilon\alpha]\alpha^*\varepsilon^*
+[\gamma\alpha]\alpha^*\gamma^*
+[\eta\alpha]\alpha^*\eta^*
+[\gamma\nu]\nu^*\gamma^*
+[\varepsilon\nu]\nu^*\varepsilon^*+
S(\tau,\sigma).
\end{eqnarray}

Define $R$-algebra automorphisms $\psi,\varphi_1,\varphi_2,\Phi:\RA{\widetilde{\mu}_k(\Atau)}\rightarrow\RA{\widetilde{\mu}_k(\Atau)}$ according to the rules
\begin{center}
{\renewcommand{\arraystretch}{1.125}
\begin{tabular}{ccll}
$\psi$ & : &
$[\eta\nu]\mapsto[\eta\nu]\left(\frac{1+v}{2}\right), \ \ \ \ \
[\varepsilon\alpha]\mapsto[\varepsilon\alpha]\left(\frac{1+v}{2}\right)$, &
$[\gamma\alpha]\mapsto-x_{p_2}[\gamma\alpha]\left(\frac{1+v}{2}\right)$,\\
$\varphi_1$ &:&
$[\eta\nu]\mapsto[\eta\nu] - x_{p_1}\lambda[\eta\alpha]\beta[\gamma\nu]$, &
$\kappa\mapsto\kappa-\left(\frac{1+v}{2}\right)\nu^*\eta^*$,\\
&&
$\delta\mapsto\delta-\left(\frac{1+v}{2}\right)\alpha^*\varepsilon^*$, &
$\beta\mapsto\beta+x_{p_2}\left(\frac{1+v}{2}\right)\alpha^*\gamma^*$,\\
$\varphi_2$ &:&
$[\gamma\alpha]\mapsto[\gamma\alpha]+x_{p_1}[\gamma\nu]\left(\frac{1+v}{2}\right)\nu^*\eta^*\lambda[\eta\alpha]$,&
$[\eta\nu]\mapsto[\eta\nu]- x_{p_1}x_{p_2}\lambda[\eta\alpha]\left(\frac{1+v}{2}\right)\alpha^*\gamma^*[\gamma\nu]$,\\
$\Phi$ &:&
$[\eta\alpha]\mapsto[\eta\alpha](1-v), \ \ \ \
[\gamma\nu]\mapsto-x_{p_2}^{-1}[\gamma\nu](1-v)$, &
$[\varepsilon\nu]\mapsto[\varepsilon\nu](1-v)$.
\end{tabular}
}
\end{center}
%\begin{eqnarray}
%\nonumber
%\psi & : &
%[\eta\nu]\mapsto[\eta\nu]\left(\frac{1+v}{2}\right)\\
%\nonumber
%&&
%[\varepsilon\alpha]\mapsto[\varepsilon\alpha]\left(\frac{1+v}{2}\right)\\
%\nonumber
%&&
%[\gamma\alpha]\mapsto-x_{p_2}[\gamma\alpha]\left(\frac{1+v}{2}\right)\\
%\nonumber
%\varphi_1 &:&
%[\eta\nu]\mapsto[\eta\nu] - x_{p_1}\lambda[\eta\alpha]\beta[\gamma\nu]\\
%\nonumber
%&&
%\kappa\mapsto\kappa-\left(\frac{1+v}{2}\right)\nu^*\eta^*\\
%\nonumber
%&&
%\delta\mapsto\delta-\left(\frac{1+v}{2}\right)\alpha^*\varepsilon^*\\
%\nonumber
%&&
%\beta\mapsto\beta+x_{p_2}\left(\frac{1+v}{2}\right)\alpha^*\gamma^*\\
%\nonumber
%\varphi_2 &:&
%[\gamma\alpha]\mapsto[\gamma\alpha]+x_{p_1}[\gamma\nu]\left(\frac{1+v}{2}\right)\nu^*\eta^*\lambda[\eta\alpha]\\
%\nonumber
%&&
%[\eta\nu]\mapsto[\eta\nu]- x_{p_1}x_{p_2}\lambda[\eta\alpha]\left(\frac{1+v}{2}\right)\alpha^*\gamma^*[\gamma\nu]\\
%\nonumber
%\Phi &:&
%[\eta\alpha]\mapsto[\eta\alpha](1-v)\\
%\nonumber
%&&
%[\gamma\nu]\mapsto-x_{p_2}^{-1}[\gamma\nu](1-v)\\
%\nonumber
%&&
%[\varepsilon\nu]\mapsto[\varepsilon\nu](1-v)
%\end{eqnarray}

Direct computation shows that
the composition $\Phi\varphi_2\varphi_1\psi$ is a right-equivalence $(\widetilde{\mu}_k(\Atau),\widetilde{\mu}_k(\Stau))\rightarrow(\widetilde{\mu}_k(\Atau),\Ssigma^\sharp)$. Whence the reduced parts of these two SPs are right-equivalent. But these reduced parts are precisely $\mu_k\AStau$ and $\ASsigma$, respectively.
\end{case}

Theorem \ref{thm:ideal-non-pending-flips<->SP-mutation} is proved.

\subsection{Proof of Proposition \ref{prop:pop-existence}}

\begin{defi}\label{def:val_X,O}
Given any set $X$ of pairwise compatible arcs on $\surf$, and given any marked point $p\in\marked$, we define $\val_{X,\orb}(p)$ to be the number of arcs in $X$ that are pending and incident to $p$. This rule clearly defines a function $\val_{X,\orb}:\marked\rightarrow\mathbb{Z}_{\geq0}$.
\end{defi}

\begin{lemma}\label{lemma:existence-certain-triang} Let $\surf$ be a surface with empty boundary such that $|\marked|\geq 2$ and with the property that $|\marked|\geq 7$ if $\Sigma$ happens to be a sphere.
For every pair of distinct punctures~$p_1,p_2 \in \marked$ and every function~$\vv : \marked \to \Z_{\geq 0}$ satisfying $\sum_{p \in \marked} \vv(p) = |\orb|$ and $\vv(p_2)=0$, there exists an ideal triangulation $\tau$ of $\surf$ and a set of non-pending arcs $\{i_1,i_2,i_3,i_4,i_5,i_6,i_7\}\subseteq\tau$ with the following properties:
\begin{enumerate}
\item $\val_{\tau,\orb} = \vv$
\item $i_1$, $i_2$, $i_3$, $i_4$, $i_5$ and $i_6$ are six different arcs in $\tau$;
\item $i_2$, $i_3$, $i_4$, $i_5$, $i_6$ and $i_7$ are six different arcs in $\tau$;
\item the arcs $i_1$ and $i_7$ are loops;
% (with basepoints that may or may not coincide; the arcs $i_1$ and $i_7$ themselves may or may not coincide);
\item the arcs $i=i_3$ and $j=i_2$ form a self-folded triangle of $\tau$, with $i$ as folded side and $j$ as enclosing loop, the latter being based at $p_1$ and enclosing $p_2$;
\item the full subquiver of $\Qtau$ determined by $\{i_1,i_2,i_3,i_4,i_5,i_6,i_7\}$ is
\begin{center}
$Q \ = \ $\begin{tabular}{c}
$ \xymatrix{
  & i_2 \ar[dl]_{\beta} & & i_5 \ar@<0.5ex>[dd]_{\eta_3 \ } \ar@<-0.5ex>[dd]^{\ \lambda_1} & \\
  i_1  \ar[rr]^{\alpha} & & i_4 \ar[ul]_{\gamma} \ar[dl]^{\varepsilon} \ar[ur]^{\eta_1}  & &  i_7 \ar[ul]_{\lambda_2}\\
 & i_3 \ar[ul]^{\delta} & & i_6 \ar[ul]^{\eta_2} \ar[ur]_{\lambda_3} &
}$
\end{tabular}
\end{center}
\item under the notation just established for the arrows of the full subquiver $Q$ of $\Qtau$, the potentials $\Stau$ and $\Wtau$ are given by the formulas
\begin{eqnarray}\nonumber
\Stau &=& -x_{p_2}^{-1}\alpha\delta\varepsilon+\eta_1\eta_2\eta_3+\lambda_1\lambda_2\lambda_3+\alpha\beta\gamma\\
\nonumber
&&
+Y(x_{p_1}\delta\varepsilon\eta_2\lambda_1\eta_1\alpha\Lambda+x_{p_3}\lambda_3\eta_3\lambda_2\Omega)\\
\nonumber
&&
+
Z(x_{p_1} \delta\varepsilon\eta_2\lambda_1\eta_1\alpha\Lambda\lambda_3\eta_3\lambda_2\Omega)+S'(\tau),\\
\nonumber
\Wtau &=& \alpha\delta\varepsilon+\eta_1\eta_2\eta_3+\lambda_1\lambda_2\lambda_3+x_{p_2}^{-1}\alpha\beta\gamma\\
\nonumber
&&
+Y(x_{p_1}\beta\gamma\eta_2\lambda_1\eta_1\alpha\Lambda+x_{p_3}\lambda_3\eta_3\lambda_2\Omega)\\
\nonumber
&&
+
Z(x_{p_1} \beta\gamma\eta_2\lambda_1\eta_1\alpha\Lambda\lambda_3\eta_3\lambda_2\Omega)+S'(\tau),
\end{eqnarray}
where
\begin{enumerate}
\item $p_1$ is the puncture the loop $i_1$ is based at, $p_2$ is the puncture inside the self-folded triangle formed by $i=i_2$ and $j=i_3$, and
$p_3$ is the puncture the loop $i_7$ is based at;
\item if $p_1\neq p_3$, then $\Lambda\in e_{i_1}\mathfrak{m}e_{i_1}$ and $\Omega\in e_{i_7}\mathfrak{m}e_{i_7}$;
\item if $p_1\neq p_3$, then $Y=1\in K$ and $Z=0\in K$;
\item if $p_1=p_3$, then $\Lambda\in e_{i_1}\RA{\Atau}e_{i_7}$ and $\Omega\in e_{i_7}\RA{\Atau}e_{i_1}$;
\item if $p_1= p_3$, then $Y=0\in K$ and $Z=1\in K$;
\item regardless of whether $p_1\neq p_3$ or $p_1=p_3$, the elements $\Lambda$ and $\Omega$ can be written as finite $F$-linear combinations of paths where none of the arrows $\alpha$, $\beta$, $\gamma$, $\eta_1$, $\eta_2$, $\eta_3$, $\lambda_1$, $\lambda_2$ and $\lambda_3$ appears;
    \item $S'(\tau)\in\RA{\Atau}$ is a potential that can be written as a finite $F$-linear combination of paths where none of the arrows $\alpha$, $\beta$, $\gamma$, $\eta_1$, $\eta_2$, $\eta_3$, $\lambda_1$, $\lambda_2$ and $\lambda_3$ appears.
\end{enumerate}
\end{enumerate}
\end{lemma}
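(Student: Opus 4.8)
The plan is to prove the lemma by an explicit construction of the desired ideal triangulation $\tau$, followed by a direct reading-off of the two potentials from the definitions in Section~\ref{sec:potential-of-a-triangulation}. This follows closely the strategy of \cite[Section~7]{Labardini-potsnoboundaryrevised} for surfaces without orbifold points; the new features to be handled are the bookkeeping of the prescribed numbers $\vv(p)$ of pending arcs and the extra arrows $\delta,\varepsilon$ produced by the self-folded triangle.

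First I would build a partial triangulation realizing the required local picture around $p_1$ and $p_2$, in three layers: (a) join disjoint small discs around $p_1$ and $p_2$ by a loop $j=i_2$ based at $p_1$ cutting out a once-punctured monogon containing $p_2$, with folded side $i=i_3$ inside it, thus producing the self-folded triangle of property~(5); (b) surround this self-folded triangle by a loop $i_1$ based at $p_1$ and triangulate the region between $i_1$ and $j$ by a single arc whose outer side meets the rest along an arc $i_4$, so that the non-self-folded triangle $\triangle'$ adjacent to the self-folded triangle has sides $i_2,i_1,i_4$; and (c) on the other side of $i_1$ install a small twice-triangulated region carrying the two triangles $i_4i_5i_6$ and $i_5i_6i_7$ with $i_7$ a loop, the puncture $p_3$ at its base being $\neq p_1$ or $=p_1$ according to an identification of marked points (this dichotomy will produce the scalars $Y,Z$). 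Exploiting that $\Sigma$ has empty boundary, that $|\marked|\geq 2$, and (for a sphere) $|\marked|\geq 7$, this partial triangulation extends to a full ideal triangulation of $\surf$ in which exactly $\vv(p)$ of the pending arcs are incident to each marked point $p$; the freedom to do so, and in particular to prescribe $\val_{\tau,\orb}=\vv$, is guaranteed by Theorem~\ref{thm:existence-of-puzzle-piece-decomps} together with the observation following it that pending arcs may be inserted exactly as self-folded triangles are, and the hypothesis $\vv(p_2)=0$ is what lets the self-folded triangle at $p_2$ coexist with the pending arcs. Properties (1)--(6) are then immediate from inspection of the triangles around $i_1,\dots,i_7$ and the arrow-counting rule of Definition~\ref{def:weighted-quiver-of-ideal-triangulation} applied via $\pi_\tau$; note in particular that \emph{all} arrows of $\Qtau$ incident to $i_2$ or $i_3$ lie in $\{\beta,\gamma,\delta,\varepsilon\}$, because $\triangle'$ is the only non-self-folded triangle containing the loop $j=i_2$.

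The core of the argument is property~(7). For $\Stau$ I would read off the contributions: the interior triangles $i_1i_4i_2$, $i_4i_5i_6$, $i_5i_6i_7$ give the $3$-cycles $\alpha\beta\gamma$, $\eta_1\eta_2\eta_3$, $\lambda_1\lambda_2\lambda_3$ (Definition~\ref{def:cycles-from-int-triangles}); the puncture $p_2$, being adjacent only to the folded side $i_3$, gives $-x_{p_2}^{-1}\alpha\delta\varepsilon$ by Definition~\ref{def:cycles-from-punctures}(1); the punctures $p_1$ and $p_3$ give, by Definition~\ref{def:cycles-from-punctures}(2), the long cycles around them, which assemble into the $Y$- and $Z$-weighted terms according to whether $p_1\neq p_3$ or $p_1=p_3$; everything else contributes $S'(\tau)$, and one checks that none of $\alpha,\beta,\gamma,\delta,\varepsilon,\eta_i,\lambda_i$ occurs in $S'(\tau)$ nor in the tails $\Lambda,\Omega$. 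The formula for $\Wtau$ then follows by pure manipulation of this expression: by Definition~\ref{def:popped-potential}, $\Wtau=\pi_{i,j}^\tau(S(\tau,\mathbf{y}))$ where $\mathbf{y}$ is obtained from $\mathbf{x}$ by flipping the sign of $x_{p_2}$, and the quiver automorphism $\pi_{i,j}^\tau$ fixes $\alpha$ and all $\eta_i,\lambda_i$, interchanges $\beta\leftrightarrow\delta$ and $\gamma\leftrightarrow\varepsilon$, and fixes $S'(\tau)$, $\Lambda$, $\Omega$; carrying out the substitution $\beta\leftrightarrow\delta$, $\gamma\leftrightarrow\varepsilon$ and the replacement $x_{p_2}\mapsto -x_{p_2}$ in the formula just obtained for $\Stau$ returns exactly the claimed formula for $\Wtau$.

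I expect the main obstacle to lie not in any single computation but in the verification that the gluing of step~(a)--(c) can genuinely be carried out inside the \emph{given} surface $\surf$ for every admissible $\vv$ and every pair $p_1\neq p_2$ --- in particular treating uniformly the degenerate marked-point identifications (which is where $Y$ and $Z$ enter), the low-genus and low-puncture cases (where the sphere hypothesis $|\marked|\geq 7$ and $|\marked|\geq 2$ are needed), and the reconciliation of the prescribed pending-arc count with the space occupied by the self-folded triangle at $p_2$. The potential computations themselves, though lengthy, are routine bookkeeping with the definitions of Section~\ref{sec:potential-of-a-triangulation}.
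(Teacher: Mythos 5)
Your overall strategy coincides with the paper's: exhibit an explicit ideal triangulation containing the required local configuration around $p_1,p_2$, then read off $\Stau$ from Definitions~\ref{def:cycles-from-int-triangles}--\ref{def:cycles-from-punctures} and obtain $\Wtau$ by applying $\pi_{i,j}^\tau$ (which swaps $\beta\leftrightarrow\delta$, $\gamma\leftrightarrow\varepsilon$) to $S(\tau,\mathbf{y})$ with $y_{p_2}=-x_{p_2}$. Your reading-off of the two potentials is correct.

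The genuine gap is exactly the one you flag at the end, and you do not fill it. You attempt to build the configuration \emph{inside} the arbitrary given surface $\surf$ and then invoke Theorem~\ref{thm:existence-of-puzzle-piece-decomps} to extend, but this is not enough: that theorem guarantees a puzzle-piece decomposition of \emph{some} triangulation of $\surf$, not that your prescribed partial system of arcs embeds and extends with the prescribed $\val_{\tau,\orb}=\vv$. The paper sidesteps this by using the classification of compact oriented surfaces: it reduces to producing \emph{one} model surface per topological type $(g,m,o)$ and constructs that model by taking the specific triangulated once-punctured cylinder of Figure~\ref{Fig:pop_basic_cylinder} (whose triangulation already is $\{i_1,\dots,i_7\}$ with the required quiver) and gluing other triangulated pieces to it, splitting into the three cases $g>0$ with $g+m+o>3$, $g>0$ with $g+m+o\leq 3$, and $g=0$. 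This route handles for free the degenerate situations your layered construction does not address: in particular the case $g+m+o\leq 3$ forces $i_1=i_7$ (the two boundary loops of the cylinder get glued to each other), which properties (2)--(3) of the statement permit but your step~(c), which installs a ``twice-triangulated region'' containing a new loop $i_7$ distinct from $i_1$, cannot produce. Your step~(b) is also imprecise: the region between two nested loops at $p_1$ is an annulus with one marked point on each boundary circle, which cannot be triangulated by ``a single arc''; what actually happens is that $i_4$ is a further loop at $p_1$ and $\triangle'$ is the ideal triangle with all three vertices at $p_1$ cut out by $i_1,i_2,i_4$. So the proposal identifies the right target and the right local picture, but the part you correctly predict to be the crux --- realizing the configuration inside $\surf$ uniformly in $(g,m,o)$ and $\vv$ --- is left unproved, and that is precisely where the paper's proof does its work.
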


\begin{proof}
Let $g \geq 0$, $m \geq 2$, and $o \geq 0$ be any triple of non-negative integers such that $m \geq 7$ if $g = 0$. Because of the well-known classification of compact connected oriented surfaces without boundary up to diffeomorphisms (in which the diffeomorphism class of such a surface is determined by its genus alone), in order to prove the lemma it is sufficient to show the
existence of a genus-$g$ empty-boundary surface~$\surf$ with $|\marked| = m$ and $|\orb| = o$ satisfying the statement of Lemma \ref{lemma:existence-certain-triang}.

Take a once-punctured cylinder~$(\Sigma_0,\marked_0,\varnothing)$,
and let $\tau_0$ be the ideal triangulation of $(\Sigma_0,\marked_0,\varnothing)$ depicted in Figure \ref{Fig:pop_basic_cylinder}.

\begin{figure}[!h]
  \centering
   \includegraphics[scale=.75]{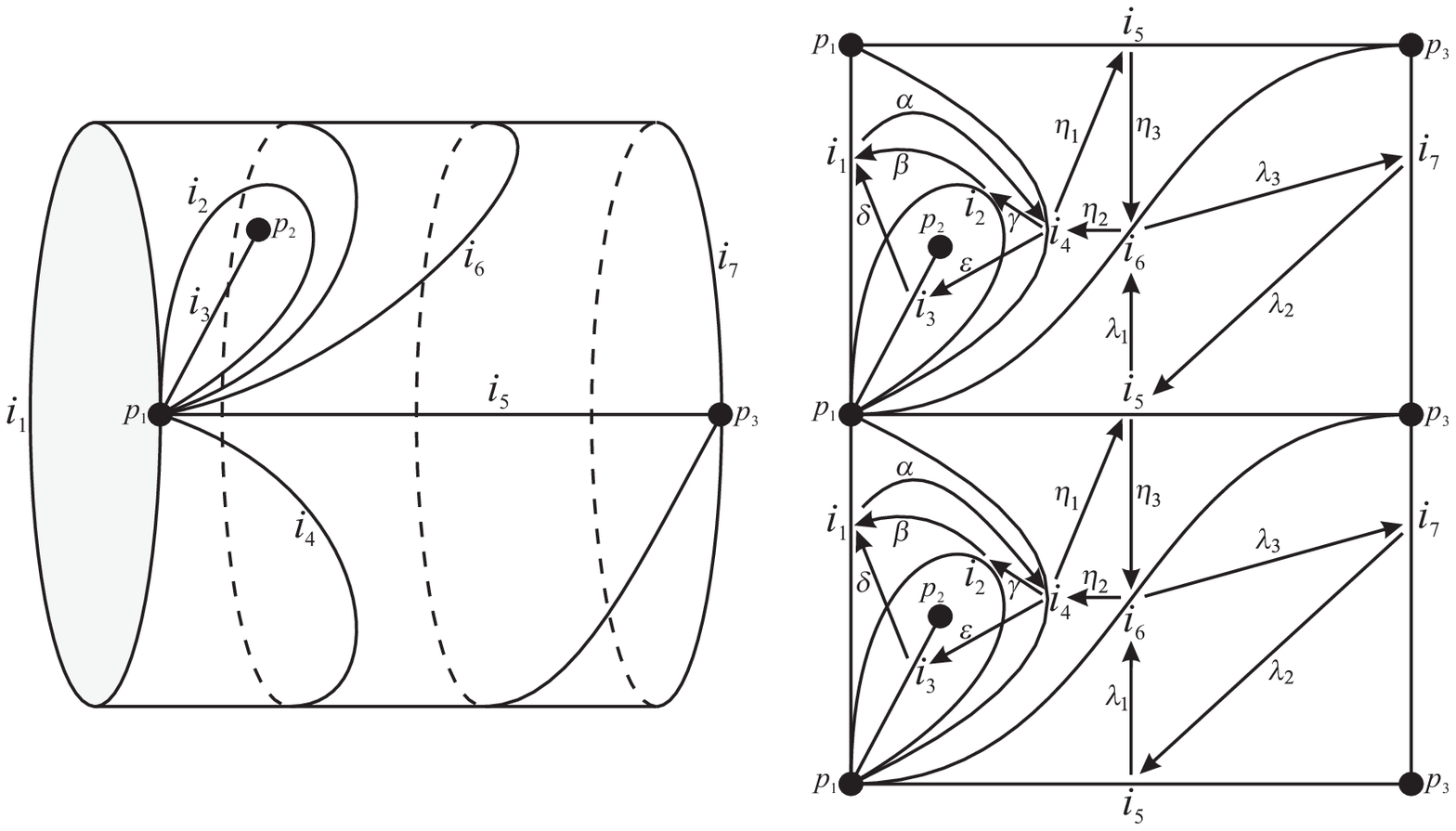}
   \caption{The once-punctured cylinder~$(\Sigma_0,\marked_0,\varnothing)$ and its ideal triangulation~$\tau_0$.}
  \label{Fig:pop_basic_cylinder}
\end{figure}

(a)
If $g > 0$ and $g + m + o > 3$ (equivalently, if $g\geq 1$ and either $g\geq 2$ or $m\geq 3$ or $o\geq 1$), let $(\Sigma_1,\marked_1,\orb)$ be a surface of genus~$g-1$ with exactly one boundary component, exactly $2$ marked points on such component, exactly $m-2$ punctures and exactly $o$ orbifold points.
Take the cylinder $(\Sigma_0,\marked_0,\varnothing)$ and glue $p_1$ and $p_3$, without gluing any other point in $i_1$ to any point in~$i_7$.
To the result~$\Sigma_0'$ of this gluing we then glue the boundary component of $(\Sigma_1,\marked_1,\orb)$ along~$i_1$ and $i_7$ of $\Sigma_0$, making sure that the two marked points on the boundary of  $\Sigma_1$ are glued to $p_1 = p_3 \in \Sigma_0'$ and that the result of this gluing is an oriented empty-boundary surface (for this we take suitable orientations of $i_1$ and $i_7)$. This way we obtain an empty-boundary surface $\surf$ of genus~$g$ with exactly $m$ punctures and exactly $o$ orbifold points.

(b)
If $g > 0$ and $g + m + o \leq 3$ (equivalently, if $g = 1$, $m = 2$, and $o = 0$), glue $i_1$ to $i_7$, making sure that $p_1$ gets identified with $p_3$, and that the gluing is made along orientations of $i_1$ and $i_7$ that ensure that the result of the gluing is an oriented surface. This way we obtain a surface $\surf$ which is a torus with exactly $2$ punctures and without orbifold points.

(c)
If $g = 0$ (which implies $m \geq 7$),
take monogons $(\Sigma_1,\marked_1,\orb_1)$ and $(\Sigma_2,\marked_2,\orb_2)$ satisfying $|\marked_1|\geq 3$, $|\marked_2|\geq 3$, $|\marked_1|+|\marked_2|=m-1$ and $|\orb_1|+|\orb_2|=o$.
Now glue $(\Sigma_1,\marked_1,\orb_1)$ to  $(\Sigma_0,\marked_0,\varnothing)$ and $(\Sigma_2,\marked_2,\orb_2)$ to $(\Sigma_0,\marked_0,\varnothing)$ along boundary components, in such a way that the marked point on the boundary of~$\Sigma_1$ is glued to $p_1 \in \Sigma_0$ and the marked point on the boundary of $\Sigma_2$ is glued to $p_3 \in \Sigma_0$. The result is a surface $\surf$ which is a sphere with exactly $m$ punctures and exactly $o$ orbifold points.

In the cases (a) and (b) it is easy to see that given any function $\vv:\marked\rightarrow\Z_{\geq0}$ such that $\sum_{p\in\marked}\vv(p)=|\orb|$ and $\vv(p_2)=0$, (the collection of arcs on $\Sigma$ induced by) our initial triangulation $\tau_0$ of the cylinder $(\Sigma_0,\marked_0,\varnothing)$ can be completed to a triangulation $\tau$ of $\surf$ satisfying all conditions in the conclusion of the lemma.
In the case (c) it is easy to see that given any function $\vv:\marked\rightarrow\Z_{\geq0}$ such that $\sum_{p\in\marked}\vv(p)=|\orb|$ and $\vv(p_2)=0$, the sets $\orb_1$ and $\orb_2$ can always be chosen in such a way that (the collection of arcs on $\Sigma$ induced by) our initial triangulation $\tau_0$ of the cylinder $(\Sigma_0,\marked_0,\varnothing)$ can be completed to a triangulation $\tau$ of $\surf$ satisfying all conditions in the conclusion of the lemma.
\end{proof}

\begin{prop}\label{prop:specific-pop}
 Let $\tau$ be an ideal triangulation as in the conclusion of Lemma \ref{lemma:existence-certain-triang}. For any choice $\mathbf{x}=(x_p)_{p\in\punct}$ of non-zero elements of $F$, the SP $\AStau$ is right-equivalent to the SP $\AWtau$.
\end{prop}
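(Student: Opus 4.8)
The plan is to exhibit an explicit right-equivalence $\varphi\colon\RA{\Atau}\to\RA{\Atau}$, i.e.\ an $F$-algebra isomorphism with $\varphi|_R=\myid_R$ and $\varphi(\Stau)$ cyclically equivalent to $\Wtau$; this is precisely what it means for $\AStau$ and $\AWtau$ to be right-equivalent. Since the seven arcs $i_1,\dots,i_7$ are all non-pending, every vertex occurring in the subquiver $Q$ of Lemma~\ref{lemma:existence-certain-triang} carries weight $1$, so none of the arrows $\alpha,\beta,\gamma,\delta,\varepsilon,\eta_1,\eta_2,\eta_3,\lambda_1,\lambda_2,\lambda_3$ is subject to Galois twisting; the situation is thus ``simply laced'' along the relevant arrows, and the argument broadly follows the \emph{ad hoc} limit computation of \cite[Section~6]{Labardini-potsnoboundaryrevised}. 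From the explicit description of $\tau$ one also reads off that $\Lambda$, $\Omega$ and $S'(\tau)$ involve only arrows of $\Qtau$ attached to arcs outside $\{i_2,i_3,i_4,i_5,i_6\}$; in particular none of $\alpha,\beta,\gamma,\delta,\varepsilon$ occurs in $\Lambda$, $\Omega$ or $S'(\tau)$, so any $R$-algebra automorphism of $\RA{\Atau}$ which modifies only these five arrows fixes $\Lambda$, $\Omega$ and $S'(\tau)$.

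I would treat in detail the case $p_1\neq p_3$ (so $Y=1$, $Z=0$, $\Lambda\in e_{i_1}\maxid e_{i_1}$, $\Omega\in e_{i_7}\maxid e_{i_7}$) and note that the case $p_1=p_3$ is identical once $\eta_2\lambda_1\eta_1\alpha\Lambda$ is replaced by $\eta_2\lambda_1\eta_1\alpha\Lambda\lambda_3\eta_3\lambda_2\Omega$ throughout. Put $\Theta=\eta_1\eta_2\eta_3+\lambda_1\lambda_2\lambda_3+x_{p_3}\lambda_3\eta_3\lambda_2\Omega+S'(\tau)$, $u=\beta\gamma$, $w=\delta\varepsilon$ (two distinct basis elements of the $2$-dimensional space $e_{i_1}A^2e_{i_4}$), and $T=\eta_2\lambda_1\eta_1\alpha\Lambda\in e_{i_4}\maxid^2 e_{i_1}$; then, up to cyclical equivalence, the displayed formulas read $\Stau=\alpha u-x_{p_2}^{-1}\alpha w+x_{p_1}wT+\Theta$ and $\Wtau=x_{p_2}^{-1}\alpha u+\alpha w+x_{p_1}uT+\Theta$, with $\Theta$ involving none of $\alpha,\beta,\gamma,\delta,\varepsilon$. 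Let $\varphi_1$ be the change of arrows given by $\gamma\mapsto x_{p_2}^{-1}\gamma$, $\varepsilon\mapsto-x_{p_2}\varepsilon$ and the identity on every other arrow; it fixes $T$, $\Lambda$, $\Omega$ and $S'(\tau)$, and a direct computation gives $\varphi_1(\Stau)=\Wtau-x_{p_1}x_{p_2}\bigl(w+x_{p_2}^{-1}u\bigr)T$. So after $\varphi_1$ the two potentials already agree in degree $3$, and the entire discrepancy is the single long cyclic word $\bigl(w+x_{p_2}^{-1}u\bigr)T$.

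The heart of the argument is to absorb this discrepancy by a convergent infinite composition of unitriangular automorphisms of strictly increasing depth. The Jacobian relation $\partial_\alpha\Wtau=x_{p_2}^{-1}u+w+x_{p_1}\Lambda u\eta_2\lambda_1\eta_1$ shows that $w+x_{p_2}^{-1}u$ agrees with $\partial_\alpha\Wtau$ modulo a cyclic word lying strictly deeper in $\maxid$; hence the leading part of $\varphi_1(\Stau)-\Wtau$ is $-x_{p_1}x_{p_2}(\partial_\alpha\Wtau)\,T$ plus deeper terms. A discrepancy of the form $(\partial_\alpha\Wtau)\,T$ is exactly what gets cancelled by the unitriangular automorphism $\varphi_2$ determined by $\alpha\mapsto\alpha+c\,T$ for a suitable $c\in F^{\times}$ (legitimate because $T$ has the same source and target as $\alpha$ and lies in $\maxid^2$): one has $\varphi_2(\Wtau)\equiv\Wtau+c\,(\partial_\alpha\Wtau)\,T$ modulo deeper terms and cyclical equivalence, so $\varphi_2\varphi_1(\Stau)-\Wtau$ lies strictly deeper in $\maxid$ than $\varphi_1(\Stau)-\Wtau$. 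Its leading part is again of the form $(\partial_a\Wtau)\cdot(\text{word})$ for some $a\in\{\alpha,\beta,\gamma,\delta,\varepsilon\}$, since every new cyclic word produced arises from substituting into occurrences of $u$, $w$ or $T$, and these are governed precisely by $\partial_\alpha\Wtau,\partial_\beta\Wtau,\partial_\gamma\Wtau,\partial_\delta\Wtau,\partial_\varepsilon\Wtau$. Iterating yields unitriangular automorphisms $\varphi_2,\varphi_3,\dots$ of strictly increasing depth, each modifying only $\alpha,\beta,\gamma,\delta,\varepsilon$ (hence fixing $\Theta$), with $\varphi_n\cdots\varphi_1(\Stau)\equiv\Wtau\pmod{\maxid^{\,n+2}}$; by \eqref{eq:unitriangular-d} the composition $\varphi=\cdots\varphi_2\varphi_1$ converges in the $\maxid$-adic topology, is an $R$-algebra isomorphism by Proposition~\ref{prop:automorphisms}, and satisfies $\varphi(\Stau)\sim_{\operatorname{cyc}}\Wtau$.

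The step I expect to be the main obstacle is this last inductive absorption: one must track the cyclic words created at each stage, verify that they never disturb the degree-$3$ cycles $\alpha u$, $\alpha w$, $\eta_1\eta_2\eta_3$, $\lambda_1\lambda_2\lambda_3$ nor the tail $\Theta$, and check that at every stage the remaining discrepancy really is of the shape ``cyclic derivative of $\Wtau$ times a word, modulo deeper terms'', so that it can be removed by an \emph{honest} unitriangular $R$-algebra automorphism — in particular that each correction term lands in the correct bimodule summand $e_{h(a)}\maxid^2 e_{t(a)}$ of the arrow $a$ being modified. In the absence of Galois twisting among the arrows involved this is exactly the bookkeeping already carried out in \cite[Section~6]{Labardini-potsnoboundaryrevised}; the only role of Lemma~\ref{lemma:existence-certain-triang} is to have embedded the self-folded triangle in a configuration clean enough that this bookkeeping closes up.
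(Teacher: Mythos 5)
Your opening two moves are correct and are a nice cleanup: the change of arrows $\varphi_1$ (scaling $\gamma\mapsto x_{p_2}^{-1}\gamma$, $\varepsilon\mapsto -x_{p_2}\varepsilon$) does produce $\varphi_1(\Stau)=\Wtau-x_{p_1}x_{p_2}\bigl(w+x_{p_2}^{-1}u\bigr)T$ with $u=\beta\gamma$, $w=\delta\varepsilon$, $T=\eta_2\lambda_1\eta_1\alpha\Lambda$, and the Jacobian identity $\partial_\alpha\Wtau=w+x_{p_2}^{-1}u+x_{p_1}\Lambda u\eta_2\lambda_1\eta_1$ is also right. The general strategy — an $\maxid$-adically convergent composition of unitriangular automorphisms of strictly increasing depth — is exactly what the paper does (it simply cites the limit process of \cite[Prop.\ 6.4]{Labardini-potsnoboundaryrevised}).

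However, your proposed inductive invariant fails at the very first iteration, and the failure kills the part of the plan you are leaning on. You assert that each $\varphi_n$ ``modifies only $\alpha,\beta,\gamma,\delta,\varepsilon$ (hence fixing $\Theta$)'' and that the remaining discrepancy is always of the shape $(\partial_a\Wtau)\cdot(\text{word})$ with $a\in\{\alpha,\beta,\gamma,\delta,\varepsilon\}$. Run the computation one step. With $\varphi_2\colon\alpha\mapsto\alpha+x_{p_1}x_{p_2}T$ one finds (exactly, since $\alpha$ appears once per relevant cycle)
\[
\varphi_2\varphi_1(\Stau)-\Wtau \;=\; x_{p_1}^{2}x_{p_2}\,\bigl(\Lambda u\,\eta_2\lambda_1\eta_1\bigr)T \;-\; x_{p_1}^{2}x_{p_2}^{2}\,\bigl(w+x_{p_2}^{-1}u\bigr)\,\eta_2\lambda_1\eta_1\,T\Lambda .
\]
The second summand does have leading part $-x_{p_1}^{2}x_{p_2}^{2}(\partial_\alpha\Wtau)\bigl(\eta_2\lambda_1\eta_1 T\Lambda\bigr)$ modulo deeper terms. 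But the first summand, $\Lambda u\,\eta_2\lambda_1\eta_1\,T=\Lambda\beta\gamma\,\eta_2\lambda_1\eta_1\,\eta_2\lambda_1\eta_1\alpha\Lambda$, does not factor (in its leading $F$-degree) through any of $\partial_\alpha\Wtau,\dots,\partial_\varepsilon\Wtau$: each of those has leading part $w+x_{p_2}^{-1}u$, $\gamma\alpha$, $\alpha\beta$, $\varepsilon\alpha$, or $\alpha\delta$ (of source/target forced by the corresponding arrow), and no cyclic rotation of $\Lambda\beta\gamma\eta_2\lambda_1\eta_1\eta_2\lambda_1\eta_1\alpha\Lambda$ begins with any of these. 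What it does begin with, after an appropriate rotation, is $\eta_1\eta_2(\cdots)$, and indeed $\partial_{\eta_3}\Wtau=\eta_1\eta_2+x_{p_3}\lambda_2\Omega\lambda_3$, so the term factors as $(\partial_{\eta_3}\Wtau)\cdot\bigl(\lambda_1\eta_1\alpha\Lambda\Lambda\beta\gamma\eta_2\lambda_1\bigr)$ modulo deeper terms. That forces a correction $\eta_3\mapsto\eta_3+(\cdots)$, which disturbs $\Theta$ — precisely the move you excluded. This matches what the paper itself does in the parallel orbifold computation (the proof of Lemma \ref{lemma:S-is-W+trash-orb-pop} uses $\psi_{\eta_3}$): the iteration genuinely needs to touch $\eta_3$ (and later $\lambda_3$), and the correct inductive invariant is the structural one in \eqref{eq:forced-factors-orb-pop}-type form, not ``Jacobian ideal times word with $a$ among the first five arrows''. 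Since your $\varphi_1$ and your claimed invariant also differ from the set-up in \cite[Section 6]{Labardini-potsnoboundaryrevised}, the final appeal to that paper's bookkeeping does not cover your situation. The gap is real: you need either to restate the invariant in the form $\eta_2\lambda_1\lambda_2\Theta_1+\eta_2\eta_3\lambda_2\Theta_2$ (or the self-folded analogue) and prove closure under your iteration, or to drop the restriction to $\{\alpha,\beta,\gamma,\delta,\varepsilon\}$ and re-do the bookkeeping from scratch.
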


\begin{proof} This is identical to the proof of \cite[Proposition 6.4]{Labardini-potsnoboundaryrevised}. We emphasize that it relies on a limit process.
\end{proof}

\begin{proof}[Proof of Proposition \ref{prop:pop-existence}] Let $\surf$ be a surface with empty boundary such that $|\marked|\geq 7$ if $\Sigma$ is a sphere, let $\mathcal{S}$ be a collection of $|\orb|$ distinct pending arcs, and let $i,j$ be a pair of arcs forming a self-folded triangle with $i$ as its folded side. Suppose that the elements of $\mathcal{S}\cup\{i,j\}$ are all pairwise compatible. Write $\vv=\val_{\mathcal{S}\cup\{i,j\},\orb}$ (see Definition \ref{def:val_X,O}). By Lemma \ref{lemma:existence-certain-triang} and Proposition \ref{prop:specific-pop}, there exists an ideal triangulation $\tau'$ of $\surf$ such that:
\begin{itemize}
\item $\val_{\tau',\orb}=\val_{\mathcal{S}\cup\{i,j\},\orb}$;
\item $\tau'$ contains arcs $i'$ and $j'$ that form a self-folded triangle;
\item the SPs $(A(\tau'),S(\tau',\mathbf{x}))$ and $(A(\tau'),W^{i'j'}(\tau',\mathbf{x}))$ are right-equivalent.
\end{itemize}
Let $\mathcal{S'}$ be the set consisting of all arcs in $\tau'$ that are pending.
It is not hard to see that there exists an auto-diffeomorphism $\varphi$ of $\Sigma$ satisfying $\varphi(\mathcal{S}')=\mathcal{S}$, $\varphi(i')=i$ and $\varphi(j')=j$. For such a $\varphi$, the collection $\tau=\varphi(\tau')$ is easily seen to be a triangulation of $\surf$ such that $\mathcal{S}\cup\{i,j\}\subseteq\tau$ and with the property that the SPs $\AStau$ and $\AWtau$ are right-equivalent.
\end{proof}

\subsection{Proof of Proposition \ref{prop:orb-pop-stronger}}\label{subsec:orbi-pop}

\begin{lemma}\label{lemma:orb-pop-existence-lemma}
 Let $\surf$ be a surface with empty boundary. If $|\orb|\geq 1$ and $\surf$ satisfies $|\marked|\geq 7$ if $\Sigma$ is a sphere (see Definition \ref{def:surf-with-orb-points}), then for every function $\vv:\marked\rightarrow\mathbb{Z}_{\geq0}$ such that $\sum_{p\in\marked}\vv(p)=|\orb|$ and every orbifold point $q$ there exists an ideal triangulation $\tau$ of $\surf$ and a set of arcs $\{i_1,i_2,i_3,i_4,i_5,i_6\}\subseteq\tau$ with the following properties:
\begin{enumerate}
\item $\val_{\tau,\orb}=\vv$ (see Definition \ref{def:val_X,O});
\item $i_1,i_2,i_3,i_4$ and $i_5$ are five different arcs in $\tau$;
\item $i_2,i_3,i_4,i_5$ and $i_6$ are five different arcs in $\tau$;
\item the arcs $i_1$ and $i_6$ are loops (with basepoints that may or may not coincide; the arcs $i_1$ and $i_6$ themselves may or may not coincide);
\item the arc $i_2$ is a pending arc, and $q$ is the unique orbifold point lying on $i_2$;
\item none of the arcs $i_1,i_3,i_4,i_5$ and $i_6$ is a pending arc;
\item the full subquiver of $Q(\tau)$ determined by $\{i_1,i_2,i_3,i_4,i_5,i_6\}$ is
\begin{center}
$Q \ = \ $\begin{tabular}{c}
$ \xymatrix{
  &  & & i_4 \ar@<0.5ex>[dd]_{\eta_3 \ } \ar@<-0.5ex>[dd]^{\ \lambda_1} & \\
  i_1  \ar[rr]^{\alpha} & & i_3 \ar[dl]^{\varepsilon} \ar[ur]^{\eta_1}  & &  i_6 \ar[ul]_{\lambda_2}\\
 & i_2 \ar[ul]^{\delta} & & i_5 \ar[ul]^{\eta_2} \ar[ur]_{\lambda_3} &
}$
\end{tabular}
\end{center}
\item under the notation just established for the arrows of the full subquiver $Q$ of $\Qtau$, the potentials $\Stau$ and $\Vtauq$ are given by the formulas
    \begin{eqnarray*}
    \Stau &=& \alpha\delta\varepsilon-\alpha\delta v\varepsilon+\eta_1\eta_2\eta_3+\lambda_1\lambda_2\lambda_3\\
    && + Y(x_{p_1}\alpha\Lambda\delta\varepsilon\eta_2\lambda_1\eta_1+x_{p_2}\lambda_3\eta_3\lambda_2\Omega)\\
    && + Z(x_{p_1}\alpha\Lambda\lambda_3\eta_3\lambda_2\Omega\delta\varepsilon\eta_2\lambda_1\eta_1)
    +S'(\tau),\\
    \Vtauq &=& \alpha\delta\varepsilon+\alpha\delta v\varepsilon+\eta_1\eta_2\eta_3+\lambda_1\lambda_2\lambda_3\\
    && + Y((-x_{p_1})\alpha\Lambda\delta\varepsilon\eta_2\lambda_1\eta_1+x_{p_2}\lambda_3\eta_3\lambda_2\Omega)\\
    && + Z((-x_{p_1})\alpha\Lambda\lambda_3\eta_3\lambda_2\Omega\delta\varepsilon\eta_2\lambda_1\eta_1)
    +S'(\tau),
    \end{eqnarray*}
    where
    \begin{enumerate}
    \item $p_1$ is the puncture the loop $i_1$ is based at and the unique puncture contained in the pending arc $i_2$, and $p_2$ is the puncture the loop $i_6$ is based at;
    \item if $p_1\neq p_2$, then $\Lambda\in e_{i_1}\mathfrak{m} e_{i_1}$ and $\Omega\in e_{i_6}\mathfrak{m} e_{i_6}$;
    \item if $p_1\neq p_2$, then $Y=1\in F$ and $Z=0\in F$;
    \item if $p_1=p_2$, then $\Lambda\in e_{i_1}\RA{\Atau}e_{i_6}$ and $\Omega\in e_{i_6}\RA{\Atau}e_{i_1}$;
    \item if $p_1=p_2$, then $Y=0\in F$ and $Z=1\in F$;
    \item regardless of whether $p_1\neq p_2$ or $p_1=p_2$, the elements $\Lambda$ and $\Omega$ can be written as finite $F$-linear combinations of paths where none of the arrows $\alpha$, $\beta$, $\gamma$, $\eta_1$, $\eta_2$, $\eta_3$, $\lambda_1$, $\lambda_2$ and $\lambda_3$ appears;
    \item $S'(\tau)\in\RA{\Atau}$ is a potential that can be written as a finite $F$-linear combination of paths where none of the arrows $\alpha$, $\beta$, $\gamma$, $\eta_1$, $\eta_2$, $\eta_3$, $\lambda_1$, $\lambda_2$ and $\lambda_3$ appears.
    \end{enumerate}
\end{enumerate}
\end{lemma}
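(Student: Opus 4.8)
The statement to be proved, Lemma~\ref{lemma:orb-pop-existence-lemma}, is the orbifold-point analogue of Lemma~\ref{lemma:existence-certain-triang}, and the proof will follow the same blueprint: construct a single ``seed'' configuration on a once-punctured cylinder that already contains the required local picture (the pending arc $i_2$ carrying the orbifold point $q$, surrounded by the fan $i_1,\dots,i_6$), and then glue standard surface pieces onto the two boundary components of the cylinder so as to realize an arbitrary prescribed genus~$g$, number of marked points~$m$, number of orbifold points~$o$, and pending-arc valency function $\vv$. By the classification of compact connected oriented surfaces without boundary, it suffices to produce, for every admissible triple $(g,m,o)$ (with $m\geq 7$ if $g=0$) and every $\vv:\marked\to\Z_{\geq 0}$ with $\sum_p\vv(p)=o$ and every orbifold point $q$, one surface $\surf$ with an ideal triangulation $\tau$ satisfying conditions (1)--(8).

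\textbf{Construction of the seed.} First I would fix a once-punctured cylinder $(\Sigma_0,\marked_0,\varnothing)$ together with an ideal triangulation $\tau_0$ chosen so that its quiver contains, as a full subquiver, the quiver $Q$ displayed in item~(7) after deleting the pending arc $i_2$ (i.e.\ the sub-picture on vertices $i_1,i_3,i_4,i_5,i_6$). This is essentially the triangulation $\tau_0$ used in the proof of Lemma~\ref{lemma:existence-certain-triang}; the only change is that instead of a self-folded triangle on $\{i_2,i_3\}$ we install an \emph{orbifolded triangle} containing one orbifold point $q$, with $i_2$ the pending arc and $i_1$ the loop enclosing that orbifolded triangle, producing exactly the arrows $\alpha:i_1\to i_3$, $\delta:i_3\to i_2$, $\varepsilon:i_2\to i_3$ (here I am reading off the clockwise-arrow convention of Definitions~\ref{def:weighted-quiver-of-ideal-triangulation} and the cycle $\widehat S^\triangle(\tau,\mathbf x)=\alpha\delta\varepsilon-\alpha\delta v\varepsilon$ of Definition~\ref{def:cycles-from-orb-triangles}(1)). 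This already pins down the $\alpha\delta\varepsilon-\alpha\delta v\varepsilon$ part of $\Stau$ and, via Definition~\ref{def:orb-popped-potential}, the $\alpha\delta\varepsilon+\alpha\delta v\varepsilon$ part of $\Vtauq$ (the sign flip at $p_1$ is the $\zeta_{q,\tau}$ sign change, and the change $v\mapsto -v$ at the orbifolded-triangle cycle is precisely the action of $\Psi_i^\tau$ on that cycle).

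\textbf{Gluing and realizing $(g,m,o,\vv)$.} Next, exactly as in cases (a), (b), (c) of the proof of Lemma~\ref{lemma:existence-certain-triang}, I would cap off the two boundary components of $\Sigma_0$: when $g>0$ and $g+m+o>3$, glue the basepoints of the loops $i_1$ and $i_6$ and attach a genus-$(g-1)$ surface with one boundary component carrying $2$ marked points, $m-2$ punctures, and $o-1$ orbifold points placed so that $\val_{\tau,\orb}$ realizes the remaining values of $\vv$; when $g=1,m=2,o=1$ glue $i_1$ to $i_6$ directly; when $g=0$ attach two monogons with appropriately distributed punctures and orbifold points. In each case the triangulation $\tau_0$ extends to a triangulation $\tau$ of $\surf$, and one checks that the punctures can be labelled so that the pending arcs accumulate with the prescribed multiplicities $\vv(p)$; the arc $i_2$ stays pending with orbifold point $q$, and none of $i_1,i_3,i_4,i_5,i_6$ becomes pending or a folded side. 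Whether the basepoints $p_1$ (of $i_1$) and $p_2$ (of $i_6$) coincide splits exactly into the two alternatives in (8)(b)--(8)(e), with $\Lambda,\Omega$ being $\maxid$-elements of the loop spans in the unglued case and elements of the mixed spans $e_{i_1}\RA{\Atau}e_{i_6}$, $e_{i_6}\RA{\Atau}e_{i_1}$ when $p_1=p_2$; these arise as the compositions of the arrows $\beta,\gamma$ and the remaining puncture cycles $\widehat S^{p}(\tau,\mathbf x)$ around $p_1$ and $p_2$.

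\textbf{Reading off the potentials.} Finally, with $\tau$ in hand, I would compute $\Stau$ from Definition~\ref{def:SP-of-tagged-triangulation}: the interior-triangle cycles give $\eta_1\eta_2\eta_3+\lambda_1\lambda_2\lambda_3$, the orbifolded triangle gives $\alpha\delta\varepsilon-\alpha\delta v\varepsilon$, and the puncture cycles at $p_1$ and $p_2$ contribute the $x_{p_1}$-term and the $x_{p_2}$-term, with $Y=1,Z=0$ or $Y=0,Z=1$ according to whether $p_1\neq p_2$ or $p_1=p_2$ (this is the same bookkeeping as item~(7) of Lemma~\ref{lemma:existence-certain-triang}); everything else is collected in $S'(\tau)$, which avoids the nine named arrows by construction. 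Then $\Vtauq=\Psi_i^\tau(S(\tau,\zeta_{q,\tau}\mathbf x))$ is obtained by applying Definition~\ref{def:orb-popped-potential}: $\Psi_i^\tau$ acts as the identity on all arrows here (the orbifolded triangle contains only one orbifold point, so $\psi_i^\tau=\myid$ on $\Qtau_1$), and its effect is entirely through its action on $R$, which conjugates the orbifolded-triangle cycle by $\theta$, sending $v\mapsto -v$ there and leaving the concatenations of honest arrows unchanged; combined with the sign change $x_{p_1}\mapsto -x_{p_1}$ coming from $\zeta_{q,\tau}$, this yields precisely the stated formula for $\Vtauq$. The main obstacle is not any single hard argument but the care needed in the gluing step: one must verify that for \emph{every} admissible $(g,m,o,\vv)$ and every~$q$ the seed really does extend to a triangulation with the prescribed valency function while keeping the six arcs $i_1,\dots,i_6$ in their required roles and keeping $i_2$ the unique pending arc through $q$; this is a finite but somewhat delicate case analysis on the topological type of $\Sigma$, directly paralleling (and only mildly generalizing) the corresponding step in the proof of Lemma~\ref{lemma:existence-certain-triang}.
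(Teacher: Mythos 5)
Your proposal follows the paper's proof exactly: the paper's entire proof of Lemma~\ref{lemma:orb-pop-existence-lemma} consists of saying that the once-punctured cylinder seed of Lemma~\ref{lemma:existence-certain-triang} (Figure~\ref{Fig:pop_basic_cylinder}) should be replaced by a triangulated cylinder with one orbifold point (Figure~\ref{Fig:orb_pop_basic_cylinder}), after which the gluing cases (a), (b), (c) of the proof of Lemma~\ref{lemma:existence-certain-triang} apply unchanged, and this is precisely what you do (including the correct observation that the modulating function makes $\psi_{i_2}^\tau$ the identity on arrows when the orbifolded triangle has only one orbifold point, so $\Psi_{i_2}^\tau$ acts only through $\theta$ on the $F_{i_2}$-component of $R$, sending $v\mapsto -v$ and flipping $x_{p_1}$ to $-x_{p_1}$ via $\zeta_{q,\tau}$).

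Two small slips worth fixing but not undermining the argument: (i) reading the displayed quiver, $\delta:i_2\to i_1$ and $\varepsilon:i_3\to i_2$, not the directions you wrote; (ii) the ``direct gluing'' case inherited from Lemma~\ref{lemma:existence-certain-triang}(b) has $g+m+o\leq 3$, and for the orbifold seed (which carries no interior puncture but one orbifold point) the case that actually falls under this bound is $g=1$, $m=1$, $o=1$, not $g=1$, $m=2$, $o=1$; the latter triple satisfies $g+m+o>3$ and belongs to case (a). Also, strictly speaking your seed cylinder is $(\Sigma_0,\marked_0,\{q\})$ rather than $(\Sigma_0,\marked_0,\varnothing)$, since the orbifold point must already live there; you correct this in spirit when you say to ``install'' the orbifolded triangle, but the initial notation is misleading.
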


\begin{proof} This is extremely similar to the proof of Lemma \ref{lemma:existence-certain-triang}. Indeed, if instead of taking the once-punctured cylinder from Figure \ref{Fig:pop_basic_cylinder} we take  as our initial surface the triangulated cylinder with one orbifold point shown in Figure \ref{Fig:orb_pop_basic_cylinder},
% CYLINDER WITH ONE ORBIFOLD POINT, CRUCIAL POP
        \begin{figure}[!h]
                \centering
                \includegraphics[scale=.75]{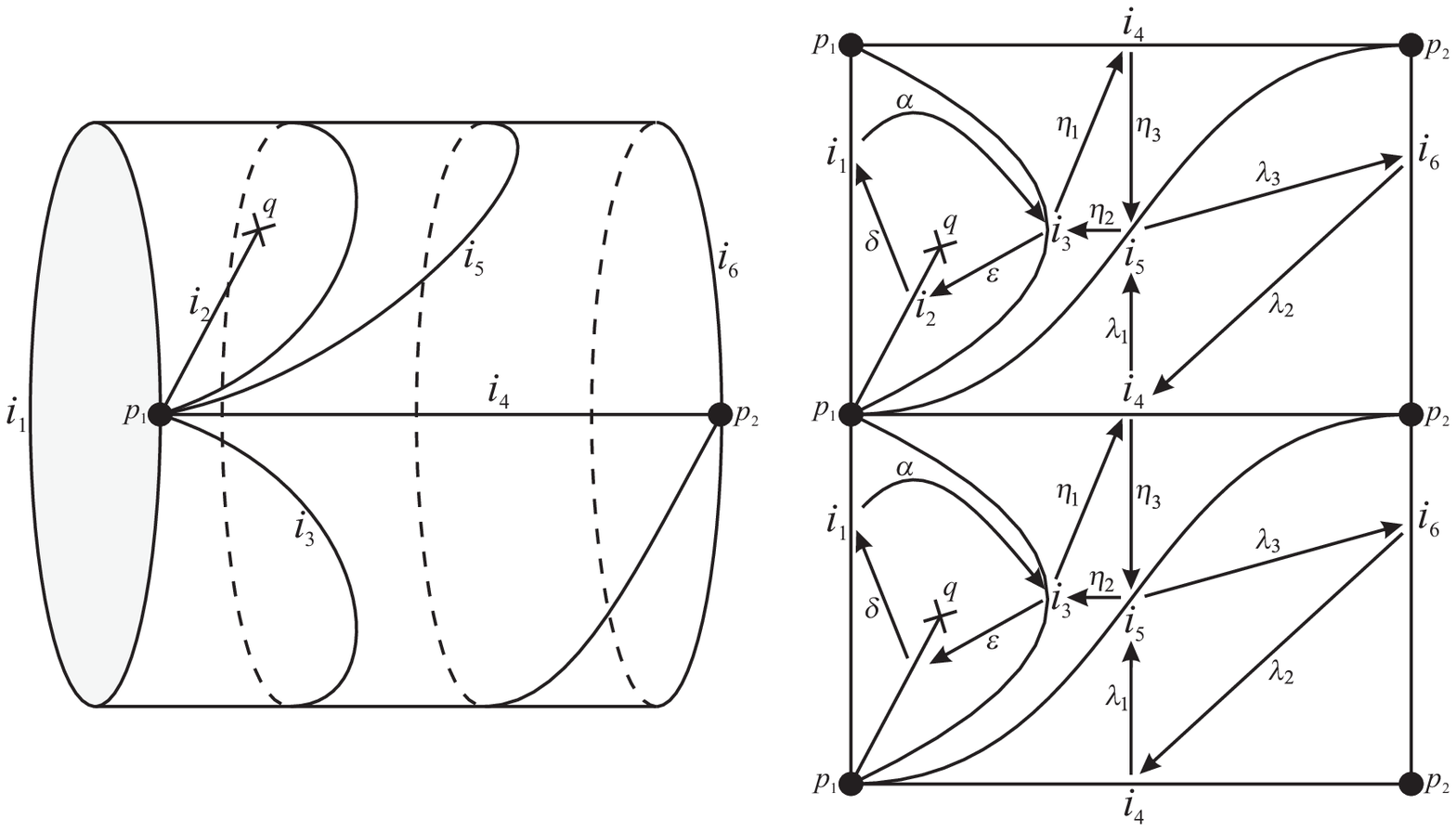}
                \caption{When $|\orb|\geq 1$ and $\surf$ satisfies $|\marked|\geq 7$ if $\Sigma$ is a sphere, the triangulation of the cylinder with one orbifold point depicted in this figure can be completed to a triangulation $\tau$ of $\surf$ satisfying the conclusion of Lemma \ref{lemma:orb-pop-existence-lemma}.}\label{Fig:orb_pop_basic_cylinder}
        \end{figure}%\\
the rest of the construction sketched in the proof of Lemma \ref{lemma:existence-certain-triang} can be applied without modification to produce a triangulation $\tau$ satisfying the conclusion of Lemma \ref{lemma:orb-pop-existence-lemma}.
\end{proof}

\begin{prop}\label{prop:specific-orb-pop-arbitrary-genus} Let $\surf$ be a surface with empty boundary. Suppose that $\tau$ is a triangulation of $\surf$ satisfying the conclusion of Lemma \ref{lemma:orb-pop-existence-lemma}. With the notation used therein, the SPs $\AStau$ and $\AVtauq$ are right equivalent.
\end{prop}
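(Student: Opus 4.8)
The plan is to argue exactly as in the proof of Proposition~\ref{prop:specific-pop} (equivalently, \cite[Proposition~6.4]{Labardini-potsnoboundaryrevised}), keeping track of the extra ``$v$-twist'' that the automorphism $\Psi_i^\tau$ introduces at the vertex $i_2$. By part~(viii) of Lemma~\ref{lemma:orb-pop-existence-lemma}, the potentials $\Stau$ and $\Vtauq$ agree except in two places: the orbifolded-triangle cycle $\alpha\delta(1-v)\varepsilon$ versus $\alpha\delta(1+v)\varepsilon$, and the long cycle $x_{p_1}\alpha\Lambda\delta\varepsilon\eta_2\lambda_1\eta_1$ versus $-x_{p_1}\alpha\Lambda\delta\varepsilon\eta_2\lambda_1\eta_1$ (and, in the case $p_1=p_2$, the long cycle $x_{p_1}\alpha\Lambda\lambda_3\eta_3\lambda_2\Omega\delta\varepsilon\eta_2\lambda_1\eta_1$ versus its negative). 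In particular the $x_{p_2}$-term $x_{p_2}\lambda_3\eta_3\lambda_2\Omega$, the cycles $\eta_1\eta_2\eta_3$ and $\lambda_1\lambda_2\lambda_3$, and the potential $S'(\tau)$ occur identically in both, and none of $\Lambda$, $\Omega$, $S'(\tau)$ involves the arrows $\alpha,\delta,\varepsilon,\eta_j,\lambda_j$.

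First I would dispose of the $v$-discrepancy with a single change of arrows. Since $i_1$ is not a pending arc we have $F_{i_1}=F$, so $g_\delta=\myid$ and $e_{i_1}\Atau e_{i_2}\cong F_{i_2}$ as an $F$-$F_{i_2}$-bimodule with $\delta$ as a free generator; as $v$ is a unit of $F_{i_2}=E$, the assignment $\delta\mapsto\delta v$ (identity on every other arrow) extends to a bimodule automorphism of $e_{i_1}\Atau e_{i_2}$ and hence, by Proposition~\ref{prop:automorphisms}, defines an $R$-algebra automorphism $\varphi_1$ of $\RA{\Atau}$ which is a change of arrows. Using $v^2=-1$ (see~\eqref{eq:v^2=-1}) one has $\delta v(1-v)\varepsilon=\delta(1+v)\varepsilon$, so $\varphi_1$ fixes $\eta_1\eta_2\eta_3$, $\lambda_1\lambda_2\lambda_3$, the $x_{p_2}$-term and $S'(\tau)$, carries $\alpha\delta(1-v)\varepsilon$ to $\alpha\delta(1+v)\varepsilon$, and carries $x_{p_1}\alpha\Lambda\delta\varepsilon\eta_2\lambda_1\eta_1$ to $x_{p_1}\alpha\Lambda\delta v\varepsilon\eta_2\lambda_1\eta_1$ (and similarly in the $p_1=p_2$ case). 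Consequently we are reduced to producing a right-equivalence between $\varphi_1(\Stau)$ and $\Vtauq$, two potentials that now differ only by the single term $x_{p_1}\alpha\Lambda\delta(1+v)\varepsilon\eta_2\lambda_1\eta_1$ (resp.\ $x_{p_1}\alpha\Lambda\lambda_3\eta_3\lambda_2\Omega\delta(1+v)\varepsilon\eta_2\lambda_1\eta_1$) --- that is, by a scalar multiple of a cycle obtained from the ``core'' cycle $\alpha\delta(1+v)\varepsilon$ by inserting the positive-$\maxid$-order words $\Lambda$ (and $\Omega$) and the chain $\eta_2\lambda_1\eta_1$.

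From this point the argument is the limit process of \cite[Proposition~6.4]{Labardini-potsnoboundaryrevised}, run verbatim: one constructs a sequence $\varphi_2,\varphi_3,\dots$ of unitriangular $R$-algebra automorphisms of $\RA{\Atau}$, each supported on the arrows of the distinguished cycle and of strictly increasing depth, so that applying $\varphi_n$ to the current potential corrects the coefficient of the distinguished cycle at its current $\maxid$-order while introducing only new discrepancies of strictly higher $\maxid$-order --- this is where $\Lambda,\Omega\in\maxid$ is essential, as it forces the new discrepancies to be again cycles built from $\alpha\delta(1+v)\varepsilon$, $\eta_2\lambda_1\eta_1$ and higher powers of $\Lambda$ and $\Omega$. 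By~\eqref{eq:unitriangular-d} the composition $\varphi=\dots\varphi_3\varphi_2$ then converges in the $\maxid$-adic topology to an $R$-algebra automorphism of $\RA{\Atau}$, and by construction $\varphi\varphi_1(\Stau)$ is cyclically equivalent to $\Vtauq$, so $\varphi\varphi_1$ is a right-equivalence $\AStau\to\AVtauq$. The main obstacle is precisely this limit process --- pinning down the $\varphi_n$ and checking that the residual discrepancy's $\maxid$-order strictly increases at each step, so that $\varphi$ exists and annihilates the discrepancy --- but since the shape of the discrepancy here ($\alpha\delta(1\pm v)\varepsilon$ together with the loop words $\Lambda$, $\Omega$) is formally identical to the one handled in the self-folded-pop setting of \cite[Proposition~6.4]{Labardini-potsnoboundaryrevised} and of Proposition~\ref{prop:specific-pop}, that verification transfers without change.
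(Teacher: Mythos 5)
Your proposal matches the paper's own proof: the change of arrows $\varphi_1:\delta\mapsto\delta v$ is exactly the automorphism $\psi_\delta$ that the paper introduces in the proof of Lemma~\ref{lemma:S-is-W+trash-orb-pop} as the ``small modification'' needed beyond the self-folded pop of \cite[Lemma~6.5]{Labardini-potsnoboundaryrevised}, and your appeal to the limit process of \cite[Proposition~6.4]{Labardini-potsnoboundaryrevised} is precisely how the paper proceeds, via Lemmas~\ref{lemma:S-is-W+trash-orb-pop} and~\ref{lemma:making-tails-longer-arbitrary-genus-orb}. The only presentational difference is that the paper records the next two unitriangular automorphisms $\psi_\alpha$, $\psi_{\eta_3}$ explicitly and isolates the canonical shape~\eqref{eq:forced-factors-orb-pop} of the residual junk before launching the iteration, whereas you fold those first corrections into the limit process itself.
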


Proposition \ref{prop:specific-orb-pop-arbitrary-genus} is a consequence of the next two lemmas, the proof of the first of which is a minor modification of the proof of \cite[Lemma 6.5]{Labardini-potsnoboundaryrevised}, modification that we choose not to omit here (because of the presence of the orbifold point $q$). Except for this minor modification, the whole proof of Proposition \ref{prop:specific-orb-pop-arbitrary-genus} is almost identical to the proof of \cite[Proposition 6.4]{Labardini-potsnoboundaryrevised}.

\begin{lemma}\label{lemma:S-is-W+trash-orb-pop} For the ideal triangulation $\tau$ above, the SP $\AStau$ is right-equivalent to a SP of the form $(\Atau,\Vtauq+L_1)$, where $L_1\in \RA{\Atau}$
is a potential satisfying the following condition:
\begin{eqnarray}
\label{eq:forced-factors-orb-pop}
&&\text{it can be written as $\eta_2\lambda_1\lambda_2\Theta_1+\eta_2\eta_3\lambda_2\Theta_2$ for some $\Theta_1,\Theta_2\in\maxid^4\subseteq\RA{\Atau}$.}
\end{eqnarray}
\end{lemma}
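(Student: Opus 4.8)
The plan is to prove this as the orbifold-point analogue of \cite[Lemma~6.5]{Labardini-potsnoboundaryrevised}, adapting that proof with the single new ingredient being the bookkeeping of the scalar $v \in F_{i_2} = E$. I would start from the explicit formulas for $\Stau$ and $\Vtauq$ recorded in part~(8) of Lemma~\ref{lemma:orb-pop-existence-lemma} and observe that $\Vtauq$ is obtained from $\Stau$ by exactly two changes: the sign of the orbifolded-triangle summand $\alpha\delta v\varepsilon$ is reversed (reflecting $\theta(v) = -v$), and the scalar $x_{p_1}$ is replaced by $-x_{p_1}$ in the two summands carrying a factor of $x_{p_1}$. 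The goal is then to construct an $R$-algebra automorphism $\varphi$ of $\RA{\Atau}$ (fixing $R$ pointwise, hence a genuine right-equivalence) with $\varphi(\Stau)$ cyclically equivalent to $\Vtauq + L_1$, where $L_1$ has the asserted shape.

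A first, illustrative step handles the cubic part. Since $v^2 = -1$ (assumption~\eqref{eq:v^2=-1}) one has $(1+v) = v(1-v)$ and $(1-v)^{-1} = \tfrac{1+v}{2}$; moreover $i_3$ is non-pending, so $g_\varepsilon = \myid_F$ and left multiplication of $\varepsilon$ by $v \in F_{i_2}$ lands in $\image\pi_{g_\varepsilon}$. Hence the change of arrows $\varepsilon \mapsto v\varepsilon$ is a well-defined automorphism, and it sends $\alpha\delta(1-v)\varepsilon = \alpha\delta\varepsilon - \alpha\delta v\varepsilon$ to $\alpha\delta(1-v)v\varepsilon = \alpha\delta(1+v)\varepsilon = \alpha\delta\varepsilon + \alpha\delta v\varepsilon$, exactly the cubic part of $\Vtauq$; it fixes the other two $3$-cycles $\eta_1\eta_2\eta_3$ and $\lambda_1\lambda_2\lambda_3$. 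However, this same move turns the $x_{p_1}$-summands into ones carrying an extra factor $(1+v)$ wedged between $\delta$ and $\varepsilon$ rather than the sign $-1$ demanded by $\Vtauq$, and this $F_{i_2}$-scalar cannot be removed by a cyclic rotation because $\varepsilon,\delta$ occupy the only $i_2$-position in each of those cycles. So the remainder of $\varphi$ must simultaneously correct this discrepancy and absorb the loop factors $\Lambda$ and $\Omega$.

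The bulk of the work, then, is to follow \cite[proof of Lemma~6.5]{Labardini-potsnoboundaryrevised}: a finite composition of unitriangular automorphisms absorbs $\Lambda$ into the arrows $\delta,\varepsilon$ (and, in the $p_1 = p_2$ case, pairs $\Lambda$ with $\Omega$), absorbs $\Omega$ into $\lambda_2,\lambda_3$, and straightens the ``twisted'' $3$-cycle $\eta_2\lambda_1\eta_1$ toward the genuine $3$-cycle $\eta_1\eta_2\eta_3$ by a change of arrows at $\eta_3$; along the way the $(1+v)$-discrepancy above is cleared using the $3$-cycle relations, and everything that does not match $\Vtauq$ is collected into $L_1$. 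The main obstacle is verifying that every cyclic path surviving in $L_1$ contains $\eta_2\lambda_1\lambda_2$ or $\eta_2\eta_3\lambda_2$ as a cyclic subword with the complementary factor in $\maxid^4$. This is a purely combinatorial check on the quiver $Q$ of Lemma~\ref{lemma:orb-pop-existence-lemma}: the arrows from $i_4$ to $i_5$ are exactly $\eta_3$ and $\lambda_1$, so any path that the straightening and absorption steps cannot further reduce is forced to traverse the bottleneck $i_6 \xrightarrow{\lambda_2} i_4 \xrightarrow{\eta_3 \text{ or } \lambda_1} i_5 \xrightarrow{\eta_2} i_3$, which supplies the required subword, while a depth count on the remaining arrows consumed (at least one among $\alpha,\Lambda$, at least one among $\eta_1,\lambda_3,\Omega$, plus $\delta,\varepsilon$ or the $\eta$-cycle arrows) supplies the $\maxid^4$ factor. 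Keeping simultaneous track of the $v$-scalars, the signs, and which arrows have been used up is precisely the subtlety that distinguishes this from the unorbifolded case; once $L_1$ is in this form, the companion lemma removes it by a limit process, exactly as in \cite[proof of Proposition~6.4]{Labardini-potsnoboundaryrevised}, completing Proposition~\ref{prop:specific-orb-pop-arbitrary-genus}.
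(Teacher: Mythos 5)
Your approach is correct in outline and matches the paper's: the single new ingredient over \cite[Lemma~6.5]{Labardini-potsnoboundaryrevised} is a change of arrows on the orbifolded triangle to flip the $v$-sign in the cubic summand. The paper uses $\delta \mapsto \delta v$ where you use $\varepsilon \mapsto v\varepsilon$; these are interchangeable since $\alpha\delta(1-v)\varepsilon \mapsto \alpha\delta(1+v)\varepsilon$ either way, and both produce the same residual $v$-factor $\alpha\Lambda\delta v\varepsilon\eta_2\lambda_1\eta_1$ in the $x_{p_1}$-summands.

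Where your write-up departs from what actually happens is in the description of the Lemma~6.5 machinery. You say the unitriangular automorphisms ``absorb $\Lambda$ into the arrows $\delta,\varepsilon$,'' ``absorb $\Omega$ into $\lambda_2,\lambda_3$,'' and ``straighten the twisted $3$-cycle $\eta_2\lambda_1\eta_1$''; and you claim the $(1+v)$-discrepancy is cleared ``using the $3$-cycle relations.'' In fact only two further automorphisms are needed. The first, $\psi_\alpha : \alpha \mapsto \alpha - Y x_{p_1}\eta_2\lambda_1\eta_1\alpha\Lambda - Z x_{p_1}\eta_2\lambda_1\eta_1\alpha\Lambda\lambda_3\eta_3\lambda_2\Omega$, absorbs $\Lambda$ (and in the $Z$-case $\Lambda$ and $\Omega$) through $\alpha$, not through $\delta,\varepsilon$; and its action on the modified cubic $\alpha\delta(1+v)\varepsilon$ is precisely what cancels the $(1+v)$-discrepancy in the $x_{p_1}$-summand, leaving $-x_{p_1}\alpha\Lambda\delta\varepsilon\eta_2\lambda_1\eta_1$ as in $\Vtauq$. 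This is a direct consequence of the unitriangular substitution together with cyclic equivalence, not an application of any $3$-cycle relation. The second automorphism, $\psi_{\eta_3}:\eta_3 \mapsto \eta_3 + x_{p_1}^2\Pi\eta_2\lambda_1$ with $\Pi$ an explicit path, is not a ``straightening'' of $\eta_2\lambda_1\eta_1$ but a one-shot cancellation of the parasite term $-x_{p_1}^2\eta_2\lambda_1\eta_1\alpha\Lambda\Lambda\delta v\varepsilon\eta_2\lambda_1\eta_1$ produced by $\psi_\alpha$. Finally, $\Omega$ is never absorbed: it survives intact inside the remainder $L_1 = Y x_{p_1}^2 x_{p_2}\eta_2\lambda_1\lambda_2\Omega\lambda_3\Pi + Z(\cdots)$, which already visibly has the prescribed form $\eta_2\lambda_1\lambda_2\Theta_1 + \eta_2\eta_3\lambda_2\Theta_2$ with $\Theta_i \in \maxid^4$ — no separate combinatorial bottleneck argument is required. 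So while your plan is sound and the crucial idea (the $v$-twisting change of arrows) is exactly right, the intermediate bookkeeping you sketch does not reflect the actual mechanism, and you would need to carry out the explicit substitutions to see that the Lemma drops out cleanly.
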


\begin{proof} The proof of \cite[Lemma 6.5]{Labardini-potsnoboundaryrevised} can almost be applied as is, but it requires a small modification. We choose not to skip details. Write
$\Pi=Y\lambda_1\eta_1\alpha\Lambda\Lambda\delta v\varepsilon+Z\lambda_1\eta_1\alpha\Lambda\lambda_3\eta_3\lambda_2\Omega\Lambda\lambda_3\eta_3\lambda_2\Omega\delta v\varepsilon$ and
define $R$-algebra automorphisms $\psi_\delta,\psi_{\alpha},\psi_{\eta_3}:\RA{\Atau}\rightarrow\RA{\Atau}$ by means of the rules
\begin{eqnarray*}
\psi_\delta &:& \delta\mapsto\delta v,\\
\psi_\alpha &:& \alpha\mapsto
\alpha
-Y(x_{p_1}\eta_2\lambda_1\eta_1\alpha\Lambda)
- Z(x_{p_1}\eta_2\lambda_1\eta_1\alpha\Lambda\lambda_3\eta_3\lambda_2\Omega),\\
\nonumber
\psi_{\eta_3} & : & \eta_3\mapsto\eta_3+x_{p_1}^2\Pi\eta_2\lambda_1.
\end{eqnarray*}
Direct computation shows that
\begin{eqnarray}\nonumber
\psi_{\eta_3}\psi_\alpha\psi_{\delta}(\Stau) &\sim_{\operatorname{cyc}}&
\Vtauq\\
\nonumber
&&
+Yx_{p_1}^2x_{p_2}\eta_2\lambda_1\lambda_2\Omega\lambda_3\Pi\\
\nonumber
&&
+Z\left(-x_{p_1}^3\eta_2\lambda_1\lambda_2\Omega\delta\varepsilon\eta_2\lambda_1\eta_1\alpha\Lambda\lambda_3\Pi\right.\\
\nonumber
&&
-x_{p_1}^4\eta_2\lambda_1\lambda_2\Omega\Lambda\lambda_3\eta_3\lambda_2\Omega\delta v\varepsilon\eta_2\lambda_1\eta_1\eta_2\lambda_1\eta_1\alpha\Lambda\lambda_3\Pi\\
\nonumber
&&
-x_{p_1}^4\eta_2\lambda_1\lambda_2\Omega\delta v\varepsilon\eta_2\lambda_1\eta_1\eta_2\lambda_1\eta_1\alpha\Lambda\lambda_3\eta_3\lambda_2\Omega\Lambda\lambda_3\Pi\\
\nonumber
&&
\left.-x_{p_1}^6\eta_2\lambda_1\lambda_2\Omega\delta v\varepsilon\eta_2\lambda_1\eta_1\eta_2\lambda_1\eta_1\alpha\Lambda\lambda_3\Pi\eta_2\lambda_1\lambda_2\Omega\Lambda\lambda_3\Pi\right).
\end{eqnarray}
This proves the Lemma.
\end{proof}

Recall that for a non-zero element $u$ of a complete path algebra $\RA{A}$, we denote by $\short(u)$ the largest integer $n$ with the property that $u\in\maxid^{n}$ but $u\notin\maxid^{n+1}$, and that we set $\short(0)=\infty$.

\begin{lemma}\label{lemma:making-tails-longer-arbitrary-genus-orb} Let $\tau$ be the ideal triangulation from the conclusion of Lemma \ref{lemma:orb-pop-existence-lemma}. Suppose $L\in \RA{\Atau}$ is a non-zero potential satisfying
\eqref{eq:forced-factors-orb-pop}.
Then there exist a potential $L'\in \RA{\Atau}$ and a right-equivalence
$\varphi:(\Atau,\Vtauq+L)\rightarrow(\Atau,\Vtauq+L')$, such that:
\begin{itemize}
\item $L'$ satisfies
\eqref{eq:forced-factors-orb-pop};
\item $\short(L')>\short(L)$;
\item $\varphi$ is a unitriangular automorphism of $\RA{\Atau}$, and $\depth(\varphi)=\short(L)-3$.
\end{itemize}
\end{lemma}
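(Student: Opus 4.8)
\textbf{Proof plan for Lemma \ref{lemma:making-tails-longer-arbitrary-genus-orb}.}
The plan is to imitate the proof of \cite[Lemma 6.6]{Labardini-potsnoboundaryrevised} (the ``making tails longer'' lemma in the self-folded case), working inside the explicit triangulation $\tau$ from Lemma \ref{lemma:orb-pop-existence-lemma}. Set $n=\short(L)\geq 4$, so that $L\in\maxid^{n}\setminus\maxid^{n+1}$. First I would use the fact that $L$ satisfies \eqref{eq:forced-factors-orb-pop}, i.e.\ $L=\eta_2\lambda_1\lambda_2\Theta_1+\eta_2\eta_3\lambda_2\Theta_2$, to decompose the degree-$n$ part of $L$: write $L\equiv\eta_2\lambda_1\lambda_2\Theta_1^{(0)}+\eta_2\eta_3\lambda_2\Theta_2^{(0)}\pmod{\maxid^{n+1}}$, where $\Theta_1^{(0)},\Theta_2^{(0)}\in\maxid^{n-3}$ are the leading homogeneous components of $\Theta_1,\Theta_2$. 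The key observation, coming from the shape of the subquiver $Q$ and of the potential $\Vtauq$, is that the cyclic derivatives of the quadratic part of $\Vtauq$ allow one to ``rotate'' cycles through the $2$-cycle-like portion of $\Vtauq$: the summand $\eta_1\eta_2\eta_3+\lambda_1\lambda_2\lambda_3$ together with $\alpha\delta\varepsilon+\alpha\delta v\varepsilon$ (which after the change $\delta\mapsto\delta v$ used in Lemma \ref{lemma:S-is-W+trash-orb-pop} has an invertible quadratic coefficient) makes it possible to replace, modulo $J(\Vtauq)$ and modulo cyclical equivalence, a cyclic path beginning with $\eta_2\lambda_1$ or $\eta_2\eta_3$ by a shorter-tailed cyclic path, at the cost of lengthening the ``trash'' factors $\Theta_i$. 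Concretely, I would define a unitriangular automorphism $\varphi$ of $\RA{\Atau}$ of depth $n-3$ by a rule of the form $\lambda_1\mapsto\lambda_1+(\text{correction in }\maxid^{n-2})$, $\eta_3\mapsto\eta_3+(\text{correction in }\maxid^{n-2})$ (and the identity on the remaining arrows), where the corrections are built from $\Theta_1^{(0)},\Theta_2^{(0)}$ and the inverse of the relevant scalar coefficients of $\Vtauq$.

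The computation then proceeds as follows. By \eqref{eq:unitriangular-d}, since $\varphi$ has depth $\delta=n-3$, we have $\varphi(u)-u\in\maxid^{m+n-3}$ for $u\in\maxid^{m}$; in particular $\varphi(\Vtauq)-\Vtauq\in\maxid^{n}$ and $\varphi(L)-L\in\maxid^{2n-3}\subseteq\maxid^{n+1}$. So $\varphi(\Vtauq+L)=\Vtauq+(\varphi(\Vtauq)-\Vtauq)+L+(\varphi(L)-L)$, and the degree-$n$ part of $\varphi(\Vtauq)-\Vtauq$ is designed to cancel the degree-$n$ part of $L$ (up to cyclical equivalence and modulo the higher-order error $\varphi(L)-L$). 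The upshot is $\varphi(\Vtauq+L)\sim_{\operatorname{cyc}}\Vtauq+L'$ with $\short(L')\geq n+1$. To check that $L'$ still satisfies \eqref{eq:forced-factors-orb-pop}, I would observe that every new term produced is again a cyclic path that either is one of the already-present $\eta_2\lambda_1\lambda_2(\cdots)$, $\eta_2\eta_3\lambda_2(\cdots)$ terms with a longer tail, or arises from substituting the corrections into such terms, hence still factors through $\eta_2\lambda_1\lambda_2$ or $\eta_2\eta_3\lambda_2$; this is exactly the bookkeeping done in \cite[Lemma 6.6]{Labardini-potsnoboundaryrevised}, and the presence of the scalar $v$ (with $v^2=-1$, by \eqref{eq:v^2=-1}) only changes some coefficients, not the combinatorial structure of which arrows appear.

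The main obstacle I anticipate is verifying that the quadratic part of $\Vtauq$ (after the preliminary normalization from Lemma \ref{lemma:S-is-W+trash-orb-pop}) really does have the invertibility property needed to define the corrections: one must make sure that the coefficient matrices tying $\eta_1\eta_2\eta_3$, $\lambda_1\lambda_2\lambda_3$ and $\alpha\delta v\varepsilon$ together are invertible over the relevant field extensions $F_i$, so that ``rotating'' a cycle past these $3$-cycles is an invertible operation at the level of leading terms. In the $\orb=\varnothing$ case this is automatic because all coefficients are $\pm1$; here one must track the appearances of $v$ and of the Galois elements $g_a$ labelling the arrows, and confirm (using Example \ref{ex:2-cycle-equivalent-to-0} and Proposition \ref{prop:properties-of-J-K-bimodules} to handle the passage of scalars through the arrows) that no coefficient collapses to $0$. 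Once this invertibility is in hand, the existence of $\varphi$, the depth count $\depth(\varphi)=n-3=\short(L)-3$, and the strict inequality $\short(L')>\short(L)$ follow by exactly the same formal argument as in \cite{Labardini-potsnoboundaryrevised}; I would write the explicit substitution rules for $\varphi$ and then invoke ``direct computation'' for the identity $\varphi(\Vtauq+L)\sim_{\operatorname{cyc}}\Vtauq+L'$, paralleling the style of the computations in Subsection \ref{subsec:proof-ideal-non-pending-flips<->SP-mutation}.
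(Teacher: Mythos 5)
Your plan---transpose the proof of Lemma 6.6 of \cite{Labardini-potsnoboundaryrevised}---is exactly the paper's approach; the paper's proof of Lemma~\ref{lemma:making-tails-longer-arbitrary-genus-orb} is literally the one sentence that the prior proof ``can be applied here as is, with no modification at all.'' Where you go astray is in the ``main obstacle'' you anticipate: it does not arise, and seeing why is the whole content of the paper's observation. By item~(6) of Lemma~\ref{lemma:orb-pop-existence-lemma} the arcs $i_1,i_3,i_4,i_5,i_6$ are all non-pending, so $\dtuple(\tau)_{i_j}=1$ and $F_{i_j}=F$ for $j\in\{1,3,4,5,6\}$; hence every arrow among $\eta_1,\eta_2,\eta_3,\lambda_1,\lambda_2,\lambda_3$ carries the trivial modulating label $\myid_F$ and the species data on this subquiver reduce to plain $F$-linear algebra. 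Condition~\eqref{eq:forced-factors-orb-pop} confines $L$ to cycles factoring through $\eta_2\lambda_1\lambda_2$ or $\eta_2\eta_3\lambda_2$, so the tail-lengthening automorphism only ever rewrites $\eta$- and $\lambda$-arrows, all with trivial species structure. The single summand of $\Vtauq$ carrying the scalar $v$, namely $\alpha\delta\varepsilon+\alpha\delta v\varepsilon$, is attached to the pending arc $i_2$ and is never touched by $\varphi$; nor can a leading term of $L$ interact with it, because of~\eqref{eq:forced-factors-orb-pop}. There are therefore no ``coefficient matrices over field extensions'' to invert---the cubic summands $\eta_1\eta_2\eta_3$ and $\lambda_1\lambda_2\lambda_3$ have coefficient $1$ over $F$---and the vanishing phenomenon of Example~\ref{ex:2-cycle-equivalent-to-0} (a cycle becoming cyclically trivial when $g_a\neq g_b^{-1}$) cannot occur since every $g$-label involved equals $\myid_F$. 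This is precisely why the verbatim transfer is legitimate and why the paper needs no computation here.

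Two smaller points. First, the summands $\eta_1\eta_2\eta_3$, $\lambda_1\lambda_2\lambda_3$, $\alpha\delta\varepsilon\pm\alpha\delta v\varepsilon$ are \emph{cubic}, not quadratic; $\Vtauq$ has no degree-$2$ part (the SP is reduced). Second, you have the arrows of the unitriangular automorphism backwards: with the paper's conventions ($\lambda_3:i_5\to i_6$, $\lambda_2:i_6\to i_4$, $\lambda_1:i_4\to i_5$, $\eta_1:i_3\to i_4$, $\eta_3:i_4\to i_5$, $\eta_2:i_5\to i_3$), the term $\eta_2\lambda_1\lambda_2\Theta_1^{(0)}\sim_{\operatorname{cyc}}\lambda_1\lambda_2\Theta_1^{(0)}\eta_2$ is killed by $\lambda_3\mapsto\lambda_3-\Theta_1^{(0)}\eta_2$ (producing $-\lambda_1\lambda_2\Theta_1^{(0)}\eta_2$ from $\lambda_1\lambda_2\lambda_3$), and $\eta_2\eta_3\lambda_2\Theta_2^{(0)}\sim_{\operatorname{cyc}}\lambda_2\Theta_2^{(0)}\eta_2\eta_3$ is killed by $\eta_1\mapsto\eta_1-\lambda_2\Theta_2^{(0)}$; modifying $\lambda_1$ or $\eta_3$ instead would produce cross-terms whose cyclic shape does not match the leading part of $L$. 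Finally, Lemma~\ref{lemma:S-is-W+trash-orb-pop} is not a hypothesis of Lemma~\ref{lemma:making-tails-longer-arbitrary-genus-orb}; it only produces the initial $L_1$ satisfying~\eqref{eq:forced-factors-orb-pop}, after which the present lemma is iterated on its own.
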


\begin{proof} The proof of \cite[Lemma 6.6]{Labardini-potsnoboundaryrevised} can be applied here as is, with no modification at all.
\end{proof}

\begin{proof}[Proof of Proposition \ref{prop:specific-orb-pop-arbitrary-genus}] A combination of Lemmas \ref{lemma:S-is-W+trash-orb-pop} and \ref{lemma:making-tails-longer-arbitrary-genus-orb} produces a limit process whose upshot is the desired right-equivalence. The reader interested in the details is kindly referred to the discussion surrounding Equations (6.7)--(6.10) in \cite{Labardini-potsnoboundaryrevised}, which can be applied to our current situation without any changes whatsoever.
\end{proof}

\end{document}